\numberwithin{equation}{section}
\theoremstyle{plain}
\newtheorem{theo}{Theorem}[section]
\newtheorem{lem}[theo]{Lemma}
\newtheorem{prop}[theo]{Proposition}
\newtheorem{conj}[theo]{Conjecture}
\newtheorem{coro}[theo]{Corollary}
\newtheorem{claim}[theo]{Claim}
\theoremstyle{definition}
\newtheorem{defi}[theo]{Definition}
\theoremstyle{remark}
\newtheorem{rem}[theo]{Remark}
\newtheorem{exam}[theo]{Example}
\newcommand{\lr}{\longrightarrow}
\newcommand{\Spec}{\mathrm{Spec}}
\newcommand{\AAA}{\mathbb{A}}
\newcommand{\Hom}{\text{\rm Hom}}
\newcommand{\cCS}{\mathcal C}
\newcommand{\Q}{{\mathbb Q}}
\newcommand{\Z}{{\mathbb Z}}
\newcommand{\OO}{{\mathcal{O}}}
\newcommand{\Gal}{\mathrm{Gal}}
\newcommand{\im}{\mathrm{im}}
\newcommand{\CH}{\mathrm{CH}}
\newcommand{\trdeg}{\mathrm{trdeg}}
\newcommand{\Br}{\mathrm{Br}}
\renewcommand{\CH}{\mathrm{CH}}
\newcommand{\et}{\text{\rm \'et}}
\newcommand{\hr}{\hookrightarrow}
\newcommand{\chara}{\text{\rm ch}}
\newcommand{\tr}{\text{\rm tr}}
\newcommand{\Zar}{\text{\rm Zar}}
\def\ord{{\mathrm{ord}}}      
\def\cd{{\mathrm{cd}}}      
\def\CH{{\mathrm{CH}}}      
\def\Coker{{\mathrm{Coker}}}
\def\Ker{{\mathrm{Ker}}}    
\def\Br{{\mathrm{Br}}}    
\def\cotor{{\mathrm{cotor}}}
\def\tors{{\mathrm{tors}}}  
\def\cotor{{\mathrm{cotor}}}  
\def\et{{\text{\'{e}t}}}    
\def\Gal{{\mathrm{Gal}}}    
\def\H{{\mathrm{H}}}        
\def\Hom{{\mathrm{Hom}}}    
\def\Image{{\mathrm{Image}}}   
\def\ker{{\mathrm{Ker}}}    
\def\ord{{\mathrm{ord}}}    
\def\ch{{\mathrm{ch}}}    
\def\trdeg{{\mathrm{trdeg}}}    
\def\et{{\text{\'{e}t}}}    
\def\cO{{\mathcal O}}
\def\cC{{\mathcal C}}
\def\cS{{\mathcal S}}
\def\cH{{\mathcal H}}
\def\cK{{\mathcal K}}
\def\LP{{\mathcal L}{\mathcal P}}
\def\Lam{\Lambda}
\def\k{\kappa}
\def\ol#1{\overline{#1}}
\def\us#1#2{\underset{#1}{#2}}
\def\sumd#1#2{\underset{x\in {#1}_{(#2)}}\bigoplus}
\def\sumcd#1#2{\underset{x\in {#1}^{(#2)}}\bigoplus}
\def\sumcdy#1#2{\underset{y\in {#1}^{(#2)}}\bigoplus}
\def\bC{{\mathbb C}}
\def\bR{{\mathbb R}}
\def\bZ{{\mathbb Z}}
\def\bQ{{\mathbb Q}}
\def\qz{{\bQ}/{\bZ}}
\def\qzl{{\bQ_\ell}/{\bZ_\ell}}
\def\zl{{\bZ}_\ell}
\def\ql{{\bQ}_\ell}
\def\zp{{\bZ}_p}
\def\qp{{\bQ}_p}
\def\nz{\bZ/n\bZ}
\def\lnz{\bZ/\ell^n\bZ}
\def\lmz{\bZ/\ell^m\bZ}
\def\indlim#1{\underset{{\underset{#1}{\longrightarrow}}}{\mathrm{lim}}\; }
\def\projlim#1{\underset{{\underset{#1}{\longleftarrow}}}{\mathrm{lim}}\; }
\def\rmapo#1{\overset{#1}{\longrightarrow}}
\def\lmapo#1{\overset{#1}{\longleftarrow}}
\def\isom{\overset{\simeq}{\longrightarrow}}
\def\tk{\tilde{k}}
\def\qfor{\quad\text{for }}
\def\qwith{\quad\text{with }}
\def\Xb{\ol X}
\def\Ln{\Lambda_n}
\def\Lm{\Lambda_m}
\def\Linfty{\Lambda_\infty}
\def\Mod{{Ab}}
\def\EX#1#2#3{E^{#3}_{#1,#2}(X)}
\def\Hempty#1#2{H_{#1}(#2)}
\def\HDL#1#2{H^D_{#1}(#2,\Lambda)}
\def\HLn#1#2{H_{#1}(#2,\Ln)}
\def\HLm#1#2{H_{#1}(#2,\Lm)}
\def\HLLn{H(-,\Ln)}
\def\HDLn#1#2{H^D_{#1}(#2,\Ln)}
\def\HDLinfty#1#2{H^D_{#1}(#2,\Linfty)}
\def\Hn#1#2{H_{#1}(#2,\Ln)}
\def\Hetnz#1#2{H^{\et}_{#1}(#2,\nz)}
\def\HetLam#1#2{H^{\et}_{#1}(#2,\Lambda)}
\def\Hinfty#1#2{H_{#1}(#2,\Linfty)}
\def\Het{H^{\et}}
\def\KC#1{KC_H(#1)}
\def\KCn#1{KC_H(#1,\Lambda_n)}
\def\KCinfty#1{KC_H(#1,\Lambda_\infty)}
\def\KH#1#2{KH_{#1}(#2)}
\def\KHn#1#2{KH_{#1}(#2,\Lambda_n)}
\def\KHnz#1#2{KH_{#1}(#2,\nz)}
\def\KHinfty#1#2{KH_{#1}(#2,\Lambda_\infty)}
\def\KHet#1#2{KH^{\et}_{#1}(#2)}
\def\KHetL#1#2{KH^{\et}_{#1}(#2,\Lam)}
\def\KHDL#1#2{KH^{D}_{#1}(#2,\Lam)}
\def\KHDLn#1#2{KH^{D}_{#1}(#2,\Ln)}
\def\KHcLn#1#2{KH_{#1}(#2/K,\Ln)}
\def\KHccLn#1#2{KH_{#1}(#2/U,\Ln)}
\def\KCcLn#1{KC(#1/K,\Ln)}
\def\KCccLn#1{KC(#1/U,\Ln)}
\def\KHccnz#1#2{KH_{#1}(#2/U,\nz)}
\def\KCccnz#1{KC(#1/U,\nz)}
\def\edgehom#1#2{\epsilon^{#1}_{#2}}
\def\pedgehom#1#2{\partial\epsilon^{#1}_{#2}}
\def\GH{G}
\def\graphHempty#1#2{\GH_{#1}(#2,\Lambda_H)}
\def\graphHempty#1#2{\GH_{#1}(#2)}
\def\graphHn#1#2{\GH_{#1}(#2,\Lambda_n)}
\def\graphHinfty#1#2{\GH_{#1}(#2,\Lambda_\infty)}
\def\graphhom#1#2{\gamma^{#1}_{#2}}
\def\graphedge#1#2{\gamma\epsilon^{#1}_{#2}}
\def\DWn#1#2{W_n\Omega^{#1}_{#2}}
\def\DWnlog#1#2{W_n\Omega^{#1}_{#2,log}}
\def\ol#1{\overline{#1}}
\def\HMXr#1{H^{#1}_M(X,\bZ(r))}
\def\nt{\ell^n}
\def\mt{\ell^m}
\def\hPhi{\hat{\Phi}}
\def\cCs{\cC_{/s}}
\def\Xd#1{X_{(#1)}}
\def\Yd#1{Y_{(#1)}}
\def\dimc{\dim_S}
\def\cCS{\cC_S}
\begin{document}

\title{
Cohomological Hasse principle and motivic cohomology for arithmetic schemes}
\author{Moritz Kerz}
\author{Shuji Saito}
\address{Moritz Kerz\\
Universit\"at Duisburg-Essen\\
Fachbereich Mathematik, Campus Essen\\
45117 Essen\\
Germany}
\email{moritz.kerz@uni-due.de}
\address{Shuji Saito\\
Interactive Research Center of Science, 
Graduate School of Science and Engineering,
 Tokyo Institute of Technology\\
Ookayama, Meguro\\
Tokyo 152-8551\\
Japan
}
\email{sshuji@msb.biglobe.ne.jp}
\date{ 2011-12-6}
\thanks{ The first author is supported by the DFG Emmy Noether-programme.}

\maketitle

\markboth{Moritz Kerz and Shuji Saito}{Cohomological Hasse principle and motivic cohomology}

\begin{abstract}
In 1985 Kazuya Kato formulated a fascinating framework of conjectures which
generalizes
the Hasse principle for the Brauer group of a global field to the so-called
cohomological Hasse principle for an arithmetic scheme $X$. In this paper we
prove 
the prime-to-characteristic part of the cohomological Hasse principle.
We also explain its implications on finiteness of motivic cohomology and 
special values of zeta functions.
\end{abstract}

\tableofcontents


\section*{Introduction} \label{intro}
\bigskip

\noindent
Let $K$ be a global field, 
namely a number field or a function field of dimension one over a finite field.
A fundamental fact in number theory due to Brauer-Hasse-Noether and Witt
is the Hasse principle for the Brauer group of $K$, which asserts
the injectivity of the restriction map:
\begin{equation}\label{intro.eq1}
\Br(K) \to \underset{v\in P_K}{\bigoplus} \Br(K_v),
\end{equation}
where $P_K$ is the set of places of $K$ and 
$K_v$ is the completion of $K$ at $v$?.
In 1985 Kato \cite{K} formulated a fascinating framework of conjectures
which generalizes this fact to higher dimensional \it arithmetic schemes $X$, \rm 
namely schemes of finite type over a finite field or the ring of 
integers in a number field or a local field. For an integer $n>0$, he defined a complex 
{ $KC^{(0)}(X,\nz)$} of Bloch-Ogus type (now called the Kato complex of $X$):
\begin{multline}\label{eq.KC1}
\cdots\rmapo{\partial} \sumd X a H^{a+1}(x,\nz(a))\rmapo{\partial} 
\sumd X {a-1} H^{a}(x,\nz(a-1))\rmapo{\partial}\cdots\\
\cdots\rmapo{\partial} \sumd X 1 H^{2}(x,\nz(1))\rmapo{\partial} \sumd X 0 H^{1}(x,\nz)
\end{multline}
Here $\Xd a$ denotes the set of points $x\in X$ such that $\dim\overline{\{x\}}=a$
with the closure $\overline{\{x\}}$ of $x$ in $X$, and the term in degree $a$ is 
the direct sum of the Galois cohomology $H^{a+1}(x,\nz(a))$ of the residue fields
$\k(x)$ for $x\in \Xd a$, where the coefficients $\nz(a)$ are the Tate twist 
(see \S\ref{homology} Lemma \ref{lemHK1smooth}).
In case $X$ is the {\it model} of a global field $K$ (namely $X=\Spec(\cO_K)$ for 
the integer ring $\cO_K$ of the number field $K$ or
a smooth projective curve over a finite field with the function field $K$), 
the Kato complex { $KC^{(0)}(X,\nz)$} is 
$$
H^2(K,\nz(1)) \rmapo{\partial} \sumd X 0 H^{1}(x,\nz)
$$
which is shown to be isomorphic to the $n$-torsion part of \eqref{intro.eq1} 
where the direct sum of $\Br(K_v)$ for the infinite places $v$ of $K$ is removed 
if $X=\Spec(\cO_K)$.
For an arithmetic scheme $X$ flat over $\Spec(\cO_K)$, 
we need modify \eqref{eq.KC1} to get the right Kato complex:
For a scheme $Y$ of finite type over $K$ or $K_v$ with $v\in P_K$, 
consider the following complex { $KC^{(1)}(Y,\nz)$}:
\begin{multline}\label{eq.KC2}
\cdots\rmapo{\partial} \sumd Y a H^{a+2}(x,\nz(a+1))\rmapo{\partial} 
\sumd Y {a-1} H^{a+1}(x,\nz(a))\rmapo{\partial}\cdots\\
\cdots\sumd Y 1 H^{3}(x,\nz(2))\rmapo{\partial} \sumd Y 0 H^{2}(x,\nz(1))\;,
\end{multline}
where the sum $\oplus_{x\in \Yd {a}}$ is put at degree $a$. 
For $X$ of finite type over a non-empty open subscheme $U\subset \Spec(\cO_K)$, 
we have the natural restriction maps {
\[
 KC^{(0)}(X,\nz)[1] \to { KC^{(1)}}(X_K,\nz) 
\to  KC^{(1)}(X_{K_v},\nz) 
\]
}
where $X_L=X\times_U\Spec(L)$ for $L=K$ and $K_v$ (the degree shift $[1]$ comes from
the fact $\Xd {a+1}\cap X_K= \Yd a$ with $Y=X_K$). We define a variant of Kato complex as {
\[
KC ( X/U,\nz )  =\mathrm{cone} [ { KC^{(0)} }(X,\nz )[1] \to \bigoplus_{v\in \Sigma_U} { KC^{(1)}}(X_{K_v} ,\nz ) ],
\]
}
where $\Sigma_U$ denotes the set of $v\in P_K$ which do not correspond to 
closed points of $U$ (thus $\Sigma_U$ includes the set of all infinite places of $K$).

\medbreak

The Kato homology of $X$ (with coefficient in $\nz$) is defined as 
{
\begin{equation}\label{eq.KHintro} 
\KHnz aX =H_a(  KC(X,\nz))  \qfor a\in \bZ.
\end{equation}
}
If $X$ is over $U\subset \Spec(\cO_K)$ as above, we introduce a variant of the Kato homology:
\begin{equation}\label{eq.KHintro2}
\KHccnz a X =H_a(\KCccnz X) \qfor a\in \bZ.
\end{equation}
These are important invariants that reflects arithmetic nature of $X$.
The Hasse principle for the Brauer group of a global field $K$ 
is equivalent to the vanishing of the Kato homology in degree $1$ of the model of $K$.
As a generalization of this fact, Kato proposed the following conjectures called 
the cohomological Hasse principle. 

\begin{conj}\label{KatoconjG} 
Let $X$ be a proper smooth scheme over a finite field. Then 
$$
\KHnz a X =0 \qfor a>0.
$$
\end{conj}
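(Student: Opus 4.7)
The proof I would attempt proceeds by induction on $d = \dim X$, reducing the vanishing of $\KHnz{a}{X}$ for a smooth projective $X$ over a finite field to the analogous vanishing for a smooth hyperplane section. The base cases are clean: for $d = 0$ the Kato complex is concentrated in degree $0$ and there is nothing to prove, and for $d = 1$ the statement $\KHnz{1}{X} = 0$ for a smooth projective curve is the $n$-torsion part of the classical Brauer--Hasse--Noether injectivity \eqref{intro.eq1} applied to the global function field $K = \kappa(X)$, as already identified in the discussion of models in the introduction.

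For the inductive step, I would first reinterpret the Kato complex sheaf-theoretically, viewing it as (essentially) the $E_1$-page of the niveau/Bloch--Ogus spectral sequence converging to étale cohomology of $X$ with $\nz(\ast)$ coefficients. In this language, $\KHnz{a}{X}$ is a subquotient of $H^{\ast}_{\et}(X,\nz(\ast))$, which for a smooth proper $X/\F_q$ is controlled by Poincaré duality and Deligne's weight formalism. This reformulation recasts the desired vanishing as an exactness statement for the residue maps appearing along descending chains of closed subvarieties of $X$.

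The concrete inductive mechanism is a Lefschetz--Bertini argument. Given $\alpha \in \KHnz{a}{X}$ with $a > 0$, the goal is to find a smooth ample divisor $i \colon Y \hookrightarrow X$ of dimension $d-1$ such that $\alpha$ either lies in the image of $i_{\ast} \colon \KHnz{a-1}{Y} \to \KHnz{a}{X}$ or becomes trivial on the open complement $U = X \setminus Y$. The localization triangle relating $KC^{(0)}(X,\nz)$, $KC^{(0)}(Y,\nz)$ and the Kato complex of $U$, combined with a moving argument that tracks which residue fields in the sums $\bigoplus_{x \in X_{(a)}}$ actually support representatives of $\alpha$, should then reduce the problem to $\KHnz{\ast}{Y}$, where the inductive hypothesis applies.

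The main obstacle will be supplying the geometric input for that reduction. Over a finite field the classical Bertini theorem fails, so one needs Poonen's Bertini theorem over $\F_q$ (and, plausibly, refinements allowing prescribed tangency or avoidance conditions) in order to produce smooth hyperplane sections that are simultaneously transverse to the countably many closed subvarieties $\overline{\{x\}}$ carrying the summands of $KC^{(0)}(X,\nz)$. In parallel, Gabber's refinement of de~Jong's alterations seems essentially unavoidable for replacing the singular $\overline{\{x\}}$ by smooth proper covers of lower dimension, so that the inductive hypothesis can be applied term by term. Weaving Bertini, alterations, and a moving lemma into a single uniform induction --- keeping Frobenius weights coherent throughout --- is the heart of the argument and the step I expect to be genuinely hard.
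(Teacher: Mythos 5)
Your outline follows the same overall strategy as the paper---induction on dimension, a Bertini/Lefschetz argument on the complement of an ample divisor, Gabber's refinement of de Jong's alterations, and Deligne's weights---but it leaves unaddressed the two ingredients that actually make the induction close, and one of them would fail as you have set things up. The inductive mechanism is not the dichotomy you describe (``$\alpha$ lies in the image of $i_*$ or dies on $U$''). What is needed is: after shrinking to $U=X-Y$ so that $\alpha|_U$ lifts under the edge map $\epsilon^a_U: H^{\et}_{a-1}(U)\to KH_a(U)$ of the niveau spectral sequence, one must prove that the composite $H^{\et}_{a-1}(U)\to KH_a(U)\xrightarrow{\partial} KH_{a-1}(Y)$ is injective in degree $a$ and surjective in degree $a+1$; a short diagram chase in the localization sequence then kills $\alpha$. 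This is the \emph{Lefschetz condition} of Section~3: for an ample simple normal crossing divisor $Y$, the group $H^{\et}_{a-1}(U)$ in the relevant range is computed by a purely combinatorial configuration complex built from the dual complex of $Y$, which by the induction hypothesis agrees with the Kato homology of $Y$. Proving this (Theorem~\ref{thm.Lcondition}) is where duality, the affine Lefschetz theorem and the weight arguments actually enter, and the weight argument forces divisible coefficients $\qzl$; the statement for $\nz$ is then recovered via the divisibility condition $(\mathbf{D})_q$, i.e.\ the Bloch--Kato conjecture (Section~\ref{finitecoeff}). Your sketch works with $\nz$ throughout, where the weight/divisibility argument breaks down.

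The second gap concerns the alteration step. Since $Y=X-U$ is in general not an SNC divisor, one passes to $\pi:X'\to X$ of degree prime to $\ell$ with $\pi^{-1}(Y)_{red}$ SNC, proves the vanishing of $\pi^*\alpha$ there, and concludes from $\pi_*\pi^*=\deg(\pi)$ that $\deg(\pi)\,\alpha=0$, hence $\alpha=0$ for $\ell$-primary $\alpha$. This requires a pullback map $\pi^*$ on Kato homology for generically finite morphisms of regular schemes, compatible with the edge maps. The Kato complex has no naive contravariance (its terms are Galois cohomology groups of the points of $X$), and constructing $\pi^*$---via Rost's intersection theory and deformation to the normal cone, or via the Gersten resolution---is what the authors call the most severe technical difficulty of the paper (Sections~\ref{pullback}--\ref{pullbacksnd}). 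Your ``moving argument tracking which residue fields support $\alpha$'' does not supply this. By contrast, your worry about transversality of the hyperplane section to the countably many $\overline{\{x\}}$ is a non-issue: Bertini is only needed to add one ample divisor to the fixed SNC boundary so that the resulting log-pair is ample, and Poonen's or Gabber's Bertini theorem over $\F_q$ suffices for that.
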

\medbreak

We remark that Geisser \cite{Ge4} defined a Kato complex with integral coefficient
for $X$ over a finite field, and studied an integral version of Conjecture \ref{KatoconjG}.

\begin{conj}\label{KatoconjAL} 
Let $X$ be a regular scheme proper and flat over $\Spec(\cO_k)$ where $\cO_k$ 
is the ring of integers in a local field. Then
$$ \KHnz a X =0 \qfor  a\geq 0. $$
\end{conj}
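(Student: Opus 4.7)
The plan is to stratify $X$ by its special and generic fibers and reduce the conjecture to the corresponding statements for (i) a proper scheme over the finite residue field---the prime-to-$p$ part of Conjecture \ref{KatoconjG}---and (ii) a smooth projective variety over the local field $k = \mathrm{Frac}(\cO_k)$ itself. Write $\bF$ for the residue field of $\cO_k$, $Y = X \otimes_{\cO_k} \bF$ for the special fiber, and $X_k$ for the generic fiber. The decomposition of points $\Xd a = \Yd a \sqcup (X_k)_{(a-1)}$, in which the index shift records that a point of $X_k$ has closure in $X$ of one Krull dimension larger than its closure in $X_k$, yields a short exact sequence
\[
0 \to KC^{(0)}(Y,\nz) \to KC^{(0)}(X,\nz) \to KC^{(1)}(X_k,\nz)[1] \to 0
\]
of Kato complexes, and hence a long exact sequence
\[
\cdots \to \KHnz a Y \to \KHnz a X \to H_{a-1}(KC^{(1)}(X_k,\nz)) \to \KHnz {a-1} Y \to \cdots
\]

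First I would apply the prime-to-$p$ part of Conjecture \ref{KatoconjG} to $Y$ in order to obtain $\KHnz a Y = 0$ for $a>0$. Because $X$ is only assumed regular and not smooth, $Y$ will in general be singular, so this step requires reducing the singular case to the smooth one via Gabber's prime-to-$\ell$ alterations $\widetilde Y \to Y$, together with a trace argument exploiting the covariant functoriality of the Kato complex under finite morphisms of prime-to-$\ell$ degree. Second, I would treat $H_a(KC^{(1)}(X_k,\nz))$ for the smooth projective generic fiber: an alteration of $X_k$ extends to a regular proper model over $\cO_k$, and running the above localization sequence in reverse reduces this to the special fiber of that model, already settled.

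Combining the two inputs through the long exact sequence yields $\KHnz a X = 0$ for all $a > 0$. The residual degree-zero statement $\KHnz 0 X = 0$ does not follow formally from the previous steps; for this I would invoke higher unramified class field theory of Kato--Saito, which interprets $\KHnz 0 X$ as an obstruction group that is trivial for a regular proper arithmetic scheme over the ring of integers of a local field (every relevant finite cover being controlled by the class field theory of $\cO_k$ itself).

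The principal obstacle is the alteration step on the special fiber: Gabber's alteration is only a generically finite cover of prime-to-$\ell$ degree between two schemes that are singular in genuinely different ways, so descending the vanishing from smooth $\widetilde Y$ to $Y$ requires full covariant functoriality of Kato homology under such morphisms together with a matching projection formula that identifies the cycle-theoretic trace with the Galois-cohomological one. Entangled with this, the generic-fiber input is itself a Hasse-principle statement over a local field that feeds back through a model into the same kind of arithmetic scheme over $\cO_k$; the entire argument therefore has to be organized as a simultaneous induction on $\dim X$ (and, via devissage, on the prime-power decomposition of $n$).
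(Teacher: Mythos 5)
Your opening move --- the short exact sequence of Kato complexes obtained by partitioning the points of $X$ into those of the special fibre $Y$ and those of the generic fibre $X_k$ --- is correct and does appear in the paper (it is the sequence \eqref{KCexactsequence} for the homology theory of Example \ref{exHK4}, and the same d\'evissage is how the paper later deduces Theorem \ref{kato.arith} from the local case). The gap is in what you feed into the resulting long exact sequence: \emph{neither} of the two outer terms vanishes, so the conjecture cannot be obtained by killing them separately. The special fibre of a regular proper $\cO_k$-scheme is in general singular, and for singular proper schemes over a finite field the Kato homology is genuinely nonzero: for a simple normal crossing $Y$ it computes (in low degrees) the homology of the configuration complex, i.e.\ of the dual complex of $Y$, with $\nz$-coefficients (Lemmas \ref{lem1.reduced.Kato.complex} and \ref{lem.reduced.graphhom}). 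Already for the minimal regular model of an elliptic curve with split multiplicative reduction, $Y$ is a cycle of $\P^1$'s and $\KHnz 1 Y\cong\nz\neq 0$. Your proposed rescue --- descending vanishing from a Gabber alteration $\widetilde Y\to Y$ by a trace argument --- cannot work: the projection formula $\pi_*\pi^*=\deg(\pi)$ needs a pullback $\pi^*$ on Kato homology, and condition $(\mathbf{PB})$ only supplies such pullbacks for morphisms between \emph{regular} schemes of the same dimension; for singular $Y$ no such $\pi^*$ can exist, precisely because $\KHnz a {\widetilde Y}=0$ while $\KHnz a Y\neq 0$. Symmetrically, $H_a(KC^{(1)}(X_k,\nz))$ does not vanish either: already for $X_k=\Spec(k)$ it equals $\Br(k)[n]\cong\nz$ in degree $0$, and for the Tate curve it is nonzero in degree $1$. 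Even when $Y$ is smooth one has $\KHnz 0 Y=\nz$, so the vanishing of $\KHnz 0 X$ and $\KHnz 1 X$ already requires the connecting map to be an isomorphism rather than both ends to be zero. What the conjecture is actually equivalent to is that the \emph{boundary map} from the homology of $KC^{(1)}(X_k,\nz)$ to $\KHnz {\ast} Y$ is an isomorphism --- a local duality statement, not two separate Hasse principles. Your treatment of the generic fibre is moreover circular: you compute $H_a(KC^{(1)}(X_k))$ by running the localization sequence of a regular model, which presupposes the vanishing of the Kato homology of that model, i.e.\ the statement being proved.

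The paper's route avoids this dichotomy entirely. It works with a single homology theory $H^{\et}(-,\Lambda)$ over the base $S=\Spec(R)$, $R$ henselian (Example \ref{exHK4}), which is leveled above $-1$ and whose Kato complex for $X/S$ is the whole complex \eqref{eq.KC1}; it proves that $KC_H(S)$ itself is \emph{acyclic} (Case II-1 of Theorem \ref{thm.Lcondition}, where local duality/class field theory enters, and which is what makes the vanishing extend to $a=0$, in contrast with the finite-field case); it verifies the Lefschetz condition in the arithmetic setting by proper base change to the special fibre plus the affine Lefschetz theorem over the residue field; and it then runs the induction on $\dim_S(X)$ of Theorem \ref{mainthm}, with Gabber's alterations applied only to the regular total space $X$, never to the singular fibre. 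If you want a statement in the spirit of your step (ii), the correct analogue is Theorem \ref{thm.J}, which compares the global generic fibre with all its local ones --- but it is an input to the global Conjecture \ref{KatoconjAG}, not a step in the proof of the local one.
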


\begin{conj}\label{KatoconjAG} 
Let $X$ be a regular scheme proper flat over a non-empty open subscheme 
$U\subset \Spec(\cO_K)$ where $K$ is a number field. Then 
$$
\KHccnz a X =0 \qfor a>0.
$$
\end{conj}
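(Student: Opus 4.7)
The plan is to use the long exact sequence associated to the defining exact triangle
$$KC^{(0)}(X,\nz)[1] \xrightarrow{\mathrm{res}} \bigoplus_{v\in\Sigma_U} KC^{(1)}(X_{K_v},\nz) \to KC(X/U,\nz),$$
together with the prime-to-$p$ parts of Conjectures \ref{KatoconjG} and \ref{KatoconjAL}, which I will take as already established. The restriction $\mathrm{res}$ is the restriction to the generic fiber $X_K$ followed by restriction to each completion, with the degree shift coming from the fact that $\Xd{a+1}\cap X_K = (X_K)_{(a)}$.

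First I would analyze the arithmetic Kato complex $KC^{(0)}(X,\nz)$ via the stratification of $X$ into its closed fibers $X_v$ (for $v\in U$ closed) and its generic fiber $X_K$. The corresponding Bloch--Ogus style localization yields a long exact sequence relating $H_*(KC^{(0)}(X,\nz))$ to the Kato homologies of the $X_v$ and of $X_K$. By Conjecture \ref{KatoconjG} applied to each smooth proper $X_v/\kappa(v)$, the closed-fiber contributions vanish in positive degrees, so in positive degrees $KC^{(0)}(X,\nz)[1]$ becomes quasi-isomorphic to $KC^{(1)}(X_K,\nz)$. Up to corrections in non-positive degrees, $KC(X/U,\nz)$ then identifies with the cone of the global-to-bad-local restriction $KC^{(1)}(X_K,\nz) \to \bigoplus_{v\in\Sigma_U} KC^{(1)}(X_{K_v},\nz)$.

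Next, for each place $v\in P_K$, I would establish vanishing of $H_a(KC^{(1)}(X_{K_v},\nz))$ for $a>0$. At places of good reduction $v\in U$, this follows from Conjectures \ref{KatoconjG} and \ref{KatoconjAL} applied to the model $X\times_U\Spec(\cO_{K_v})$ via the local analog of the above localization. At $v\in\Sigma_U$, de Jong's alterations furnish a regular proper flat $\cO_{K_v}$-model of a prime-to-$\ell$ alteration of $X_{K_v}$ (for each prime $\ell\mid n$), and a trace-corestriction argument transports the vanishing back to $X_{K_v}$. Combining these local vanishings with the global cohomological Hasse injection
$$H_b(KC^{(1)}(X_K,\nz)) \hookrightarrow \bigoplus_{v\in P_K} H_b(KC^{(1)}(X_{K_v},\nz)) \qquad (b>0)$$
yields $H_b(KC^{(1)}(X_K,\nz)) = 0$ for $b>0$. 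Feeding these vanishings into the reduced cone completes the proof that $\KHccnz{a}{X}=0$ for $a>0$.

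The principal obstacle will be establishing the global Hasse injection at the level of Kato complexes -- the higher-dimensional analog of the classical Brauer--Hasse--Noether--Witt theorem -- which rests on Artin--Verdier duality for arithmetic schemes combined with the Bloch--Kato norm residue isomorphism of Voevodsky--Rost, together with delicate compatibility checks relating the Kato boundary maps to the cone and localization triangles used above. The restriction to prime-to-$p$ coefficients (with $p$ a residue characteristic of $\cO_K$) is essential, since a clean mixed-characteristic theory of logarithmic de Rham--Witt at the Kato-complex level is presently unavailable and would be required to address the $p$-primary part.
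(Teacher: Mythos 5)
There is a genuine gap, and it sits at the heart of your argument: the claimed local vanishing $H_a(KC^{(1)}(X_{K_v},\nz))=0$ for $a>0$ is false in general. If $X_{K_v}$ admits a regular proper flat model $\mathcal X_v$ over $\cO_{K_v}$, the local case of the Hasse principle (Theorem \ref{kato.finitefield} over a henselian discrete valuation ring) says that $KC^{(0)}(\mathcal X_v,\nz)$ is \emph{acyclic}, and the localization triangle then identifies $KC^{(1)}(X_{K_v},\nz)$ with $KC^{(0)}$ of the \emph{special fibre} of $\mathcal X_v$ up to shift. That special fibre is proper but in general singular (e.g.\ a simple normal crossing configuration), and its Kato homology is typically nonzero in positive degrees --- for a semistable curve it already sees the homology of the dual graph. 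This is precisely why Theorem \ref{thm.J} is stated as an \emph{isomorphism} $H_a(KC^{(1)}(Y,\qz))\cong\bigoplus_v H_a(KC^{(1)}(Y_{K_v},\qz))$ for $a>0$ rather than as a two-sided vanishing: neither side vanishes, only the cone does. Consequently your scheme ``local vanishing at every place plus global injectivity'' cannot close; what is needed, and what the paper proves, is injectivity of $KC^{(1)}(X_K,\nz)\to\bigoplus_v KC^{(1)}(X_{K_v},\nz)$ as a map of complexes (Jannsen) together with an identification of its cokernel, via Artin--Verdier duality, with the Kato complex of the homology theory $H^D(-,\nz)$ over $\Spec(K)$; the vanishing of that third complex's homology in positive degrees is then what the main theorem (Lefschetz condition, pullbacks, Gabber's alterations) delivers. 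Note also that your long exact sequence for the cone in degree $a=1$ requires injectivity of $H_0(KC^{(1)}(X_K))\to\bigoplus_v H_0(KC^{(1)}(X_{K_v}))$, which your stated injection for $b>0$ does not cover; the termwise injectivity of the map of complexes handles this.

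A secondary misstep: you invoke Conjecture \ref{KatoconjG} for the closed fibres $X_v$ with $v\in U$, but regularity and properness of $X$ over $U$ do not make $X_v$ smooth over $\kappa(v)$, so that conjecture does not apply to them. The correct reduction to the generic fibre (which the paper carries out) never needs the fibres' Kato homology to vanish: one uses the acyclicity of $KC^{(0)}(X\times_U\Spec(R_v),\nz)$ for the henselization $R_v$ of $\cO_{U,v}$ to conclude that the residue map $KC^{(1)}(X_{K_v},\nz)\to KC^{(0)}(X_v,\nz)$ is a quasi-isomorphism, and this is exactly what converts $\KCccnz X$ into the cone over the generic fibre with \emph{all} places of $K$ appearing, i.e.\ $\KHccnz a X\cong H_a(KC(X_K/K,\nz))$.
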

\medbreak

Our main results are the following.

\begin{theo}\label{thm1.intro} (Theorem \ref{kato.finitefield})
Conjectures \ref{KatoconjG} and \ref{KatoconjAL} hold if $n$ is invertible on $X$.
\end{theo}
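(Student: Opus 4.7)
The plan is a simultaneous induction on $d = \dim X$ for both conjectures, reducing the local-field statement \ref{KatoconjAL} to the finite-field statement \ref{KatoconjG} on the special fibre wherever possible. For Conjecture \ref{KatoconjG}, the base cases are: $d = 0$ is vacuous since the Kato complex has only a degree-zero term, and $d = 1$ is the Brauer--Hasse--Noether--Witt Hasse principle for the function field of a smooth projective curve over $\bF_q$, which the introduction has already identified with the vanishing of $\KHnz 1 X$. The base case of Conjecture \ref{KatoconjAL} in relative dimension zero reduces to the local reciprocity isomorphism $\Br(k)[n] \xrightarrow{\sim} \nz$.

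For the inductive step I would first replace $X$ by a smooth projective alteration $X' \to X$ of generic degree prime to $n$ using Gabber's refined alteration theorem, which is available since $n$ is invertible on $X$; a standard trace/restriction argument exhibits $\KHnz a X$ as a direct summand of $\KHnz a {X'}$. The core of the induction is a Lefschetz--Bertini argument: choose a sufficiently generic smooth ample hypersurface $Y \subset X$ with $\dim Y = d - 1$, apply the inductive hypothesis to get $\KHnz a Y = 0$ for $a > 0$, and exploit the localization long exact sequence
\[
\cdots \to \KHnz a Y \to \KHnz a X \to \KHnz a{X \setminus Y} \to \KHnz{a-1} Y \to \cdots
\]
arising from the niveau filtration on étale cohomology with $\nz$-coefficients. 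Invoking the Bloch--Kato (norm-residue) isomorphism to identify the terms of the Kato complex with Milnor $K$-theory modulo $n$, and varying $Y$ in a Lefschetz pencil, one argues that every class in $\KHnz a X$ with $a > 0$ restricts to zero on some such $U = X \setminus Y$ and therefore vanishes. Conjecture \ref{KatoconjAL} is handled by an entirely parallel induction run on the cone-type variant of the Kato complex described in the introduction, with proper base change used to propagate the vanishing of the finite-field case to the mixed-characteristic setting via the special fibre.

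The main obstacle will be the construction of the smooth Bertini hypersurfaces $Y$ needed to power the induction. Over a finite residue field classical Bertini fails, so one must invoke the arithmetic Bertini theorems of Poonen and Gabber, and moreover in a refined form that supplies enough mobility in an ample linear system to kill a prescribed class in $\KHnz a X$. A secondary, though no less delicate, issue is matching the two flavours of Kato complex $KC^{(0)}$ and $KC^{(1)}$ of the introduction compatibly with the localization sequence, so that the induction for Conjecture \ref{KatoconjAL} genuinely descends from the finite-field case of Conjecture \ref{KatoconjG} applied to the special fibre.
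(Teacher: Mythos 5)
Your skeleton (induction on dimension, Gabber's prime-to-$\ell$ alterations with a trace argument, Bertini over finite fields, the localization sequence) matches the paper's architecture, but the central step is misstated in a way that hides the real content. You claim that ``every class in $\KHnz a X$ with $a>0$ restricts to zero on some $U=X\setminus Y$.'' What the spectral sequence actually gives, by shrinking, is only that $\alpha|_U$ lies in the image of the edge homomorphism $\edgehom a U:H_{a+e}(U)\to \KH a U$ on some dense open $U$; it does not vanish there. The whole difficulty is then to show that such classes die after further cutting by an ample hypersurface, and this is exactly the ``Lefschetz condition'' of the paper: one must prove that for a log-pair $(X,Y;U)$ with an ample component, the composite $H_{a+e}(U)\to \KH a U\xrightarrow{\partial}\KH{a-1}Y$ is injective (resp.\ surjective) in the relevant range. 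Proving this requires computing $H_{a+e}(U)$ for $U$ the complement of an ample divisor via the affine Lefschetz theorem and, crucially, Deligne's weight estimates (Weil II) to kill the coinvariants of the relevant cohomology groups. Nothing in your sketch supplies this input, and without it the diagram chase in the localization sequence does not close.

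Two further gaps. First, after the alteration the complement $Y$ is a simple normal crossing divisor, not smooth, so you cannot simply ``apply the inductive hypothesis to $Y$''; the paper needs the configuration-complex (log-pair) formalism to express $\KH a Y$ in terms of the Kato homology of the smooth strata, and the inductive hypothesis enters through that comparison (Lemma \ref{lem.reduced.graphhom} and Lemma \ref{mainlem}). Second, the ``standard trace/restriction argument'' presupposes a pullback $f^*$ on Kato homology along the alteration $f:X'\to X$ with $f_*f^*=\deg(f)$; Kato homology is not obviously contravariant for such maps, and constructing $f^*$ (via Rost's intersection theory and deformation to the normal cone, or via the Gersten resolution) occupies two full sections of the paper. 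Your use of Bloch--Kato is also misplaced: in the actual proof it is not used to identify Kato complexes with Milnor $K$-theory inside the induction, but only at the end, to deduce the $\lnz$-coefficient statement from the $\qzl$-coefficient one via a divisibility property of the $E^1$-terms.
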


\begin{theo}\label{thm1.intro2} (Theorem \ref{kato.arith})
Conjecture \ref{KatoconjAG} holds if $n$ is invertible on $U$.
\end{theo}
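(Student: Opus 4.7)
The plan is to reduce Conjecture \ref{KatoconjAG} to the local statements already given by Theorem \ref{thm1.intro} by unwinding the cone that defines $KC(X/U,\nz)$. The defining distinguished triangle produces a long exact sequence
\[
\cdots \to \KHnz{a+1}{X} \to \bigoplus_{v\in \Sigma_U}\KHnz{a}{X_{K_v}} \to \KHccnz{a}{X} \to \KHnz{a}{X} \to \cdots,
\]
so for $a>0$ it is enough to control the two ``outer'' groups $\KHnz{a+1}{X}$ and $\KHnz{a}{X}$ on one side and the local terms $\KHnz{a}{X_{K_v}}$ ($v\in \Sigma_U$) on the other.

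For the global groups $\KHnz{a}{X}$, the open-closed decomposition $X = X_K \sqcup \bigsqcup_{v\in U_{(0)}} X_v$ yields a short exact sequence of Kato complexes presenting $KC^{(0)}(X,\nz)$ as an extension of $KC^{(1)}(X_K,\nz)$ (up to a shift) by $\bigoplus_{v\in U_{(0)}} KC^{(0)}(X_v,\nz)$. The same analysis applied to the integral model $\hat X_v := X\times_U\Spec(\cO_{K,v})$, combined with the local vanishing $\KHnz{a}{\hat X_v}=0$ for $a\geq 0$ guaranteed by Theorem \ref{thm1.intro} (Conjecture \ref{KatoconjAL}), identifies $\KHnz{a}{X_v}$ with $\KHnz{a}{X_{K_v}}$ for $a\geq 0$. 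Splicing the global and local long exact sequences together expresses $\KHnz{*}{X}$ and $\KHccnz{*}{X}$ entirely in terms of $\KHnz{*}{X_K}$ together with the local groups $\bigoplus_{v\in P_K}\KHnz{*}{X_{K_v}}$, reducing the conjecture to a Hasse-type statement for the Kato homology of the generic fiber $X_K$.

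For the local terms $\KHnz{*}{X_{K_v}}$ at $v\in \Sigma_U$, the infinite places contribute nothing in positive degrees since the values of $\cd(\bR)$ and $\cd(\bC)$ with prime-to-characteristic coefficients force the Kato complex to be concentrated in low degrees. For a finite $v\in \Sigma_U$, the scheme $X_{K_v}$ has no given integral model inside $X$; here I would invoke de Jong's alterations to construct a regular scheme proper and flat over $\Spec(\cO_{K,v})$ altering $X_{K_v}$, then apply Theorem \ref{thm1.intro} (Conjecture \ref{KatoconjAL}) to this model. Because $n$ is invertible on $U$, the alteration can be arranged to have degree prime to $n$, so the vanishing descends to $X_{K_v}$.

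The main obstacle, I expect, is the Hasse-principle step for the generic fiber $X_K$, a smooth proper variety over the number field $K$, since this group has no intrinsic integral model and no direct reduction to the local cases of Theorem \ref{thm1.intro}. Avoiding circularity will require a careful induction on $\dim X$, in which the Kato homology of $X_K$ is itself rewritten via a further cone whose pieces either involve strictly lower-dimensional arithmetic schemes accessible to the inductive hypothesis, or reduce to the local parts of Theorem \ref{thm1.intro} already at our disposal.
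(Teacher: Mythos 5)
Your first step --- identifying the contribution of each closed point $v\in U$ with that of the local field $K_v$ by means of the vanishing of the Kato homology of the model over the henselization of $\cO_{U,v}$ --- is exactly the paper's reduction: the residue map $KC^{(1)}(X_{K_v},\nz)\to KC^{(0)}(X_v,\nz)$ has cone the Kato complex of $X\times_U R_v$, which is acyclic by the local case of Theorem \ref{kato.finitefield}, so the cone over $\Sigma_U$ becomes the cone over all of $P_K$ and $\KHccnz a X\cong H_a(KC(X_K/K,\nz))$. From that point on, however, the proposal has two genuine gaps.

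First, the local groups do not vanish, so the strategy of killing the terms $\bigoplus_{v\in\Sigma_U}H_a(KC^{(1)}(X_{K_v},\nz))$ by alterations cannot work. Conjecture \ref{KatoconjAL} kills the Kato homology of a regular proper flat \emph{model} over $\cO_{K_v}$; via the localization sequence this identifies $H_a(KC^{(1)}(X_{K_v},\nz))$ with the Kato homology of the special fibre, which is $\nz\neq 0$ in degree $0$ (already for $X_{K_v}=\Spec(K_v)$ one gets $\Br(K_v)[n]\cong\nz$). The archimedean places likewise cannot be discarded when $n$ is even, since $\cd(\bR)=\infty$ for $2$-primary coefficients. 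The classical Hasse principle, and Kato's generalization, are statements about the kernel and cokernel of a map \emph{into} these nonzero local groups, not about their vanishing. Second, the step you defer as ``the main obstacle'' --- the Hasse principle for the smooth proper generic fibre $X_K$ over the number field $K$ (Theorem \ref{kato.globalfield}) --- is the actual substance of the theorem and cannot be recovered by induction from the local cases already proved. The paper needs three further ingredients there: (i) Jannsen's injectivity theorem (Theorem \ref{thm.J}) for $KC^{(1)}(X_K,\nz)\to\bigoplus_{v\in P_K}KC^{(1)}(X_{K_v},\nz)$, proved by weight arguments and alterations; (ii) Artin--Verdier duality (Section \ref{duality}), which identifies the cokernel complex with the Kato complex of the dual homology theory $H^D(-,\nz)$ over $\Spec(K)$; and (iii) a fresh run of the main vanishing theorem (Theorem \ref{mainthm.finite}) for $H^D$ over the base $\Spec(K)$, using the Lefschetz condition, ${\bf (PB)}$, Gabber's theorem and the Bloch--Kato conjecture. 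None of these appears in your outline, and the suggested device of a further cone reducing to lower-dimensional arithmetic schemes would not produce them.
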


Indeed we will prove the vanishing of the Kato homology with $\qzl$-coefficient
for a fixed prime $\ell$ invertible on $X$, and deduce the vanishing of 
$\lmz$-coefficient by using the Bloch-Kato conjecture (see Section~\ref{finitecoeff}) 
recently established by Rost and Voevodsky 
(the whole proof is available in the papers \cite{V1}, \cite{V2}, \cite{SJ}, \cite{HW}).
\bigskip

We give few words on the known results on the Kato conjectures
(see \cite[Section~4]{Sa3} for a more detailed account).
Let $X$ be as in the conjectures.
The Kato conjectures in case $\dim(X)=1$ rephrase the classical fundamental facts 
on the Brauer group of a global field and a local field.
Kato \cite{K} proved it in case $\dim(X)=2$.  
He deduced it from higher class field theory proved in \cite{KS} and
\cite{Sa1}. For $X$ of dimension $2$ over a finite field, the vanishing of $\KHnz 2 X$ 
in Conjecture \ref{KatoconjG} had been earlier established in \cite{CTSS} 
(prime-to-$p$-part), and by M. Gros \cite{Gr} for the $p$-part
even before Kato formulated his conjectures. 
There have been results \cite{Sa2}, \cite{CT}, \cite{Sw}, \cite{JS1} showing 
the vanishing of the Kato homology with $\qzl$-coefficient in degree$\leq 3$.
In \cite{J2} and \cite{JS2} general approaches to Conjecture \ref{KatoconjG} are proposed 
assuming resolution of singularities. 
Our technique of the proof of Theorem \ref{thm1.intro} is a refinement of 
that developed in \cite{JS2} and it replaces resolution of singularities by
a recent result of Gabber on a refinement of de Jong's alteration (\cite{Il2}) .
Finally we mention the following theorem which will be used in the proof of
Theorem \ref{thm1.intro2}.

\begin{theo}\label{thm.J} (Jannsen)
Let $Y$ be a projective smooth variety over a number field $K$.
Write $Y_{K_v}=Y\times_K K_v$ where $K_v$ is the completion of $K$ at $v\in P_K$.
Then we have a natural isomorphism {
$$
 H_a(KC^{(1)}(Y,\qz))   \cong \underset{v\in P_K}{\bigoplus}\; 
 H_a(KC^{(1)}(Y_{K_v} ,\qz) )\qfor a>0.
$$
}
\end{theo}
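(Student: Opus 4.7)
The plan is to deduce the theorem from Jannsen's higher local-global Hasse principle for Galois cohomology of finitely generated fields over a number field, applied termwise to the Kato complex.

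First I would analyze the natural morphism of complexes
$$\phi\colon KC^{(1)}(Y, \qz) \lra \bigoplus_{v \in P_K} KC^{(1)}(Y_{K_v}, \qz).$$
For a point $x \in \Yd{a}$, the residue field $\kappa(x)$ is finitely generated over $K$ of transcendence degree $a$, and the points $y \in (Y_{K_v})_{(a)}$ lying over $x$ correspond bijectively to the factors of the \'etale $K_v$-algebra $\kappa(x)\otimes_K K_v$. Thus the degree-$a$ component of $\phi$ is the direct sum, indexed by $x \in \Yd{a}$, of the restriction maps
$$\psi_x\colon H^{a+2}(\kappa(x), \qz(a+1)) \lra \bigoplus_{v \in P_K} H^{a+2}(\kappa(x) \otimes_K K_v, \qz(a+1)).$$

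The crucial input is Jannsen's higher local-global principle: for any field $L/K$ finitely generated with $\trdeg_K L = d$, the map
$$H^{d+2}(L, \qz(d+1)) \lra \bigoplus_{v \in P_K} H^{d+2}(L \otimes_K K_v, \qz(d+1))$$
is always injective, and is an isomorphism whenever $d \geq 1$; for $d=0$ it reduces to the classical Brauer--Hasse--Noether exact sequence for the number field $L$, whose cokernel is $\qz$. Applied to each $\kappa(x)$, this shows that $\psi_x$ is injective for every $a \geq 0$ and is an isomorphism as soon as $a \geq 1$.

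Consequently $\phi$ fits into a short exact sequence of complexes
$$0 \lra KC^{(1)}(Y, \qz) \lra \bigoplus_{v \in P_K} KC^{(1)}(Y_{K_v}, \qz) \lra C \lra 0,$$
where the cokernel complex $C$ is concentrated in degree $0$, with $H_0(C) \cong \bigoplus_{x \in \Yd{0}} \qz$. Since $H_a(C) = H_{a+1}(C) = 0$ for every $a \geq 1$, the associated long exact sequence in homology immediately yields the desired isomorphism for every $a \geq 1$.

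The main obstacle is of course the invocation of Jannsen's higher Hasse principle itself, which is a deep arithmetic statement; it rests on Poitou--Tate-type duality in higher cohomological dimensions and on careful control of the Galois cohomological dimension of finitely generated fields over $K$, in particular at the archimedean places where $2$-torsion phenomena must be accounted for in the direct sum. Once that input is granted, the deduction above is purely formal.
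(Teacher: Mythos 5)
There is a genuine gap: the termwise surjectivity you invoke is false. Jannsen's higher local--global principle for a finitely generated field $L/K$ of transcendence degree $d$ gives only the \emph{injectivity} of
$H^{d+2}(L,\qz(d+1)) \to \bigoplus_{v} H^{d+2}(L\otimes_K K_v,\qz(d+1))$;
it is not an isomorphism for $d\geq 1$, any more than it is for $d=0$. Indeed, by the Artin--Verdier duality statement of Corollary~\ref{coro.compact} (applied to shrinking smooth affine models of $L$ and passing to the limit), the cokernel of this map is the group $H^D_d(x,\qz)$ of Example~\ref{exHK2} over the base $\Spec(K)$, which by Lemma~\ref{lemHK1smooth2}(1) is $\varinjlim_U H^d(U_{\ol K},\qz(d))_{G_K}$. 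Already for $d=1$ the localization sequence \eqref{lef.eq3} shows this contains the coinvariants of $\ker\bigl(\bigoplus_{y\in Y_{\ol K}}\qz\to\qz\bigr)$ for a divisor $Y$ on a model curve, which is nonzero. (A consistency check: if your cokernel complex were concentrated in degree $0$ with $H_0=\bigoplus_{x\in \Yd 0}\qz$, this would contradict the known computation $H_0$ of the cone $\cong \qz$, a single copy cut out by the nontrivial $d^1$-differential from the degree-one term of the cokernel complex.)

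Consequently the real content of the theorem is not captured by a termwise argument. The correct route, which is the one underlying \eqref{jannsen.kato} in Section~\ref{kato}, is: (a) the termwise map is injective (Jannsen's Hasse principle), so one gets a short exact sequence of complexes whose quotient is identified, via Corollary~\ref{coro.compact}, with the Kato complex $KC_{H^D}(Y,\qz)$ of the dual homology theory $H^D(-,\qz)$ over $\Spec(K)$; and (b) one must then prove that $\KHDL a Y=0$ for $a>0$ when $Y$ is smooth projective over $K$. Step (b) is itself a Kato-conjecture-type vanishing over the number field $K$ (Theorem~\ref{kato.finitegenfield} in this paper, Jannsen's Theorem~0.4 in \cite{J2}); it requires the Lefschetz condition, weight arguments and alterations, and is not a formal consequence of Poitou--Tate-type duality applied to each residue field separately. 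Your argument omits this entire second step by asserting that the cokernel vanishes termwise in positive degrees.
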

\bigskip

We now explain some applications of the main theorem \ref{thm1.intro}.
It turns out that the cohomological Hasse principle plays a significant r\^ole 
in the study of motivic cohomology of arithmetic schemes $X$.
In this paper we adopt
$$\HMXr q=  \CH^r(X,2r-q) .$$
as the definition of the motivic cohomology of regular $X$, where the right hand side
is Bloch's higher Chow group (\cite{B}, \cite{Le} and \cite{Ge1}).
One of the important open problems is the conjecture that motivic cohomology of 
regular arithmetic schemes is finitely generated, a generalization of the 
known finiteness results on the ideal class group and the unit group of 
a number field (Minkowski and Dirichlet), and 
the group of the rational points on an abelian variety over a number field
(Mordell-Weil). 
In \cite{JS2} it is found that the Kato homology fills a gap between 
motivic cohomology with finite coefficient and \'etale cohomology of $X$. 
Indeed, for a regular arithmetic scheme $X$ of dimension $d$
and for an integer $n>0$, there is a long exact sequence
\begin{multline*}\label{eq3.intro}
\KHnz {q+2} X \to \CH^d(X,q;\nz) \rmapo{\rho_{X}} 
H^{2d-q}_{\et}(X,\nz(d)) \\
\to \KHnz {q+1} X \to \CH^d(X,q-1;\nz) \rmapo{\rho_{X}} \dots
\end{multline*}
where $\CH^*(X,q;\nz)$ is Bloch's higher Chow group with finite coefficient and $\rho_X$
is the \'etale cycle map defined in \cite{B}, \cite{GL2}, \cite{Le} and \cite{Sat}.
Thus, thanks to known finiteness results on \'etale cohomology, the cohomological Hasse principle implies new finiteness results on motivic cohomology (see Section~\ref{application1}). 
\medbreak

We also give an implication of Theorem \ref{thm1.intro} on a special value of the zeta function $\zeta(X,s)$ of a smooth projective variety $X$ over a finite field.
It expresses
\[
\zeta(X,0)^*:= \us{s \to 0}{\mathrm{lim}} \
 \zeta(X,s) \cdot (1-q^{-s})
\] 
by the cardinalities of the torsion subgroups of motivic cohomology groups of $X$.
It may be viewed as a geometric analogue of the analytic class number formula for
the Dedekind zeta function of a number field (see Section~\ref{application2}, Remark \ref{rem.zeta}). 
\medbreak

In a forthcoming paper \cite{KeS}, we will give a geometric application of 
the cohomological Hasse principle to singularities. 
A consequence is the vanishing of weight homology groups of the exceptional
divisors of desingularizations of quotient singularities and radicial singularities.

\bigskip

Now we sketch the basic structure of the proof of our main theorem.
We start with an observation in \cite{JSS} that the Kato complex arises
from the niveau spectral sequence associated to \'etale homology theory.
To explain this more precisely, fix the category $\cCS$ of schemes separated and
of finite type over a fixed base $S$. Recall that a homology theory 
$H=\{H_a\}_{a\in \bZ}$ on $\cCS$ is a sequence of functors:
\[
H_a(-):~\cC_S\rightarrow \Mod\quad \text{($\Mod$ is the category of abelian groups)}
\]
which is covariant for proper morphisms and contravariant for open immersion, and
gives rise to a long exact sequence (called the localization sequence)
\[
\cdots\rmapo{\partial}
 H_a(Y) \rmapo{i_*} H_a(X) \rmapo{j^*} H_a(U)\rmapo{\partial} H_{a-1}(Y)
\rmapo{i_*}\cdots
\]
for a closed immersion $i:Y\hookrightarrow X$ and the open complement 
$j:U=X-Y\hookrightarrow X$ where $\partial$ is a connecting homomorphism 
(see Definition \ref{def.homology.theory} for a precise definition).
Given a homology theory $H$ on $\cCS$,
the method of \cite{BO} produces the niveau spectral sequence for $X \in \cCS$:
\begin{equation}\label{spectralsequence.intro}
E^1_{a,b}(X)=\sumd X a H_{a+b}(x)~~\Rightarrow ~~H_{a+b}(X) \qwith 
H_a(x)=\indlim{V\subseteq \overline{\{x\}}} H_a(V).
\end{equation}
Here the limit is over all open non-empty subschemes $V$ of the closure 
$\overline{\{x\}}$ of $x$ in $X$. 
Fundamental examples are the \'etale homology $H = H^{\et}(-,\nz)$
given in Examples \ref{exHK1} and \ref{exHK4}, where $S=\Spec(k)$ for a finite field $k$
or $S=\Spec(\cO_k)$ for the integer ring $\cO_k$ of a local field.
For the spectral sequence \eqref{spectralsequence.intro}
arising from the \'etale homology theory, one has
$\EX a b 1=0$ for $b<-1$ and the Kato complex { $KC^{(0)}(X,\nz)$  }
is identified up to sign with the complex
$$
 \cdots \to \EX a {-1} 1 \rmapo{d^1} \EX {a-1}{-1} 1 \rmapo{d^1} \cdots 
\rmapo{d^1} \EX 1{-1} 1 \rmapo{d^1} \EX 0{-1} 1.
$$
In particular we have a natural isomorphism
$$
\KHnz a X \simeq \EX a {-1} 2
$$
and an edge homomorphism
\begin{equation*}
 \edgehom a X\;:\; \Hetnz {a-1} X \to \KHnz a X 
\end{equation*}
relating the Kato homolgy to the \'etale homology.
\medbreak

Now we start from a general homology theory $H$ on $\cCS$, which satisfies the condition
that for a fixed integer $e$, we have $\EX a b 1=0$ for $X\in \cCS$ and $b<e$ 
(this will be the case if $H$ is leveled above $e$, see Definition~\ref{def.Kato.complex}(1)).
We then define the Kato homology of $X\in \cCS$ associated to $H$ as
$$
\KH a X = \EX a {e} 2\quad (a\in \bZ)
$$
where the right hand side is an $E^2$-term of the spectral sequence 
\eqref{spectralsequence.intro} arising from $H$.
It is an easy exercise to check that the Kato homology 
$$
\KH \empty - =\{\KH a -\}_{a\in \bZ}
$$
provides us with a homology theory on $\cCS$ equipped with an edge homomorphism
\begin{equation}\label{edgehom.intro}
 \edgehom a X\;:\; H_{a+e}(X) \to \KH a X \qfor X\in \cCS
\end{equation}
which is a map of homology theories in an obvious sense 
(see Definition \ref{def.homology.theory}).

In order to prove the Kato conjecture in this abstract setting, we introduce
a condition on $H$, called the {\em Lefschetz condition} (see Definition \ref{def.Lcondition}). 
In case $H$ is the \'etale homology with $\qzl$-coefficient:
\[
\Het(-,\qzl)=\indlim n \Het(-,\lnz),
\]
the Lefschetz condition is shown in \cite{JS3} and \cite{SS}, 
where weight arguments based on Deligne's results \cite{D} play an essential r\^ole.

We now assume that our homology theory $H$ satisfies the Lefschetz condition
and that for a fixed prime $\ell$ invertible on $S$, $H_a(X)$ are $\ell$-primary torsion
for all $X\in \cCS$ and $a\in \bZ$. We also assume $S=\Spec(k)$ for a field $k$ 
(we will treat the case where $S=\Spec(R)$ for a henselian discrete valuation ring
$R$). By induction on $d>0$ we then prove the following assertion (see the sentence above Lemma \ref{mainlem}):

\medskip\noindent
$\mathbf{KC}\mathrm{(} d\mathrm{)}$: For any $X\in \cCS$ with $\dim(X)\le d$ 
projective and smooth over $k$ we have $\KH a X=0$ for $a\ge 1$.

\medskip\noindent
One of the key steps in the proof is to show that under the assupmtion of the Lefschetz
condition, $\mathbf{KC}\mathrm{(} d-1\mathrm{)}$ implies:

\begin{enumerate}
\item[$(*)$]
For $X\in \cCS$ projective, smooth and connected of dimension $d$ over $k$ and for a simple normal crossing divisor $Y\hookrightarrow X$ such that one of the irreducible components is ample,
the composite map
\[
\delta_a\;:\; H_{a+e}(U) \rmapo{\edgehom a U} KH_{a}(U) \rmapo{\partial} KH_{a-1}(Y)
\quad (U=X - Y)
\] 
is injective for $1\le a\le d$ and surjective for $2\le a$, where $\edgehom a U$ is the 
map \eqref{edgehom.intro} and $\partial$ is a connecting homomorphism 
in the localization sequence for the Kato homology.
\end{enumerate}
\medbreak

Now  we sketch a proof of 
$\mathbf{KC}\mathrm{(} d-1\mathrm{)}  \Longrightarrow \mathbf{KC}\mathrm{(} d\mathrm{)}.$
For $X$ as above and for a fixed element $\alpha \in \KH a X$ with $1\leq a\leq d$,
we have to show $\alpha=0$. It is easy to see that there is a dense open subscheme $j:U\to X$ such that: 
\begin{enumerate}
\item[$(**)$]
$j^*(\alpha)$ is in the image of $\edgehom a U: H_{a+e}(U)\to \KH a U$ .
\end{enumerate}
Suppose for the moment that $Y=X - U$ is a simple normal crossing divisor on $X$. Then one can use a Bertini argument to find a hypersurface 
section $H\hookrightarrow X$ such that $Y\cup H$ is a simple normal crossing divisor. Replace $U$ by $U- U\cap H$ and $Y$ by $Y\cup H$ (note that the condition $(**)$ 
is preserved by the modification). Consider the commutative diagram
\[
\xymatrix{
KH_{a+1}(U) \ar[r]^\partial  &   KH_a(Y)  \ar[r]  & KH_a(X)  \ar[r]^{j^*}  &  KH_a(U) \ar[r]^\partial & KH_{a-1}(Y) \\
  H_{a+e+1}(U)  \ar[u]^{\edgehom {a+1} U} \ar[ru]_{\delta_{a+1}}  & & &  H_{a+e}(U)  \ar[u]^{\edgehom a U} \ar[ru]_{\delta_{a}}
}
\]
where the upper row is the exact localization sequence for the Kato homology.
The condition $(*)$ implies that $\delta_a$ is injective and $\delta_{a+1}$ is surjective
by noting that $Y$ is supposed to contain an ample divisor $H\subset X$. 
An easy diagram chase shows that $\alpha =0$. 
\medbreak

In the general case in which $Y\hookrightarrow X$ is not necessarily a simple normal crossing divisor we use a recent refinement of de Jong's alterations due to Gabber (cf.~Remark~\ref{rem.GG}) to find an alteration $f:X'\to X$ of degree prime to { $\ell$} such that the reduced part of $f^{-1}(Y)$ is a simple normal 
crossing divisor on $X'$ which is smooth and projective over $k$. 
We use intersetion theory to construct a pullback map $$f^*:KH_a(X)\to KH_a(X')$$ which allows us to conduct the above argument for $f^*(\alpha )\in KH_a(X')$. 
This means $f^*(\alpha )=0$ and taking the pushforward gives $\deg(f)\, \alpha =0$. 
Since $\deg(f)$ is prime to { $\ell$} and $\alpha$ was supposed to be killed by 
a power of $\ell$, we conclude $\alpha=0$, 
which completes the proof of $\mathbf{KC}\mathrm{(} d\mathrm{)}$.
\medbreak

The most severe technical difficulty which is handled in this paper is the construction of the necessary pullback maps on our homology theories, in particular over a discrete valuation ring. This problem is solved in Section~\ref{pullback} using Rost's version of intersection theory and the method of deformations to normal cones. 
An alternative construction using the Gersten conjecture is given in Section~\ref{pullbacksnd}.

\medbreak  
  
This work is strongly influenced by discussions with Uwe Jannsen. We would like to thank him cordially.


\section{Homology Theory} \label{homology}

\bigskip

\noindent
Let $S$ be the spectrum of a field or of an excellent Dedekind ring. By $\cCS$ we denote the category of schemes $X/S$ which are separated and of finite type over $S$.

The dimension of an integral $X\in \cCS$ is defined to be 
\[
\dim_S (X)= \mathrm{trdeg}(k(X)/k(T)) + \dim(T),
\]
where $T$ is the closure of the image of $X$ in $S$ and $\dim(T)$ is the Krull dimension. 
If all irreducible components $X_i$ of $X$ satisfy $\dim_S(X_i)=d$ we write $\dim_S(X)= d$ and say that $X$ is equidimensional.

For an integer $a\geq 0$, we define $\Xd a$ as the set of all points $x$ on $X$
with $\dimc(\overline{\{x\}})=a$, where $\overline{\{x\}}$ is the closure of $x$ in $X$. One can easily check that
\begin{equation}\label{dimension}
X_a\cap Y =Y_a\quad\hbox{ for $Y$ locally closed in $X$}.
\end{equation}
In the geometric case, i.e.~if $S$ is the spectrum of a field, $x\in X$ belongs to $\Xd a$ if and only if 
$\trdeg_k(\kappa(x)) = a$ where $\kappa(x)$ is the residue field of $x$.

\begin{defi}\label{def.homology.theory}
Let ${\cCS}_\ast$ be the category with the same objects as $\cCS$, but where morphisms are just the proper maps in $\cCS$. 
Let $\Mod$ be the category of abelian groups.
A homology theory $H=\{H_a\}_{a\in \bZ}$ on $\cCS$ is 
a sequence of covariant functors:
$$
H_a(-):~{\cCS}_\ast \rightarrow \Mod
$$
satisfying the following conditions:
\begin{itemize}
\item [$(i)$] For each open immersion $j:V\hookrightarrow X$ in
$\cCS$, there is a map $j^*:H_a(X)\rightarrow H_a(V)$,
associated to $j$ in a functorial way. 
\item [$(ii)$] If $i:Y\hookrightarrow X$ is a closed immersion in $X$, 
with open complement $j:V\hookrightarrow X$, there is a long exact sequence
(called localization sequence)
$$
\cdots\rmapo{\partial} H_a(Y) \rmapo{i_*} H_a(X) \rmapo{j^*} H_a(V) \rmapo{\partial} H_{a-1}(Y)
\longrightarrow \cdots.
$$
(The maps $\partial$ are called the connecting morphisms.) This
sequence is functorial with respect to proper maps or open
immersions, in an obvious way.
\end{itemize}

A morphism between homology theories $H$ and $H'$ is a
morphism $\phi: H \rightarrow H'$ of functors on ${\cCS}_\ast$, 
which is compatible with the long exact sequences in $(ii)$.
\end{defi}

\medbreak

Given a homology theory $H$ on $\cCS$,
we have the spectral sequence of homological type associated to every 
$X \in Ob(\cCS)$, called the niveau spectral sequence (cf. \cite{BO}):
\begin{equation}\label{spectralsequence1}
E^1_{a,b}(X)=\sumd X a H_{a+b}(x)~~\Rightarrow ~~H_{a+b}(X) \qwith 
H_a(x)=\lim_{\lr \atop V\subseteq \overline{\{x\}}} H_a(V).
\end{equation}
Here the limit is over all open non-empty subschemes 
$V\subseteq \overline{\{x\}}$. This spectral sequence
is covariant with respect to proper morphisms in $\cCS$ and
contravariant with respect to open immersions.

\begin{defi}\label{def.Kato.complex}
Fix an integer $e$.
\begin{itemize}
\item[(1)]
Let $H$ be a homology theory on ${\cCS}_\ast$. Then
$H$ is {\it leveled above $e$} if for every affine regular connected $X\in\cCS$ with $d=\dim_S(X)$ and for $a<e$ we have

\begin{equation}\label{KHcondition}
H_{d+a}(X)=0
\end{equation}
\item[(2)]
Let $H$ be leveled above $e$. For $X\in Ob(\cCS)$ with $d=\dim(X)$, define the Kato 
complex of $X$ by
$$
 \KC X \;:\; \EX 0{e} 1\lmapo{d^1}\EX 1{e} 1 \lmapo{d^1} \cdots 
\lmapo{d^1} \EX d{e} 1,
$$
where $\EX a {e} 1$ is put in degree $a$ and the differentials are 
$d^1$-differentials.
\item[(3)]
We denote by $\KH a X$ the homology group of $\KC X$ in degree $a$ and call it the Kato homology of $X$.
By \eqref{KHcondition}, we have the edge homomorphism
\begin{equation}\label{edgehom}
 \edgehom a X\;:\; \Hempty {a+e} X \to \KH a X =\EX a {e} 2.
\end{equation}
\end{itemize}
\end{defi}
\medbreak

\begin{rem}\label{rem.Kato.complex}
\begin{itemize}
\item[(1)]
One easily sees that $\edgehom 0 X$ is always an isomorphism and 
$\edgehom 1 X$ is always surjective.
\item[(2)] 
If $H$ is leveled above $e$, then the homology theory
$\widetilde{H}=H[-e]$ given by $\widetilde{H}_a(X)=H_{a-e}(X)$ for
$X\in Ob(\cC)$ is leveled above $0$.
Thus we may consider only a homology theory leveled above $0$
without loss of generality. 
\end{itemize}
\end{rem}

A proper morphism $f:X\to Y$ and an open immersion $j:V\to X$ induce maps 
of complexes
$$ f_* : \KC X \to \KC Y,\quad j^*: \KC X \to \KC V.$$
For a closed immersion $i:Z\hookrightarrow X$ and its complement 
$j:V \hookrightarrow X$,
we have the following exact sequence of complexes thanks to \eqref{dimension}
\begin{equation}\label{KCexactsequence}
0\to \KC Z \rmapo{i_*} \KC X \rmapo{j^*} \KC V \to 0.
\end{equation}

\bigskip
\def\kb{\overline{k}}
\def\ksep{k^{sep}}

\begin{exam}\label{exHK1}
Assume $S=\Spec(k)$ with a field $k$.
Fix an algebraic closure $\overline{k}$ of $k$ and let $\ksep$ be the separable closure
of $k$ in $\kb$. Let $G_k=\Gal(\ksep/k)$ be the absolute Galois group of $k$.
Fix a discrete torsion $G_k$-module $\Lam$ viewed as a sheaf on $S_{\et}$. 
One gets a homology theory $H = H^{\et}(-,\Lam)$ on $\cCS$:
$$
H^\et_a(X,\Lam)=H^{-a}(X_{\et}, R\,f^{!}\Lam)
\qfor f:X\rightarrow S \text{ in } \cCS.
$$
Here $Rf^!$ is the right adjoint of $Rf_!$ defined in \cite[XVIII, 3.1.4]{SGA4}. 
Sometimes we will write $H^\et_a(X)$ for $H^\et_a(X,\Lam)$ if the coefficient module $\Lam$ is clear from the context. We have the following (see \cite{JSS}).

\begin{lem}\label{lemHK1smooth}
Let $f:X\to S$ be smooth of pure dimension $d$ over $S$.
\begin{itemize}
\item[(1)]
Assume that any element of $\Lam$ is annihilated by an integer prime to $\ch(k)$. 
Then we have 
\begin{equation*}\label{exHK1smooth}
H^\et_a(X,\Lam) = H^{2d-a}(X_{\et},\Lam(d)). 
\end{equation*}
Here, for an integer $r>0$, we put
$$
\Lam(r)=\indlim n f^*\Lam_n\otimes \mu_{n,X}^{\otimes r}\text{  with } \Lam_n=\Ker(\Lam\rmapo{n}\Lam)
$$
where the limit is taken over all integers $n$ prime to $\ch(k)$ and 
$\mu_{n,X}$ is the \'etale sheaf of $n$-th roots of unity on $X$.
\item[(2)]
Assume that $k$ is perfect and any element of $\Lam$ is annihilated by a power of 
$p=\ch(k)>0$. Then we have
\begin{equation*}\label{exHK1smooth}
H^\et_a(X,\Lam) = H^{2d-a}(X_{\et},\Lam(d)). 
\end{equation*}
Here, for an integer $r>0$, we put
$$
\Lam(r)=\indlim n f^*\Lam_n\otimes \DWnlog r X[-r] \text{  with } \Lam_n=\Ker(\Lam\rmapo{p^n}\Lam)
$$
where $\DWnlog r X$ is the logarithmic part of the de Rham-Witt sheaf 
$\DWn r X$ (cf. \cite[I 5.7]{Il1}) and $[-r]$ means a shift in $D^b(X_{\et})$, the derived category of bounded complexes of sheaves on $X_{\et}$.
We remark that we use $\DWnlog r X$ only for $r=\dim(X)$ in this paper.
\end{itemize}
\end{lem}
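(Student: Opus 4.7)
The plan is to identify the complex $Rf^!\Lambda$ for the smooth structure morphism $f:X\to S=\Spec(k)$ of pure relative dimension $d$; once this is done, the stated formula reduces to a trivial reindexing of \'etale cohomology, since by definition
\[
 H^\et_a(X,\Lambda) \;=\; H^{-a}(X_\et,Rf^!\Lambda).
\]
The required input is relative Poincar\'e duality (purity) for \'etale cohomology, applied in its two variants, prime-to-$p$ and $p$-primary.

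For part~(1), since every element of $\Lambda$ is killed by some integer prime to $\mathrm{ch}(k)$, classical relative purity (SGA~4, XVIII) gives, for each integer $n$ invertible on $S$, a canonical quasi-isomorphism
\[
 Rf^!\Lambda_n \;\simeq\; f^*\Lambda_n \otimes \mu_{n,X}^{\otimes d}\,[2d].
\]
Since $Rf^!$ commutes with filtered colimits of torsion sheaves and $\Lambda=\varinjlim_n \Lambda_n$, passing to the inductive limit over such $n$ gives $Rf^!\Lambda\simeq\Lambda(d)[2d]$ in the notation of the statement (where $\Lambda(d)$ is a sheaf, with no internal shift). Substituting into the definition of $H^\et_a$ above yields
\[
 H^\et_a(X,\Lambda)=H^{-a}(X_\et,\Lambda(d)[2d])=H^{2d-a}(X_\et,\Lambda(d)).
\]

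For part~(2), the analogous purity statement in residue characteristic $p$ is the duality theorem of Moser in the required relative form (compare also the absolute results of Milne, Gros--Suwa, and \cite{Il1}), which furnishes a canonical isomorphism
\[
 Rf^!(\bZ/p^n\bZ) \;\simeq\; W_n\Omega^d_{X,\log}\,[d]
\]
for smooth $f$ of pure dimension $d$ over the perfect field $k$ of characteristic $p$. Tensoring with $\Lambda_n$ and passing to the inductive limit yields $Rf^!\Lambda\simeq\Lambda(d)[2d]$, where now the shift $[-d]$ built into the definition of $\Lambda(d)$ combines with the external $[2d]$ so that $W_\bullet\Omega^d_{X,\log}\otimes f^*\Lambda$ sits in cohomological degree $-d$ within $Rf^!\Lambda$, matching the shape above. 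The conclusion for $H^\et_a$ follows exactly as in~(1).

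The main obstacle lies in part~(2): the $p$-primary purity isomorphism is considerably deeper than its prime-to-$p$ counterpart, requiring a careful analysis of the Cartier operator and of the behaviour of $Rf^!$ on logarithmic de Rham--Witt sheaves. In practice one simply invokes the packaged statement proved in \cite{JSS}, where the two cases are treated uniformly. Part~(1), by contrast, is essentially formal from SGA~4~XVIII once one observes the compatibility of $Rf^!$ with filtered colimits of torsion sheaves.
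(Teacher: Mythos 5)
Your argument is correct and coincides with what the paper intends: the paper offers no proof of this lemma beyond the citation ``see \cite{JSS}'', and the content of that reference is exactly the duality you describe --- classical relative purity $Rf^!\Lam_n\simeq f^*\Lam_n\otimes\mu_{n,X}^{\otimes d}[2d]$ in the prime-to-$p$ case, and the $p$-primary dualizing-complex identification $Rf^!(\bZ/p^n)\simeq \DWnlog d X[d]$ (Moser/\cite{JSS}) in the second case, followed by passage to the colimit and reindexing. The shift bookkeeping in part (2) is handled correctly, so nothing further is needed.
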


By Lemma \ref{lemHK1smooth} we get for $X$ general
$$ E^1_{a,b}(X) =\sumd X a H^{a-b}(x,\Lam(a)) 
\qwith H^*(x,\Lam(a))=\lim_{\lr \atop V\subseteq \overline{\{x\}}} H^*(V,\Lam(a)).$$
Here the limit is over all open non-empty subschemes $V\subseteq \overline{\{x\}}$. 
\medbreak

Now assume that $k$ is finite. Due to the fact $\cd(k)=1$
and the affine Lefschetz theorem \cite[XIV, 3.1]{SGA4} together with 
\cite[Lemma 2.1]{Sw}, Lemma \ref{lemHK1smooth} implies
$H^{\et}_{d+a}(X,\Lam) = 0$ for $a<-1$ if $X$ is affine smooth connected of dimension $d$ over $S$. 
Hence $H = H^{\et}(-,\Lam)$ is leveled above $-1$.
The arising complex $\KC X$ is written as:
\begin{multline}\label{KCfinitefield}
\cdots \sumd X a H_{\et}^{a+1}(x,\Lam(a))\to
\sumd X {a-1} H_{\et}^{a}(x,\Lam(a-1))\to \cdots \\
\cdots \to\sumd X 1 H_{\et}^{2}(x,\Lam(1))\to 
\sumd X 0 H_{\et}^{1}(x,\Lam).
\end{multline}
Here the sum over the points $\Xd a$ is placed in degree $a$. 
In case $\Lam=\nz$ it is identified up to sign with complex \eqref{eq.KC1} thanks to \cite{JSS}. 
\end{exam}

\begin{exam}\label{exHK2}
Assume $S=\Spec(k)$ and $G_k$ and $\Lam$ be as in \ref{exHK1}.
If $\Lam$ is finite, we define the homology theory $H^D(-,\Lam)$ by
$$
\HDL a X:=\Hom\big(H^{a}_c(X_{\et},\Lam^\vee),\qz\big) \qfor X\in Ob(\cCS).
$$ 
where $\Lam^\vee=\Hom(\Lam,\qz)$ and $H^{a}_c(X_{\et},-)$ denotes 
the cohomology with compact support over $k$ defined as 
\[
H^{a}_c(X_{\et},-)=H^{a}(\Xb_{\et},-)
\]
for any compactification $j:X\hookrightarrow \Xb$ over $k$, i.e. $\Xb$ is proper over $k$ and $j$ is an open immersion.
If $\Lam$ is arbitrary, we put
$$
\HDL a X=\indlim F H^D_a(X,F) \qfor X\in Ob(\cCS)
$$ 
where $F$ runs over all finite $G_k$-submodules of $\Lambda$.
By Lemma \ref{lemHK1smooth2} below, this homology theory is leveled above $0$.

\begin{lem}\label{lemHK1smooth2}
\begin{itemize}
\item[(1)]
If $X$ is affine regular connected over $S$ with $k$ perfect and $d=\dim(X)$, then
$$
\HDL a X\simeq H^{2d-a}(X_{\overline{k},\et},\Lam(d))_{G_k}\qfor a\leq d,
$$ 
and it vanishes for $a<d$, where $X_{\overline{k}}=X\times_k \overline{k}$ and 
$\Lam(d)$ is defined as in Lemma \ref{lemHK1smooth} and $M_{G_k}$ denotes
the coinvariant module by $G_k$ of a $G_k$-module $M$. 
\item[(2)]
If $k$ is finite, $H^D(-,\Lam)$ shifted by degree $1$ coincides 
with $H^{\et}(-,\Lam)$ in Example \ref{exHK1}
\item[(3)]
If $ k \subset k'$ is a purely inseparable field extension there is a
canonical pushforward isomorphism
\[
H^D(X\otimes_k k' ,\Lambda )  \stackrel{\sim}{\to} H^D(X ,\Lambda ),
\]
where the left hand side is defined as the dual of the cohomology with compact support over $k'$. 
\end{itemize}
\end{lem}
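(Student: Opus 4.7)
The plan is to treat all three parts by combining the Hochschild-Serre spectral sequence
$$E_2^{p,q} = H^p(G_k, H^q_c(X_{\ol k,\et}, \Lam^\vee)) \Rightarrow H^{p+q}_c(X_\et, \Lam^\vee)$$
with Poincar\'e duality on the geometrically smooth variety $X_{\ol k}$, reducing throughout to the case of finite $\Lam$ via the colimit built into the definition of $H^D(-,\Lam)$.

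For part $(1)$: since $k$ is perfect and $X$ is regular, $X_{\ol k}$ is smooth of dimension $d$ over $\ol k$, so applying Lemma \ref{lemHK1smooth} over $\ol k$ gives $H^q_c(X_{\ol k,\et}, \Lam^\vee) \cong H^{2d-q}(X_{\ol k,\et}, \Lam(d))^\vee$, where the prime-to-$p$ and $p$-primary parts of $\Lam$ are handled separately. The affine Lefschetz theorem \cite[XIV, 3.1]{SGA4} forces $H^i(X_{\ol k,\et}, \Lam(d))=0$ for $i>d$, so $H^q_c(X_{\ol k,\et}, \Lam^\vee)=0$ for $q<d$. The spectral sequence thus gives $H^a_c(X_\et, \Lam^\vee)=0$ for $a<d$ and degenerates in total degree $d$ to produce an isomorphism $H^d_c(X_\et, \Lam^\vee) \cong H^d_c(X_{\ol k}, \Lam^\vee)^{G_k}$. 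Dualizing via the standard Pontryagin identity $(M^{G_k})^\vee \cong (M^\vee)_{G_k}$ then yields $\HDL d X \cong H^d(X_{\ol k,\et}, \Lam(d))_{G_k}$. For $a<d$ both sides are zero by Artin vanishing, so the stated isomorphism extends trivially to all $a\leq d$.

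For part $(2)$: when $k$ is finite one has $\cd(G_k)=1$, so Hochschild-Serre degenerates into short exact sequences
$$0 \to H^{i-1}_c(X_{\ol k}, \Lam^\vee)_{G_k} \to H^i_c(X_\et, \Lam^\vee) \to H^i_c(X_{\ol k}, \Lam^\vee)^{G_k} \to 0.$$
The cleanest way to deduce the shift relation is via Verdier duality: the dualizing complex on $\Spec k$ (with $k$ finite) is $\qz[-1]$, so for $f\colon X\to\Spec k$ one obtains
$$H^\et_a(X,\Lam) = H^{-a}(X_\et, Rf^!\Lam) \cong \Hom(H^{a+1}_c(X_\et,\Lam^\vee),\qz) = \HDL{a+1}X,$$
which is exactly the claimed degree-one shift.

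For part $(3)$: the morphism $X\otimes_k k' \to X$ induced by a purely inseparable extension is a universal homeomorphism, hence induces an equivalence of \'etale sites (\cite[VIII, 1.1]{SGA4}) and an isomorphism on compactly supported cohomology. The canonical identification $G_k\cong G_{k'}$ makes these identifications compatible with the coefficient module, so applying $\Hom(-,\qz)$ yields the desired pushforward isomorphism. The main obstacle will be the Verdier duality bookkeeping in part $(2)$: correctly identifying $Rf^!\Lam$ on $\Spec k$ and tracking the shift coming from $\cd(G_k)=1$ is precisely what makes the ``shift by one'' in the statement appear, and it requires care with the functorial and sign conventions of the derived functor formalism, as well as a careful separation of the prime-to-$p$ and $p$-primary parts of $\Lam$ whenever Poincar\'e duality is invoked.
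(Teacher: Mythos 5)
Your argument follows the paper's own proof essentially verbatim: reduce to finite $\Lam$, combine Poincar\'e duality on $X_{\ol k}$ with affine Lefschetz vanishing and the Hochschild--Serre spectral sequence for (1), use Tate duality over the finite field (equivalently, your Verdier-duality phrasing) for (2), and invoke invariance of compactly supported \'etale cohomology under the universal homeomorphism $X\otimes_k k'\to X$ for (3). The one loose end is that for the $p$-primary part of $\Lam$ the vanishing $H^i(X_{\ol k,\et},\Lam(d))=0$ for $i>d$ is not covered by \cite[XIV, 3.1]{SGA4}, since there $\Lam(d)$ involves logarithmic de Rham--Witt sheaves and the paper instead cites \cite[Lemma 2.1]{Sw}; you do flag that the $p$-part needs separate treatment, but you never supply the input that makes it go through.
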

\begin{proof}
We may assume that $\Lam$ is finite.
By the Poincar\'e duality for \'etale cohomology
and \cite[Theorem 2.10 and Remark 2.5 (2)]{JSS}, we have
$$
H^b_c(X_{\overline{k},\et},\Lam^\vee)\simeq
\Hom\big(H^{2d-b}(X_{\overline{k},\et},\Lam(d)),\qz\big),
$$
which vanishes for $b<d$ by the affine Lefschetz theorem \cite[XIV, 3.1]{SGA4} and 
\cite[Lemma 2.1]{Sw}. By the Hochschild-Serre spectral sequence this implies
$H^b_c(X_{\et},\Lam^\vee)=0$ for $b<d$ and 
$H^d_c(X_{\et},\Lam^\vee)\simeq H^d_c(X_{\overline{k},\et},\Lam^\vee)^{G_k}$.
Taking the dual, this implies (1).
(2) follows from the Poincar\'e duality and 
\cite[Theorem 2.10 and Corollary 2.17]{JSS} together with
the Tate duality for Galois cohomology of finite field.
(3) follows from the canonical pullback isomorphism
\[
H^{a}_c(X_{\et},\Lam^\vee) \to H^{a}_c((X\otimes_k k')_{\et},\Lam^\vee) 
\]
which follows from \cite[XVIII 1.2 and XVII 5.2.6]{SGA4}.  
\end{proof}


\end{exam}

\medbreak

In the following example we are in the arithmetic case.

\begin{exam}\label{exHK4}
In this example, we assume $S=\Spec(R)$ where 
$R$ is a henselian discrete valuation ring.
Let $s$ (resp. $\eta$) be the closed (resp. generic) point of $S$.
Let $\cCs\subset \cCS$ be the subcategory of separated schemes of finite type over $s$.
Let $G_S=\pi_1(S,\overline{\eta})$ with a geometric point $\overline{\eta}$ over $\eta$. 
Let $\Lambda$ be a torsion $G_S$-module whose elements are of order prime to $\ch(k)$,
viewed as an \'etale sheaf on $S$.
One has a homology theory $H = H^\et( - ,\Lambda )$ on $\cCS$:
\begin{equation}\label{exHK4def}
H^\et_a( X,\Lambda) = H^{2-a}(X_{\et}, R\,f^{!}\Lambda(1))
\qfor f:X\rightarrow S \text{ in } \cCS,
\end{equation}
where the Tate twist is defined as in Lemma~\ref{lemHK1smooth}(1).
Note that the restriction of $H$ to $\cCs$ coincides with
the homology theory $H^\et(-,\Lambda)$ in Example \ref{exHK1}. The last fact follows from the purity isomorphism
\[
R^a i^! \Lambda(1)=
\left.\left\{\begin{gathered}
 \Lambda \\ 
 0 \\
\end{gathered}\right.\quad
\begin{aligned}
&\text{$a=2$}\\
&\text{$a\not=2$}
\end{aligned}\right.
\]
where $i:s\to S$ is the closed immersion.
By the absolute purity due to Gabber (\cite{FG}, see also \cite[Lemma 1.8]{SS}), 
for $X\in \cCS$ which is integral regular  with $\dimc(X)=d$, we have
\begin{equation}\label{exHK4regular.pnz}
H^\et_a ( X ,\Lambda) = H^{2d-a}_{\et}(X,\Lambda(d)). 
\end{equation}
By \eqref{exHK4regular.pnz} we get for general $X\in \cCS$ 
$$ E^1_{a,b}(X) =\sumd X a H^{a-b}_\et(x,\Lambda(a)).$$
\medbreak

We now assume that $\k(s)$ is finite. Due to the fact $\cd(\k(s))=1$ 
and Gabber's affine Lefschetz theorem \cite[Th\'eor\`eme~2.4]{Il3}, 
\eqref{exHK4regular.pnz} implies that 
$H = H^\et(-,\Lambda)$ is leveled above $-1$.
The arising complex $\KC X$ is written as:
\begin{multline}\label{KC.arith}
\cdots \sumd X a H_{\et}^{a+1}(x,\Lambda(a))\to
\sumd X {a-1} H_{\et}^{a}(x,\Lambda(a-1))\to \cdots \\
\cdots \to\sumd X 1 H_{\et}^{2}(x,\Lambda(1))\to 
\sumd X 0 H_{\et}^{1}(x,\Lambda).
\end{multline}
Here the term $\sumd X a$ is placed in degree $a$. 
For $\Lambda=\Z/n$ this is identified up to sign with complex \eqref{eq.KC1} thanks to \cite{JSS}. 
\end{exam}


\section{Log-Pairs and Configuration complexes} \label{logpairs}

\bigskip

\noindent
In this section we revisit the combinatorial study of the homology of log-pairs originating from~\cite{JS3}.
Let the assumption be as in Section \ref{homology}.
We assume that either
\begin{itemize}
\item[$\bf{(G)}$](geometric case)
$S=\Spec(k)$ for a perfect  field $k$,
\item[$\bf{(A)}$](arithmetic case)
$S=\Spec(R)$ for a henselian discrete valuation ring $R$ with perfect residue field.
\end{itemize}
We denote by $\eta$ (resp.~$s$) the generic (resp.~closed) point of $S$ in the arithmetic case.

\medbreak
\def\cSreg{\cS_{reg}}
\def\cSregT{\cS_{reg/T}}

\def\cSregir{\cS_{reg}^{irr}}
\def\cSregirT{\cS_{reg/T}^{irr}}

\begin{defi}\label{defSreg}
We let $\cSreg\subset \cCS$ be the following full subcategory:
\begin{itemize}
\item 
In the geometric case $X\in \cSreg$ if $X$ is regular and projective over $S$.
\item 
In the arithmetic case $X\in \cSreg$ if $X$ is regular and projective over $S$,
and letting $X^{fl}$ denote the union of those irreducible components of $X$ flat 
over $S$, $X^{fl}_{s,red}$ is a simple normal crossing divisor on $X^{fl}$, where
$X^{fl}_{s,red}$ is the reduced special fibre of $X^{fl}$.
\end{itemize}
We let $\cSregir$ denote the subcategory of $\cSreg$
of the objects which are irreducible.
For a closed subscheme $T\subset S$ ($T=S$, or the closed point of $S$ in the arithmetic case), let $\cSregT$ (resp. $\cSregirT$) denote the subcategory of $\cSreg$ (resp. $\cSregir$) of the objects whose image in $S$ is $T$.
\end{defi}
\medbreak

\begin{defi}\label{defSNCDadm}
For $X\in \cSreg$, a simple normal crossing divisor $Y$ on $X$ is admissible if one of the following conditions is satisfied: 
\begin{itemize}
\item
we are in the geometric case, or
\item
we are in the arithmetic case, and 
letting $Y_1,\dots,Y_r$ be the irreducible components of $Y$ which are flat over $S$,
$Y_1\cup \cdots \cup Y_r\cup X^{fl}_{s,red}$ is a simple normal crossing divisor 
on $X^{fl}$. 
\end{itemize}
\end{defi}

\begin{rem}
In the second case of the above definition, 
\begin{itemize}
\item
$Y\cap X^{fl}$ is a subdivisor of $Y_1\cup \cdots \cup Y_r\cup X^{fl}_{s,red}$,
\item
for all $1\leq i_1<\cdots<i_s\leq r$, 
$Y_{i_1}\cap \cdots \cap Y_{i_s}\;\in \cSreg$ and flat over $S$.
\end{itemize}
\end{rem}

\begin{defi}\label{defS}
We let $\cS\subset \cCS$ be the following full subcategory:
$X\in \cS$ if either $X\in \cSreg$ or there exsits a closed immersion $X\hookrightarrow X'$ such that $X'\in \cSreg$ and $X$ is an admissible simple normal crossing divisor on $X'$. 
\end{defi}

\medbreak

\begin{defi}\label{deflogpair}
\noindent
\begin{itemize}
\item[(1)]
A log-pair is a couple $\Phi=(X,Y)$ where either
\begin{itemize}
\item[-]
$X\in \cSreg$ and $Y$ is an admissible simple normal crossing divisor on $X$,
\item[-]
or $X\in \cS$ and $Y=\emptyset$.
\end{itemize}

\item[(2)]
Let  $\Phi=(X,Y)$ and $\Phi'=(X',Y')$ be log-pairs. 
A map of log-pairs $\pi:\Phi' \to\Phi$ is a  morphism $\pi: X'\to X$ 
such that $\pi(Y')\subset Y$. 
\item[(3)] 
We let $\LP$ denote the category of log-pairs: The objects and morphisms
are as defined in (1) and (2). We have a fully faithful functor:
$$
\cS\to \LP\;;\; X\to (X,\emptyset).
$$
For a log-pair $\Phi = (X,Y) $ with $X\in \cSreg$ we let $\hat \Phi$ be the log-pair $(Y,\emptyset )$. This defines a functor $\LP\to \LP$.
\item[(3)]
Let $\Phi=(X,Y)$ be a log-pair with $X\in \cSreg$.
Let $Z\hookrightarrow X$ be a closed immersion in $\cS$ such that
$\Phi_Z=(Z,Z\cap Y)$ and $\Psi_Z=(X,Y\cup Z)$ are log-pairs.
Then the sequence in $\LP$
$$\Phi_Z \to \Phi \to \Psi_Z$$
is called a fiber sequence in $\LP$, where the first (resp. second)
map is induced by the closed immersion $Z\to X$ (resp. the identity on $X$).
\item[(4)]
Let $\Phi=(X,\emptyset)$ be a log-pair.
Let $X_1$ be an irreducible component of $X$ and $X_2$ be the union of the other
irreducible components of $X$. 
Then the sequence in $\LP$
\[
\Phi_{X_1\cap X_2} \to \Psi \to \Phi
\]
is also called a fiber sequence. Here 
$\Phi_{X_1\cap X_2} = (X_1\cap X_2,\emptyset)$ and  
$\Psi= (X_1 \coprod X_2 ,\emptyset)$ (by definition these are log-pairs).
\end{itemize}
\end{defi}
\medbreak
\def\CAb{C^b(\Mod)}
\def\KAb{K^b(\Mod)}

Next we present a construction which allows us to extend certain functors defined a priori only on $\cSreg$ to all log-pairs.
It is an elementary version of the construction of Gillet and Soul\'e~\cite{GS}.
Let $\CAb$ be the category of homologically  bounded complexes of abelian groups.
Assume given a covariant functor $F: \cSreg\to \CAb$ such that the natural maps give an isomorphism
\begin{equation}\label{log.sumsum}
F(X_1) \oplus F(X_2)\overset{\cong}{\rightarrow}  F(X_1\coprod X_2) .
\end{equation}
We now construct in a canonical way a functor $\bar F:\LP\to \CAb$ which satisfies:
\begin{itemize}
\item $\bar F$ maps fibre sequences of log-pairs to fibre sequences in the derived category. 
\item For $X\in \cSreg$ and $\Phi=(X,\emptyset)$,
$\bar F ( \Phi )=F(X)$.
\end{itemize} 
\medbreak

For a simplicial object in $\cSreg$
\[
X_\bullet\;:\; \quad
\cdots \;X_2 \;
\begin{matrix}
\rmapo{\delta_0}\\
\lmapo{s_0}\\
\rmapo{\delta_1}\\
\lmapo{s_1}\\
\rmapo{\delta_2}\\
\end{matrix}
\; X_1 \;
\begin{matrix}
\rmapo{\delta_0}\\
\lmapo{s_0}\\
\rmapo{\delta_1}\\
\end{matrix}
\; X_0 ,
\]
let
$\text{Tot} \, F(X_\bullet )\in \CAb$
be the total complex associated to the double complex
\[
\cdots\to F(X_n) \rmapo{\partial} F(X_{n-1})\rmapo{\partial} \cdots
\rmapo{\partial} F(X_0),
\]
where $\displaystyle{\partial=\sum_{a=0}^n (-1)^a (\delta_a)_*}$. 

For a log-pair $\Phi=(X,Y)$ where $X\in \cSreg$ and $Y$ is an admissible 
simple normal crossing divisor on $X$, we set
\[
\bar F ( \Phi )= \text{Tot} \, F((\coprod_{j\in J} Y_j\to X)_\bullet ) [-1]
\]
where $Y_j\hr Y$ ($j\in J $) are the irreducible components of $Y$ and
$(\coprod_{j} Y_j\to X)_\bullet$ is the augmented \v Cech simplicial scheme associated 
to the map $\coprod_j Y_j\to X$, 
which is easily seen to be a simplicial object in $\cSreg$.

For a log-pair $\Phi=(Y,\emptyset)$ where $Y\in \cS$ is an admissible 
simple normal crossing divisor on $X\in \cSreg$, we set
$\bar F ( \Phi )$ to be the mapping fiber of the natural map
\[
F(X)  \to \text{Tot} \, F((\coprod_{j\in J} Y_j\to X)_\bullet ) [-1] 
\]
It is easy to check that the functor $\bar F$
satisfies the properties stated above.

\medskip

In what follows we fix a homology theory $H$ on $\cCS$ leveled above $e$ as in 
Definition \ref {def.Kato.complex}. 
We denote the restriction of the Kato complex functor $KC_H$ to $\cSreg$ by the same letter. The construction explained above produces a functor 
\[
\overline{KC }_H \;:\; \LP \to \CAb.
\]
For simplicity of notation we will omit the bar and write  
$KC_H(\Phi)=\overline{KC }_H (\Phi) $.
We easily see the following lemma.

\begin{lem}\label{lem1.Kato.complex}
For a log-pair $\Phi=(X,Y;U)$ with $X\in \cSreg$,
there is a natural quasi-isomorphism
\begin{equation*}\label{KCFU}
KC_H ({\Phi}) \isom \KC U 
\end{equation*}\
compatible with fibre sequences of log-pairs.
\end{lem}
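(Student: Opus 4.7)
The plan is to combine two inputs. First, the localization sequence~\eqref{KCexactsequence}, which for $Y \hookrightarrow X$ with complement $U$ gives the short exact sequence of complexes
$$
0 \to KC_H(Y) \to KC_H(X) \to KC_H(U) \to 0,
$$
so that $KC_H(U)$ is naturally quasi-isomorphic to the mapping cone of $KC_H(Y) \to KC_H(X)$. Second, a \v Cech-type descent identification
$$
\mathrm{Tot}\, KC_H\bigl((\coprod_j Y_j \to Y)_\bullet\bigr) \simeq KC_H(Y),
$$
where $Y = Y_1 \cup \cdots \cup Y_r$ is the decomposition into irreducible components. Unwinding the construction of $\overline{KC}_H$, the augmented version of the left-hand side, together with the shift $[-1]$, is precisely the mapping cone of the augmentation $\mathrm{Tot}\, KC_H((\coprod_j Y_j\to Y)_\bullet)\to KC_H(X)$; combining the two inputs thus identifies $\overline{KC}_H(\Phi)$ with $KC_H(U)$, as required.

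The \v Cech descent is proved by induction on $r$. The base case $r=1$ is trivial, as the \v Cech nerve collapses to $Y_1=Y$. For $r\geq 2$, write $Y = Y_1 \cup Y'$ with $Y' = Y_2 \cup \cdots \cup Y_r$. Applying~\eqref{KCexactsequence} to the closed immersion $Y_1 \hookrightarrow Y$ and to $Y_1 \cap Y' \hookrightarrow Y'$, both having the common open complement $Y \setminus Y_1 = Y' \setminus (Y_1 \cap Y')$, a formal diagram chase yields the Mayer--Vietoris short exact sequence
$$
0 \to KC_H(Y_1 \cap Y') \to KC_H(Y_1) \oplus KC_H(Y') \to KC_H(Y) \to 0.
$$
Since $Y'$ and $Y_1 \cap Y' = \bigcup_{j \geq 2}(Y_1 \cap Y_j)$ are admissible simple normal crossing divisors with at most $r-1$ irreducible components, the inductive hypothesis expresses the first two terms as \v Cech total complexes. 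A standard re-organization of the resulting bicomplex then delivers the \v Cech description for $Y$ itself.

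Compatibility with the two flavors of fibre sequences in Definition~\ref{deflogpair} is essentially formal once the identification $\overline{KC}_H(\Phi) \simeq KC_H(U)$ has been made naturally. A fibre sequence of type~(3), $\Phi_Z \to \Phi \to \Psi_Z$, corresponds under the assignment $(X,Y)\mapsto X\setminus Y$ to the triple of opens $(Z \cap U,\, U,\, U \setminus (Z\cap U))$, and the induced sequence of Kato complexes is exactly~\eqref{KCexactsequence}. A fibre sequence of type~(4) is the Mayer--Vietoris sequence established in the previous paragraph.

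The main obstacle will be the bookkeeping of shifts and signs in the bicomplex so that every quasi-isomorphism is genuinely natural with respect to morphisms of log-pairs; in the arithmetic case one must additionally verify that every stratum $Y_{j_0} \cap \cdots \cap Y_{j_s}$ of the \v Cech nerve actually lies in $\cSreg$, so that $KC_H$ is defined on it and the induction makes sense. This last point is precisely what the admissibility hypothesis in Definition~\ref{defSNCDadm} guarantees, via transversality to the reduced special fibre $X^{fl}_{s,red}$.
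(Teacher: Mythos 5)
Your proof is correct and follows the route the paper clearly intends when it states the lemma without proof: termwise the Kato complex of a scheme is a direct sum over points, so the localization sequence~\eqref{KCexactsequence} and the inclusion--exclusion Mayer--Vietoris sequence for a closed cover hold as short exact sequences of complexes, and the \v Cech descent plus the identification of $KC_H(U)$ with the cone of $KC_H(Y)\to KC_H(X)$ give the claim. The only cosmetic point is that in the induction step the intersections $Y_1\cap Y_j$ need not be irreducible, so the induction should run on the number of members of the closed cover rather than on the number of irreducible components; since the \v Cech construction and the Mayer--Vietoris sequence only use the covering closed subschemes (not their irreducible decompositions), this changes nothing.
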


\medbreak

We need another type of a Kato complex below.
The reduced Kato complex of a log-pair $\Phi=(X,Y)$ is defined 
to be the complex $KC_H(\hat \Phi )$, where we recall $\hat \Phi=(Y,\emptyset)$.
For later reference we state the following variant of Lemma~\ref{lem1.Kato.complex}.

\begin{lem}\label{lem1.reduced.Kato.complex}
For a log-pair $\Phi=(X,Y)$ with $X\in \cSreg$, there is a natural quasi-isomorphism
\begin{equation*}\label{KCFU}
\KC {\hat \Phi} \isom \KC Y .
\end{equation*}
\end{lem}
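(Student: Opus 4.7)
The plan is to deduce the lemma from Lemma~\ref{lem1.Kato.complex} by exhibiting $\hat\Phi$ as the fibre of a canonical morphism of log-pairs and then comparing with the localization triangle \eqref{KCexactsequence}. First I take the log-pair $(X,\emptyset)$, which is a valid log-pair since $X\in\cSreg\subset\cS$, and apply the fibre sequence construction of Definition~\ref{deflogpair} with closed subscheme $Z=Y$. Because $Y$ is an admissible simple normal crossing divisor on $X$, both $\Phi_Z=(Y,Y\cap\emptyset)=\hat\Phi$ and $\Psi_Z=(X,\emptyset\cup Y)=\Phi$ are log-pairs, so I obtain a fibre sequence
\[
\hat\Phi \longrightarrow (X,\emptyset) \longrightarrow \Phi
\]
in $\LP$.

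Next I apply the extended functor $\overline{KC}_H:\LP\to\CAb$. By its defining property it sends fibre sequences of log-pairs to fibre sequences in the derived category, so I get a distinguished triangle whose middle term is $\overline{KC}_H((X,\emptyset))=\KC X$ (since $X\in\cSreg$) and whose right-hand term is $\KC\Phi$. By Lemma~\ref{lem1.Kato.complex} the latter is naturally quasi-isomorphic to $\KC U$ with $U=X\setminus Y$, yielding a distinguished triangle
\[
\KC{\hat\Phi} \longrightarrow \KC X \longrightarrow \KC U .
\]
On the other hand, the short exact sequence of complexes \eqref{KCexactsequence} for the closed immersion $i:Y\hookrightarrow X$ and its open complement $j:U\hookrightarrow X$ furnishes a second distinguished triangle $\KC Y \to \KC X \overset{j^*}{\to} \KC U$.

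Comparing the two triangles via the identity on $\KC X$ and the Lemma~\ref{lem1.Kato.complex} quasi-isomorphism on $\KC U$ gives the desired natural quasi-isomorphism $\KC{\hat\Phi}\isom \KC Y$ on the fibres. The one point requiring genuine verification, which I expect to be the main (but minor) obstacle, is the compatibility of the map $\KC X \to \KC U$ in my first triangle---induced by the morphism $(X,\emptyset)\to\Phi$ under $\overline{KC}_H$ together with Lemma~\ref{lem1.Kato.complex}---with the localization map $j^*$ in the second triangle. This is exactly the content of the assertion in Lemma~\ref{lem1.Kato.complex} that its quasi-isomorphism is compatible with fibre sequences of log-pairs, applied to the present fibre sequence, so no additional argument beyond invoking that naturality is needed.
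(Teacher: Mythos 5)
Your proof is correct and is essentially the argument the paper has in mind: the paper states this lemma without proof as a "variant of Lemma~\ref{lem1.Kato.complex}", the point being precisely that $\KC{\hat\Phi}$ is by construction the fibre of $\KC X\to\KC\Phi\simeq\KC U$, while \eqref{KCexactsequence} exhibits $\KC Y$ as the fibre of $j^*:\KC X\to\KC U$. Your identification of the only nontrivial point (compatibility of the two maps $\KC X\to\KC U$, supplied by the naturality clause of Lemma~\ref{lem1.Kato.complex}) is also right.
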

\bigskip

Next we define the trace map for a Kato complex.
For $X\in \cSregir$ we put 
\begin{equation}\label{eq.LamX}
\Lambda_H (X):= \KC {T_X},\text{ where $T_X$ is the image of $X$ in $S$}.
\end{equation}
The functor $\Lam_H:\cSregir\to \CAb$ extends naturally to $\cSreg$ 
via~\eqref{log.sumsum}. 
Now apply the bar construction to $\Lam_H$ to get a functor 
\[
\overline{\Lam_H}:\LP \to \CAb.
\]
For simplicity of notation 
we write $\Lam_H(\Phi)=\overline{\Lam_H}(\Phi)$ for a log-pair $\Phi$ and call it
the {\em configuration complex} of $\Phi$ (with coefficient $\Lam_H$) . We set
\[
G_a(\Phi) := H_a( \Lam_H(\Phi)) .
\]
We also write $G_a(X)=G_a(\Phi)$ for a log pair $\Phi=(X,\emptyset)$.

\medbreak
For $X\in \cSregir$ with image $T_X$ in $S$ we have the trace map
\begin{equation}\label{KCtrace0}
\KC X \to \KC {T_X}=\Lam_H(X)
\end{equation}
induced by the proper map $X\to T_X$. By the bar construction, this extends 
to a trace map of complexes for a log-pair $\Phi$: 
\begin{equation}\label{KCtrace1}
 \tr_\Phi : \;\KC \Phi \to \Lam_H(\Phi)
\end{equation}
which is a natural transformation of functors $\LP\to \CAb$.

If $\Phi=(X,Y;U)$ we can use Lemma~\ref{lem1.Kato.complex} to construct a morphism
\begin{equation}\label{eq.trU}
\tr_{U}: KC_H(U) \simeq KC_H( \Phi ) \lr \Lam_H(\Phi )
\end{equation} 
in the derived category. This induces a homomorphism of homology groups
\begin{equation}\label{graphhomU}
\graphhom {a}  {\Phi} \;:\; \KH a U \to 
\graphHempty a {\Phi}
\end{equation}
By the Gillet-Soul\'e construction explained above and Lemma \ref{lem1.Kato.complex}, 
the following diagram is commutative
$$
\begin{CD}
 \KH a U @>{\partial}>> \KH {a-1} Y \\
 @VV{\graphhom a \Phi}V @VV{\graphhom {a-1} \hPhi}V \\
 \graphHempty a {\Phi}@>{\partial}>>\graphHempty {a-1} {\hPhi}\\
\end{CD}
$$
where $\partial$ is the boundary map arising from 
the fibre sequence
\[
\hat \Phi \lr  (X,\emptyset ) \lr \Phi .
\]
This fibre sequence implies:

\begin{lem}\label{log.rediso}
For a closed subscheme $T\subset S$, let $i_T\in \{-\infty,0,1\}$ be
defined as follows. If $H_a(\KC T)=0$ for all $a\in \bZ$, $i_T=-\infty$.
Otherwise 
\[i_T=\max\{a\in \bZ\;|\; H_a(\KC T)\not=0\}.\]
For a log-pair $\Phi=(X,Y)$ with $X\in \cSregirT$, 
the boundary map
\[
G_a(\Phi) \stackrel{\partial}{\lr} G_{a-1}(\hat \Phi )
\]
is an isomorphism for $a\ge i_T+2$ and injective for $a=i_T+1$.
\end{lem}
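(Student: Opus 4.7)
The plan is to derive the statement directly from the long exact sequence in homology associated to the fiber sequence $\hat \Phi \to (X,\emptyset) \to \Phi$ of log-pairs, together with the identification of $\Lam_H(X,\emptyset)$ provided by \eqref{eq.LamX}.

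First, I would invoke the basic property of the bar construction (stated earlier in the section): the extended functor $\overline{\Lam_H} : \LP \to \CAb$ sends fiber sequences of log-pairs to fiber sequences in the derived category. Applied to $\hat \Phi \to (X,\emptyset) \to \Phi$, this yields a long exact homology sequence
\[
\cdots \to G_a(\hat \Phi) \to G_a(X,\emptyset) \to G_a(\Phi) \rmapo{\partial} G_{a-1}(\hat \Phi) \to G_{a-1}(X,\emptyset) \to \cdots
\]
in which $\partial$ is precisely the boundary map appearing in the statement.

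Next I would identify the middle term. Since $X \in \cSregirT$ is irreducible, by \eqref{eq.LamX} we have $\Lam_H(X) = \KC{T_X} = \KC T$, and hence
\[
G_a(X,\emptyset) = H_a(\Lam_H(X)) = H_a(\KC T).
\]
By the very definition of $i_T$, this group vanishes for all $a > i_T$ (and, if $i_T = -\infty$, for all $a \in \bZ$).

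It then suffices to chase this vanishing through the long exact sequence. For $a \ge i_T + 2$, both $G_a(X,\emptyset)$ and $G_{a-1}(X,\emptyset)$ vanish, so $\partial$ is an isomorphism. For $a = i_T + 1$, only $G_a(X,\emptyset) = G_{i_T+1}(X,\emptyset)$ is forced to vanish, which gives the injectivity of $\partial$. There is no real obstacle here; the only point to verify carefully is that the boundary map in the long exact sequence coming from the bar construction agrees with the map $\partial$ appearing in the commutative square just before the lemma, which is immediate from the functoriality of the construction of $\overline{\Lam_H}$ on the chosen fiber sequence.
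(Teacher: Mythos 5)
Your argument is exactly the one the paper intends: apply $\overline{\Lam_H}$ to the fibre sequence $\hat\Phi \to (X,\emptyset)\to\Phi$, identify $G_a(X,\emptyset)=H_a(\KC T)$ via \eqref{eq.LamX}, and read off the claim from the resulting long exact sequence using the definition of $i_T$. The proof is correct and coincides with the paper's (essentially unwritten) proof.
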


\begin{lem}\label{log.vanishing}
For a log-pair $\Phi=(X,Y)$, $G_a(\Phi)=0$ for $a>\dim_S(X)$.
\end{lem}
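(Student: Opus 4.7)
The plan is a three-case analysis on the log-pair $\Phi=(X,Y)$, combining the structure of $\Lam_H$ on irreducible objects with the spectral sequence coming from the bar construction and the fibre sequence $\hat\Phi\to(X,\emptyset)\to\Phi$. Throughout, set $d=\dim_S(X)$.

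For the base case $\Phi=(X,\emptyset)$ with $X\in\cSregir$, by definition $\Lam_H(X)=\KCH{T_X}$, where $T_X$ is the image of $X$ in $S$. Since $T_X\subset S$ has Krull dimension at most $1$ in the arithmetic case and $0$ in the geometric case, the Kato complex $\KCH{T_X}$ is concentrated in homological degrees $[0,\dim(T_X)]$. The dimension formula
\[
\dim_S(X)=\trdeg(k(X)/k(T_X))+\dim(T_X)
\]
gives $\dim(T_X)\le d$, so $G_a(X)=0$ for $a>d$. For general $X\in\cSreg$ the same bound follows from additivity~\eqref{log.sumsum}.

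Next suppose $\Phi=(X,\emptyset)$ with $X\in\cS\setminus\cSreg$; by Definition~\ref{defS}, $X$ is an admissible SNC divisor on some $X'\in\cSreg$ with $\dim_S(X')=d+1$, and $\Phi=\hat\Psi$ for $\Psi=(X',X)$. The bar construction presents $\Lam_H(\hat\Psi)$ as the total complex of the double complex obtained by applying $\Lam_H$ to the augmented \v Cech simplicial scheme of $\coprod_{j\in J}X_j\to X'$. Filtering by the \v Cech degree yields a convergent spectral sequence
\[
E^1_{p,q}=\bigoplus_{\substack{K\subset J\\ |K|=p+1}}G_q(X_K)\;\Longrightarrow\;G_{p+q}(\Phi),
\]
where $X_K=\bigcap_{k\in K}X_k\in\cSreg$. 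By transversality of the SNC configuration, $\dim_S(X_K)=\dim_S(X')-|K|=d-p$, so the base case forces $E^1_{p,q}=0$ whenever $q>d-p$, i.e.~whenever $p+q>d$. Hence $G_a(\Phi)=0$ for $a>d$.

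Finally, for $\Phi=(X,Y)$ with $X\in\cSreg$ and $Y$ a non-empty admissible SNC divisor, the fibre sequence $\hat\Phi\to(X,\emptyset)\to\Phi$ induces the long exact sequence
\[
\cdots\to G_a(X,\emptyset)\to G_a(\Phi)\to G_{a-1}(\hat\Phi)\to\cdots.
\]
For $a>d$ the left group vanishes by the first step, and $G_{a-1}(\hat\Phi)=0$ by the second step applied to $\hat\Phi=(Y,\emptyset)$, using $\dim_S(Y)=d-1$. Hence $G_a(\Phi)=0$ for $a>d$. The most delicate point is the spectral sequence argument in the second step: one must carefully unpack the bar construction to identify the $E^1$-page, which rests on the additivity of $\Lam_H$ on $\cSreg$ and the standard first-quadrant total-complex spectral sequence of a bounded double complex.
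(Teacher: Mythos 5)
Your proof is correct and is essentially the paper's argument: the paper's entire proof is ``Evident from the definition,'' and what you have written is precisely the unwinding of that definition (the configuration complex is a total complex whose $p$-th \v Cech column is built from $\Lam_H(X_K)=KC_H(T_{X_K})$ for strata $X_K$ of dimension $\dim_S(X)-p$, hence is supported in total degrees $\le\dim_S(X)$). The three-case structure, the degree count on $KC_H(T_X)$, and the fibre sequence $\hat\Phi\to(X,\emptyset)\to\Phi$ are exactly the intended reasoning, so there is nothing to add beyond noting that your $E^1$-page implicitly uses the normalized (strictly increasing multi-indices) \v Cech complex, which computes the same homology.
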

\begin{proof}
Evident from the definition.
\end{proof}

\begin{lem}\label{lem.reduced.graphhom}
Let $\Phi=(X,\emptyset)$ be a log-pair and $q\geq 0$ be an integer.
Assume that for any $Z\in \cSregir$ with $\dim_S(Z)\le \dim_S(X)$ 
the map \eqref{KCtrace0} induces a quasi-isomorphism
$$\tau_{\leq q} \KC Z \isom \tau_{\leq q}\Lam_H(Z).$$
Then $\tr_\Phi $ induces a quasi-isomorphism
$ \tau_{\leq q} \KC \Phi \isom \tau_{\leq q}\Lam_H(\Phi)$.
\end{lem}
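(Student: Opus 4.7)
The plan is to prove the lemma by induction on the pair $(d,r)$ in lexicographic order, where $d=\dim_S(X)$ and $r$ is the number of irreducible components of $X$. The guiding observation is that both $KC_H$ and $\Lam_H$, viewed as functors $\LP\to\CAb$ via the bar construction of Section~\ref{logpairs}, carry fibre sequences of log-pairs to fibre sequences in $\CAb$, and $\tr$ is a natural transformation between them. I will reformulate the conclusion as the vanishing of $H_a(\mathrm{Cone}(\tr_\Phi))$ for all $a\leq q$, a property that behaves well with respect to fibre sequences.

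In the base case $r=1$, the scheme $X$ is irreducible. Whether $X\in\cSreg$ to begin with or $X$ is an admissible simple normal crossing divisor on some $X'\in\cSreg$ with a unique component, admissibility forces the single component to lie in $\cSreg$, so $X\in\cSregir$. The conclusion is then precisely the hypothesis of the lemma applied to $Z=X$.

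For the inductive step with $r\geq 2$, I would decompose $X=X_1\cup W$ where $X_1$ is a single irreducible component and $W=X_2\cup\cdots\cup X_r$ is the union of the remaining components. Both $W$ (with $r-1$ components, same dimension) and $X_1\cap W$ (an admissible simple normal crossing divisor on $X_1$ of dimension at most $d-1$) are strictly smaller than $X$ in the lexicographic order. Definition~\ref{deflogpair}(4) furnishes the fibre sequence of log-pairs
\[
(X_1\cap W,\emptyset)\to (X_1\coprod W,\emptyset)\to (X,\emptyset).
\]
Applying $KC_H$ and $\Lam_H$ together with the natural transformation $\tr$ produces a fibre sequence of mapping cones
$\mathrm{Cone}(\tr_{X_1\cap W})\to \mathrm{Cone}(\tr_{X_1\coprod W})\to \mathrm{Cone}(\tr_X)$. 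By outer induction on $d$, the leftmost cone is acyclic in degrees $\leq q$; by additivity~\eqref{log.sumsum}, the middle cone splits as the direct sum of the cones on $(X_1,\emptyset)$ (base case $r=1$) and $(W,\emptyset)$ (inner induction on $r$), and so is also acyclic in degrees $\leq q$. The long exact sequence in homology then sandwiches $H_a(\mathrm{Cone}(\tr_X))$ for $a\leq q$ between two vanishing terms, forcing the desired acyclicity and closing the induction.

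The main obstacle I anticipate is a clean verification that the vanishing range propagates through fibre sequences without loss of degree. This is settled by the direct observation that in the relevant segment $H_a(\mathrm{Cone}_{\mathrm{middle}})\to H_a(\mathrm{Cone}_{\mathrm{right}})\to H_{a-1}(\mathrm{Cone}_{\mathrm{left}})$ of the long exact sequence, both flanking terms vanish for $a\leq q$ as soon as the left and middle cones are acyclic in degrees $\leq q$ (noting $a-1\leq q-1\leq q$). All other ingredients — the additive splitting, the identification of $X_1\cap W$ as a lower-dimensional admissible simple normal crossing divisor, and the fibre-sequence preservation of the extended functor $\overline{F}$ — are formal consequences of the Gillet--Soul\'e bar construction recalled in Section~\ref{logpairs}.
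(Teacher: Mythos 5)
Your strategy is the same as the paper's: a double induction on $\dim_S(X)$ and on the number of irreducible components, peeling off one component via the fibre sequence $(X_1\cap X_2,\emptyset)\to(X_1\coprod X_2,\emptyset)\to(X,\emptyset)$ and splitting the middle term by additivity \eqref{log.sumsum}. The base case and the identification of $X_1\cap X_2$ as a lower-dimensional admissible simple normal crossing divisor are fine.

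There is, however, a concrete off-by-one slip in your reformulation. The conclusion $\tau_{\leq q}\KC{\Phi}\isom\tau_{\leq q}\Lam_H(\Phi)$ means that $H_a(\tr_\Phi)$ is an \emph{isomorphism} for all $a\leq q$. But $H_a(\mathrm{Cone}(\tr_\Phi))=0$ for $a\leq q$ only gives, from the exact sequence
$H_{a+1}(\mathrm{Cone})\to H_a(\KC{\Phi})\to H_a(\Lam_H(\Phi))\to H_a(\mathrm{Cone})$,
surjectivity of $H_a(\tr_\Phi)$ for $a\leq q$ and injectivity for $a\leq q-1$; injectivity in the top degree $a=q$ would additionally require the boundary $H_{q+1}(\mathrm{Cone}(\tr_\Phi))\to H_q(\KC{\Phi})$ to vanish, and your sandwich argument gives no control on the cone in degree $q+1$ (the cone over $X_1\cap X_2$ need not be acyclic there). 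This lost degree is genuinely needed downstream: in the proof of Lemma~\ref{mainlem} the map $\graphhom{q}{\hPhi}$ must be injective, not merely surjective. The repair is immediate and is exactly what the paper does: do not pass to cones, but apply the five lemma to the morphism between the two long exact homology sequences of the fibre sequence. In the window $H_a(A)\to H_a(B)\to H_a(C)\to H_{a-1}(A)\to H_{a-1}(B)$ the four outer vertical maps are isomorphisms for every $a\leq q$, since they sit in degrees $a$ and $a-1$, both $\leq q$; hence $H_a(C)\to H_a(C')$ is an isomorphism for all $a\leq q$, with no loss at the top degree.
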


\begin{proof}
We prove the lemma by a double induction on the dimension of $X$ and on the number of irreducible components of $X$. Let $X_1$ be an irreducible component of $X$ and
let $X_2$ be the union of the other irreducible components.
The fibre sequence
\[
\Phi_{X_1\cap X_2} \to \Psi \to \Phi
\]
with $\Phi_{X_1\cap X_2}=(X_1\cap X_2,\emptyset)$ and $\Psi = (X_1 \coprod X_2,\emptyset)$ gives rise to a morphism of fibre sequences
in the derived category
\[
\xymatrix{
KC_H( \Phi_{X_1\cap X_2} ) \ar[r]\ar[d]  & KC_H( \Psi) \ar[r] \ar[d]  &  KC_H(\Phi)  \ar[d]\\
\Lam_H( \Phi_{X_1\cap X_2})  \ar[r]  &  \Lam_H(\Psi)  \ar[r]   &  \Lam_H(\Phi)  
}
\] 
where the first and second vertical arrow induce isomorphisms on homology groups
in degree $\leq q$. So the last arrow induces isomorphisms on homology groups
in degree $\leq q$. 
\end{proof}


\section{Lefschetz Condition} \label{lefschetz}

\bigskip

\noindent
Let the notation be as in the previous section. We introduce the
Lefschetz condition which will be crucial in the proof of our main theorem in the
next section. Roughly, the Lefschetz condition for a homology theory $H$ says that
for a log-pair $(X,Y;U)$ such that an irreducible component of $Y$ is ample, we 
can calculate $H_{a+e}(U)$ for $a\leq \dim(U)$ by using the configuration complex
introduced in the previous section. Below we explain and generalize the arguments given 
in~\cite[Lemma 3.4]{JS2} and~\cite{SS}, which show
that the Lefschetz condition is satisfied for
the homology theories (with admissible coefficients, see Definition \ref{admisdefi}) introduced in Section~\ref{homology}. The use of weight arguments is pivotal.   
\medbreak

For a log-pair $\Phi=(X,Y)$ with $X\in \cSreg$, let
\begin{equation}\label{graphedgehomPhi}
\graphedge a \Phi \;:\; \Hempty {a+e} U \to 
\graphHempty a {\Phi}
\end{equation}
be the composition of the maps $\graphhom {a}  {\Phi}$ and $\edgehom a U$ which were defined in
 \eqref{graphhomU} and \eqref{edgehom}.

\begin{defi}\label{def.Lcondition}
Let $\Phi=(X,Y)$ be a log-pair with $X\in \cS^{irr}_{reg}$ and let $T_X$ be its image in $S$. 
\begin{itemize}
\item[(1)]
$\Phi$ is $H$-clean in degree $q$ for an 
integer $q$ if $q\leq \dim_S(X)$ and $\graphedge a \Phi$ is injective for $a=q$ 
and surjective for $a=q+1$.
\item[(2)] Assume $X$ is irreducible. Then $\Phi$ is ample 
if $\dim_S(X)> \dim_S(T_X)$ and there exists a regular closed subscheme $Y'$ of $Y$ such that $Y'$ is a divisor on $X$ relatively ample over $T_X$.
\item[(3)]
We say that $H$ satisfies the Lefschetz condition if 
a log-pair $\Phi=(X,Y)$ is $H$-clean in degree $q$ for all $q \leq \dim_S(X)$ 
whenever $\Phi$ is ample or $\dim_S(X)=\dim_S(T_X)$. 
\end{itemize}
\end{defi}

The following Bertini theorem (\cite{P},\cite{Ga},\cite{JS3}) shows that there are plenty of ample log-pairs.

\begin{theo}\label{Bertini}
Let $\Phi=(X,Y)$ be a log-pair, where $X\in \cSregir$ with $\dim_S(X)> \dim_S(T_X)$.
Fix an invertible sheaf $\mathcal L$ on $X$ relatively ample over $T_X$. 
For a sufficiently large $N>0$ there exists a section of $\mathcal L^{\otimes N}$
with support $Z\hr X$ such that $\Phi'=(X,Y\cup Z)$ is an ample log-pair.
\end{theo}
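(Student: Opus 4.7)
The plan is to apply a Bertini-type theorem to find a section of $\mathcal{L}^{\otimes N}$ whose zero scheme $Z$ is regular and meets all strata of the relevant stratification of $X$ transversally. Once such a $Z$ is constructed, $Z$ is itself a regular divisor on $X$ relatively ample over $T_X$ (being the vanishing locus of a section of the relatively ample $\mathcal{L}^{\otimes N}$), and $Y\cup Z$ will be an admissible simple normal crossing divisor on $X$. Taking $Y'=Z$ in Definition \ref{def.Lcondition}(2) then shows that $\Phi'=(X,Y\cup Z)$ is ample.

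To produce $Z$, consider the finite collection of regular locally closed subschemes obtained by taking all intersections $Y_{i_1}\cap\cdots\cap Y_{i_s}$ of the irreducible components of $Y$ (and, in the arithmetic case, also the analogous intersections involving the components of $X^{fl}_{s,red}$). Admissibility of $Y$ (Definition \ref{defSNCDadm}) ensures that each such intersection is regular. A section $s\in H^0(X,\mathcal{L}^{\otimes N})$ whose vanishing locus $Z=\{s=0\}$ meets every one of these strata transversally then automatically yields a regular $Z$ and makes $Y\cup Z$ an admissible SNC divisor. When $k$ is an infinite perfect field, classical Bertini provides such a generic section for any sufficiently large $N$. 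When $k$ is finite one invokes Poonen's Bertini theorem \cite{P}, which produces, for $N$ large enough, a section of degree $N$ meeting a prescribed finite family of smooth (or regular) subschemes of a smooth projective variety transversally.

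In the arithmetic case $S=\Spec(R)$, one must handle simultaneously the stratification coming from $Y$ and that coming from $X^{fl}_{s,red}$, and in addition guarantee that the resulting $Z\cap X^{fl}$ together with the flat components of $Y$ and the components of $X^{fl}_{s,red}$ still form an SNC divisor on $X^{fl}$. The required extension of Poonen's Bertini theorem over a discrete valuation ring is due to Gabber \cite{Ga}; the variant adapted to the admissibility condition of Definition \ref{defSNCDadm} is formulated and verified in \cite{JS3}. The main technical obstacle is precisely this arithmetic case: one must control the transversality of the Bertini section both to the generic-fibre stratification induced by $Y$ and to the special-fibre stratification given by $X^{fl}_{s,red}$, and this requires the refined Bertini theorem over $R$ rather than classical Bertini. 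Once that theorem is invoked, verifying that $\Phi'=(X,Y\cup Z)$ is a log-pair and that $Y'=Z$ satisfies Definition \ref{def.Lcondition}(2) is immediate from the construction.
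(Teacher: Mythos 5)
Your proposal is correct and follows essentially the same route the paper takes: the paper gives no proof of Theorem \ref{Bertini} at all, merely citing \cite{P}, \cite{Ga} and \cite{JS3}, and your sketch (regular ample $Z$ transversal to all strata of $Y$ and, in the arithmetic case, of $X^{fl}_{s,red}$, then take $Y'=Z$ in Definition \ref{def.Lcondition}(2)) is exactly the argument those references supply. One small attribution slip: \cite{Ga} is Gabber's Bertini-type theorem over finite fields (an alternative to Poonen's), while the extension over a discrete valuation ring adapted to admissibility is the content of \cite{JS3}.
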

\medbreak

\begin{lem}\label{lem0.Lefschetz}
Assume that $H$ satisfies the Lefschetz condition. Let $T\subset S$ be a closed
subscheme. For $X\in \cSregirT$ the map
$\KH a X \to \KH a T$ induced by the proper map $X\to T$ 
is an isomorphism for $a\leq \dim_S(T)$.
\end{lem}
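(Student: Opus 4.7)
The plan is to prove the lemma by induction on $d=\dim_S(X)-\dim_S(T)\ge 0$ with $T\subset S$ fixed, combining the Lefschetz condition (Definition~\ref{def.Lcondition}) with the Bertini theorem (Theorem~\ref{Bertini}), the fibre-sequence machinery of Section~\ref{logpairs}, and Lemma~\ref{lem.reduced.graphhom}.

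In the base case $\dim_S(X)=\dim_S(T)$ the proper morphism $\pi:X\to T$ is finite. Both log-pairs $\Phi_X=(X,\emptyset)$ and $\Phi_T=(T,\emptyset)$ fall under the trigger $\dim_S=\dim_S T_\bullet$ of the Lefschetz condition, yielding that $\graphedge a {\Phi_X}:H_{a+e}(X)\to\KH a T$ and $\graphedge a {\Phi_T}=\edgehom a T:H_{a+e}(T)\to\KH a T$ are both bijective for $a\le\dim_S(T)$. Naturality of $\epsilon$ and of the trace under $\pi_*$ produces a commuting square; combined with the top-degree vanishing of $\KC\bullet$ and the explicit top-degree description by the single generic point, this pins down $\graphhom a {\Phi_X}=\pi_*^{KH}:\KH a X \to \KH a T$ as an isomorphism in the required range.

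For the inductive step $d>0$, I would invoke Bertini (Theorem~\ref{Bertini}) to find an irreducible admissible hypersurface $Y\hookrightarrow X$ with $Y\in\cSregirT$, making $\Phi=(X,Y)$ an ample log-pair with $\dim_S(Y)=\dim_S(X)-1$. The induction hypothesis applied to $Y$ and, via iterated Bertini, to all relevant lower-dimensional strata, together with Lemma~\ref{lem.reduced.graphhom}, yields that $\graphhom a {\hat\Phi}:\KH a Y\to G_a(\hat\Phi)$ is bijective for $a\le\dim_S(T)$. The Lefschetz condition applied to $\Phi$ gives that $\graphedge a \Phi:H_{a+e}(U)\to G_a(\Phi)$ is bijective for $a\le\dim_S(X)$ with $U=X\setminus Y$; a diagram chase using the already-established iso on $Y$ and the localization exact sequence extracts that $\graphhom a \Phi:\KH a U\to G_a(\Phi)$ is also bijective for $a\le\dim_S(T)$.

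Finally I would compare the localization long exact sequence for the triple $(X,Y,U)$ with the long exact sequence on $G_\bullet$ arising from the fibre sequence $\hat\Phi\to(X,\emptyset)\to\Phi$:
\[
\begin{array}{ccccccccc}
\KH {a+1} U & \to & \KH a Y & \to & \KH a X & \to & \KH a U & \to & \KH {a-1} Y \\
\downarrow\,\wr & & \downarrow\,\wr & & \downarrow & & \downarrow\,\wr & & \downarrow\,\wr \\
G_{a+1}(\Phi) & \to & G_a(\hat\Phi) & \to & \KH a T & \to & G_a(\Phi) & \to & G_{a-1}(\hat\Phi)
\end{array}
\]
where the four outer verticals are isomorphisms in the range $a\le\dim_S(T)$ by the preceding steps. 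The five-lemma then delivers iso of the middle vertical $\graphhom a {(X,\emptyset)}:\KH a X\to\KH a T$ for $a\le\dim_S(T)$, closing the induction.

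The principal obstacle is the extraction of $\graphhom a {}$ alone from the composite $\graphedge a {}=\graphhom a {}\circ\edgehom a {}$ controlled by Lefschetz: the edge map is surjective only in low degrees by Remark~\ref{rem.Kato.complex}(1), so separating the trace factor requires combining the Lefschetz control with the inductively-verified isomorphism on $Y$ via a diagram chase in the localization sequence. Care is also needed in choosing Bertini sections so that all relevant strata $Z$ have image $T_Z=T$, so that the iso range from the induction matches the required $\dim_S(T)$.
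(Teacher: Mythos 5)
Your proof is correct and rests on the same pillars as the paper's argument: induction on dimension, the Bertini theorem to produce an ample log-pair $\Phi=(X,Y;U)$ with $Y$ regular, and a five-lemma comparison with the long exact sequence of configuration complexes $G_\bullet$. The difference is which ladder the five lemma is run on. The paper first reduces, using Remark~\ref{rem.Kato.complex}(1) together with the fact that $\dim_S(T)\le 1$ forces $a\le 1$, to proving that the composite $\graphedge a {(X,\emptyset)}\colon H_{a+e}(X)\to \KH a T$ is bijective; its five-lemma diagram then has the localization sequence of $H$ itself on top and the composites $\graphedge {a+1} \Phi$, $\graphedge a {\Phi_Y}$, etc.\ as verticals, which are controlled directly by the Lefschetz condition and the inductive hypothesis, so the edge and trace factors never have to be separated. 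You put the Kato-homology localization sequence on top, so your verticals are the bare trace maps $\graphhom a \Phi$, $\graphhom a \hPhi$, and you must extract their bijectivity from that of the composites --- the ``principal obstacle'' you flag. The extraction does work, but the reason is precisely Remark~\ref{rem.Kato.complex}(1): since $a\le\dim_S(T)\le 1$, the edge maps $\edgehom a U$ and $\edgehom a X$ are surjective, so injectivity of $\graphedge a \Phi$ yields injectivity of $\graphhom a \Phi$, while for the leftmost vertical only surjectivity of $\graphhom {a+1} \Phi$ is needed, which follows from surjectivity of $\graphedge {a+1} \Phi$ alone; this should be stated explicitly rather than deferred to an unspecified diagram chase. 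Two minor points: the appeal to iterated Bertini and Lemma~\ref{lem.reduced.graphhom} is superfluous here, since $Y$ is irreducible and regular, so $G_a(\hat\Phi)=\KH a {T_Y}$ by definition and the inductive hypothesis applies to $Y$ directly; and in your base case the ``top-degree description by the single generic point'' is a detour --- the edge homomorphism need not surject onto the $E^2$-term in top degree in general, and what actually saves you is again that $a\le\dim_S(T)\le 1$.
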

\begin{proof}
By Remark \ref{rem.Kato.complex}(1) it suffices to show that the composite map
\[
H_{a+e}(X) \rmapo{\edgehom a X} \KH a X \to \KH a T
\]
is an isomorphism for $a\leq \dim_S(T)$.
Note that the above map is $\graphedge a \Phi$ for $\Phi=(X,\emptyset)$. Hence, 
the assertion in case $\dim_S(X)=\dim_S(T)$ follows from the Lefschetz condition.
In case $\dim_S(X)>\dim_S(T)$, use the Bertini Theorem~\ref{Bertini} to choose 
 regular divisor $Y\hr X$ relatively ample over $T$ such that $\Phi=(X,Y;U)$
forms an ample log-pair. From the fiber sequence of log-pairs:
\[
\Phi_Y \to\Phi_X  \to \Phi,\text{  where } \Phi_Y=(Y,\emptyset),\; \Phi_X=(X,\emptyset),
\]
we get the following commutative diagram with exact rows 
\[
\xymatrix{
H_{a+1+e}(U) \ar[r] \ar[d]_{\simeq} & H_{a+e}(Y)  \ar[d]_{\simeq} \ar[r]  &  H_{a+e}(X)   \ar[r] \ar[d]  & H_{a+e}(U)  \ar[d]_{\simeq}  \ar[r] & H_{a-1+e}(Y)\ar[d]_{\simeq} \\
G_{a+1}(\Phi)  \ar[r]  &  G_a(\Phi_Y)   \ar[r]  &   G_a(\Phi_X)  \ar[r]  &  G_a(\Phi)  \ar[r]  & G_{a-1}(\Phi_Y)
}
\]
where for $a\leq \dim_S(T)$, the indicated maps are isomorphisms by the Lefschetz condition and the induction assumption (note $G_a(\Phi_X)=G_a(\Phi_Y)=\KH a T$). 
So the five lemma shows that the middle vertical arrow
is an isomorphism too. This proves the lemma.
We note that the above diagram is commutative since the vertical maps are the compositions of the maps induced by the natural transformation from
the homology theory $H$ to the associated Kato homology $KH$ and the maps induced by \eqref{eq.trU}.
\end{proof}

\bigskip

In the rest of this section we assume that the base scheme $S$ is the
spectrum of the perfection $k$ of a finitely generated field or the
spectrum of a henselian discrete valuation ring $R$ with finite
residue field. Let { $\ell$} be a prime number. The homology theory
$H$ will be as in one of the examples of Section~\ref{homology} and we
assume $k$ is finite if $H=H^\et$ (geometric case) and { $\ell$} is
invertible in $R$ if $H=H^\et$ (arithmetic case).

Let $G=\pi_1(S,\overline{\eta})$ with a geometric generic point $\overline{\eta}$. 
The following definition is motivated by \cite{J2}.

\begin{defi}\label{admisdefi}
A finitely generated free { $\Z_{\ell}$}-module $T$ with trivial $G$-action is called  
admissible of weight $0$. 
A finitely generated free { $\Z_{\ell}$}-module $T$ with continuous $G$-action is called admissible if there is an exact sequence
of free { $\Z_{\ell}$}-modules with continuous $G$-actions 
\[
0\lr T' \lr T \lr T'' \lr 0
\]
such that $T'\otimes_{\Z_\ell} \Q_\ell$ is mixed of weights $<0$ \cite{D} and $T''$ is admissible of weight $0$.
A torsion $\Z_\ell$-module $\Lam$ with $G$-action is called admissible if there is an admissible free $\Z_\ell$-module $T$ such that
$\Lam=T\otimes_{\Z_\ell} \Q_\ell/\Z_\ell$.
\end{defi}

The next theorem comprises results due to Jannsen--Saito \cite{JS2} and Saito--Sato \cite{SS}.

\begin{theo}\label{thm.Lcondition}
Let $H_*(-,\Lam)$ be one of the homology theories defined in Section~\ref{homology} and $S$ as explained above. Assume 
$\Lambda=T\otimes_{\Z_\ell} \Q_\ell/\Z_\ell$ is an admissible torsion $G$-module as in Definition~\ref{admisdefi}.
Then $H$ satisfies the Lefschetz condition.
\end{theo}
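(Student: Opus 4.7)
The plan is to follow the weight-filtration strategy of Jannsen--Saito \cite{JS2} and Saito--Sato \cite{SS}: for an ample log-pair $\Phi=(X,Y;U)$ I want to show that the $\ell$-adic homology $H_{a+e}(U,\Lambda)$ is ``pure'' in the range $a\leq \dim_S(X)$ and that this pure part is exactly what the configuration complex $\Lam_H(\Phi)$ computes, the comparison being effected by $\graphedge a \Phi$.

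First I would reduce to $\qzl$-coefficients. Applying the defining exact sequence $0 \to T' \to T \to T'' \to 0$ of Definition~\ref{admisdefi} to get $0 \to \Lambda' \to \Lambda \to \Lambda'' \to 0$, both sides of the comparison sit in long exact sequences, so a five-lemma/cone argument reduces the $H$-cleanness of $\Lambda$ to that of $\Lambda' = T'\otimes\qzl$ (mixed of strictly negative weights) and $\Lambda'' = T''\otimes\qzl$ (weight zero, unramified). Using Example~\ref{exHK2} in the geometric case and the absolute purity comparison of Example~\ref{exHK4} in the arithmetic case, one may work throughout with $\ell$-adic \'etale homology.

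Second, if $\Phi$ is ample with relatively ample component $Y'\subset Y$, then $U=X\setminus Y$ is affine over $T_X$. Gabber's affine Lefschetz theorem (\cite[Th\'eor\`eme~2.4]{Il3} in the arithmetic case, and \cite[XIV, 3.1]{SGA4} together with \cite[Lemma~2.1]{Sw} in the geometric case) yields the vanishing $H_{a+e}(U,\Lambda) = 0$ for $a > \dim_S(X)$. Meanwhile, the weight spectral sequence attached to the normal crossing compactification $X\supset Y$ (combined with $X^{fl}_{s,red}$ in the arithmetic case) has an $E_1$-page assembled from the $\ell$-adic cohomology of the smooth projective strata $Y_I = Y_{i_1}\cap\cdots\cap Y_{i_s}$. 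By Deligne's purity \cite{D} and Gabber's absolute purity these $E_1$-terms are pure of known weights, and the Gillet--Soul\'e style bar construction of Section~\ref{logpairs} identifies $\Lam_H(\Phi)$ with the top-weight part of this $E_1$-page via the trace map $\tr_\Phi$; see Lemma~\ref{lem1.Kato.complex}.

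Third, in the weight-zero case $\Lambda = \Lambda''$ the weight spectral sequence degenerates on the top-weight part in the range $a\le \dim_S(X)$, producing an isomorphism $H_{a+e}(U,\Lambda) \isom G_a(\Phi)$ for $a\le \dim_S(X)$ and a surjection in degree $\dim_S(X)+1$, which is exactly the Lefschetz condition. In the strictly-negative-weight case $\Lambda = \Lambda'$ the non-top weight contributions have weights bounded away from zero, so Deligne's integrality constraints force their vanishing in this range and the same conclusion follows. The remaining non-ample case $\dim_S(X)=\dim_S(T_X)$ is settled directly: the proper map $X\to T_X$ is generically finite, so a trace computation using Lemma~\ref{lem1.Kato.complex} identifies the Kato complex of $X$ with that of $T_X$ up to the required degree.

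The main obstacle is the arithmetic case: one must set up the weight spectral sequence over a henselian DVR, correctly incorporate the special fibre contribution $X^{fl}_{s,red}$, and invoke the weight compatibility of nearby cycles alongside Gabber's absolute purity. This is the essential technical content of \cite{SS}; the geometric case treated in \cite{JS3} provides the conceptual template being adapted.
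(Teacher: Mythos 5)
Your overall strategy (reduce to the weight-zero quotient $T''$ via the defining sequence of Definition~\ref{admisdefi}, then combine Deligne's purity with affine Lefschetz vanishing on the complement of the ample divisor) is the same one the paper uses, and steps one and two of your outline do match the actual argument. But the heart of your proof --- the assertion that a weight spectral sequence for the pair $(X,Y)$ ``degenerates on the top-weight part in the range $a\le\dim_S(X)$'' and that the bar construction identifies $\Lam_H(\Phi)$ with that top-weight part --- is stated, not proved, and it is precisely where the content of the theorem lies. The paper never constructs such a spectral sequence. Instead it runs a double induction on $\dim_S(X)$ and on the number of irreducible components of $Y$, using the fibre sequences of log-pairs from Definition~\ref{deflogpair} and the five lemma (Cases I-3 and II-3), which reduces everything to two explicit base cases: $Y$ irreducible and ample (where \emph{both} sides vanish for $a\le d$, by the affine Lefschetz theorem plus a weight estimate on $H^d(U_{\bar k},\Q_\ell(d))$), and $\dim_S(X)\le 1$.

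The concrete gap is in the curve case. There, the required injectivity of $\partial:H^D_1(U,\Lam)\to\bigoplus_y H^D_0(y,\Lam)$ amounts to showing that the localization sequence
\[
H^1(X_{\bar k,\et},\Lam(1))\to H^1(U_{\bar k,\et},\Lam(1))\to \bigoplus_{y\in Y_{\bar k}}\Lam\to\Lam\to 0
\]
\emph{remains exact after taking $G$-coinvariants}. Coinvariants are right exact but not exact, so ``degeneration of the top-weight part'' does not follow formally from purity of the $E_1$-terms; one genuinely needs the vanishing of the relevant $H_1(G/N,\Lam)$, which the paper extracts from the divisibility of $\Lam=(\qzl)^r$ after splitting the sequence over an open normal subgroup $N$ acting trivially on $Y_{\bar k}$. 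Your proposal never confronts this, and it is the step that fails for, say, finite coefficients --- which is exactly why the finite-coefficient statements require the separate argument of Section~\ref{finitecoeff}. Similarly, in the arithmetic case your appeal to ``the weight compatibility of nearby cycles'' is not what the paper does: Case II-2 uses the base-change isomorphism $H^{2d-a+1}(U_\et,\Lam(d))\simeq H^{2d-a+1}(U_{s,\et},\Lam(d))$ to pass to the \emph{special fibre} $U_s$, which is affine of dimension $d-1$, and then uses $\cd(s)=1$; no weight spectral sequence over the DVR is needed. To repair your write-up you would either have to carry out the degeneration argument honestly (including the coinvariance issue) or adopt the paper's inductive reduction to the two base cases.
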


\begin{proof} 
Take an ample log-pair $\Phi=(X,Y;U)$ with $X\in \cSreg^{irr}$. We want to show
the map 
\begin{equation}\label{eq0.thm.Lcondition}
\graphedge a \Phi \;:\; \Hempty {a+e} U \to 
\graphHempty a {\Phi}
\end{equation}
is an isomorphism for $a\leq\dim_S(X)$. 

First we treat the homology theory $H=H^D(-,\Lam)$ of Example~\ref{exHK2} over the base
scheme $S=\Spec(k)$. Recall that $H$ is leveled over $e=0$.
It follows immediately from the definition that for $X\in \cSreg$
\begin{equation}\label{eq1.thm.Lcondition}
\HDL 0 X \simeq \HDL 0 S=\Lam_G,\text{  and  } \Lam_H(X)= \Lam_G[0],
\end{equation}
where $\Lam_G$ is the coinvariant of $\Lam$ under $G$.
We now distinguish several cases. 
For $X\in \cCS$ smooth over $k$, put $X_{\overline{k}}=X\times_k \overline{k}$ with
an algebraic closure $\overline{k}$ of $k$ and write
\def\Xkb{X_{\overline{k}}}
$$
H^i(X_{\overline{k}},\qzl(r))=
\left\{\begin{gathered}
 \indlim n H^i(X_{\overline{k},\et},\mu_{\ell^n}^{\otimes r}) \quad \text{$\ell\not=\ch(k)$}\\
 \indlim n H^{i-r}(X_{\overline{k},\et},\DWnlog r {\Xkb}) \quad \text{$\ell=p:=\ch(k)$}\\
\end{gathered}\right.
$$
$$
H^i(X_{\overline{k}},\ql(r))=
\left\{\begin{gathered}
 \projlim n H^i(X_{\overline{k},\et},\mu_{\ell^n}^{\otimes r})\otimes_{\zl}\ql 
\quad \text{$\ell\not=\ch(k)$}\\
 \projlim n H^{i-r}(X_{\overline{k},\et},\DWnlog r {\Xkb})\otimes_{\zp}\qp
\quad \text{$\ell=p:=\ch(k)$}\\
\end{gathered}\right.
$$
\medbreak

{\bf Case I-1:} 
The desired assertion in case $\dim_S(X)=0$ follows immediately from \eqref{eq1.thm.Lcondition}. Assume $\dim_S(X)=1$. 
In view of \eqref{eq1.thm.Lcondition} we are reduced to show that the sequence
\begin{equation}\label{lef.eqex}
0\lr \HDL 1 U \rmapo {\partial} \bigoplus_{y\in \Yd 0} \HDL 0 y \lr \HDL 0 X \lr 0
\end{equation}
is exact. For this we may assume without loss of generality that $X$ is 
geometrically connected over $k$. The assertion is clear except the injectivity 
on the left. An easy computation shows  
\begin{equation}\label{lef.eq2}
\bigoplus_{y\in Y} \HDL 0 y = \big( \bigoplus_{y\in Y_{\bar k}} \Lam \big)_G 
 = \big( \bigoplus_{y\in Y_{\bar k}} T\otimes_{\zl} \qzl\big)_G 
\end{equation}
By the reason of weight, the exact sequence $0\to T'\to T\to T''\to 0$ in \ref{admisdefi} induces an isomorphism
\[
\big( \bigoplus_{y\in Y_{\bar k}} \Lam \big)_G =
\big( \bigoplus_{y\in Y_{\bar k}}  T''\otimes_{\Z_\ell} \Q_\ell/\Z_\ell \big)_G.
\]
On the other hand Lemma \ref{lemHK1smooth2} implies
\begin{equation}\label{eq2.thm.Lcondition}
\HDL 1 U = H^1(U_{\bar k,\et} , \Lam(1))_G= \big(H^1(U_{\bar k},\qzl(1))\otimes_{\zl} T)_G
\end{equation}
We claim that the exact sequence in \ref{admisdefi} induces an isomorphism
$$
 H^1(U_{\bar k,\et} , \Lam(1))_G
\simeq \big(H^1(U_{\bar k},\qzl(1))\otimes_{\zl}  T''\big)_G \;.
$$
By the claim we can assume without loss of generality that $T=T''$ is admissible of 
weight $0$ so that $\Lam=(\qzl)^r$ with trivial $G$-action.
To show the claim, it suffices to prove $\big(H^1(U_{\bar k},\qzl(1))\otimes T'\big)_G=0$.
The affine Lefschetz theorem (case $\ell\not=\ch(k)$) or \cite[Lemma 2.1]{Sw} (case $\ell=\ch(k)$)
implies that the natural map
$$
H^1(U_{\bar k},\ql(1)) \to H^1(U_{\bar k},\qzl(1))
$$
is surjective and the claim follows from the vanishing of 
$\big(H^1(U_{\bar k},\ql(1))\otimes_{\zl} T'\big)_G$. 
We have the exact localization sequence
\[
 H^1(X_{\bar k} ,\Q_\ell(1)) \lr H^1(U_{\bar k} ,\Q_\ell(1)) \lr 
H^0(Y_{\bar k} ,\Q_\ell)
\]
where we use \cite[Theorem 2.5]{Sw} in case $\ell=\ch(k)$.
It shows that $H^1(U_{\bar k} ,\Q_\ell(1))$ is mixed of weight $-1$ and $0$,
thanks to \cite{D} in case $\ell\not=\ch(k)$ and to \cite[p784 $(iii)$]{CTSS} and 
\cite[Theorem 1.3]{GrSw} in case $\ell=\ch(k)$. Since $T'$ is mixed of weight $<0$,
this implies the desired vanishing.
\medbreak

Now we show the injectivity of $\partial$ in \eqref{lef.eqex} assuming
$\Lam=(\qzl)^r$ with trivial $G$-action.
Consider the exact localization sequence
\begin{equation}\label{lef.eq3}
H^1(X_{\bar k,\et} , \Lam(1)) \lr  H^1(U_{\bar k,\et} , \Lam(1)) \lr \bigoplus_{y\in Y_{\bar k}} \Lam  \lr \Lam \lr 0 .
\end{equation}
By the weight argument as before, we get 
\begin{equation}\label{eq3.thm.Lcondition}
(H^1(X_{\bar k,\et} , \Lam(1)))_G=0,
\end{equation}
Bt \eqref{lef.eq2} and \eqref{eq2.thm.Lcondition},
the sequence~\eqref{lef.eq3} would give us the desired injectivity, 
if we showed that it stays exact after taking coinvariants.  
Let $M$ be defined by the short exact sequence
\begin{equation}\label{eq4.thm.Lcondition}
0\lr M \lr \bigoplus_{y\in Y_{\bar k}} \Lam  \lr \Lam \lr 0  .
\end{equation}
From \eqref{lef.eq3} and \eqref{eq3.thm.Lcondition},
we deduce that $H^1(U_{\bar k,\et} , \Lam(1))_G=M_G$. 
Let $N\subset G$ be an open normal subgroup which acts trivially on the set $Y_{\bar k}$. 
Since $\Lam=(\qzl)^r$ with trivial $G$-action, $N$ acts trivially on each term of 
\eqref{eq4.thm.Lcondition} and it splits as a sequence of $N$-modules.
Hence we get the exact sequence
\[
H_1(G/N, \Lam ) \lr H^1(U_{\bar k,\et} , \Lam(1))_G \lr 
(\bigoplus_{y\in Y_{\bar k}} \Lam)_G  \lr \Lam_G \lr 0
\]
Since $\Lam=(\qzl)^r$ is divisible, we have $H_1(G/N, \Lam )=0$, 
which finishes the proof of Case I-1.
\medbreak

{\bf Case I-2:}
Assume $d=\dim_S(X)>1$ and $Y$ is irreducible. 
In this case it is clear from \eqref{eq1.thm.Lcondition}
that $G_a(\Phi)=0 $ for $a\in \Z$. 
On the other hand Lemma \ref{lemHK1smooth2} implies that
\begin{equation}\label{lef.eq4}
\HDL a U = H^{2d-a} (U_{\bar k,\et} ,\Lam(d) )_G \text{  for } a\le d
\end{equation}
and it vanishes for $a<d$. Hence it remains to
show vanishing in~\eqref{lef.eq4} for $a=d$. 
The affine Lefschetz theorem  (case $\ell\not=\ch(k)$) or \cite[Lemma 2.1]{Sw} (case $\ell=\ch(k)$)
implies
\[ H^d(U_{\bar k} ,\Q_\ell(d)) \lr H^d(U_{\bar k} ,\Q_\ell/\Z_\ell(d)), \]
is surjective. Hence it suffices to show $H^d(U_{\bar k} ,\Q_\ell (d))_G=0$.
We have the exact localization sequence
\[
\cdots \to H^d(X_{\bar k} ,\Q_\ell (d)) \lr H^d(U_{\bar k} ,\Q_\ell (d)) \lr H^{d-1}(Y_{\bar k} ,\Q_{\ell}(d-1))\lr \cdots
\]
where we use \cite[Theorem 2.5]{Sw} in case $\ell=\ch(k)$. 
It shows that $H^d(U_{\bar k} ,\Q_{\ell}(d))$ is mixed of weight $-d$ and $-d+1<0$
thanks to \cite{D} in case $\ell \not=\ch(k)$ and to \cite[p784 $(iii)$]{CTSS} and 
\cite[Theorem 1.3]{GrSw} in case $\ell=\ch(k)$. 
This implies the desired vanishing and the proof of Case I-2 is complete.
\medbreak

{\bf Case I-3:} Assume $d=\dim_S(X)>1$ and that $Y$ consists of $r>1$ irreducible components $(Y_j)_{1\le j\le r}$. Let $Y_r$ be ample. We use a double induction on the dimension $d$ of $X$ and on the number $r$ of irreducible components of $Y$, which will reduce us to Case I-1 and Case I-2. Consider the fibre sequence
\[
\Phi_{Y_1} \to \Psi  \to \Phi
\]
where $\Phi_{Y_1}=(Y_1, Y_1\cap ( \cup_{j>1} Y_j )\, ;W )$ and 
$\Psi=(X,\cup_{j>1} Y_j \, ; V )$.
Writing $H_a(Z)=\HDL a Z$ for $Z\in \cCS$ for simplicity, 
it gives rise to the following commutative diagram with exact rows
\[
\xymatrix{
H_a(W)  \ar[r]  \ar[d]  &  H_a(V)  \ar[r]  \ar[d]_{\simeq}  &  H_a(U)   \ar[r]  \ar[d]  &   H_{a-1}(W)  \ar[r]  \ar[d]_{\simeq}  & H_{a-1}(V) \ar[d]_{\simeq} \\
G_a(\Phi_{Y_1} )  \ar[r]  &  G_a(\Psi)  \ar[r]  &  G_a(\Phi)  \ar[r]  &  G_{a-1}( \Phi_{Y_1} ) \ar[r]  & G_{a-1}( \Psi )
}
\]
The commutativity of the diagram follows by the same argument as in the last part of the proof of Lemma \ref{lem0.Lefschetz}. 
For $a\le d$, the indicated maps are isomorphisms by our induction assumption and
the left vertical map is surjective by our induction assumption and
Lemma~\ref{log.vanishing}. Hence a diagram chase shows that the middle
vertical arrow is bijective. This completes the proof of Theorem \ref{thm.Lcondition}
for the homology theory in Example~\ref{exHK2}.
It implies the theorem for the homology theory in Example~\ref{exHK1} by 
Lemma \ref{lemHK1smooth2}(3).
\medbreak

Finally we prove the theorem for the homology theory $H = H^\et( - ,\Lambda )$
in Example~\ref{exHK4} over the base scheme $S=\Spec(R)$ where $R$ is a henselian 
discrete valuation ring with finite residue field.
Recall that this is leveled over $e=-1$.
Note that the restriction of $H$ on $\cCs$ coincides with
the homology theory in Example \ref{exHK1}, where
$\cCs\subset \cCS$ is the subcategory of separated schemes of finite type over 
the closed point $s$ of $S$. This allows us to deal with \eqref{eq0.thm.Lcondition}
only in case $X$ is flat over $S$ (so that $T_X=S$). We now distinguish some cases.
Recall that $\Lam$ is assumed to be $\ell$-primary torsion with $\ell$ invertible on $S$. 
\medbreak

{\bf Case II-1:} 
Assume $\dim_S(X)=1$.
In this case $X=\Spec(R')$ where $R'$ is a henselian discrete valuation ring
which is finite over $R$. Let $s'$ (resp. $\eta'$) be the closed (resp. generic) 
point of $X$. 
Then the log-pair $\Phi=(X,Y)$ is equal to either $(X,\emptyset)$ or $(X,s';\eta')$.
By definition, $\Lam_H(X)=\KC S$ is the complex:
\[
\begin{matrix}
H^1(s_{\et},\Lam) &\lmapo{\partial}& H^2(\eta_{\et},\Lam(1)) \\
\text{deg $0$}&& \text{deg $1$}&&
\end{matrix}
\]
which is acyclic due to the localization exact sequence and the isomorphism
\[
H^a(S_{\et},\Lam(1))\simeq H^a(s_{\et},\Lam(1))=0\text{  for } a\geq 2.
\]
In case $\Phi=(X,\emptyset)$, this implies $G_a(\Phi)=H_a(\KC S)=0$ for all 
$a\in \bZ$. On the other hand we have
\[
\HetLam {a-1} X\simeq H^{3-a}(X_{\et},\Lam(1))\simeq H^{3-a}(s'_{\et},\Lam(1)).
\]
The last group vanishes for $a\leq 1$ because of the fact $\cd(s')=1$.
In case $\Phi=(X,s';\eta')$, $\Lam_H(\Phi)$ is equal to the complex
$H^2(\eta_{\et},\Lam(1))$ put in degree $1$, while
\[
\HetLam {a-1} {\eta'} \simeq H^{3-a}(\eta'_{\et},\Lam(1))
\]
which vanishes for $a=0$. For $a=1$, the map \eqref{eq0.thm.Lcondition} is 
identified with the correstriction map
\[
H^{2}(\eta'_{\et},\Lam(1))  \to H^{2}(\eta_{\et},\Lam(1))
\]
which is known to be an isomorphism. This proves the desired assertion in this case.
\medbreak

{\bf Case II-2:} 
Assume $d:=\dim_S(X)>1$ and $Y$ is a regular flat and relatively ample divisor over $S$.
It is clear that $G_a(\Phi)=0$ for $a\in \Z$.
Using duality \eqref{exHK4regular.pnz} and ba ase change theorem \cite[Lemma 3.4]{SS}, 
we conclude that
\begin{equation}\label{lef.eq5}
H_{a-1}^\et (U,\Lam)\simeq H^{2d-a+1}(U_\et , \Lam (d) ) 
\simeq H^{2d-a+1}(U_{s,\et} ,\Lam (d) )
\end{equation}
By the assumption of ampleness of $Y$, $U_s$ is affine of dimension $d-1$ over $s$.
Hence the last group of~\eqref{lef.eq5} vanishes for $a\le d$ 
because of the affine Lefschetz theorem and the fact $\cd(s)=1$.
This proves the desired assertion in this case.
\medbreak

{\bf Case II-3:} 
Finally, to show the general case, we use an induction argument as in Case~I-3 
to reduce either to Case~II-2 or to the geometric case in Example~\ref{exHK1}  
over the base $\Spec(k)$, where $k$ is the finite residue field of $R$.
\medbreak

This completes the proof of Theorem \ref{thm.Lcondition}
\end{proof}


\section{Pullback Map (First Construction)} \label{pullback}

\bigskip

\noindent 
Let the notation be as in Section~\ref{lefschetz}.
Let $H$ be a homology theory on the category $\cCS$ leveled above $e$. 
We now introduce a condition for $H$ which plays a crucial role in
the proof of the main theorem.
\medbreak

We say that a morphism $f:Y\to X$ in $\cCS$ is embeddable if it factors as
\[
Y \stackrel{i}{\lr} P_X  \stackrel{\pi_X}{\lr} X
\] 
where $i$ is an immersion and $\pi_X$ is the base change to $X$ of a smooth morphism $\pi:P\to S$.
\medbreak

We now consider the condition:
\medbreak

({\bf PB}): For any embeddable  morphism $f:X'\to X$ between irreducible regular schemes in $\cCS$ with $\dim_S(X')=\dim_S(X)$
there exist functorial pullbacks 
\[
f^*:H_a(X) \to H_a(X')  \;\;\;  \text{ and } \;\;\; f^*:KH_a(X) \to KH_a(X')
\]
extending the given pullbacks for open immersions and
which satisfy the following properties:

\renewcommand{\labelenumi}{(\roman{enumi})}
\begin{enumerate}
\item The diagram
\[
\xymatrix{
H_{a+e} (X) \ar[r]^{\epsilon^a_{X'}} \ar[d]_{f^*} & KH_a(X) \ar[d]^{f^*} \\
H_{a+e} (X')  \ar[r]_{\epsilon^a_X} & KH_a(X') 
}
\]
commutes.
\item If $f$ is proper and dominant, the composite map
\[
KH_a(X) \rmapo{f^*} KH_a(X')  \rmapo{f_*} KH_a(X)
\]
is multiplication by $\deg(X'/X)$.
\end{enumerate}
\medbreak

The main result of this section is the following:

\begin{theo}\label{PBthm}
All homology theories in the examples in Section~\ref{homology} satisfy 
{\rm (}{\bf PB}{\rm )}.
\end{theo}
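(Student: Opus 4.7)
The plan is to construct the pullback by factoring an embeddable morphism $f\colon Y\to X$ as $Y\stackrel{i}{\hookrightarrow} P_X\stackrel{\pi_X}{\to} X$ with $\pi\colon P\to S$ smooth of relative dimension $d$ and $i$ a closed immersion. Since $\dim_S(Y)=\dim_S(X)$, one has $\dim_S(P_X)=\dim_S(X)+d$, so $i$ is automatically a regular closed immersion of codimension $d$. I would then build $f^*$ as the composition of a smooth pullback $\pi_X^*$ which raises homological degree by $2d$ and a Gysin map $i^*$ which lowers it by $2d$, so that the composite preserves the degree.

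For the smooth piece, the pullback $\pi_X^*\colon H_a(X)\to H_{a+2d}(P_X)$ is already inherent to each of the homology theories in Section~\ref{homology}: in the \'etale cases it comes from the canonical purity isomorphism $R\pi_X^{!}\simeq \pi_X^{*}(d)[2d]$, and since $\pi_X$ is flat it induces a natural morphism on the $E^1$-terms of the niveau spectral sequence and hence a pullback $\pi_X^*\colon KH_a(X)\to KH_{a+2d}(P_X)$ on Kato homology. The corresponding statement for the dual theory $H^D$ of Example~\ref{exHK2} follows from its definition as the Pontryagin dual of cohomology with compact support.

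The core of the argument is the construction of the Gysin map $i^*$ for the regular closed immersion $i$ of codimension $d$. Following Rost and Fulton, I would use deformation to the normal cone: form the deformation space $M$ obtained from $P_X\times \AAA^1$ by blowing up $Y\times\{0\}$ and removing the strict transform of $P_X\times\{0\}$; its generic fibre is $P_X$ and its special fibre is the normal bundle $N_YP_X$. Specialisation yields a map $\sigma^*\colon H_{a+2d}(P_X)\to H_{a+2d}(N_YP_X)$, and composing with the inverse of the homotopy-invariance isomorphism $H_a(Y)\isom H_{a+2d}(N_YP_X)$ produces the desired Gysin map. Performing this construction on the $E^1$-page of the niveau spectral sequence, together with Lemma~\ref{lem1.Kato.complex}, transfers it to Kato homology.

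Property (i) then follows from the naturality of the edge homomorphism under each of the ingredients (smooth pullback, specialisation, homotopy invariance). Property (ii), the degree identity $f_* f^*=\deg(f)\cdot\mathrm{id}$, is a version of the projection formula and reduces via the self-intersection formula for the zero section of a vector bundle to a residue-field computation. The main obstacle I anticipate is verifying that the specialisation and homotopy-invariance maps are compatible with the niveau filtration and its $d^1$-differentials so as to descend to honest maps of Kato complexes; this forces one to work in Rost's cycle-module framework and requires a careful extension of that machinery from the case of a field to the excellent Dedekind base $S$ treated here.
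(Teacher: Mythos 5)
Your plan follows the paper's first construction (Section~\ref{pullback}) very closely: the paper likewise factors an embeddable $f$ through a smooth morphism, builds the Gysin map for the closed immersion by deformation to the normal cone (the space $D(X,Y)$ obtained from the blow-up of $X\times\AAA^1$ in $Y\times\{0\}$, the pullback being $\partial\circ\{t\}\circ\pi^*$ followed by the inverse of the homotopy-invariance isomorphism for $N_YX\to Y$), and descends everything to the Kato complex through the spectral sequence. The only packaging differences are that the paper first dualizes — it axiomatizes a ``cohomology theory with supports'' (Definition~\ref{def.HTD}), proves the more general Theorem~\ref{PBTheo} for any homology theory extending to a homology theory with duality, and works with the coniveau rather than the niveau spectral sequence — and that it restricts (in this construction) to coefficients killed by an integer invertible on $S$, since the purity isomorphisms underlying the Gysin maps require it; you should make the analogous restriction explicit.

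The one place where your sketch has a real hole is property (ii). The identity $f_*f^*=\deg(f)\cdot\mathrm{id}$ on $KH_a$ does not follow from the projection formula together with a residue-field computation alone, because on the $E^1$-page the constructed pullback agrees with the naive finite-flat pullback only over the dense open $V\subset X$ where $f$ is finite and flat; over $Z=X-V$ the two sides differ by terms supported on $Z$. The paper closes this in two steps (Lemmas~\ref{onim} and~\ref{imall}): first, for a cycle $\alpha$ supported over $V$ the discrepancy $f_*f^*\alpha-\deg(f)\,\alpha$ lies in $E^1_{a,e}(Z\cap Y)$ where $Y$ is the support of $\alpha$, and $\dim_S(Z\cap Y)<a$ forces it to vanish; second — and this is the ingredient missing from your plan — every class in $KH_a(X)$ is represented by such a cycle. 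That surjectivity is a genuine moving lemma: given $z\in Z_{(a)}$ one chooses $z'\in V_{(a+1)}$ whose closure passes through $z$ and is regular there (\cite[Lemma 7.2]{SS}), and uses the localization sequence for the normalization of the resulting one-dimensional semi-local ring together with the hypothesis that $H$ is leveled above $e$ to show that $E^1_{a+1,e}(X)\to E^1_{a,e}(Z)$ is surjective. Without this step your argument only proves (ii) on a subgroup of $KH_a(X)$.
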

The theorem will be shown here only in case each element of the coefficient group $\Lam$ is killed by an integer invertible on the base scheme $S$. The general case requires completely revisiting Rost's pullback construction \cite{R} in the absence of purity and will be explained elsewhere.

The above theorem is deduced from the following more general statement:

\begin{theo}\label{PBTheo}
Assume that $H$ extends to a homology theory with duality, see Definition~\ref{def.Hdual}.
Then $H$ satisfies {\rm (}{\bf PB}{\rm )}.
\end{theo}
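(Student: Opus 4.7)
The strategy I would take is to factor an embeddable morphism $f : X' \to X$ as $X' \stackrel{i}{\hookrightarrow} P_X \stackrel{\pi_X}{\to} X$, with $\pi_X$ smooth and $i$ a (possibly regular) immersion, and to construct $f^*$ as a composition of two separate pullbacks, one for each step. The pullback along the smooth morphism $\pi_X$ should come essentially for free from the duality datum: a homology theory with duality provides, for a smooth morphism of relative dimension $r$, a canonical isomorphism identifying $H_{a}(\pi_X)$ with the cohomology of $X$ (up to appropriate shift and twist) through the relative fundamental class, from which $\pi_X^*$ is induced. The same formalism produces a pullback on the level of the niveau spectral sequence, and hence on $KH_\ast$, by transporting along the smooth base change together with the standard identification of the $E^1$-terms via residue fields of codimension-zero generizations.

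For the immersion step, my plan is to use Rost-style deformation to the normal cone. After possibly replacing $i$ by a regular closed immersion (and reducing the general immersion case to the closed one by restricting to the open set where it becomes closed, using the existing restriction map for open immersions), one forms the deformation space $D(P_X , X')$ whose generic fiber is $P_X$ and whose special fiber is the normal cone $C = C_{X'/P_X}$. The inclusion $X' \hookrightarrow C$ as the zero section is split by the projection $C \to X'$; meanwhile the pair of inclusions $P_X \hookrightarrow D$ and $C \hookrightarrow D$ give specialization morphisms in the long exact localization sequence of the homology theory. The Gysin pullback $i^\ast$ is then defined as the composition: restrict to $D$, specialize to the normal cone via the connecting map of the localization sequence, then pull back along the smooth projection $C \to X'$, which is available by the smooth case already handled. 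The duality hypothesis is what guarantees that the specialization is the right-degree shift and that this composition lands back in $H_a(X')$ (rather than in some shifted group).

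To transport this to the Kato complex, I would run the same construction on the filtered pieces of the niveau filtration: the deformation to the normal cone respects the dimension filtration on both $D$ and its fibers, so the specialization map descends to a map of $E^1$-pages, and hence to $d_1$-cohomology, i.e.\ to $KH_\ast$. Compatibility of this construction with the edge homomorphism $\epsilon^a$, condition (i), is formal because $\epsilon^a$ is itself an edge map of the spectral sequence produced by the abstract niveau machinery; both the homology pullback and the Kato pullback are induced by a single map of spectral sequences, so (i) holds on the nose.

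The most delicate point, which I expect to be the main obstacle, is the projection formula (ii): for $f$ proper and dominant, $f_\ast f^\ast = \deg(X'/X)$ on $KH_a(X)$. At the level of homology itself, this is the classical projection formula for Rost-type intersection theory, and will follow once one checks that the Gysin construction via the normal cone is independent of the chosen factorization and that it commutes with proper pushforward on the deformation space. The verification amounts to comparing two specialization calculations on $D \times_X X'$ and tracking the fundamental class through the two long exact sequences; it is routine when purity is available but, as the authors warn, requires Rost's intersection theory to make sense without purity. Under the standing assumption that $H$ has a duality, however, purity is part of the duality package, and the degree formula reduces to the standard computation that for a finite flat morphism of regular schemes of the same dimension, the composition of the Gysin pullback with the trace is multiplication by the degree. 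Once (i) and (ii) are established, Theorem~\ref{PBthm} follows by verifying that each homology theory of Section~\ref{homology} with coefficients killed by an integer invertible on $S$ carries such a duality structure, which is precisely Lemma~\ref{lemHK1smooth} together with absolute purity.
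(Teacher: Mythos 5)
Your construction of the pullback follows the paper's route: factor $f$ as an immersion into a smooth scheme over the base, handle the smooth (flat) part via the duality/coniveau formalism, and handle the immersion by deformation to the normal cone, with compatibility (i) coming for free because everything is a map of (co)niveau spectral sequences. One point you gloss over in that part: after specializing to the normal cone $C=N_{X'}P_X$ you need to get \emph{back} to $X'$, i.e.\ you need the flat pullback $X'\rightsquigarrow C$ along the vector bundle projection to be an \emph{isomorphism} of spectral sequences at the $E_2$-level so that you can invert it. This homotopy invariance is the central technical proposition of the paper's Section on pullbacks; it is proved by transplanting Rost's cycle-module argument (the explicit inverse $\pi^*\circ\{-1/t\}\circ\partial$ on $X\times(\mathbb{A}^1\setminus\{0\})$, followed by a noetherian induction to pass from trivial to arbitrary vector bundles), and it does not follow formally from the duality axioms.

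The genuine gap is in condition (ii). You reduce the degree formula to "the standard computation that for a finite flat morphism of regular schemes the composition of Gysin pullback with the trace is multiplication by the degree," but the morphisms $f:X'\to X$ occurring in ({\bf PB}) (alterations) are only proper, dominant and \emph{generically} finite: $f$ is finite and flat only over some dense open $V\subset X$, and over $Z=X\setminus V$ it can have positive-dimensional fibres, so there is no global trace map and the finite-flat computation does not apply. The paper needs two further lemmas to close this: first, a moving lemma (Lemma~\ref{imall}) showing that every class in $KH_a(X)=E^2_{a,e}(X)$ is represented by a cycle supported on $V$ --- this uses the choice of auxiliary points from \cite[Lemma 7.2]{SS} and the vanishing $H_a(A)=0$ for semilocal one-dimensional rings coming from the hypothesis that $H$ is leveled above $e$; and second, a support argument (Lemma~\ref{onim}) showing that for such a class the part of $f^*\alpha$ supported over $Z$ pushes forward to zero because it lands in $E^1_{a,e}(Z\cap Y)$ with $\dim_S(Z\cap Y)<a$, so that only the naive flat pullback over $V$ contributes and the classical degree formula applies there. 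Without these two steps the identity $f_*f^*=\deg(X'/X)$ on $KH_a(X)$ is not established, and it is exactly this identity that the main theorem uses to kill classes of order prime to $\ell$.
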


It is an easy routine to verify that the homology theories in the examples 
in Section~\ref{homology} satisfy the assumption of Theorem \ref{PBTheo} under the above restriction on the characteristic
(see Examples \ref{PBexam1}, \ref{PBexam2} and \ref{PBexam3}).
An alternative shorter proof of Theorem~\ref{PBthm} for the homology $H^\et$ from Example~\ref{exHK1} is given in the next section.
\medbreak

The following definition of homology theory with duality is a combination of the notion of a twisted Poincar\'e duality theory, see 
\cite{BO} and \cite{J1}, and of Rost's cycle modules~\cite{R}. As a general remark we mention that in \cite{R} it is assumed that the base is a field. Nevertheless the results that
we tranfer from~\cite{R} hold over a Dedekind ring too.

\begin{defi}\label{def.HTD}
A cohomology theory with supports is given by abelian groups $H^a_Y(X,n)$ 
for any closed immersion $Y\hookrightarrow X$ of schemes in $\cCS$ and for any $a,n\in \Z$,
which satisfies the following conditions. For $Y=X$, $H^a_Y(X,n)$ is simply denoted by
$H^a(X,n)$. 

\renewcommand{\labelenumi}{(\arabic{enumi})}
\begin{enumerate}

\item The groups $H^a_Y(X,n)$ are contravariant functorial for Cartesian squares
\[
\xymatrix{
Y' \ar@{^{(}->}[r] \ar[d] & X' \ar[d]^f \\
Y \ar@{^{(}->}[r]  & X  ,
}
\]
i.e.~there exists a pullback $f^*: H^a_Y(X,n) \to H^a_{Y'}(X',n)$.

\item For a closed immersion $Y\hr X$ there is a cup-product
\[
\Gamma (\OO_X^\times ) \otimes H^a_Y(X,n) \lr H^{a+1}_Y(X,n+1)
\]
compatible with pullback.
For $t\in \Gamma(\OO_X^\times )$ we denote the homomorphism $$t\cup - :  H^a_Y(X,n) \to H^{a+1}_Y(X,n+1) $$
by the symbol $\{t\} $.

\item \label{pullback.locseq} For closed immersions $Z\hookrightarrow Y\hookrightarrow X$ there is a long exact sequence 
\[
\cdots \lr H^a_Z(X,n) \lr H^a_Y(X,n) \lr H^a_{Y-Z}(X-Z,n) \stackrel{\partial}{\lr} H^{a+1}_Z(X,n)\lr \cdots
\]
compatible with pullbacks. Note that this implies in particular
\[
H^a_Y(X,n) \simeq H^a_{Y_{red}}(X,n)
\]
for a closed immersion $Y\hookrightarrow X$ and the reduced part $Y_{red}$ of $Y$.

\item Consider a Cartesian diagram
\[
\xymatrix{
Y' \ar@{^{(}->}[r] \ar[d] & X' \ar[d]^f \\
Y \ar@{^{(}->}[r]  & X  
}
\]
such that $f$ is finite and flat and $X,X'$ are regular. Then there is a pushforward map
\[
f_*:H^a_{Y'}(X',n)\to  H^a_{Y}(X,n)
\]
compatible with the localization sequence from~\eqref{pullback.locseq} and such that
$f_* \circ f^* $ equals multiplication by $\deg(X'/X)$ if $X$ is connected.

\item  For  closed immersions $Z\hr Y\hr X$  and for $t\in \Gamma (\OO_X^\times )$ the diagram
\[
\xymatrix{
 H^a_{Y-Z}(X-Z,n) \ar[r]^{\partial} \ar[d]_{-\{t\}} & \ar[d]^{\{t\}} H^{a+1}_Z(X,n) \ar[r] & H^a_Y(X,n) \ar[d]^{\{t\}} \\
  H^{a+1}_{Y-Z}(X-Z,n+1) \ar[r]_{\partial}  &  H^{a+2}_Z(X,n+1)    \ar[r]  &   H^{a+1}_Y(X,n+1) 
}
\]
commutes.

\item If $Z\hr Y\hr X$ are  closed immersions of schemes in $\cCS$, such that $Y$
has pure codimension $c$ in $X$ and $X$ is regular, then there is a Gysin homomorphism
\[
P_{X,Y} :  H^a_Z(Y,n) \lr   H^{a+2c}_Z(X,n+c) 
\]
transitive in an obvious sense and such that for closed immersions $Z\hr Z'\hr Y$ the diagram
\[
\xymatrix{
  H^a_Z(Y,n)   \ar[r] \ar[d]_{P_{X,Y}}  &   H^a_{Z'}(Y,n) \ar[r]  \ar[d]_{P_{X,Y}} &   H^a_{Z'-Z}(Y-Z,n)  \ar[r]^{\partial} \ar[d]_{P_{X,Y}}  &    H^{a+1}_Z(Y,n)  \ar[d]_{P_{X,Y}}  \\
H^{a+2c}_Z(X,n+c)  \ar[r]   &  H^{a+2c}_{Z'}(X,n+c)    \ar[r]    & H^{a+2c}_{Z'-Z}(X-Z,n+c)  \ar[r]^\partial  & H^{a+2c+1}_Z(X,n+c)    
}
\]
commutes.

\item If $X$ and $Y$ are regular, the Gysin morphism $P_{X,Y}$ is an isomorphism, called purity isomorphism.

\item For closed immersions  $Z\hr Y\hr X$ as in (6) and for $t\in \Gamma (\OO_X^\times )$ the diagram
\[
\xymatrix{
 H^*_Z(Y,*) \ar[d]^{\{ \bar t \}}  \ar[r]^{P_{X,Y}} &  H^*_Z(X,*)  \ar[d]_{\{t\}}  \\
  H^*_Z(Y,*)  \ar[r]_{P_{X,Y}}    &  H^*_Z(X,*)       }
\]
commutes. Here $\bar t$ is the restriction of $t$ to $Y$.

\item
Consider Cartesian squares 
\begin{equation}\label{pull.carsq}
\xymatrix{
Z' \ar@{^{(}->}[r] \ar[d] &  Y' \ar@{^{(}->}[r] \ar[d]_g & X' \ar[d]^f \\
Z \ar@{^{(}->}[r] & Y \ar@{^{(}->}[r]  & X  .
}
\end{equation}
with $f$ flat and $X,X',Y,Y'_{red}$ regular. Let $\eta^i_Y$ for $1\leq i\leq r$ be the maximal points of $Y'$  and set $m^i = \mathrm{length}(\OO_{Y',\eta^i_Y}) $. 
Then the diagram
\[
\xymatrix{
 H^*_Z(Y,*) \ar[d]^{ \sum m^i\, (g^{i})^* } \ar[r]^{P_{X,Y}} &  H^{*}_{Z}(X,*)  \ar[d]_{f^*} \\ 
  H^*_{Z'}(Y',*)  \ar[r]_{P_{X',Y'}}   &  H^{*}_{Z'}(X',*) 
}
\]
commutes. Here $g^i$ means the induced morphism $\overline{\{ \eta^i \} } \to Y$.

\item Consider  Cartesian squares as in \eqref{pull.carsq}
with $f$ finite and flat. Then
the diagram
\[
\xymatrix{
H^{*}_{Z'}(Y',*)\ar[d]_{f_*} \ar[r]^{P_{X',Y'}}  &  H^{*}_{Z'}(X',*) \ar[d]_{f_*} \\
  H^{*}_Z(Y,*)   \ar[r]^{P_{X,Y}}    & H^{*}_{Z}(X',*)
}
\] 
commutes.

\item For an equidimensional regular  scheme $X$ and $t\in \Gamma (\OO_X )$ such that $Y:=V(t)$ is regular,
the diagram 
\[
\xymatrix{
H^a(Y,n) \ar[rr]_{P_{X,Y}}  & &  H^{a+2}_Y(X,n+1)  \\
 \ar[r]H^a(X,n) \ar[u]_{i^*} & \ar[r]^/-0.8em/{\{t\}} H^a(U,n) & \ar[u]_{\partial} H^{a+1}(U,n+1)  
}
\]
commutes. Here $U=X-Y$ and $i:Y\hr X$ is the closed immersion.

\end{enumerate}
\end{defi}

\medskip

\begin{defi}\label{def.Hdual}
We say that a homology theory $H$ extends to a homology theory with duality if there exists a cohomology theory with supports 
$(X,Y)\mapsto H_Y^*(X,*)$ in the sense of Definition~\ref{def.HTD}
and an integer $n_0$ 
such that for a regular equidimensional $X\in \cCS$ of dimension $d$  there is an isomorphism 
$$D_X : H_a(X) \stackrel{\sim}{\lr} H^{2d-a}(X,d+n_0)$$
with the following properties:
\renewcommand{\labelenumi}{(\arabic{enumi})}
\begin{enumerate}
\item For a closed immersion of regular schemes $Y\hr X$ and for $U=X-Y$ the diagram
\[
\xymatrix{
  H_*(U)  \ar[r]^\partial \ar[d]^{D_U}  & H_*(Y)  \ar[r] \ar[d]_{D_Y} & H_*(X)  \ar[d]_{D_X}   \\
H^*(U,*) \ar[r]^\partial &  H^*(Y,*) \ar[r]  &  H^*(X,*) 
}
\]
commutes. Here the lower row is a combination of purity in \ref{def.HTD}(7) and 
the localization sequence in \ref{def.HTD}(3).

\item For an open immersion $U\to X$ of regular schemes in $\cCS$ the diagram
\[
\xymatrix{
H_*(X) \ar[r] \ar[d]^{D_X} &  H_*(U)  \ar[d]_{D_U} \\
H^*(X,*)  \ar[r]  &  H^*(U,*)
}
\]
commutes.

\item If $f:X' \to X$ is finite and flat and $X,X'$ are regular the diagram
\[
\xymatrix{
H_*(X')  \ar[d]^{f_*}  \ar[r]_{D_{X'}} &  H^*(X',*) \ar[d]_{f_*} \\
 H_*(X)  \ar[r]_{D_{X}} &  H^*(X,*) 
}
\]
commutes
\end{enumerate}
\end{defi}

\bigskip

The following examples show that the homology theories in the examples in Section~\ref{homology} 
extend to homology theories with duality under our assumption on characteristic. 
\medbreak

\begin{exam}\label{PBexam1}
Let $S=\Spec(k)$ be the spectrum of a perfect field and
fix a $G_k$-module $\Lambda$ whose elements are of fintie order prime to $\chara(k)$.
We define \'etale homology of $\pi : X\to S$ to be
\[
H^\et_a (X) = H^{-a}(X_\et , R\, \pi^! \Lambda  )
\]
and \'etale cohomology with support in $Y\hr X$ to be
\[
H_Y^a (X,n) = H^a_Y(X_\et , \Lambda (n)) .
\]
This defines a cohomology theory with support in the sense of Definition~\ref{def.HTD}  and there is an isomorphism
$$H_a(X)\cong H^{2d-a}(X,d)$$
for regular equidimensional $X$ of dimension $d$ by \cite[XVIII Theorem 3.2.5]{SGA4} .
\end{exam}
\medbreak

\begin{exam}\label{PBexam2}
Let $S=Spec(k)$ be the spectrum of the perfection of a finitely generated field and
fix a finite $G_k$-module $\Lambda$ of order prime to $\chara(k)$. 
Let $d_k$ be the Kronecker dimension of $k$ and $d$ the dimension of $X$.
We define homology of $\pi:X\to S$ to be
\[
H^D_a(X) = \Hom (H^a_c (X_\et ,\Lambda^\vee ),\Q/\Z )
\]
and cohomology with support in $i:Y\hr X$ to be
\[
H_Y^a(X,n) =  H^{a+2d_k+1}_c( k, R \, \pi^Y_*\, R\, i^!\, \Lambda (n+d_k) ) 
\]
where $\pi^Y: Y \to \Spec(k)$ is the canonical morphism.
For the definition of Galois cohomology of $k$ with compact support and for Artin-Verdier duality see Section~\ref{duality}. If $\Lambda$ is a torsion $G_k$-module 
whose elements are of order prime to $\chara(k)$,
we define the corresponding homology and cohomology groups as the direct limit over all finite submodules.
This defines a cohomology theory with support and we get that $H^D$ is a homology theory with duality (see Corollary~\ref{coro.compact0}).
\end{exam}
\medbreak

\begin{exam}\label{PBexam3}
Let $S=\Spec(R)$ be the spectrum of a henselian discrete valuation ring.
Denote by $s$ (resp.~$\eta$) the closed (resp.~generic) point of $S$. 
Fix a unramified $G_\eta$-module $\Lambda$ whose elements are of finite order prime to $\chara(k(s))$. Define homology of $\pi :X\to S$ to be
\[
H^\et_a(X) = H^{2-a}(X_\et , R\, \pi^! \Lambda (1)) 
\]
and cohomology with support in $Y\hr X$ to be
\[
H_Y^a (X,n ) = H^{a}_Y(X_\et ,\Lambda (n) ) .
\]
This defines a cohomology theory with support (see \cite[Lemma 1.8]{SS}).
\end{exam}
\bigskip

Assume we are given a homology theory $H$. 
Recall that the niveau spectral sequence for $X\in \cCS$ reads
\[
E^1_{a,b}(X) = \bigoplus_{x\in X_{(a)}} H_{a+b}(x)  \Longrightarrow H_{a+b}(X) .
\]
It has a pushforward morphism induced by a proper morphism of schemes~\cite{BO}.
On the other hand the coniveau spectral sequence for a cohomology theory with support reads
\[
E_1^{a,b}(Y\hr X,n) = \bigoplus_{x\in X^{(a)} \cap Y} H_x^{a+b}(X,n) \Longrightarrow H^{a+b}_Y(X,n) 
\]
for a closed immersion $Y\hr X$ and for $n\in \Z$.  
If a homology theory $H$ extends to a homology theory with duality in the sense of
Definition~\ref{def.Hdual} then the two spectral sequences are canonically isomorphic up to indexing. For example we have isomorphisms for regular $X=Y$ of dimension $d$:
\[
E_1^{a,b}(X,d+n_0) \cong E^1_{d-a,d-b}(X).
\]

In order to construct a pullback for Kato homology we will work with cohomology theories with support.
Our next definition makes the term `morphism of coniveau spectral sequences' precise.

\begin{defi}\label{def.MCSS}
For $X,Y \in \cCS$ a morphism of coniveau spectral sequences \[ \xymatrix{f:X \ar@{~>}[r]& Y} \] (of degree $(a_0,b_0,m_0)$ and level $t\ge 1$) is given by the following data:
\begin{enumerate}
\item For $r\ge t$ there are homomorphisms 
\[
f_r : E_r^{a,b} (X,n) \lr E_r^{a+a_0,b+b_0} (Y,n+m_0)
\]
such that the following diagram is commutative
\[
\xymatrix{
E_r^{a,b} (X,n) \ar[rr]^{(-1)^{a_0+b_0} d_r} \ar[d]_{f_r} & & E_r^{a+r,b-r+1} (X,n)  \ar[d]^{f_r} \\
E_r^{a+a_0,b+b_0} (Y,n+m_0) \ar[rr]_{d_r}  & & E_r^{a+r+a_0,b-r+1+b_0} (Y,n+m_0)  
}
\]
and such that $f_{r+1}$ and $f_r$ are compatible with respect to the diagram.

\item There are homomorphisms 
\[
f_{tot} : H^{a+b}(X,n)  \lr  H^{a+b+a_0+b_0}(Y,n+m_0 ) 
\]
compatible with the limit homomorphisms $$f_\infty : E_\infty^{a,b} (X,n) \lr E_\infty^{a+a_0,b+b_0} (Y,n+m_0) .$$
\end{enumerate}
\end{defi}
\medbreak

In the following we give some examples of morphisms of coniveau spectral sequences. 
\medbreak

\noindent {\bf Flat pullback:} For a flat morphism $f:Y\to X$ of schemes in $\cCS$ it is shown in \cite{BO} that there arises from \ref{def.HTD}(1) a morphism of
coniveau spectral sequences
\[
 \xymatrix{f^*:X \ar@{~>}[r]& Y} 
\]
of degree $(0,0,0)$ and level $1$.

\medskip

\noindent {\bf Cup-Product:} Let $X$ be in $\cCS$.  By taking  cup-product  with an element  $t\in \Gamma (\OO_X^\times )$ (cf. \ref{def.HTD}(2)), we get a morphism
of coniveau spectral sequences
\[
\xymatrix{ \{t\} :  X \ar@{~>}[r]& X  }
\]
of degree $(0,1,1)$ and level $1$.

\medskip

\noindent {\bf Pushforward by closed immersion:}  
For a closed immersion $i:Y\hr X$ of codimension $c$ of regular schemes in $\cCS$, 
we get a pushforward morphism of coniveau spectral sequences
\[
\xymatrix{
i_* :  Y \ar@{~>}[r] & X   }
\]
of degree $(c,c,c)$ and level $1$ which is given as the composition 
\[ 
E_r^{a-c,b-c}(Y,n-c) \stackrel{\sim}{\lr} E_r^{a,b}(Y\hr X,n) \lr E_r^{a,b}(X,n).
\]
Here the former morphism is induced by purity in \ref{def.HTD}(7).

\medskip

\noindent {\bf Residue:}  
For a closed immersion $Y\hr X$ of codimension $c$ of regular schemes in $\cCS$ with $U=X-Y$, we get a residue morphism of coniveau spectral sequences
\[
\xymatrix{
\partial : U  \ar@{~>}[r] & Y   }
\]
of degree $(1-c,-c,-c)$ and level $1$ which is given as the composition 
\[ 
E_r^{a,b}(U,n) \lr E_r^{a+1,b}(Y\hr X,n) \stackrel{\sim}{\lr} E_r^{a+1-c,b-c}(Y,n-c) .
\]
Here the former morphism arises from \ref{def.HTD}(3) and the latter isomorphism 
is induced by purity in \ref{def.HTD}(7). For $r=2$ we have a long exact sequence
\begin{equation}\label{eq.locseq}
\cdots \rmapo{\partial} E_2^{a-c,b-c}(Y,n) \rmapo{i_*} E_2^{a,b}(X,n) 
\rmapo{j^*} E_2^{a,b}(U,n) \rmapo{\partial}  E_2^{a+1-c,b-c}(Y,n) \to \cdots\\
\end{equation}
where $j:U\hookrightarrow X$ is the open immersion.
\medskip

\noindent {\bf Finite flat Pushforward:}
For a finite flat morphism $f:X'\to X$ between regular schemes in $\cCS$,
there arises from \ref{def.HTD}(4) a morphism of coniveau spectral sequences
\[
 \xymatrix{f_*:X' \ar@{~>}[r]& X} 
\]
of degree $(0,0,0)$ and level $1$.

\medskip

The following proposition on homotopy invariance is crucial. 

\begin{prop}
For regular $X\in \cCS$  and for a vector bundle $f:V\to X$ the flat  pullback along $f$ induces  an isomorphism of coniveau spectral sequences of degree $(0,0,0)$ and level $2$
\[
\xymatrix{f^* : X \ar@{~>}[r] & V \;.  }
\]
In particular $f^*:H^*(X,*) \to H^*(V,*)$ is an isomorphism.
\end{prop}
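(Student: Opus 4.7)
The statement is a homotopy invariance result for the coniveau spectral sequence associated to a cohomology theory with duality, in the style of Bloch--Ogus and Rost. My plan is to proceed in three steps: reduce to a trivial affine line bundle, exhibit a section-splitting of $f^*$ coming from the zero section, and close the argument using the Gysin localization sequence.

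The first step is to reduce to the case of a trivial affine line bundle. Vector bundles are Zariski-locally trivial, so using the localization exact sequence \eqref{eq.locseq} together with a Mayer--Vietoris argument (built out of localization and purity) one can descend over a trivializing cover of $X$ to the case $V = \mathbb{A}^n_X$. Induction on $n$ via the factorization $\mathbb{A}^n_X = \mathbb{A}^1_{\mathbb{A}^{n-1}_X}$ then reduces us to the case of a single affine line $f : \mathbb{A}^1_X \to X$. The zero section $s : X \hookrightarrow \mathbb{A}^1_X$ satisfies $f \circ s = \mathrm{id}_X$, and functoriality of the pullback on each page of the spectral sequence gives $s^* \circ f^* = \mathrm{id}$ on $E_r$ for all $r \geq 2$. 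Hence $f^*$ is split injective on every page.

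For surjectivity I would apply the localization sequence \eqref{eq.locseq} to the codimension-one closed immersion $i_0 : X \hookrightarrow \mathbb{A}^1_X$ with open complement $j : \mathbb{G}_m \times X \hookrightarrow \mathbb{A}^1_X$:
\[
E_2^{a-1,b-1}(X,n-1) \xrightarrow{(i_0)_*} E_2^{a,b}(\mathbb{A}^1_X,n) \xrightarrow{j^*} E_2^{a,b}(\mathbb{G}_m\times X,n) \xrightarrow{\partial} E_2^{a,b-1}(X,n-1).
\]
Using the cup product with the coordinate function $t \in \Gamma(\mathcal{O}_{\mathbb{G}_m \times X}^\times)$ from axiom \ref{def.HTD}(2) and the compatibilities in \ref{def.HTD}(5) and \ref{def.HTD}(11), I would exhibit a canonical decomposition
\[
E_2^{a,b}(\mathbb{G}_m \times X, n) \;\cong\; p^* E_2^{a,b}(X,n) \;\oplus\; \{t\}\cdot p^* E_2^{a,b-1}(X,n-1),
\]
where $p : \mathbb{G}_m \times X \to X$ is the projection, in which the residue $\partial$ annihilates the first summand and identifies the second with $E_2^{a,b-1}(X,n-1)$. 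Since $f \circ j = p$, the pullback $f^*$ factors through $j^*$ onto the summand $p^* E_2^{a,b}(X,n)$, and combined with split injectivity this produces the desired isomorphism on $E_2$. All higher pages then follow formally because $f^*$ is a morphism of spectral sequences.

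The technical heart of the proof lies in the Gysin decomposition at the $E_2$-level displayed above. While the analogous formula is classical for étale cohomology, verifying it purely from the axioms of Definition~\ref{def.HTD} requires a careful combination of \ref{def.HTD}(3), (5), (6), (7), (8) and (11) to show that $\{t\}\cdot p^*$ splits $\partial$ up to sign. A secondary but nontrivial obstacle is the Zariski descent used in Step 1: Definition~\ref{def.HTD} does not include Mayer--Vietoris as an axiom, so one has to bootstrap it from the localization sequences attached to complements of pairs of open subsets in a trivializing cover, proceeding by induction on the number of charts.
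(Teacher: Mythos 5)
Your overall skeleton (reduce to a trivial bundle, then to $\mathbb{A}^1_X$ by induction, then treat the affine line) matches the paper's, but the argument for the affine line is circular at its key step. You invoke $s^*\circ f^*=\mathrm{id}$ ``on each page of the spectral sequence'' for the zero section $s:X\hookrightarrow\mathbb{A}^1_X$. However, $s$ is a closed immersion, and in this framework the pullback along a closed immersion \emph{as a morphism of coniveau spectral sequences} does not exist a priori: it is constructed only after this proposition, via deformation to the normal cone, and that construction explicitly requires inverting the flat pullback along the normal bundle $N_YX\to Y$ --- i.e.\ it uses the very homotopy invariance being proved. Axiom \ref{def.HTD}(1) gives $s^*$ only on the groups $H^*_Y(X,n)$, not on $E_1$ or $E_2$. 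Your surjectivity step has the same defect in a different guise: the decomposition $E_2^{a,b}(\mathbb{G}_m\times X,n)\cong p^*E_2^{a,b}(X,n)\oplus\{t\}\,p^*E_2^{a,b-1}(X,n-1)$ at the $E_2$-level is a statement of exactly the same depth as the proposition (it is homotopy invariance for $\mathbb{A}^1$ combined with localization); you identify it as the technical heart but do not prove it, and axioms (2), (5), (11) alone do not yield it. The paper sidesteps both issues by writing down an explicit inverse to $f_2^*$ in the style of Rost's cycle modules: restrict to $X\times(\mathbb{A}^1-\{0\})$, cup with $\{-1/t\}$, and apply the residue attached to the closed immersion $X\times\{\infty\}\hookrightarrow X\times(\mathbb{P}^1-\{0\})$; that this is a two-sided inverse is checked on the $E_2$-level using the fact that $E\mapsto H^*(E,*)$ on function fields satisfies Rost's axioms, so that the argument of \cite[Section 9]{R} transfers verbatim.

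There is also a gap in your reduction to the trivial case. The localization sequence \eqref{eq.locseq} on the $E_2$-page is available only for a closed immersion $Y\hookrightarrow X$ with $Y$ \emph{regular}, since its construction uses the purity isomorphism \ref{def.HTD}(7). The closed complements and intersections arising in a Mayer--Vietoris argument over an arbitrary trivializing cover will in general not be regular, so the descent cannot be bootstrapped from \eqref{eq.locseq} as you propose. The correct device, and the one used in the paper, is to choose a chain of closed subschemes $X_0\subset X_1\subset\cdots\subset X_r=X$ whose successive differences are regular and over which $V$ is trivial, and then to argue by noetherian induction on $\dim(X)$, comparing the localization sequences \eqref{eq.locseq} for $X$ and for $V$ and applying the five lemma together with the already-established trivial case.
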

\begin{proof}
It is sufficient to show that $f$ induces an isomorphism on $E_2^{*,*}$-level.
First we treat the special case where $V\to X$ is a trivial vector bundle, namely
$V=X\times \mathbb{A}^n \to X$ is the natural projection. 
By induction on $n$ we are reduced to the case $V=X\times \mathbb{A}^1$.
This can be shown by transferring the argument of \cite[Section 9]{R} to our setting.
For the function field $E$ of an integral scheme $X\in \cCS$, write
\[
H^*(E,*)=\indlim {U\subset X} H^*(U,*),
\]
where $U$ ranges over all open subschemes of $X$. Then the functor
$E\mapsto H^*(E,*)$ from function fields of integral schemes in $\cCS$ 
to abelian groups satisfies properties similar to Rost's axioms for cycle modules.  
Now an inverse to $f_2^*:E_2^{*,*}(X) \to E_2^{*,*}(V)$ is given by
\[
\xymatrix{
X\times \mathbb{A}^1 \ar@{~>}[r]  & X\times (\mathbb{A}^1- \{ 0\}) \ar@{~>}[r]^{\{ -1/t\} }  &  X\times (\mathbb{A}^1- \{ 0\} )
\ar@{~>}[r]^/1.4em/{\partial}  & X .
}
\]
Here $\partial$ is the residue morphism corresponding to the closed immersion $X\times \{\infty \} \hr  X\times (\mathbb{P}^1- \{ 0\} )$. 

We now reduce the general case to the special case.
We proceed by induction on $\dim(X)$. First we assume that there exists
a regular closed subscheme $Y\subset X$ such that $V\to X$ is a trivial vector bundle
over $U=X-Y$. We have the commutative diagram
\[
\xymatrix{
\cdots \ar[r]^/-1.1em/{\partial}   & E_2^{a-c,b-c}(Y,n) \ar[d]^{f^*}  \ar[r]^{i_*} &  E_2^{a,b}(X,n)   \ar[r]^{j^*} \ar[d]^{f^*} & E_2^{a,b}(U,n) \ar[r]^/0.5em/{\partial}  \ar[d]^{f^*} & \cdots \\
\cdots  \ar[r]^/-1.8em/{\partial}   &    E_2^{a-c,b-c}(f^{-1}(Y),n) \ar[r]^/0.4em/{i_*}  &   E_2^{a,b}(f^{-1}(X),n)  \ar[r]^{j^*}  & E_2^{a,b}(f^{-1}(U),n) 
\ar[r]^/1.4em/{\partial}  &  \cdots
}
\]
where the horizontal long exact sequences come from \eqref{eq.locseq}.
By induction and by what we have shown, this proves that
$f_2^*:E_2^{*,*}(X) \to E_2^{*,*}(V)$ is an isomorphism.

In general there is a sequence of closed subschemes
\[
X_0\subset X_1\subset \cdots X_r=X
\]
such that $X_i-X_{i-1}$ for $1\leq i\leq r$ are regular and that
$V\to X$ is a trivial vector bundle over $X-X_r$.
This reduces the proof to the previous case and the proof is complete.
\end{proof}
\medbreak

Using homotopy invariance we can now define the pullback morphism of coniveau spectral sequences along closed immersions of regular schemes.
For such a closed immersion $i:Y \hr X$ let $D(X,Y)$ be the blow-up of $X\times \mathbb{A}^1$ in $Y\times \{0 \}$  minus the blow-up of $X$ in $Y$.
Then $D(X,Y)$ is flat over $\mathbb{A}^1$, and the fiber over a point 
$p\in \mathbb{A}^1 - \{0\}$ (resp. $0\in \mathbb A^1$) is isomorphic to 
$X\times \{ p \}$ (resp. the normal cone $N_Y X$ of $Y$ in $X$), cf.~\cite[Sec.\ 10]{R}.

\medbreak

\noindent {\bf Pullback along immersions:}
Let $i:Y\to X$ be a closed immersion of regular schemes in $\cCS$. Using the normal cone we define the morphism of coniveau spectral sequences of degree $(0,0,0)$ and level $1$
\[
\xymatrix{ J(i) : X \ar@{~>}[r] &  N_Y X . } 
\]
as the composition 
\[
\xymatrix{ 
X \ar@{~>}[r]^/-1.5em/{\pi^*}  & X\times (\AAA^1 - \{0\} )  \ar@{~>}[r]^{\{t\}}  
& X\times (\AAA^1 - \{0\} ) \ar@{~>}[r]^/1em/{\partial} & N_Y X  .}
\] 
Here $\pi:X\times (\AAA^1 - \{0\} )\to X$ is the projection and 
$t$ is the standard coordinate of $\AAA^1$ and $\partial$ is the residue morphism corresponding to the closed immersion $N_Y X \hr D(X,Y)$.
Composing $J(i)$ with the inverse of the flat pullback along $N_Y X\to Y$ we get the pullback morphism along $i$ which is of degree $(0,0,0) $ and level $2$
\[
\xymatrix{ i^* : X \ar@{~>}[r] &  Y  .} 
\]
\medbreak

The proof of the next proposition is standard
(see ~\cite{Deg} for the argument on the $E_2^{*,*}$-level).

\begin{prop}\label{compimpull}
 The pullback along closed immersions satisfies:

\renewcommand{\labelenumi}{(\roman{enumi})}
\begin{enumerate}
\item The homomorphism $i^*_{tot}:H^*(X,*) \to H^*(Y,*) $ stemming from the normal cone construction is equal to the usual pullback from Definition \ref{def.HTD}(1).
\item 
The construction of pullback along closed immersions is functorial.
\item It is compatible with cup-products.

\item For a Cartesian square
\[
\xymatrix{
Y'  \ar[d] \ar[r] &  X' \ar[d]^{f} \\
Y \ar[r]_{i} &  X
}
\]
of regular schemes 
with closed immersion $i$ and flat $f$ the diagram
\[
\xymatrix{
Y'   &  X' \ar@{~>}[l]  \\
Y \ar@{~>}[u] &  X \ar@{~>}[l]^{i^*}  \ar@{~>}[u]_{f^*}
}
\]
commutes.

\item For two morphisms $Z\stackrel{i}{\to} Y \stackrel{p}{\to} X$ of regular schemes with $i$ a closed immersion, $p$ flat and $p\circ i$ flat,
we have
\[
(p\circ i)^* = i^* \circ p^* .
\]

\item For two morphisms $Z\stackrel{i}{\to} Y \stackrel{p}{\to} X$ of regular schemes with $i$ a closed immersion, $p$ smooth and $p\circ i$ a closed immersion,
we have
\[
(p\circ i)^* = i^* \circ p^* .
\]
\end{enumerate}
\end{prop}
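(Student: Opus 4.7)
The strategy is to verify each of (i)--(vi) at the level of the coniveau spectral sequences by reducing them to the axioms (1)--(11) of Definition \ref{def.HTD}. Recall that $i^*$ is built as the composition of four morphisms of spectral sequences: flat pullback along the projection $\pi: X\times(\AAA^1\setminus\{0\})\to X$, cup-product with the standard coordinate $t$, the residue at $N_Y X \hr D(X,Y)$, and the inverse of the flat pullback along the vector bundle $N_Y X\to Y$. Each of the six assertions is therefore a compatibility statement among these four basic operations, and most of the input is already present in Definition \ref{def.HTD}.

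For (i), I would apply Definition \ref{def.HTD}(11) to $D(X,Y)$ with the regular section $t$, whose zero locus is $N_Y X$: the composition of $\{t\}$, residue, and purity then equals the usual closed-immersion pullback to $N_Y X$. Restricting first to the fibre $X\times\{1\}$ of $D(X,Y)$ gives the identity on $H^*(X,*)$, so the three-step composite before inverting homotopy invariance agrees with the usual pullback $H^*(X,*)\to H^*(N_Y X,*)$; post-composing with the inverse of $N_Y X\to Y$ then returns $i^*$ as in Definition \ref{def.HTD}(1). Claim (iii) is immediate from axiom (5) of Definition \ref{def.HTD} combined with the compatibility of cup-product with flat pullback. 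Claim (iv) follows from the canonical identification $D(X',Y')\cong D(X,Y)\times_X X'$ plus the commutation of flat base change with cup-product and residues (axioms (3) and (9)). Claim (v) is a direct consequence of the functoriality of flat pullback from axiom (1). Finally, (vi) reduces to (ii), (iv), and the homotopy-invariance proposition via the Zariski-local factorisation of the smooth $p$ as $Y\to\AAA^n_X\to X$ with $Y\to\AAA^n_X$ \'etale, which splits the check into a trivial vector-bundle case (where homotopy invariance applies) and an \'etale base-change case covered by (iv).

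The principal obstacle is (ii), the functoriality $(i\circ j)^*=j^*\circ i^*$ for closed immersions $Z\stackrel{j}{\hr}Y\stackrel{i}{\hr}X$. The standard device is a double deformation space: blow up $X\times\AAA^2$ first along $Z\times\{(0,0)\}$ and then along the strict transform of $Y\times\AAA^1\times\{0\}$, producing a scheme which over one axis deforms $i\circ j$ to the Gysin morphism for $N_Z X$, and over the other axis realises the iterated deformation $N_Z Y \hr N_Y X$ underlying $j^*\circ i^*$. Equality of the two iterated pullbacks is then obtained by commuting residues past cup-products using axiom (5) of Definition \ref{def.HTD} and invoking the transitivity of purity in axiom (6), with the axiom (8) compatibility between $\{\cdot\}$ and $P_{X,Y}$ supplying the remaining sign bookkeeping. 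This is precisely the computation carried out at the $E_2^{*,*}$-level in D\'eglise's treatment referenced in the paper; extending to higher pages $r\geq 2$ is automatic because each constituent operation is already a morphism of coniveau spectral sequences commuting with all differentials $d_r$.
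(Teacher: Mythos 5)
You should first note that the paper itself gives no proof of this proposition: it says only that the proof is standard and points to D\'eglise for the $E_2^{*,*}$-level argument. Your plan is therefore not competing with an argument in the text but reconstructing the intended one, and for the most part you reconstruct it correctly: (i) via axiom (11) of Definition \ref{def.HTD} applied to $D(X,Y)$ and the coordinate $t$ with $V(t)=N_YX$, (iii) via axioms (2) and (5), (iv) via the flat base change $D(X',Y')\cong D(X,Y)\times_X X'$ together with axioms (3) and (9), and (ii) via the double deformation space --- this last being exactly the computation the paper delegates to \cite{Deg} (and to Rost's Sections 10--11). One point of care in (ii) that your sketch does not mention: to apply the purity and residue axioms to the double deformation space you must check that the relevant closed subschemes (the two special fibres and their intersection) are regular, which is where the regularity hypotheses on $X,Y,Z$ are consumed.

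There is, however, one step that would fail as written. For (v) you claim the identity $(p\circ i)^*=i^*\circ p^*$ is ``a direct consequence of the functoriality of flat pullback from axiom (1).'' It is not: the left-hand side is a flat pullback, but $i^*$ on the right-hand side is the normal-cone composite $\partial\circ\{t\}\circ\pi^*$ followed by the inverse of $H^*(Z,*)\to H^*(N_ZY,*)$, and nothing in axiom (1) identifies that composite with a flat pullback. The model case is $Y=X\times\AAA^1$, $Z=X\times\{0\}$, $p$ the projection, where the assertion is precisely the statement that the zero-section pullback inverts homotopy invariance --- this must be proved by an explicit computation with the residue at $X\times\{\infty\}$ (as in the proof of the homotopy-invariance proposition in the paper, transferred from \cite[Section 9]{R}), or by combining (i), (iv) and the identification of $N_ZY$ in this case. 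Statement (v) is also not dispensable: it is the input for the key case of (vi), namely a section of a smooth morphism, to which the general case of (vi) reduces via the graph factorisation $Z\to Z\times_XY\to Y$ together with (ii) and (iv); your proposed Zariski-local factorisation $Y\to\AAA^n_X\to X$ does not obviously apply, since the construction of $i^*$ is not a priori local on $X$ at the level of the spectral sequence. So the plan is sound in outline, but (v) needs a genuine argument and (vi) should be routed through the graph trick rather than a local factorisation.
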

\medbreak

It follows from Proposition~\ref{compimpull}~(iv) that one gets a pullback map along locally closed immersions of regular schemes with the same properties.
Using Proposition~\ref{compimpull}~(vi) and standard arguments, see~\cite{F} Section~6.6, one can now define a pullback along arbitrary embeddable morphisms between regular schemes.

\smallskip

\noindent {\bf General Pullback:} Let $f: Y\to X$ be an embeddable morphism of regular schemes  in $\cCS$, which factors as
\[
Y \stackrel{i}{\lr} P \stackrel{\pi}{\lr} X
\]
where $\pi$ is smooth and $i$ is an immersion. Define
\[
f^*= i^* \circ \pi^* .
\]

\begin{prop}\label{compgenpull}
The following holds for the general pullback along $f$:
\renewcommand{\labelenumi}{(\roman{enumi})}
\begin{enumerate}
\item The map $f_{tot}^*:H^*(X,*)  \to H^*(Y,*)$ is equal to the usual pullback map of cohomology.
\item The map $f^*$ does not depend on the factorization of $f$.
\item The general pullback construction is functorial. 
\item If $f$ is flat the general pullback coincides with the flat pullback.
\end{enumerate}
\end{prop}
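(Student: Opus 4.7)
My plan is to verify the four properties in the order (i), (iv), (ii), (iii). Property (i) is immediate: by Definition~\ref{def.HTD}(1) the flat pullback $\pi^*$ at the total level recovers the usual cohomological pullback, and by Proposition~\ref{compimpull}(i) so does the immersion pullback $i^*_{tot}$; functoriality of the usual cohomological pullback then gives $f^*_{tot} = i^*_{tot} \circ \pi^*_{tot}$ as the usual pullback along $f = \pi \circ i$.

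For (iv), given a factorization $Y \xrightarrow{i} P \xrightarrow{\pi} X$ with $f = \pi \circ i$ flat, I will decompose the immersion $i$ as $i_0 \circ j$ with $j: U \hookrightarrow P$ an open neighbourhood of $Y$ and $i_0: Y \hookrightarrow U$ a closed immersion. Both $\pi \circ j$ and $(\pi \circ j) \circ i_0 = f$ are flat, so Proposition~\ref{compimpull}(v) yields $f^*_{\mathrm{flat}} = i_0^* \circ (\pi \circ j)^*_{\mathrm{flat}} = i^* \circ \pi^*$, using functoriality of flat pullback and the fact that the immersion pullback along an open immersion agrees with flat pullback (evident from the normal-cone definition, the normal cone of an open immersion being trivial).

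For (ii), given two factorizations $Y \xrightarrow{i_k} P_k \xrightarrow{\pi_k} X$ for $k = 1, 2$, I will form the fiber product $P := P_1 \times_X P_2$ with smooth projections $p_k: P \to P_k$, diagonal immersion $\Delta := (i_1, i_2)$, and common structure map $\tilde{\pi}: P \to X$. Proposition~\ref{compimpull}(vi), applied after splitting $\Delta$ as open followed by closed as in (iv), gives $i_k^* = \Delta^* \circ p_k^*$; functoriality of flat pullback for the smooth maps gives $p_k^* \circ \pi_k^* = \tilde{\pi}^*$. Therefore $i_1^* \circ \pi_1^* = \Delta^* \circ \tilde{\pi}^* = i_2^* \circ \pi_2^*$, proving independence of the chosen factorization.

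For (iii), given $Z \xrightarrow{f} Y \xrightarrow{g} X$ with factorizations $Z \xrightarrow{j} Q \xrightarrow{q} Y$ and $Y \xrightarrow{i} P \xrightarrow{\pi} X$, the naive concatenation fails because $Q \to X$ need be neither smooth nor an immersion. I plan to follow the recipe of Fulton~\cite{F}, Section~6.6: embed $Z$ via a graph construction involving $i \circ q$ into a scheme smooth over $X$ to produce a single embeddable factorization of $g \circ f$, and then use (ii) together with parts (iv), (v), (vi) of Proposition~\ref{compimpull} to identify the resulting pullback with $f^* \circ g^* = j^* \circ q^* \circ i^* \circ \pi^*$. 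This step is the main technical obstacle; the detailed verification amounts to careful diagram chases in the spectral-sequence framework, with property (ii) playing the background role of ensuring the comparison is independent of ancillary choices.
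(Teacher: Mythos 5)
Your outline is correct and follows essentially the route the paper intends: the paper gives no written proof of Proposition~\ref{compgenpull}, deferring to Proposition~\ref{compimpull}(vi) and the ``standard arguments'' of \cite{F}, Section~6.6, and your arguments for (i) and (iv) together with the double-fibre-product/diagonal argument for (ii) are exactly that standard route. Part (iii) remains, as you acknowledge, a sketch of the graph construction, but that is the same level of detail as the reference on which the paper itself relies.
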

\bigskip

Now we can prove Theorem \ref{PBTheo}.
Passing from the coniveau spectral sequence to the niveau spectral sequence as discussed above gives us  
a pullback map of niveau spectral sequences for a morphism of regular schemes of the same dimension. In particular
we have produced the pullback maps in condition 
{\rm (}{\bf PB}{\rm )}.
The only thing one has to check is part (ii) of {\rm (}{\bf PB}{\rm )}. For this we need a detailed understanding of the
pullback construction on $E_1^{*,*}$-level as is given in the work of Rost~\cite{R} in terms of his Chow groups with coefficients.
Fix an integer $a\geq 0$.
Recall that for $X\in \cC$, the term in degree $a$ of the Kato complex $\KC X$ is
$$
E^1_{a,e}(X)=\sumd X a H_{a+e}(x) .
$$
Now let $f:X'\to X$ be as in {\rm (}{\bf PB}{\rm )}.
Choose a non-empty open subscheme $V\subset X$ such that $f$ is finite and flat over 
$V$. Putting $Z=X-V$, we have
\begin{equation}\label{eq.pullback}
E^1_{a,e}(X)=E^1_{a,e}(V)\oplus E^1_{a,e}(Z). 
\end{equation}
We put
$$
\Sigma=\ker\big(E^1_{a,e}(V)\hookrightarrow E^1_{a,e}(X) \rmapo{d^1} E^1_{a-1,e}(X)\big)
$$
and denote by $\im^V_*$ the image of $\Sigma$ in $\KH a X=E^2_{a,e}(X)$. 
The following two lemmas imply part (ii) in {\rm (}{\bf PB}{\rm )} and therefore finish the proof the the theorem.

\begin{lem}\label{onim}
The restriction of $f_*\circ f^*:\KH a X\to \KH a X$ to $\im^V_*$, i.e.
\[
f_*\circ f^* |_{\im^V_*} : \im^V_* \lr \KH a X ,
\]
is equal to the inclusion $\im^V_*\hr \KH a X$ multiplied by $\deg(X'/X)$.
\end{lem}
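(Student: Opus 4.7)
The plan is to reduce the claim to the finite-flat restriction $f|_{V'}\colon V'\to V$, where $V'=f^{-1}(V)$, and to invoke the standard identity $(f|_{V'})_*\circ (f|_{V'})^*=d\cdot\mathrm{id}$ for a finite flat morphism of degree $d=\deg(X'/X)$ at the $E^1$-level of the niveau spectral sequence.

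First I would verify the commutativity of the two squares in
\[
\xymatrix{
\KH a X \ar[r]^{j^*} \ar[d]_{f^*} & \KH a V \ar[d]^{(f|_{V'})^*} \\
\KH a {X'} \ar[r]^{(j')^*} \ar[d]_{f_*} & \KH a {V'} \ar[d]^{(f|_{V'})_*} \\
\KH a X \ar[r]^{j^*} & \KH a V
}
\]
where $j\colon V\hookrightarrow X$ and $j'\colon V'\hookrightarrow X'$ are the open immersions. The upper square follows from Proposition~\ref{compimpull}(iv)--(v) applied to a factorization of $f$ as a closed immersion followed by a smooth morphism, after transferring the coniveau statement to the niveau spectral sequence via the duality of Definition~\ref{def.Hdual}. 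The lower square is the standard compatibility of proper pushforward with open immersion built into Definition~\ref{def.homology.theory}(ii). Since $f|_{V'}\colon V'\to V$ is finite flat, Definition~\ref{def.HTD}(4) (dualized) gives $(f|_{V'})_*\circ (f|_{V'})^*=d\cdot\mathrm{id}$ already on the $E^1$-term $E^1_{a,e}(V)=\bigoplus_{v\in V_{(a)}}H_{a+e}(v)$, which contains $\Sigma$. Combining, for $\alpha\in\Sigma$ one has $j^*\bigl(f_*f^*[\alpha]-d[\alpha]\bigr)=0$, so that $f_*f^*[\alpha]-d[\alpha]\in\ker(j^*)=\mathrm{Im}\bigl(i_*\colon \KH a Z\to\KH a X\bigr)$.

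The main obstacle is to rule out this residual class coming from $\KH a Z$. My plan is to refine the above reasoning to the level of cycle representatives in $E^1_{a,e}(X')$. The key input is that the pullback $f^*$ admits a cycle-level description in terms of Rost-type operations on the $E^1$-term of the niveau spectral sequence, as indicated in the text preceding this lemma. Using this refinement, one shows that $f^*(\alpha)$ is represented at $E^1$-level by the cycle $(f|_{V'})^*(\alpha)\in E^1_{a,e}(V')\subset E^1_{a,e}(X')$, the point being that over the flat locus $V$ the normal-cone construction collapses to the usual finite-flat pullback $(f|_{V'})^*$ by Proposition~\ref{compgenpull}(iv), and the compatibility of residues with pullback (coming from the morphism of spectral sequences $\partial$) ensures that this cycle has trivial boundary component on $Z'_{(a-1)}$ because $\alpha\in\Sigma$. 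Applying the proper pushforward $f_*$ at $E^1$-level then sends this to $(f|_{V'})_*(f|_{V'})^*(\alpha)=d\alpha$ in $E^1_{a,e}(V)\subset E^1_{a,e}(X)$, and hence $f_*f^*[\alpha]=d[\alpha]$ in $\KH a X$ on the nose.
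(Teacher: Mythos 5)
Your opening reduction is sound and is exactly how the paper's proof begins: using the compatibility of $f^*$ and $f_*$ with restriction to $V$ and the identity $(f|_{V'})_*\circ(f|_{V'})^*=\deg(X'/X)\cdot\mathrm{id}$ over the finite flat locus, one gets $j^*\bigl(f_*f^*[\alpha]-\deg(X'/X)[\alpha]\bigr)=0$, so the discrepancy lies in the image of $\KH a Z\to\KH a X$. The gap is in how you kill this residual class. You assert that $f^*(\alpha_X)$ is represented at the $E^1$-level by the chain $(f|_{V'})^*(\alpha)\in E^1_{a,e}(V')$, i.e.\ that the Rost/normal-cone pullback has \emph{no} component $\alpha_2$ supported on $Z'=f^{-1}(Z)$. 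That is false in general: if the support $Y$ of $\alpha_X$ meets $Z$, the cycle-level pullback acquires components inside $f^{-1}(Y\cap Z)$, and these can be $a$-dimensional because $f$ is only generically finite (compare the exceptional components appearing in the total transform of a curve through the centre of a blow-up). Proposition~\ref{compgenpull}(iv) only identifies the $V'$-component $\alpha_1=(j')^*f^*(\alpha_X)$ with the naive flat pullback; it says nothing about $\alpha_2$. Your appeal to ``$\alpha\in\Sigma$ forces trivial boundary component on $Z'_{(a-1)}$'' addresses the wrong object: what must be controlled is a component of the degree-$a$ chain itself on $Z'_{(a)}$, not of its $d^1$-boundary.

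What the paper actually proves --- and what your argument is missing --- is not $\alpha_2=0$ but $f_*(\alpha_2)=0$, by a support-and-dimension count. Since $\alpha_X$ comes from $E^1_{a,e}(V)$, its support $Y$ is a union of $a$-dimensional integral subschemes none of which lies in $Z$, so $\dim_S(Y\cap Z)<a$. Compatibility of the $E^1$-level pullback with restriction to $X-Y$ shows that $f^*(\alpha_X)$ is supported on $f^{-1}(Y)$, hence $\alpha_2\in E^1_{a,e}\bigl(Z'\cap f^{-1}(Y)\bigr)$, and its pushforward $f_*(\alpha_2)$ is supported on $Z\cap Y$. But $E^1_{a,e}(Z\cap Y)=\bigoplus_{x\in(Z\cap Y)_{(a)}}H_{a+e}(x)=0$ because $Z\cap Y$ has no points of dimension $a$. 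Therefore $f_*f^*(\alpha_X)=f_*(\alpha_1)=\deg(X'/X)\,\alpha_X$ already in $E^1_{a,e}(X)$, which is the assertion of the lemma. You should replace the claim ``the $Z'$-component vanishes'' by this pushforward-vanishing step; with that substitution your proof coincides with the paper's.
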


\begin{lem}\label{imall}
The inclusion  $\im^V_* \hr \KH a X$ is an isomorphism.
\end{lem}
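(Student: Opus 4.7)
The inclusion $\im^V_* \hr \KH a X$ is tautologically injective, since $\im^V_*$ is defined as an image, so the task is surjectivity. The strategy is to show that every cycle $\tilde\alpha \in Z_a(KC(X))$ can be modified by a $KC(X)$-boundary so as to land in $E^1_{a,e}(V)$: given $\tilde\alpha$, I will produce $\gamma \in E^1_{a+1,e}(X)$ with $\sigma := \tilde\alpha - d^1\gamma \in E^1_{a,e}(V)$. Such a $\sigma$ is automatically a cycle in $KC(X)$, hence lies in $\Sigma$, and represents the same class $[\tilde\alpha] \in \KH a X$.

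Using the direct sum $E^1_{a,e}(X) = E^1_{a,e}(V) \oplus E^1_{a,e}(Z)$, write $\tilde\alpha = \tilde\alpha_V + \tilde\alpha_Z$ and $\gamma = \gamma_V + \gamma_Z$. The Kato differential then takes the triangular form $\left(\begin{smallmatrix} d^1_V & 0 \\ \delta & d^1_Z \end{smallmatrix}\right)$, where $\delta\colon E^1_{a,e}(V) \to E^1_{a-1,e}(Z)$ is the connecting map of the short exact sequence of complexes $0 \to KC(Z) \to KC(X) \to KC(V) \to 0$. Comparing $Z$-components, the condition $\tilde\alpha - d^1\gamma \in E^1_{a,e}(V)$ reduces to the single equation $\tilde\alpha_Z = \delta\gamma_V + d^1_Z\gamma_Z$, i.e.\ to membership $\tilde\alpha_Z \in \delta(E^1_{a+1,e}(V)) + B_a(KC(Z))$; the cycle condition on $\tilde\alpha$ forces $d^1_V\tilde\alpha_V = 0$ and $d^1_Z\tilde\alpha_Z = -\delta\tilde\alpha_V$, and the identities $d^1_V\sigma = \delta\sigma = 0$ then follow automatically. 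I would organize the chase via the long exact sequence
\[
\cdots \to \KH{a+1}V \rmapo{\partial} \KH a Z \rmapo{i_*} \KH a X \rmapo{j^*} \KH a V \rmapo{\partial} \KH{a-1}Z \to \cdots ,
\]
splitting surjectivity into two complementary claims: (A) the image of $\Sigma$ in $\KH a V$ equals $\ker(\partial_a)$, so that $\im^V_*$ covers the full quotient $\KH a X / \im(i_*)$; and (B) $\im(i_*) \subset \im^V_*$.

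The main obstacle, common to (A) and (B), is the pointwise surjectivity input: for every cycle $z \in Z_a(KC(Z))$, produce $w \in E^1_{a+1,e}(V)$ with $\delta w - z \in B_a(KC(Z))$. I expect this to be established by a direct $E^1$-level analysis exploiting the regularity of $X$. For each $x \in Z_{(a)}$, the generic points $y$ of $(a+1)$-dimensional subvarieties of $X$ through $x$ that meet $V$ (which exist by regularity: cut $\OO_{X,x}$ by a regular sequence of length $d-a-1$) contribute residues $\partial_y^x\colon H_{a+1+e}(y) \to H_{a+e}(x)$ whose sum is the $x$-component of $\delta$; the localization exact sequence for the underlying homology theory $H$, together with the duality hypothesis of Definition~\ref{def.Hdual} and the identification of the $E^1$-page with residues as in Section~\ref{pullback}, should ensure that these residues generate $H_{a+e}(x)$ modulo $d^1_Z$-boundaries. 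The delicate part is the combinatorial bookkeeping needed to coordinate the pointwise contributions at distinct $x \in Z_{(a)}$ into a single element of $E^1_{a+1,e}(V)$ realizing the prescribed $KC(Z)$-cycle.
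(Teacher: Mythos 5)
Your reduction is the right one, and it agrees with the paper's: since injectivity is tautological, everything comes down to showing that the composite $E^1_{a+1,e}(X)\rmapo{d^1}E^1_{a,e}(X)\to E^1_{a,e}(Z)$ is surjective, i.e.\ that every relevant element of $E^1_{a,e}(Z)$ lies in $\delta\big(E^1_{a+1,e}(V)\big)+d^1_Z\big(E^1_{a+1,e}(Z)\big)$; once that is known, subtracting $d^1\gamma$ from a representing cycle $\tilde\alpha$ lands it in $E^1_{a,e}(V)$, hence in $\Sigma$. (Two small remarks on your bookkeeping: the elements you actually need to hit are the $Z$-components $\tilde\alpha_Z$ of cycles of $KC(X)$, which satisfy $d^1_Z\tilde\alpha_Z=-\delta\tilde\alpha_V$ and are therefore in general \emph{not} cycles of $KC(Z)$, so formulating the input only for $z\in Z_a(KC(Z))$ is not quite what your own chase requires; and the detour through the long exact sequence and the claims (A), (B) is unnecessary once the full surjectivity onto $E^1_{a,e}(Z)$ is available.)

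The genuine gap is that you never prove this surjectivity input: you say the residues ``should'' generate $H_{a+e}(x)$ modulo boundaries and explicitly defer the ``delicate combinatorial bookkeeping.'' That is precisely where the content of the lemma lies, and your sketch of it has two problems. First, cutting $\OO_{X,z}$ by a regular sequence of length $d-a-1$ produces an $(a+1)$-dimensional subscheme through $z$, but nothing forces its generic point to lie in $V$ (it may sit entirely inside $Z$ when $\dim_S Z>a$), nor that it is irreducible with $z$ a regular point of its closure; the paper invokes a moving lemma (\cite[Lemma 7.2]{SS}) to choose $z'\in V_{(a+1)}$ with $z$ in the regular locus of $Y=\overline{\{z'\}}$. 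Second, the coordination between the finitely many $a$-dimensional points of $Y\cap Z$ is not a separate combinatorial problem to be solved afterwards: it is handled in one stroke by the localization sequence on the normalization $A$ of the semi-local one-dimensional ring $\OO_{Y,\{z\}\cup R}$ (where $R$ is the set of $a$-dimensional points of $Y\cap Z$ other than $z$), which reads $H_{a+1+e}(z')\rmapo{\partial}H_{a+e}(z)\oplus H_{a+e}(R')\to H_{a+e}(A)$; the leveled-above-$e$ hypothesis gives $H_{a+e}(A)=0$, so $\partial$ surjects onto the \emph{direct sum}, letting you prescribe any value at $z$ while forcing $0$ at all other points of $Z_{(a)}$ on $Y$ (and $d^1$ of a class supported at $z'$ vanishes off $Y$, so distinct $z$'s do not interfere). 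Note also that only the leveled condition is used here; the duality of Definition~\ref{def.Hdual} plays no role in this lemma.
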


\begin{proof}[Proof of Lemma \ref{onim}]

Consider $\alpha \in \Sigma\subset E^1_{a,e}(V)$ and denote its image in $E^1_{a,e}(X)$ by $\alpha_X$ (cf. \eqref{eq.pullback}). 
Choose a factorization $f=i \circ p$ with a closed immersion $i$ and
a smooth morphism $p$ and choose a coordination of the normal bundle of $i$ as in \cite[Sec.\ 9]{R}. This choice induces a pullback map
$f^*:E^1_{a,e}(X)\to E^1_{a,e}(X')$ as in \cite[Sec.\ 12]{R} which satisfies
the following properties:
\begin{itemize}
\item
the restriction of $f^*$ to
$\ker(d^1:E^1_{a,e}(X) \to E^1_{a-1,e}(X))$ induces 
$f^*$ on $\KH a X=E^2_{a,e}(X)$.
\item
For an open immersion $j:U\hookrightarrow X$,
we have the commutative diagram:
\begin{equation}\label{eq2.pullback}
\begin{CD}
E^1_{a,e}(X)@>{f^*}>> E^1_{a,e}(X')\\
@VV{j^*}V @VV{(j')^*}V \\
E^1_{a,e}(U)@>{f_U^*}>> E^1_{a,e}(U')\\
\end{CD}
\end{equation}
where the vertical arrows are the pullbacks by the open immersions
and $j':U'=f^{-1}(U) \hookrightarrow X'$ and 
$f_U:U'\to U$ is the base change of $f$ via $j$ (see the sentence above 
\cite{R}, Lemma 11.1).
\item
In the above diagram, if $f_U$ is flat, then
$f_U^*$ is the naive pullback via the flat map $U'\to U$
(see below Definition \ref{def.MCSS}, and \cite[(3.5)] {R} and \cite[Lemma 12.2]{R}).
\end{itemize}
We put 
$$\alpha_{X'}=f^*(\alpha_X)\in E^1_{a,e}(X').$$
Let $V'=f^{=1}(V)$ and $Z'=X'-V'=f^{-1}(Z)$ with $Z=X-V$. Let
$$
\alpha_{X'}=\alpha_1+\alpha_2,\quad 
\alpha_1\in E^1_{a,e}(V'),\; \alpha_2\in E^1_{a,e}(Z')
$$
be the decomposition with respect to 
\[
E^1_{a,e}(X')=E^1_{a,e}(V') \oplus E^1_{a,e}(Z').
\]
By the above fact, $\alpha_1=(j')^*(\alpha_{X'})$ 
is the naive pullback of $\alpha\in E^1_{a,e}(V)$ via the flat map $V'\to V$. 
Therefore we have $f_*(\alpha_1)=\deg(X'/X)\, \alpha_X$.
So in order to finish the proof of the lemma it suffices to show that $\beta:=f_*(\alpha_2)=0\in E^1_{a,e}(X)$. 
We have the commutative diagram:
\[
\begin{CD}
E^1_{a,e}(Z')@>{i'_*}>> E^1_{a,e}(X')\\
@VV{(f_Z)_*}V @VV{f_*}V \\
E^1_{a,e}(Z)@>{i_*}>> E^1_{a,e}(X)\\
\end{CD}
\]
where $i:Z\hookrightarrow X$ and $i':Z'\hookrightarrow X'$ are the closed immersions
and $f_Z:Z'\to Z$ is the base change of $f$ via $i$.
This shows the support of $\beta$ is contained in $Z$, namely $\beta\in E^1_{a,e}(Z)$.
 Let $i:Y\hr X$ be the support of $\alpha_X$. Since $\alpha_X$ is the image of
$\alpha\in E^1_{a,e}(V)$, $Y$ is the union of closed
integral subschemes $Y_i$ of $\dim_S(Y_i)=a$ such that none of $Y_i$ is
contained in $Z=X-V$, which implies $\dim_S(Y\cap Z)<a$.
Applying the diagram \eqref{eq2.pullback} to the case $U=X-Y$,
we see $\alpha_{X'}\in E^1_{a,e}(Y')$ with $Y'=f^{-1}(Y)$.
By the same argument as above, this implies the support of $\beta$ is contained in 
$Y$ and hence $\beta\in E^1_{a,e}(Z\cap Y)$.
Now the desired assertion follows from the fact $E^1_{a,e}(Z\cap Y)=0$ as $\dim_S(Z\cap Y)<a$. 
\end{proof}
\medbreak

\begin{proof}[Proof of Lemma \ref{imall}]

Assume for simplicity of notation that our homology theory $H$ is leveled above $e=0$.
It suffices to show that the composite map
\[
E^1_{a+1,0}(X) \rmapo{d^1} E^1_{a,0}(X) \to E^1_{a,0}(Z) 
\]
is surjective for $a\in \Z$, where the second map is the projection with respect to
\eqref{eq.pullback}. For a point $z\in Z_{(a)}$ we can choose $z'\in V_{(a+1)}$ 
by \cite[Lemma 7.2]{SS} such that $z$ lies on the regular locus of 
$Y:=\overline{\{ z' \}}$.
Let $R$ be the finite set of points of dimension $a$ on $Y\cap Z$ which are different from $z$. We have an exact sequence arising as the limit of localization sequences for
the homology theory $H$
\[
H_{a+1}(z') \rmapo{\partial} H_a(z) \oplus H_a(R') \lr H_a(A),
\]
where $A$ is the normalization of the semi-local one-dimensional ring $\OO_{Y,\{z\}\cup R}$ and $R'$ is the set of points of $\Spec(A)$ lying over $R$. 
By the assumption that $H$ is leveled above $e=0$ we have $H_a(A)=0$ and hence
$\partial$ is surjective. This implies the desired surjectivity.
\end{proof}


\section{Pullback Map (Second Construction)}\label{pullbacksnd}

\bigskip

\noindent
In the previous section we gave a complete proof of the fact, Theorem~\ref{PBthm}, that the homology theories defined in the examples in Section~\ref{homology}
satisfy condition $\rm{(}\bf{PB}\rm{)}$ under our restriction on characteristic. This proof relies on the Fulton-Rost version of intersection theory. In this section
we give an alternative proof of Theorem~\ref{PBthm} for the \'etale homology theory $H^\et$ from Example~\ref{exHK1} by using the fact that the Gersten
conjecture holds over fields. This allows us to remove the restriction on characteristic at least for $H^\et$.
\medbreak

Let the notation be as in Example~\ref{exHK1}. Assume $S=\Spec(k)$, where $k$ is a finite field.
Let $X$ be smooth over $S=\Spec(k)$ of pure dimension $d$. 
By Lemma~\ref{lemHK1smooth} we have an isomorphism
$$
H_a(X)=\Het_a(X,\Lam) = H^{2d-a}_{\et}(X,\Lam(d)). 
$$
Thus, for $f:X\to Y$ as in $\bf{(PB)}$,
$f^*:\Het_a(Y,\Lam)\to \Het_a(X,\Lam)$ is defined by 
$$
f^* : H^{2d-a}_{\et}(Y,\Lam(d))\to H^{2d-a}_{\et}(X,\Lam(d)),
\quad (d=\dim(X)=\dim(Y))
$$
the pullback map for \'etale cohomology.
We have the spectral sequence
\begin{equation}\label{coniveau.sp.seq}
E_2^{a,b}=H^a_{Zar}(X,\cH^b_X)\Rightarrow H^{a+b}_{\et}(X,\Lam(d))
\end{equation}
where
$\cH^b_X=R^b\rho_* \Lam(d)$ with $\rho:X_{\et}\to X_{Zar}$, 
the natural morphism of sites. By the Bloch-Ogus version of Gersten's conjecture (see \cite{CTHK}), 
we have a flasque resolution of $\cH_X^b$:
\begin{equation}\label{flasque.res}
\cH_X^b\isom \cC^b_X\text{  in } D^b(X_{Zar}),
\end{equation}
where $D^b(X_{Zar})$ is the derived category of bounded 
complexes of Zariski sheaves on $Y$ and $\cC^b_X$ is the following (cohomological) complex of 
Zariski sheaves:
\begin{multline*}\label{Gersten.complex}
\sumcd X 0 (i_x)_* H_{\et}^{b}(x,\Lam(d))\to
\sumcd X 1 (i_x)_*H_{\et}^{b-1}(x,\Lam(d-1))\to \cdots \\
\to \sumcd X {a} (i_x)_*H_{\et}^{b-a}(x,\Lam(d-a))\to\cdots 
\end{multline*}
where the term $\sumcd X a (i_x)_*H_{\et}^{b-a}(x,\Lam(d-a))$ is placed 
in degree $a$ and $i_x:x\to X$ is the inclusion. 
In view of the description \eqref{KCfinitefield} of $\KC X$, 
this implies a natural isomorphism
\begin{equation}\label{KHGersten}
\KHet a X \isom H^{d-a}_{Zar}(X,\cH^{d+1}_X)
\end{equation}
We note $\cH^b_X=0$ for $b>d+1$ due to the injectivity of
$\cH^b_X\to (i_\eta)_* H^b(\eta,\Lam(d))$ ($\eta$ is the generic point of $X$)
and the fact $H^b(\eta,\Lam(d))=0$ for $b>d+1$ where we use the assumption that $k$ is finite.
Thus we get an edge homomoprhism for the spectral sequence
\eqref{coniveau.sp.seq}:
$$
\lambda^a_X : H^{2d+1-a}_{\et}(X,\Lam(d))\to H^{d-a}_{Zar}(X,\cH^{d+1}_X).
$$
This allows us to define $f^*:\KHet a Y \to \KHet a X$ for $f:X \to Y$ as 
in $\bf{(PB)}$ to be the composite map 
$$
f^* : H^{d-a}_{Zar}(Y,\cH^{d+1}_Y)\to H^{d-a}_{Zar}(X,f^*\cH^{d+1}_Y)\to H^{d-a}_{Zar}(X,\cH^{d+1}_X),
$$
where the second map is induced by the natural pullback map 
$f^*\cH^{d+1}_Y\to \cH^{d+1}_X$.

The first condition of $\bf{(PB)}$ is immediate from the definitions.
The second condition follows from the commutative diagram
$$
\begin{CD}
H^{2d+1-a}_{\et}(Y,\Lam(d)) @>f^*>> H^{2d+1-a}_{\et}(X,\Lam(d))\\
@VV{\lambda^a_Y}V  @VV{\lambda^a_X}V \\
H^{d-a}_{Zar}(Y,\cH^{d+1}_Y) @>f^*>> H^{d-a}_{Zar}(X,\cH^{d+1}_X)\\
\end{CD}
$$
and from the commutative diagram
$$
\begin{CD}
\Het_{a-1}(X,\Lam) @>\sim>> H^{2d+1-a}_{\et}(X,\Lam(d))\\
@VV{\edgehom a X}V @VV{\lambda^a_X}V \\
\KHet a X @>\sim>> H^{d-a}_{Zar}(X,\cH^{d+1}_X)\\
\end{CD}
$$
The commutativity of the last diagram follows from the compatibility of 
the niveau and coniveau spectral sequences with respect to the isomorphism
\eqref{KHGersten} and from compatibility of the coniveau spectral sequence and the 
spectral sequence \eqref{coniveau.sp.seq} (cf. \cite[Cor.4.4]{Pa}).

Finally we show the last condition of $\bf{(PB)}$.
Let $f:X\to Y$ be as in $\bf{(PB)}$. 
From the flasque resolution \eqref{flasque.res}, we get an isomorphism
\begin{equation}\label{flasque.res2}
\bR f_*\cH^{d+1}_X \isom f_*\cC_X^{d+1}\text{  in  } D^b(Y_{Zar}).
\end{equation}
We have the trace map
$$
tr_f: f_*\cC_X^{d+1}\to \cC_Y^{d+1}
$$ 
that induces $\Gamma(U,f_*\cC_X^{d+1})\to \Gamma(U,\cC_Y^{d+1})$
for open $U\subset Y$, which is induced by the following commutative diagram
$$
\begin{CD}
\sumcd V 0 H_{\et}^{d+1}(x,\Lam(d))@>>>
\sumcd V 1 H_{\et}^{d}(x,\Lam(d-1))@>>> \cdots @>>>
\sumcd V d H_{\et}^{1}(x,\Lam)\\
@VV{f_*}V @VV{f_*}V @. @VV{f_*}V \\
\sumcdy U 0 H_{\et}^{d+1}(y,\Lam(d))@>>>
\sumcdy U 1 H_{\et}^{d}(y,\Lam(d-1))@>>> \cdots @>>>
\sumcdy U d H_{\et}^{1}(y,\Lam)\\
\end{CD}
$$
where $V=f^{-1}(U)$ and the upper (resp. lower) complex represents $\Gamma(U,f_*\cC_X^{d+1})$ (resp. $\Gamma(U,\cC_Y^{d+1})$).
For $x\in V^{(a)}$ and $y\in U^{(a)}$, the $(x,y)$-component of $f_*$:
$$
H_{\et}^{d+1-a}(x,\Lam(d-a))\to H_{\et}^{d+1-a}(y,\Lam(d-a))
$$
is $0$ if $y\not=f(x)$ and the pushforward map for
the finite moprhism $\Spec(x)\to \Spec(y)$ if $y=f(x)$.
By \eqref{flasque.res} and \eqref{flasque.res2}, $tr_f$ induces 
$tr_f: \bR f_*\cH_X^{d+1}\to \cH_Y^{d+1}$, a map in $D^b(Y_{Zar})$, and 
$$
f_*:  H^{b}_{Zar}(X,\cH^{d+1}_X)\simeq H^{b}_{Zar}(Y,\bR f_*\cH^{d+1}_X)
\rmapo{tr_f} H^{b}_{Zar}(Y,\cH^{d+1}_Y).
$$
It is easy to check the commutativity of the following daigram:
$$
\begin{CD}
\KHet a X @>{\simeq}>> H^{d-a}_{Zar}(X,\cH^{d+1}_X) \\
@VV{f_*}V @VV{f_*}V \\
\KHet a Y @>{\simeq}>> H^{d-a}_{Zar}(Y,\cH^{d+1}_Y) \\
\end{CD}
$$
Hence we are reduced to show the following:

\begin{claim}
Let $f:X\to Y$ be as $\bf{(PB)}$. Then the composite
$$
\cH_Y^{d+1} \rmapo{f^*} \bR f_*\cH_X^{d+1}\rmapo{tr_f} \cH_Y^{d+1}
$$ 
is the multiplication by $[k(X):k(Y)]$, where the first map is 
the adjunction of the pullback map
$f^*\cH_Y^{d+1} \to \cH_X^{d+1}$.
\end{claim}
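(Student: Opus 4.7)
The plan is to reduce the claimed identity to the well-known formula $\mathrm{cor}\circ\mathrm{res}=[K:L]$ in Galois cohomology, applied to the generic function field extension $k(X)/k(Y)$. Since $\cH^{d+1}_Y$ is concentrated in degree zero, any morphism from $\cH^{d+1}_Y$ to itself in $D^b(Y_{\mathrm{Zar}})$ is a morphism of sheaves, so the composite $tr_f\circ f^*$ is genuinely a map $\phi:\cH^{d+1}_Y\to \cH^{d+1}_Y$, and we must show $\phi=[k(X):k(Y)]\cdot\mathrm{id}$. The Gersten resolution~\eqref{flasque.res} realizes $\cH^{d+1}_Y$ as the kernel of the first differential of $\cC^{d+1}_Y$, giving a canonical injection
\[
\cH^{d+1}_Y\hookrightarrow (i_{\eta_Y})_* H^{d+1}_{\et}(\eta_Y,\Lam(d)),
\]
where $\eta_Y$ is the generic point of $Y$. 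It therefore suffices to verify that $\phi$ becomes multiplication by $n:=[k(X):k(Y)]$ after post-composing with this injection---that is, on the generic stalk of $Y$.

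Next I would compute what $\phi$ becomes on the generic stalk. Since $f$ is embeddable with $\dim_S(X)=\dim_S(Y)$, it is dominant and generically finite of degree $n$, and $\eta_X$ is the unique generic point of $X$ above $\eta_Y$. The adjoint pullback $f^*\cH^{d+1}_Y\to \cH^{d+1}_X$ is the natural pullback of the higher direct image sheaf $R^{d+1}\rho_*\Lam(d)$ along $f$, which on generic stalks becomes the restriction map $H^{d+1}_{\et}(k(Y),\Lam(d))\to H^{d+1}_{\et}(k(X),\Lam(d))$ in Galois cohomology. The trace $tr_f$ was defined in the paper via its $(x,y)$-components; on the degree-zero terms of the Gersten complexes it is, by the explicit description just recalled, the pushforward for the finite morphism $\Spec k(X)\to \Spec k(Y)$, i.e.~the corestriction $H^{d+1}_{\et}(k(X),\Lam(d))\to H^{d+1}_{\et}(k(Y),\Lam(d))$.

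Combining these, on the generic stalk $\phi$ coincides with $\mathrm{cor}_{k(X)/k(Y)}\circ\mathrm{res}_{k(X)/k(Y)}$, and the classical identity $\mathrm{cor}\circ\mathrm{res}=n$ for finite field extensions (in the inseparable case one invokes that restriction along a purely inseparable extension is an isomorphism for coefficients of order prime to $\mathrm{ch}(k)$, together with the analogous property for the $p$-part via the logarithmic de Rham--Witt sheaves appearing in Lemma~\ref{lemHK1smooth}(2)) finishes the argument. The hard part will be the precise identification on the generic stalk, and in particular of the pullback, which requires carefully unwinding the adjunction defining $f^*:\cH^{d+1}_Y\to Rf_*\cH^{d+1}_X$ together with the description of $\cH^{d+1}_X$ as an \'etale higher direct image; once this is in place, the trace side is essentially by construction and the residual computation reduces to the classical $\mathrm{cor}\circ\mathrm{res}$ formula.
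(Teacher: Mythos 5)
Your proposal is correct and follows essentially the same route as the paper: reduce to the generic point of $Y$ via the Gersten-conjecture injection $\cH_Y^{d+1}\hookrightarrow (i_{\eta})_* H^{d+1}_{\et}(\eta,\Lam(d))$, identify the composite there with $\mathrm{cor}\circ\mathrm{res}$ for $k(X)/k(Y)$, and conclude by the standard degree formula. The paper states this more tersely, but the content is the same.
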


Indeed, by the Gersten conjecture, the natural map 
$\cH_Y^{d+1}\to (i_{\eta})_* H_{\et}^{d+1}(\eta,\Lam(d))$
is injective, where $\eta$ is the generic point of $Y$.
Hence it suffices to show the claim after the restriction to $\eta$, which
follows from the standard fact that the composite
$$
H^n(\eta,\Lam(d)) \rmapo{f^*} H^n(\xi,\Lam(d)) \rmapo{f_*} H^n(\eta,\Lam(d))
$$
is the multiplication by $[k(X):k(Y)]$, where $\xi$ is the generic point of $X$.


\section{Main Theorem} \label{maintheo}

\bigskip

\noindent 
In this section we state and prove the main theorem \ref{mainthm}. 
Let the notation be as in Section \ref{logpairs}.
For an integer $q\geq 0$ and a prime $\ell$, consider the following condition:

\medbreak\noindent
$\bf{(G)}_{\ell,\it{q}}$:   
For any $X\in \cSregir$ and for any proper closed subscheme $W$ of $\dim_S(W)\leq q$
in $X$, there exists $X'\in \cSregir$ and a morphism $\pi:X'\to X$ 
such that
\begin{itemize}
\item $\pi$ is surjective and generically finite of degree prime to $\ell$,
\item $W':=\pi^{-1}(W)_{red}$ is an admissible simple normal crossing divisor on $X'$
(cf. Definition \ref{defSNCDadm}).
\end{itemize}

\begin{rem}\label{rem.GG}
\begin{itemize}
\item[(1)]
If the base scheme $S$ is the spectrum of a field of characteristic $0$,
$\bf{(G)}_{\ell,\it{q}}$ holds for all $q$ and $\ell$ thanks to  Hironaka \cite{H}.
\item[(2)]
$\bf{(G)}_{\ell,\it{q}}$ holds for $q\leq 2$ due to \cite[Corollary~0.4]{CJS}. 
Indeed in the cases (1) and (2), one can take $\pi$ to be a birational morphism.
\item[(3)]
If $\ell$ is invertible on $S$, $\bf{(G)}_{\ell,\it{q}}$ for all $q\geq 0$
follows from stronger statements which have been recently proved by Gabber 
(see \cite[Theorem 1.3 and 1.4]{Il2})
\end{itemize}
\end{rem}
\medbreak

Now we state the main theorem.

\begin{theo}\label{mainthm}
Let $H$ be a homology theory leveled above $e$.
Fix an integer $q\geq0$ and a prime $\ell$, and assume the following conditions:
\begin{itemize}
\item[(1)]
$H_a(X)$ is $\ell$-primary torsion for all $X\in \cCS$ and $a\in \Z$,
\item[(2)]
$H$ satisfies the Lefschetz condition
(cf. Definition \ref{def.Lcondition}),
\item[(3)]
the condition $\bf{(PB)}$ from Section~\ref{pullback} holds for $H$,
\item[(4)]
the condition $\bf{(G)}_{\ell,\it{q-2}}$ holds.
\end{itemize}
Then, for any $X\in \cSregir$, the trace map \eqref{KCtrace0} induces a quasi-isomorphism
\[
\tau_{\leq q} \KC X \isom \tau_{\leq q}\KC {T_X}.
\] 
\end{theo}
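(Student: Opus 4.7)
I argue by induction on $d := \dim_S(X) - \dim_S(T_X) \ge 0$. The base case $d = 0$ is Lemma~\ref{lem0.Lefschetz} in degrees $\le \dim_S T_X$, with both complexes vanishing above that dimension; degrees $a \le 1$ are handled by Remark~\ref{rem.Kato.complex}(1). In the inductive step, fix $X \in \cSregir$ with $d > 0$ and assume the theorem for all $X' \in \cSregir$ of strictly smaller relative dimension. Via Lemma~\ref{lem.reduced.graphhom} and the bar construction of $\Lambda_H$, this propagates to a quasi-isomorphism $\tau_{\le q} KC_H(\hat\Phi) \isom \tau_{\le q} \Lambda_H(\hat\Phi)$ for every admissible simple normal crossing divisor $W'$ whose components have strictly smaller relative dimension than $X$; hence the reduced configuration complex $\Lambda_H(\hat\Phi)$ is computable from $KC_H(W')$ in the relevant range.

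\smallskip

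\noindent A standard Bertini/curve-slicing argument reduces the task to proving injectivity of the trace in degree $q$: given $\alpha \in KH_q(X)$ with $\tr_X \alpha = 0$, we must show $\alpha = 0$. First, I produce a dense open $j : U \hookrightarrow X$ with complement $W := X \setminus U$ of $\dim_S W \le q - 2$ together with $\beta \in H_{q+e}(U)$ such that $j^* \alpha = \edgehom q U (\beta)$. This is achieved by observing that the failure of $\alpha \in KH_q(X) = E^2_{q,e}(X)$ to lie in the image of $\edgehom q X$ is controlled by the higher differentials $d^r$ (for $r \ge 2$) of the niveau spectral sequence, whose targets $E^r_{q-r,e+r-1}$ are supported on closed subschemes of $\dim_S \le q - 2$; cutting these out into $W$ yields the required lift, and is precisely what forces the index $q - 2$ in hypothesis~(4).

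\smallskip

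\noindent Apply $\bf{(G)}_{\ell, q-2}$ to $(X, W)$, obtaining $\pi : X' \to X$ in $\cSregir$ of degree prime to $\ell$ such that $W' := \pi^{-1}(W)_\mathrm{red}$ is an admissible simple normal crossing divisor on $X'$. The Bertini Theorem~\ref{Bertini} allows one to enlarge $W'$ by a regular divisor relatively ample over $T_{X'}$, producing an ample log-pair $\Phi' := (X', W')$ with $U' := X' \setminus W'$. Condition $\bf{(PB)}(\mathrm i)$ gives $\pi^* \alpha|_{U'} = \edgehom q {U'}(\pi^* \beta)$, while $\pi_* \pi^* = \deg(\pi)\cdot\mathrm{id}$ combined with $\tr_X \alpha = 0$ yields $\tr_{X'}(\pi^* \alpha) = 0$. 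The Lefschetz condition (Definition~\ref{def.Lcondition}(3)) now furnishes injectivity of $\graphedge q {\Phi'}$ and surjectivity of $\graphedge {q+1} {\Phi'}$; combined with the inductively established quasi-isomorphism on $W'$ and the compatibility between the Kato-homology localization sequence and the configuration-complex fibre sequences of Section~\ref{logpairs}, the standard diagram chase of the introduction forces $\pi^* \alpha = 0$ in $KH_q(X')$.

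\smallskip

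\noindent Finally, $\bf{(PB)}(\mathrm{ii})$ gives $\deg(\pi) \cdot \alpha = \pi_* \pi^* \alpha = 0$ in $KH_q(X)$; hypothesis~(1) ensures $\alpha$ is $\ell$-primary torsion, and $\deg(\pi)$ is prime to $\ell$, hence $\alpha = 0$. The central technical difficulty is the first step: controlling the niveau spectral sequence so as to reduce the support of the obstructing complement $W$ from codimension one down to codimension two, which uses in an essential way the condition $\alpha \in \ker d^1$ and the fact that $H$ is leveled above $e$.
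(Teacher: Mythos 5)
Your proposal follows the paper's own proof essentially step for step: the niveau spectral sequence argument producing a dense open $U$ whose complement $W$ has $\dim_S W\le q-2$ and over which $\alpha$ lifts to $H_{q+e}(U)$, Gabber's alteration ${\bf (G)}_{\ell,q-2}$, Bertini to enlarge the boundary to an ample log-pair, the Lefschetz condition together with the inductive quasi-isomorphism on the boundary to obtain injectivity/surjectivity of $\partial\circ\epsilon$ (the paper's condition ${\bf (LG)}_{q}$, packaged in Proposition \ref{mainprop.geo} and Lemmas \ref{lem1.mainprop} and \ref{mainlem}), and finally the $\pi_*\pi^*=\deg(\pi)$ argument against an $\ell$-primary class.

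The one place your write-up does not close as stated is the induction parameter. You induct on the relative dimension $\dim_S(X)-\dim_S(T_X)$, but in the arithmetic case an admissible simple normal crossing divisor on $X'$ is allowed to contain \emph{vertical} components, i.e.\ components of the reduced special fibre. Such a component $Y_0$ has $\dim_S(Y_0)=\dim_S(X')-1$ while $T_{Y_0}$ is the closed point of $S$, so its relative dimension is $\dim_S(X')-1-0$, which equals the relative dimension $\dim_S(X')-\dim_S(S)$ of $X'$ itself; your inductive hypothesis therefore does not apply to $Y_0$, and the quasi-isomorphism $\tau_{\le q}KC_H(\hat\Phi)\isom\tau_{\le q}\Lambda_H(\hat\Phi)$ that you need for the diagram chase is not available. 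The fix is exactly what the paper does: fix the closed subscheme $T=T_X\subset S$ once and for all and induct on the absolute dimension, with the inductive statement ${\bf KC}_T(q,d)$ quantified over \emph{all} $X''\in\cSregir$ with $\dim_S(X'')\le d$ and $T_{X''}\subset T$, so that both horizontal components (with $T_{X''}=T$) and vertical ones (with $T_{X''}$ the closed point) are covered. With that adjustment your argument coincides with the paper's.
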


Before coming to the proof of the theorem we deduce the following corollary generalizing the theorem to not necessarily projective
schemes.

\begin{coro}\label{mainthm.cor}
Let $H$ be a homology theory satisfying (1), (2) and (3) of Theorem~\ref{mainthm}.
Assume further that $\ell$ is invertible on $S$ (so that (4) is satisfied for all 
$q$ by Gabber's theorem).
Let $X\in \cCS$ be regular, proper and connected over $S$ and let $T$ be the image 
of $X$ in $S$. Then $\KH a X=0$ for $a\geq i_T+1$ (cf. Lemma \ref{log.rediso}).
\end{coro}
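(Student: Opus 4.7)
The plan is to reduce the general proper case to Theorem~\ref{mainthm} by replacing $X$ with a projective prime-to-$\ell$ alteration. Since $X$ is connected and regular, it is integral, hence irreducible. Because $\ell$ is invertible on $S$, Gabber's refinement of de~Jong's alteration theorem (\cite{Il2}, Theorems~1.3 and 1.4) applies to $X$ viewed as a reduced proper $S$-scheme, and I would use it to produce a projective alteration $\pi: X' \to X$ of degree prime to $\ell$ with $X' \in \cSregir$ --- in particular, in the arithmetic case the reduced special fibre of $X'^{fl}$ is a simple normal crossings divisor, so Definition~\ref{defSreg} is satisfied. Since $\pi$ is surjective and generically finite, $\dim_S(X') = \dim_S(X)$ and the image of $X'$ in $S$ coincides with $T$, so $T_{X'} = T$.

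Next, I would check that $\pi$ is embeddable so that condition~$\mathbf{(PB)}$ applies. A closed immersion $X' \hookrightarrow \mathbb{P}^n_S$ composed with the graph of $\pi$ produces a factorization
\[
X' \hookrightarrow \mathbb{P}^n_S \times_S X \lr X,
\]
in which the second arrow is the base change of the smooth $S$-morphism $\mathbb{P}^n_S \to S$. Thus hypothesis~(3) supplies a pullback $\pi^*: \KH a X \to \KH a {X'}$ whose composition with the proper pushforward satisfies $\pi_* \circ \pi^* = \deg(\pi)\cdot \mathrm{id}$ on $\KH a X$.

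The key observation is that $\KH a X$ is $\ell$-primary torsion: it is a subquotient of $E^1_{a,e}(X) = \bigoplus_{x \in \Xd a} H_{a+e}(x)$, and each $H_{a+e}(x)$ is a filtered colimit of the $\ell$-primary torsion groups $H_{a+e}(V)$ provided by hypothesis~(1). Since $\deg(\pi)$ is prime to $\ell$, multiplication by $\deg(\pi)$ is an automorphism of $\KH a X$, which forces $\pi^*$ to be injective. Finally, applying Theorem~\ref{mainthm} to $X'$ with $q$ taken arbitrarily large --- legitimate because hypothesis~(4) is available for every $q$ under our assumption on $\ell$ --- yields $\KH a {X'} \simeq \KH a T$ for all $a$. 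By the definition of $i_T$ in Lemma~\ref{log.rediso}, the right-hand side vanishes for $a \geq i_T + 1$; the injectivity of $\pi^*$ then transfers this vanishing from $X'$ to $X$.

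The main obstacle I foresee is really just ensuring that Gabber's theorem delivers an $X'$ satisfying the stringent normal crossings condition of Definition~\ref{defSreg} in the arithmetic case; this is however precisely the content of the refinements recorded in \cite{Il2}. Everything else is formal: it is the standard trace-and-pullback device exploiting $\ell$-primary torsion of $\KH a X$ together with the existence of a projective alteration of degree prime to $\ell$.
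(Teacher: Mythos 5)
Your proposal is correct and follows essentially the same route as the paper: a prime-to-$\ell$ alteration $\pi:X'\to X$ with $X'\in\cSregirT$ furnished by Gabber's theorem, Theorem~\ref{mainthm} applied to $X'$, and the trace identity $\pi_*\circ\pi^*=\deg(\pi)$ from $\bf{(PB)}$ combined with the $\ell$-primary torsion of $\KH a X$. The only cosmetic differences are that the paper first invokes Chow's lemma to pass to a projective modification before applying Gabber, and concludes via surjectivity of $\pi_*$ rather than injectivity of $\pi^*$ --- two equivalent readings of the same trace formula.
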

\begin{proof}
Since $\ell$ is invertible on $S$, we may use Chow's lemma \cite[Section~5.6]{EGAII} and Gabber's theorem \cite[Theorem 1.3 and 1.4]{Il2} to construct $X'\in \cSregirT$ and 
a morphism $\pi: X'\to X$
such that $\pi$ is surjective and generically finite of degree prime to $\ell$. 
By Theorem~\ref{mainthm} we get $\KH a {X'}=0$ for $a\geq i_T+1$. 
From $\bf{(PB)}$ we deduce that $\pi_*:KH_a(X') \to KH_a(X)$
is surjective for $a\in \Z$ and thus the corollary follows.
\end{proof}

\medbreak

For the proof of the theorem, we first show the following Proposition 
\ref{mainprop.geo}. Let $q\geq 1$ be an integer. For a log-pair $\Phi=(X,Y;U)$ 
with $X\in \cSregir$, consider the condition:
\medbreak
$\bf{(LG)}_{\it{q}}$: 
The composite map
$$
\pedgehom a \Phi:H_{a+e}(U)\rmapo{\edgehom q U} \KH a U \rmapo{\partial} 
\KH {a-1} Y
$$
is injective for $a=q$ and surjective for $a=q+1$.
\medbreak

\begin{prop}\label{mainprop.geo}
Let $H$ be a homology theory leveled above $e$. 
Let $X\in \cSregir$ and $d=\dim_S(X)$. Fix a prime $\ell$ and an integer $q\geq 0$.
Assume the following conditions:
\begin{itemize}
\item[(1)]
$\KH q X$ is $\ell$-primary torsion.
\item[(2)]
The condition $\bf{(PB)}$ from Section~\ref{pullback} holds for $H$.
\item[(3)]
The condition $\bf{(G)}_{\ell,\it{q-2}}$ holds.
\item[(4)]
For any log-pair $\Phi=(X',Y')$ with a proper surjective morphism $\pi:X'\to X$
such that $X'\in \cSregir$ and $\dim_S (X')=d$, 
there exists a log-pair $\Phi'=(X',Y'')$ such that $Y'\subset Y''$ and that 
$\Phi'$ satisfies the condition $\bf{(LG)}_{\it{q}}$.
\end{itemize}
Then we have $ \KH q X =0$.
\end{prop}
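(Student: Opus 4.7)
Fix $\alpha\in \KH q X$; we aim to show $\alpha=0$, following the strategy from the Introduction but arranged so that only $\bf{(G)}_{\ell,\it{q-2}}$ is invoked rather than the full Gabber theorem. The first step is to locate a dense open $j:U\hookrightarrow X$ with $Z:=X-U$ closed of dimension $\le q-2$, together with $\beta\in H_{q+e}(U)$ such that $\edgehom qU(\beta)=j^*(\alpha)$. Since $\alpha$ is an $E^2$-class in the niveau spectral sequence, the obstruction to lifting it through the edge map is the collection of higher differentials $d_r(\alpha)\in E^r_{q-r,\,e+r-1}(X)$ for $r\ge 2$. Each such class has a representative in $E^1_{q-r,e+r-1}(X)=\bigoplus_{x\in X_{(q-r)}}H_{q+e-1}(x)$ of finite support, so the union $Z$ of the closures of these supports (over the finitely many $r\ge 2$ with $d_r(\alpha)\ne 0$) is closed of dimension $\le q-2$. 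By functoriality of the niveau spectral sequence under open immersions, $j^*d_r(\alpha)=d_r(j^*\alpha)$ vanishes for all $r\ge 2$, hence $j^*(\alpha)\in E^\infty_{q,e}(U)$ lifts to some $\beta\in H_{q+e}(U)$.

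Next, apply $\bf{(G)}_{\ell,\it{q-2}}$ to the pair $(X,Z)$: it provides a morphism $\pi:X'\to X$ of degree $d_\pi$ prime to $\ell$, with $X'\in\cSregir$ of dimension $d$, such that $W':=\pi^{-1}(Z)_{red}$ is an admissible simple normal crossing divisor on $X'$. The log-pair $(X',W')$ and the proper surjective $\pi$ satisfy the hypotheses of assumption~(4), which furnishes a log-pair $\Phi'=(X',Y'')$ with $Y''\supset W'$ satisfying $\bf{(LG)}_{\it{q}}$. Set $V:=X'-Y''$; then $V\subset \pi^{-1}(U)$, and the restriction $g:V\to U$ of $\pi$ is embeddable between regular schemes of equal dimension. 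By $\bf{(PB)}$ we obtain $\pi^*(\alpha)\in\KH q{X'}$ and $\beta':=g^*(\beta)\in H_{q+e}(V)$, and by $\bf{(PB)}(i)$ these satisfy $\edgehom qV(\beta')=(j_V)^*\pi^*(\alpha)$ in $\KH qV$, where $j_V:V\hookrightarrow X'$ is the open immersion.

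Now perform the diagram chase from the Introduction. The commutative diagram with exact top row (the Kato localization sequence)
\[
\xymatrix{
\KH{q+1}V \ar[r]^{\partial} & \KH q{Y''} \ar[r] & \KH q{X'} \ar[r]^{j_V^*} & \KH qV \ar[r]^{\partial} & \KH{q-1}{Y''} \\
H_{q+e+1}(V) \ar[u]^{\edgehom{q+1}V} \ar[ru]_{\delta_{q+1}} & & & H_{q+e}(V) \ar[u]^{\edgehom qV} \ar[ru]_{\delta_q} &
}
\]
yields $\delta_q(\beta')=\partial\,j_V^*\pi^*(\alpha)=0$ by exactness; injectivity of $\delta_q$ from $\bf{(LG)}_{\it{q}}$ gives $\beta'=0$ and hence $j_V^*\pi^*(\alpha)=0$. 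Write $\pi^*(\alpha)=i_*(\gamma)$ for some $\gamma\in\KH q{Y''}$. Surjectivity of $\delta_{q+1}$ from $\bf{(LG)}_{\it{q}}$ places $\gamma$ in the image of $\partial:\KH{q+1}V\to\KH q{Y''}$, so $i_*(\gamma)=0$ by exactness and $\pi^*(\alpha)=0$. Applying $\pi_*$ and using $\bf{(PB)}(ii)$ gives $d_\pi\cdot\alpha=\pi_*\pi^*(\alpha)=0$, and as $d_\pi$ is prime to $\ell$ while $\alpha$ is $\ell$-primary torsion by hypothesis~(1), we conclude $\alpha=0$.

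The main obstacle is the first step: the sharp bound $\dim_S Z\le q-2$, which is exactly what matches the alteration hypothesis $\bf{(G)}_{\ell,\it{q-2}}$, must be extracted from a careful analysis of the supports of the higher differentials $d_r(\alpha)$, going beyond the coarser ``choose any dense open $U$'' statement of the Introduction. All later steps are essentially formal manipulations using $\bf{(PB)}$, $\bf{(LG)}_{\it{q}}$, and the hypothesis that $\KH q X$ is $\ell$-primary torsion.
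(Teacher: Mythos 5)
Your proof is correct and follows essentially the same route as the paper's: the same reduction via the higher differentials of the niveau spectral sequence to an open $U$ with complement of dimension $\le q-2$, the same use of $\bf{(G)}_{\ell,q-2}$ and hypothesis (4) to produce a log-pair satisfying $\bf{(LG)}_q$, and the same norm argument at the end. The only cosmetic difference is that your diagram chase on $X'$ is exactly the content of the paper's Lemma \ref{lem1.mainprop}, which you prove inline rather than quoting.
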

\bigskip

For the proof of the proposition, we need the following:

\begin{lem}\label{lem1.mainprop}
Let $q\geq 1$ be an integer. 
Let $\Phi=(X,Y;U)$ with $X\in \cSregir$ be a log-pair which satisfies the condition $\bf{(LG)}_{\it{q}}$.
Let $j^*: \KH q X \to \KH q U$ be the pullback via $j:U\hookrightarrow X$ 
and $\edgehom q U: H_{q+e}(U) \to \KH q U$ be as in
Definition \ref{def.Kato.complex}(3).
Then $ (j^*)^{-1}(\Image(\edgehom q U))=0$.
\end{lem}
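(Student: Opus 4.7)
The plan is to carry out a short diagram chase in the localization sequence for the Kato homology of the pair $(X,Y;U)$. Concretely, condition $\bf{(LG)}_{\it{q}}$ is set up precisely so that it forces both ends of the following exact sequence to behave well:
\[
 \KH {q+1} U \rmapo{\partial} \KH q Y \rmapo{i_*} \KH q X \rmapo{j^*} \KH q U \rmapo{\partial} \KH {q-1} Y.
\]

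First I would pick $\alpha\in \KH q X$ with $j^*(\alpha)=\edgehom q U(\beta)$ for some $\beta\in H_{q+e}(U)$. Since $\partial\circ j^*=0$ by exactness, we get
\[
\pedgehom q \Phi(\beta) = \partial\,\edgehom q U(\beta) = \partial j^*(\alpha) = 0.
\]
The $a=q$ part of $\bf{(LG)}_{\it{q}}$ says $\pedgehom q \Phi$ is injective, so $\beta=0$, and hence $j^*(\alpha)=\edgehom q U(\beta)=0$. By exactness, $\alpha=i_*(\gamma)$ for some $\gamma\in \KH q Y$.

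Next I would use the $a=q+1$ part of $\bf{(LG)}_{\it{q}}$: since $\pedgehom {q+1} \Phi=\partial\circ \edgehom {q+1} U$ is surjective onto $\KH q Y$, the map $\partial:\KH {q+1} U\to \KH q Y$ is itself surjective. Thus $\gamma=\partial(\delta)$ for some $\delta\in\KH {q+1} U$, and exactness of the localization sequence gives
\[
 \alpha = i_*(\gamma) = i_*\partial(\delta) = 0.
\]

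There is no real obstacle in this lemma; it is a purely formal consequence of the exactness of the Kato-homology localization sequence together with the injectivity/surjectivity packaged in $\bf{(LG)}_{\it{q}}$. The genuine difficulty in the paper lies elsewhere: verifying the condition $\bf{(LG)}_{\it{q}}$ itself (which rests on the Lefschetz condition and a Bertini-type argument to produce ample hyperplane sections) and the construction of the pullback $f^*$ on Kato homology that will be applied after this lemma to reduce the general case to one where $Y$ is a simple normal crossing divisor.
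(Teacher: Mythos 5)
Your proof is correct and is essentially the paper's own argument: the same localization sequence and the same two halves of $\bf{(LG)}_{\it{q}}$ (injectivity of $\partial\,\edgehom q U$ to kill $\beta$, surjectivity of $\partial\,\edgehom {q+1} U$ to make $\partial:\KH{q+1}U\to\KH q Y$ surjective), merely organized in the reverse order — the paper first deduces that $j^*$ is injective and then shows $\Image(j^*)\cap\Image(\edgehom q U)=0$. No issues.
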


\begin{proof}
First we claim that $j^*$ is injective. Indeed we have the exact sequence
$$
\KH {q+1} U \rmapo{\partial} \KH q Y \to \KH q X \rmapo{j^*} \KH q U.
$$
Since $\partial \edgehom {q+1} U$ is surjective by the assumption,
$\partial$ is surjective and the claim follows. 
By the above claim it suffices to show 
$\Image(j^*)\cap \Image(\edgehom q U)=0$.
We have the exact sequence
$$
\begin{CD}
\KH q X @>{j^*}>> \KH q U @>{\partial}>> \KH {q-1} Y \\
\end{CD}
$$  
Let $\beta\in H_{q+e}(U)$ and assume $\alpha=\edgehom q U(\beta)\in \KH q U$
lies in $\Image(j^*)$. 
It implies $\partial(\alpha)=\partial\edgehom q U(\beta)=0$.
Since $\partial\edgehom {q} U$ is injective by the assumption,
this implies $\beta=0$ so that $\alpha=0$.
\end{proof}

Now we prove Proposition \ref{mainprop.geo}.

\begin{proof}
Let $\alpha \in KH_q(X)$. By recalling that
$$\edgehom q X: H_{q+e}(X)\to \KH q X=\EX q e 2$$
is an edge homomorphism and by looking at the differentials
$$
d^r_{q,e}: \EX q e r \to \EX {q-r}{r+e-1} r,
$$ 
we conclude that there exists a closed subscheme $W\subset X$ such that
$\dim(W)\leq q-2$, and that putting $U=X-W$, the pullback 
$\alpha_{|U}\in KH_q(U)$ of $\alpha$ via $U\to X$ lies in
the image of $\edgehom q U$, namely there exists $\beta\in H_{q-1}(U)$ such that
$\alpha_{|U}=\edgehom q U(\beta)$. By $\bf{(G)}_{\ell,\it{q-2}}$ we can find 
$X'\in \cSregir$ and a proper surjective map $\pi: X'\to X$ generically finite of
degree prime to $\ell$ such that  $(X',Y')$ is a log-pair where $Y'=\pi^{-1}(W)_{red}$. Put $U'=\pi^{-1}(U)$.
By the condition \ref{mainprop.geo}(4), there is a log-pair $\Phi=(X',Y'';V)$ 
with $Y'\subset Y''$ which satisfies the condition $\bf{(LG)}_{\it{q}}$.
Thanks to the condition \ref{mainprop.geo}(2), we have the commutative diagram
(since $X$ and $X'$ are projective over $S$, $\pi$ is projective and hence it is
embeddable in the sense of Section \ref{pullback})
$$
\begin{CD}
KH_q(X) @>{\pi^*}>> KH_q(X') \\
@VVV @VVV \\
KH_q(U) @>{\pi^*}>> KH_q(U') @>>> KH_q(V) \\
@AA{\epsilon^q_U}A  @AA{\epsilon^q_{U'}}A @AA{\epsilon^q_V}A \\
H_{q+e}(U) @>{\pi^*}>> H_{q+e}(U') @>>> H_{q+e}(V) \\
\end{CD}
$$
\medbreak\noindent
Put $\alpha'=\pi^*(\alpha)\in \KH q {X'}$ and 
$\beta'=\pi^*(\beta)\in H_{q-1}(U')$.
Let $\alpha'_{|V}\in \KH q V$ (resp. $\beta'_{|V}\in H_q(V)$) 
be the pullback of $\alpha'$ (resp. $\beta'$) via 
$V\hookrightarrow X'$ (resp. $V\hookrightarrow U'$). 
By the diagram we get 
$\alpha'_{|V}=\edgehom q V(\beta'_{|V}) \in \KH q V$.
By Lemma \ref{lem1.mainprop} this implies $\alpha'=0$. Since the composite
$$
\KH q X \rmapo{\pi^*} \KH q {X'} \rmapo{\pi_*} \KH q X
$$
is the multiplication of the degree of $\pi$ which is prime to $\ell$,
we get $\alpha=0$ by the condition \ref{mainprop.geo}(1).
\end{proof}

For a closed subscheme $T\subset S$ and integers $q,d\geq 0$, 
consider the condition:

\medbreak\noindent
${\bf{KC}}_T(q,d)$: For any $X\in \cSregir$ with $\dim(X)\leq d$ and $T_X\subset T$, where
$T_X$ is the image of $X$ in $S$, 
the proper map $X\to T_X$ induces a quasi-isomorphism
$\tau_{\leq q} \KC X \isom \tau_{\leq q} \KC {T_X}$.
\medbreak

\begin{lem}\label{mainlem}
Assume ${\bf{KC}}_T(q,d-1)$ for integers $d\geq 1$ and $q\geq 0$. 
If a log-pair $\Phi=(X,Y;U)$ with $X\in \cSregirT$ is $H$-clean in degree $q$
and if $q\geq i_T+1$ (cf. Definition~\ref{def.Lcondition} and Lemma~\ref{log.rediso}), 
then it satisfies $\bf{(LG)}_{\it{q}}$.
\end{lem}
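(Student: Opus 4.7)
\medskip\noindent
\textbf{Proof plan.} The plan is to relate the composite $\pedgehom{a}{\Phi}$ to the map $\graphedge{a}{\Phi}$ into the configuration homology $G_*(\Phi)$, where the hypothesis of $H$-cleanness gives direct control, and then to transfer the information back to Kato homology via the trace maps. The key diagram is the commutative square
\[
\xymatrix{
H_{a+e}(U) \ar[r]^{\edgehom{a}{U}} \ar[dr]_{\graphedge{a}{\Phi}} & \KH{a}{U} \ar[r]^{\partial} \ar[d]^{\graphhom{a}{\Phi}} & \KH{a-1}{Y} \ar[d]^{\graphhom{a-1}{\hat\Phi}} \\
 & G_a(\Phi) \ar[r]^{\partial} & G_{a-1}(\hat\Phi)
}
\]
whose right-hand square is the commutative square displayed just before Lemma~\ref{log.rediso}. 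Chasing this diagram yields the identity $\graphhom{a-1}{\hat\Phi}\circ \pedgehom{a}{\Phi}=\partial\circ \graphedge{a}{\Phi}$.

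First I would show that, under the induction hypothesis ${\bf KC}_T(q,d-1)$, the right vertical map $\graphhom{a-1}{\hat\Phi}$ is an isomorphism in the relevant range $a-1\le q$. The idea is to apply Lemma~\ref{lem.reduced.graphhom} to the reduced log-pair $\hat\Phi=(Y,\emptyset)$: any $Z\in\cSregir$ appearing in the bar construction of $Y$ is an iterated intersection of irreducible components of the simple normal crossing divisor $Y\subset X$, hence $\dim_S(Z)\le d-1$ and $T_Z\subset T_X=T$. So ${\bf KC}_T(q,d-1)$ guarantees $\tau_{\le q}\KC{Z}\isom \tau_{\le q}\KC{T_Z}=\tau_{\le q}\Lam_H(Z)$, and Lemma~\ref{lem.reduced.graphhom} upgrades this to a quasi-isomorphism $\tau_{\le q}\KC{\hat\Phi}\isom \tau_{\le q}\Lam_H(\hat\Phi)$. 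Passing to homology and using Lemma~\ref{lem1.reduced.Kato.complex} gives $\graphhom{a-1}{\hat\Phi}:\KH{a-1}{Y}\isom G_{a-1}(\hat\Phi)$ for $a-1\le q$.

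Next I would invoke Lemma~\ref{log.rediso}: since $q\ge i_T+1$, the boundary map $\partial:G_a(\Phi)\to G_{a-1}(\hat\Phi)$ is injective for $a=q$ and an isomorphism for $a=q+1$ (as then $a\ge i_T+2$). Combining this with the $H$-cleanness assumption, which says $\graphedge{q}{\Phi}$ is injective and $\graphedge{q+1}{\Phi}$ is surjective, the composite $\partial\circ\graphedge{a}{\Phi}$ is injective for $a=q$ and surjective for $a=q+1$. Since this composite equals $\graphhom{a-1}{\hat\Phi}\circ\pedgehom{a}{\Phi}$ and the first factor is an isomorphism by the previous step, the desired injectivity (for $a=q$) and surjectivity (for $a=q+1$) of $\pedgehom{a}{\Phi}$ follow at once, verifying $\bf{(LG)}_{\it q}$.

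The only delicate point in this plan is the verification that the induction hypothesis ${\bf KC}_T(q,d-1)$ is indeed strong enough to feed into Lemma~\ref{lem.reduced.graphhom}; everything else is a diagram chase. The bookkeeping is clean because all strata of the admissible simple normal crossing divisor $Y$ lie in $\cSregir$ with image contained in $T$ and of $S$-dimension strictly less than $d$, so the inductive statement applies uniformly to all of them.
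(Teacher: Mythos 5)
Your proposal is correct and follows essentially the same route as the paper: the paper's proof consists of exactly the two commutative diagrams you describe (one for $a=q+1$, one for $a=q$), with $\graphhom{a-1}{\hPhi}$ shown to be an isomorphism via ${\bf{KC}}_T(q,d-1)$ and Lemma~\ref{lem.reduced.graphhom}, the lower boundary map controlled by Lemma~\ref{log.rediso}, and the cleanness hypothesis supplying injectivity/surjectivity of $\graphedge{a}{\Phi}$. Your extra remark justifying why the strata of $Y$ fall under the inductive hypothesis is a point the paper leaves implicit, but the argument is the same.
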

\noindent
\begin{proof}
We consider the commutative diagram
$$
\begin{CD}
H_{q+1+e}(U)@>{\edgehom {q+1} U}>>\KH {q+1} U @>{\partial}>> \KH q Y \\
@. @VV{\graphhom {q+1} \Phi}V @V{\simeq}V{\graphhom {q} \hPhi}V \\
@. \graphHempty {q+1} \Phi @>{\simeq}>{\partial}>  \graphHempty {q} \hPhi \\
\end{CD}
$$  
where $\graphhom {q} \hPhi$ is an isomorphism by ${\bf{KC}}_T(q,d-1)$
and Lemma \ref{lem.reduced.graphhom}, and the lower $\partial$ is an isomorphism 
by Lemma~\ref{log.rediso} (use $q\geq \dim_S(T)+1$),
and $\graphedge {q+1} \Phi=\graphhom {q+1} \Phi\circ \edgehom {q+1}U$ is surjective by the cleanness assumption. This shows $\partial\edgehom {q+1} U$ is surjective.
Next we consider the commutative diagram 
$$
\begin{CD}
 H_{q+e}(U) @>{\edgehom q U}>> \KH q U @>{\partial}>> \KH {q-1} Y \\
 @.@VV{\graphhom q \Phi}V @V{\simeq}V{\graphhom {q-1} \hPhi}V \\
@.\graphHempty q \Phi @>{\hookrightarrow}>{\partial}>  \graphHempty {q-1} \hPhi \\
\end{CD}
$$  
where $\graphhom {q-1} \hPhi$ is an isomorphism by ${\bf{KC}}_T(q,d-1)$ and 
Lemma \ref{lem.reduced.graphhom}, and the lower $\partial$ is injective by 
Lemma~\ref{log.rediso}, and
$\graphedge {q} \Phi=\graphhom {q} \Phi\circ \edgehom {q}U$ is injective by 
the cleanness assumption. This shows $\partial\edgehom {q} U$ is injective and the proof is complete.
\end{proof}

\medskip

We now prove Theorem \ref{mainthm}. 

\begin{proof}
For this we may fix a closed subscheme $T\subset S$. It suffices to show ${\bf{KC}}_T(q,d)$ holds for all $d\geq 0$.
By Lemma \ref{lem0.Lefschetz} we may assume $q\geq \dim_S(T)+1\geq i_T+1$.
We now proceed by induction on $d=\dim(X)$. The case $d=0$ follows from Lemma 
\ref{lem0.Lefschetz}. Assume $d\geq 1$ and ${\bf{KC}}_T(q,d-1)$ and we want to 
show $\KH q X=0$ for $X\in \cSregirT$. For this we apply Proposition \ref{mainprop.geo} to $X$. The conditions (1), (2) and (3) hold by the assumption of
the theorem. The condition (4) of the proposition is satisfied by Lemma \ref{mainlem} and
the Bertini Theorem~\ref{Bertini} due to the Lefschetz condition.
This completes the proof of the theorem.
\end{proof}


\section{Result with finite coefficients}\label{finitecoeff}

\bigskip

\noindent
Theorem \ref{mainthm} shows the vanishing of the Kato homology of an object of $\mathcal{S}_{reg}$ for a homology theory satisfying the Lefschetz condition.
By Theorem \ref{thm.Lcondition} the condition is satisfied for some homology 
theories with admissilbe coefficient $\Lambda=T\otimes_{\Z_{\ell}} \Q_{\ell}/\Z_{\ell}$
as in Definition \ref{admisdefi}. In this section we improve Theorem~\ref{mainthm} and Corollary~\ref{mainthm.cor} to the case of
finite coefficient $\Ln=T\otimes_{\Z_{\ell}} \lnz$.
\medbreak

Fix a prime $\ell$ and assume given an inductive system of homology theories:
$$H=\{\HLLn\}_{n\geq 1},$$
where $H(-,\Ln)$ are homology theories leveled above $e$ on $\cCS$.
We assume $H(X,\Ln)$ is killed by $\ell^n$ for any $X\in \cCS$. 
It gives rise to a homolgy theory (again leveled above $e$) on $\cCS$:
$$ H(-,\Linfty)\;:\; X \to H_{a}(X,\Linfty):=\indlim {n\geq 1} H(X,\Ln) 
\qfor X\in Ob(\cCS)$$
with 
$\iota_n: H(-,\Ln) \to H(-,\Linfty)$, a natural transformation of homology theories.
We further assume given for each integer $n\geq 1$, 
a map of homology theories of degree $-1$
\begin{equation}\label{boundaryL}
\partial_n\;:\; H(-,\Linfty) \to H(-,\Ln)
\end{equation}
such that for any $X\in \cC$ and for any integers $m>n$,
we have the following commutative diagram of exact sequences
\begin{equation}\label{ESfinite}
\begin{CD}
0 @>>> \Hinfty {q+1} X/\nt @>{\partial_n}>> \HLn q X 
@>{\iota_n}>> \Hinfty q X [\nt] @>>> 0 \\
@. @| @VV{\iota_{m,n}}V @|\\
0@>>> \Hinfty {q+1} X/\mt @>{\partial_m}>> \HLm q X 
@>{\iota_m}>> \Hinfty q X [\mt] @>>> 0. \\
\end{CD}
\end{equation}
\medbreak

We let $\KCn X$ and $\KCinfty X$ denote the Kato complexes
associated to $H(-,\Ln)$ and $H(-,\Linfty)$ respectively and 
$\KHn a X$ and $\KHinfty a X$ denote their Kato homology groups.
By definition 
$\KHinfty a X =\indlim {n\geq 1} \KHn a X.$ 

\begin{rem}
The above assumption is satisfied for the inductive systems of homology theories 
$\{\H^{\et}(-,\Ln)\}_{n\geq 1}$ and 
$\{\H^{D}(-,\Ln)\}_{n\geq 1}$ in Examples~\ref{exHK1} and \ref{exHK2}, where
$\Ln=T\otimes \lnz$ for a finitely generated free $\zl$-module with
continuous $G_k$-action. 
\end{rem}
\medbreak

For an integer $q\geq 0$ we consider the following condition for $H(-,\Linfty)$:

\begin{enumerate}
\item[$\mathrm{(} \mathbf D \mathrm{)}_q$] : 
For any $x\in \Xd q$, $\Hinfty {q+e+1} x$ is divisible.
\end{enumerate}

\begin{rem}\label{finitecoeff.rem2}
\par\noindent
\begin{itemize}
\item[(1)]
For $H=\{\H^{\et}(-,\lnz)\}_{n\geq 1}$, the condition 
{$\mathrm ( \mathbf D \mathrm )_q$} is implied by the Bloch-Kato conjecture
(see Lemma \ref{lem.Dq} below).
For $H=\{\H^{D}(-,\lnz)\}_{n\geq 1}$, it is not known to hold.
\item[(2)]
Let $x\in \Xd q$.
In view of \eqref{ESfinite} and the vanishing of $\Hinfty {q+e-1} x$ 
(which follows from the assumption that $H(-,\Ln)$ is leveled above $e$),
{$\mathrm ( \mathbf D \mathrm )_q$} is equivalent to the exactness of the following sequence
\[
0\to \HLn {q+e} {x} \rmapo{\iota_n} \Hinfty {q+e} {x}\rmapo{\ell^n}
\Hinfty {q+e} {x} \to 0.
\]
\end{itemize}
\end{rem}
\medbreak

\begin{lem}\label{lem.Dq}
For $H=H^\et(-,\lnz)$ in Example~\ref{exHK1}, 
{$\mathrm ( \mathbf D \mathrm )_q$} holds.
\end{lem}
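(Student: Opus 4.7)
The plan is to translate $\mathrm{(}\mathbf{D}\mathrm{)}_q$ into a statement about divisibility of a Galois cohomology group of the residue field $\kappa(x)$ and then to deduce that divisibility from the Bloch--Kato conjecture. Since we are in Example~\ref{exHK1} with $k$ finite, the theory $H^\et(-,\Lam)$ is leveled above $e=-1$, so what must be shown is that $\Hinfty{q}{x}$ is divisible for every $x\in\Xd{q}$.

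For the identification, I would choose a dense open smooth subscheme $V\subseteq\overline{\{x\}}$ of dimension $q$, which exists because $k$ is perfect. Applying Lemma~\ref{lemHK1smooth} with coefficients $\lnz$ gives
\[
H^\et_q(V,\lnz)\;=\;H^q_\et(V,\lnz(q)),
\]
and passing to the limit first over such $V$ and then over $n$ yields
\[
\Hinfty{q}{x}\;\cong\;H^q(\kappa(x),\,\qzl(q)).
\]

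Next I would invoke the norm-residue isomorphism. When $\ell$ is invertible in $\kappa(x)$ this is the Bloch--Kato conjecture proved by Rost--Voevodsky et al.\ (\cite{V1}, \cite{V2}, \cite{SJ}, \cite{HW}); when $\ell=\chara(k)$ it is the corresponding theorem of Bloch--Kato--Gabber, which matches the definition of $\Lam(q)$ in Lemma~\ref{lemHK1smooth}(2) via logarithmic de Rham--Witt sheaves. In either case there is a canonical isomorphism
\[
K_q^M(\kappa(x))/\ell^n\;\xrightarrow{\;\sim\;}\;H^q(\kappa(x),\,\lnz(q)).
\]
Taking the direct limit over $n$ identifies $\Hinfty{q}{x}$ with $K_q^M(\kappa(x))\otimes_{\bZ}\qzl$, which is divisible because $\qzl$ is.

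There is no real obstacle in the argument: the lemma is essentially a repackaging of Bloch--Kato for fields. The only step requiring any care is the uniform treatment of the case $\ell=\chara(k)$, but this is cleanly accommodated by the characteristic-$p$ norm-residue isomorphism of Bloch--Kato--Gabber together with the corresponding definition of $\Lam(q)$ built into Example~\ref{exHK1}.
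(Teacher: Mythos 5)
Your proof is correct and follows essentially the same route as the paper: use Lemma~\ref{lemHK1smooth} (with $e=-1$, so the relevant group is $\Hinfty{q}{x}\cong\indlim_n H^q(\kappa(x),\lnz(q))$) and then deduce divisibility from the norm-residue isomorphism, citing Bloch--Gabber--Kato when $\ell=\chara(k)$ and Rost--Voevodsky otherwise. The only cosmetic difference is that the paper invokes just the surjectivity of the Galois symbol map $K^M_q(L)\to H^q_{\et}(\Spec(L),\lnz(q))$, which already suffices since a quotient of the divisible group $K^M_q(L)\otimes\qzl$ is divisible, whereas you use the full isomorphism.
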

\begin{proof}
In view of Lemma \ref{lemHK1smooth}, $\mathrm ( \mathbf{D} \mathrm )_q$ is equivalent to 
the condition that 
\[
\indlim n H^q_{\et}(\Spec(L),\lnz(q))
\]
is divisible for the residue field $L=\k(x)$ of $x\in \Xd q$.
This is a consequence of the surjectivity of the Galois symbol map
(\cite{Mi} and \cite[Section 2]{BK}):
$$h^q_{L} : K^M_q(L) \to H^q_{et}(\Spec(L),\lnz(q))$$
where $K^M_q(-)$ denotes the Milnor $K$-group. 
In case $\ell =\ch(L)$ the surjectivity had been shown by Bloch-Gabber-Kato \cite{BK}.
In case $\ell\not=\ch(L)$ it has been called the Bloch-Kato conjecture and now
established by Rost and Voevodsky (the whole proof is available in the series of 
the papers \cite{V1}, \cite{V2}, \cite{SJ} and \cite{HW}).
\end{proof}
\medbreak

\begin{theo}
\label{mainthm.finite}
Let $H$ be an inductive system of homology theories as above.
Fix an integer $q\geq0$ and assume the following conditions:
\begin{itemize}
\item[(1)]
$H(-,\Lam_\infty )$ satisfies the Lefschetz condition
(cf. Definition \ref{def.Lcondition}).
\item[(2)]
$H(-,\Lam_\infty )$ satisfies $\bf{(PB)}$ from Section~\ref{pullback}.
\item[(3)]
$\bf{(G)}_{\ell,\it{q-1}}$ from Section~\ref{maintheo} holds
(this is the case if $q\leq 3$ by \cite{CJS}). 
\item[(4)]
${\bf(D)}_q$ and ${\bf(D)}_{q-1}$ hold.
\end{itemize}
Then, for $X\in \cS^{irr}_{reg}$ with image $T$ in $S$, 
$\KHn q X=0$ if $q\geq i_T+1$.
\end{theo}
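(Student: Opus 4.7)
The plan is to compare the finite-coefficient Kato complex $C^n := \KCn X$ with its divisible-coefficient counterpart $C^\infty := \KCinfty X$, reducing the desired vanishing of $H_q(C^n)=\KHn q X$ to the vanishing of $H_q(C^\infty)$ and $H_{q+1}(C^\infty)$ already supplied by Theorem~\ref{mainthm}. The crucial preliminary observation is that each term of $C^\infty$ is automatically divisible. Indeed, for $x\in X_{(a)}$ the long exact sequence coming from \eqref{boundaryL} contains the fragment
$$\Hinfty{a+e}{x}\xrightarrow{\ell^n} \Hinfty{a+e}{x} \xrightarrow{\partial_n} \HLn{a+e-1}{x},$$
whose target vanishes because affine regular open subschemes of $\overline{\{x\}}$ of dimension $a$ are cofinal in the system of all open subschemes and on them $H_{a+e-1}(-,\Lam_n)=0$ by the leveled-above-$e$ hypothesis on $H(-,\Lam_n)$. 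Hence $\ell^n$ is surjective on $\Hinfty{a+e}{x}$ for every $n\ge 1$, so each term $C^\infty_a=\bigoplus_{x\in X_{(a)}}\Hinfty{a+e}{x}$ is divisible, and
$$0\to C^\infty[\ell^n] \to C^\infty \xrightarrow{\ell^n} C^\infty \to 0$$
is a short exact sequence of chain complexes.

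Next I would apply Theorem~\ref{mainthm} to $H(-,\Lam_\infty)$. Hypotheses (1)--(3) of the present theorem provide the Lefschetz condition, property $\bf{(PB)}$, and $\bf{(G)}_{\ell,q-1}$; the last of these is strong enough to apply Theorem~\ref{mainthm} both in degree $q$ (which needs $\bf{(G)}_{\ell,q-2}$) and in degree $q+1$ (which needs $\bf{(G)}_{\ell,q-1}$). Combined with $q\ge i_T+1$ and Lemma~\ref{log.rediso}, this yields $H_q(C^\infty)=H_{q+1}(C^\infty)=0$. Feeding these into the long exact sequence associated with the displayed short exact sequence of complexes then forces $H_q(C^\infty[\ell^n])=0$.

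To complete the argument I would relate $C^n$ to $C^\infty[\ell^n]$. Applying \eqref{ESfinite} to each residue field, summing over $x\in X_{(a)}$, and using functoriality yields a short exact sequence of chain complexes
$$0\to \widetilde C^\infty/\ell^n \to C^n \to C^\infty[\ell^n] \to 0,\qquad \widetilde C^\infty_a := \bigoplus_{x\in X_{(a)}}\Hinfty{a+e+1}{x}.$$
The conditions ${\bf(D)}_q$ and ${\bf(D)}_{q-1}$ are precisely the statements that $\widetilde C^\infty_q$ and $\widetilde C^\infty_{q-1}$ are divisible, so $(\widetilde C^\infty/\ell^n)_q=(\widetilde C^\infty/\ell^n)_{q-1}=0$ and the left-hand complex has zero homology in degrees $q$ and $q-1$. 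The resulting long exact sequence identifies $H_q(C^n)\cong H_q(C^\infty[\ell^n])=0$, as desired.

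The subtle point of the proof, and what I expect to be easily missed, is the automatic divisibility of each $C^\infty_a$: without it, ${\bf(D)}_q$ and ${\bf(D)}_{q-1}$ alone would not suffice to pass from the vanishing of $H_\ast(C^\infty)$ in the two relevant degrees to that of $H_q(C^\infty[\ell^n])$ -- one would naively also need ${\bf(D)}_{q+1}$ in order to kill the obstruction coming from $\ell^n C^\infty \hookrightarrow C^\infty$. The leveled-above-$e$ hypothesis on the $H(-,\Lam_n)$ removes this obstruction for free, and ${\bf(D)}_q$, ${\bf(D)}_{q-1}$ then precisely identify $C^n$ with $C^\infty[\ell^n]$ in the two degrees relevant for $H_q(C^n)$.
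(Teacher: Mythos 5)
Your argument is correct and is essentially the paper's proof in a slightly different packaging: the paper isolates the content of your two short exact sequences of complexes into a single ``universal coefficient'' lemma $0\to \KHinfty {q+1} X/\ell^n\to \KHn q X\to \KHinfty q X[\ell^n]\to 0$ (proved by the same diagram built from \eqref{ESfinite}, the leveled-above-$e$ vanishing, and ${\bf(D)}_q$, ${\bf(D)}_{q-1}$), and then quotes Theorem~\ref{mainthm} for $H(-,\Linfty)$ in degrees $q$ and $q+1$ exactly as you do. Your observation that $\ell^n$ is automatically surjective on each term of the $\Linfty$-Kato complex is precisely Remark~\ref{finitecoeff.rem2}(2) in the paper, so nothing is missing.
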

\medbreak

\begin{coro}\label{mainthmcor.finite}
Assume that $H(-,\Linfty)$ as above satisfies the Lefschetz condition
and $\mathrm ( \bf{PB} \mathrm )$ from Section~\ref{pullback}.
Assume further that $\ell$ is invertible on $S$.
Let $X\in \cCS$ be regular, proper and connected over $S$ and let $T$ be the image 
of $X$ in $S$. Let $q\geq 1$ be an integer. 
Assuming {$\mathrm ( \mathbf D \mathrm )_q $ } and {$ \mathrm ( \mathbf D \mathrm )_{q-1} $}, we have
$\KHn q X=0$ if $q\geq i_T+1$. 
\end{coro}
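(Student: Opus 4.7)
The strategy parallels that of Corollary~\ref{mainthm.cor}, with Theorem~\ref{mainthm.finite} playing the role that Theorem~\ref{mainthm} played there.

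First I would use Chow's lemma together with Gabber's refinement of de Jong's alteration (cf.\ Remark~\ref{rem.GG}), which is applicable since $\ell$ is invertible on $S$, to construct $X' \in \cSregirT$ and a proper surjective morphism $\pi: X' \to X$ that is generically finite of degree prime to~$\ell$. The image of $X'$ in $S$ is again $T$, so $i_{T_{X'}} = i_T$.

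Next I would apply Theorem~\ref{mainthm.finite} to $X'$. All four of its hypotheses are at hand: the Lefschetz condition and $\bf{(PB)}$ for $H(-,\Linfty)$ are assumed; the condition $\bf{(G)}_{\ell, q-1}$ follows from Gabber's theorem because $\ell$ is invertible on~$S$; and $\bf{(D)}_q$, $\bf{(D)}_{q-1}$ are hypotheses of the corollary. Since $q \geq i_T + 1 = i_{T_{X'}} + 1$, this yields $\KHn q {X'} = 0$.

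Finally, I would transfer the vanishing from $X'$ to $X$ by the same pushforward-pullback argument used at the end of the proof of Corollary~\ref{mainthm.cor}. The pushforward $\pi_* : \KHn q {X'} \to \KHn q X$ is automatic from the covariance of the Kato complex under proper morphisms, while $\bf{(PB)}$ furnishes a pullback $\pi^*$ satisfying $\pi_* \circ \pi^* = \deg(\pi) \cdot \mathrm{id}$. Hence $\deg(\pi) \cdot \KHn q X = \pi_*\pi^*(\KHn q X) \subseteq \pi_*(\KHn q {X'}) = 0$; since $\deg(\pi)$ is prime to~$\ell$ and $\KHn q X$ is $\ell^n$-torsion, this forces $\KHn q X = 0$.

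The one delicate point is that $\bf{(PB)}$ is assumed only for $H(-,\Linfty)$, whereas the pushforward-pullback identity is needed at the $\Ln$-level. This extension is formal: the natural transformations $\iota_n$ and $\partial_n$ of~\eqref{boundaryL} identify $H(-,\Ln)$ with a shift of the cone of multiplication by $\ell^n$ on $H(-,\Linfty)$, so any functorial pullback on the $\Linfty$-theory that commutes with $\ell^n$ descends canonically to one on the $\Ln$-theory, and the identity $\pi_* \circ \pi^* = \deg(\pi) \cdot \mathrm{id}$ is preserved under this descent.
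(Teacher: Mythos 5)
Your overall strategy (alteration via Chow's lemma and Gabber's theorem, apply the finite-coefficient Theorem~\ref{mainthm.finite} to $X'$, transfer down by $\pi_*\pi^*=\deg(\pi)$) is sound up to the final step, but that step has a genuine gap. The transfer requires a pullback $\pi^*:\KHn q X\to \KHn q {X'}$ satisfying $\pi_*\pi^*=\deg(\pi)$, i.e.\ condition $\mathbf{(PB)}$ for the finite-coefficient theory $H(-,\Ln)$, whereas the corollary assumes $\mathbf{(PB)}$ only for $H(-,\Linfty)$. The claimed formal descent does not work: the axioms provide only the exact sequences \eqref{ESfinite}, which exhibit $H_q(X,\Ln)$ as an extension of $H_q(X,\Linfty)[\ell^n]$ by $H_{q+1}(X,\Linfty)/\ell^n$, and functorial maps on the sub and the quotient of an extension do not induce a canonical map on the middle term (the homology theories are given as functors to abelian groups, not as cones of multiplication by $\ell^n$ on complexes or spectra; even there, cones are not functorial). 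Moreover $\mathbf{(PB)}$ also demands a pullback on the Kato homology $\KHn a -$ itself, compatible with the edge homomorphism and satisfying the projection formula; for the \'etale theories this is precisely the nontrivial content of Theorem~\ref{PBthm} at finite level, not a formal consequence of the $\Linfty$-statement. Note that the second version of Theorem~\ref{mainthm.finite} in Section~\ref{finitecoeff} explicitly lists $\mathbf{(PB)}$ for $H(-,\Ln)$ as a separate hypothesis, which indicates the authors do not regard it as automatic.

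The paper's own proof avoids any finite-level transfer. From $(\mathbf{D})_q$ and $(\mathbf{D})_{q-1}$ one deduces (this is the unnumbered lemma immediately following the corollary) a short exact sequence
\[
0\to \KHinfty{q+1}X/\ell^n \to \KHn q X \to \KHinfty q X[\ell^n]\to 0 ,
\]
and then applies Corollary~\ref{mainthm.cor} to $H(-,\Linfty)$ --- for which the Lefschetz condition and $\mathbf{(PB)}$ \emph{are} assumed, and for which Gabber's theorem supplies $\mathbf{(G)}_{\ell,q}$ for all $q$ --- to conclude $\KHinfty{q+1}X=\KHinfty q X=0$ for $q\ge i_T+1$, hence $\KHn q X=0$. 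To repair your argument, either add $\mathbf{(PB)}$ for $H(-,\Ln)$ as a hypothesis (it does hold in the examples by Theorem~\ref{PBthm}), or replace the final pushforward-pullback step by this universal-coefficient sequence, which is exactly what your hypotheses $(\mathbf{D})_q$ and $(\mathbf{D})_{q-1}$ are there to provide.
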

\medbreak

The theorem and corollary follow at once from Theorem~\ref{mainthm}, Corollary~\ref{mainthm.cor} and the following:

\begin{lem}
If {$ \mathrm ( \mathbf D \mathrm )_q $} and {$\mathrm ( \mathbf D \mathrm )_{q-1}$} holds, we have an exact sequence for $X\in\cCS$:
\[
0\to {\KHinfty {q+1} X}/\ell^n \to  \KHn q X \to {\KHinfty q X}[\ell^n]\to 0.
\]
\end{lem}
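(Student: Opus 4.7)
The plan is to splice together two natural Bockstein-type short exact sequences at the level of Kato complexes.

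The key preliminary is that each term of $KC(X,\Linfty)$ is $\ell^n$-divisible. Indeed, for $x\in X_{(a)}$ the long exact sequence underlying \eqref{ESfinite} reads in homological degree $a+e$
\[
H_{a+e}(x,\Linfty)\rmapo{\ell^n} H_{a+e}(x,\Linfty)\rmapo{\partial_n} H_{a+e-1}(x,\Ln),
\]
and the last group vanishes because $H(-,\Ln)$ is leveled above $e$ and $x$ is the generic point of an integral closed subscheme of dimension $a$. Hence $\ell^n$ acts surjectively on every term of $KC(X,\Linfty)$, producing a genuine short exact sequence of chain complexes
\[
0\to KC(X,\Linfty)[\ell^n]\to KC(X,\Linfty)\rmapo{\ell^n} KC(X,\Linfty)\to 0.
\]
Its associated long exact sequence of homology yields the first building block
\[
0\to \KHinfty{q+1}X/\ell^n \to H_q\bigl(KC(X,\Linfty)[\ell^n]\bigr) \to \KHinfty q X[\ell^n]\to 0.
\]

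Second, applying \eqref{ESfinite} in homological degree $a+e$ at each $x\in X_{(a)}$ and summing over $X_{(a)}$ gives a short exact sequence of complexes
\[
0\to A^\bullet \to KC(X,\Ln)\rmapo{\iota_n} KC(X,\Linfty)[\ell^n]\to 0,
\]
where $A^a=\bigoplus_{x\in X_{(a)}} H_{a+e+1}(x,\Linfty)/\ell^n$ and the maps are those induced by $\partial_n$ and $\iota_n$. These are chain maps because $\iota_n$ and $\partial_n$ are morphisms of homology theories, hence compatible with the localization sequences that define $d^1$. The hypothesis $(\mathbf D)_q$ forces $A^q=0$ (since $H_{q+e+1}(x,\Linfty)$ is divisible for $x\in X_{(q)}$), and $(\mathbf D)_{q-1}$ similarly forces $A^{q-1}=0$. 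Thus $H_q(A^\bullet)=H_{q-1}(A^\bullet)=0$, and the long exact sequence of homology collapses to an isomorphism
\[
\KHn q X\;\os{\sim}{\lra}\; H_q\bigl(KC(X,\Linfty)[\ell^n]\bigr)
\]
induced by $\iota_n$.

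Splicing this isomorphism into the first short exact sequence produces the sequence asserted in the lemma. A brief inspection of the identifications shows that the right-hand arrow $\KHn q X\to\KHinfty q X[\ell^n]$ is the one induced by $\iota_n$ and that the left-hand arrow $\KHinfty{q+1}X/\ell^n\to\KHn q X$ is the Bockstein coming from $\partial_n$, exactly as dictated by \eqref{ESfinite}. The argument is entirely formal once the divisibility claim is established; that step is the only place where the leveled-above-$e$ hypothesis on $H(-,\Ln)$ genuinely enters.
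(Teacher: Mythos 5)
Your proof is correct and takes essentially the same route as the paper's: the paper packages the same Bockstein argument into a single commutative diagram whose columns are the sequences $0\to \HLn {a+e}x \to \Hinfty {a+e}x \rmapo{\ell^n} \Hinfty {a+e}x\to 0$ for $a=q+1,q,q-1$ (exact by \eqref{ESfinite}, the leveled-above-$e$ vanishing, and $(\mathbf D)_q$, $(\mathbf D)_{q-1}$), which is precisely your two short exact sequences of complexes through $KC(X,\Linfty)[\ell^n]$ combined. Your version merely makes the resulting diagram chase explicit.
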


\begin{proof}
This follows from the following commutative diagram:
\[
\begin{CD}
@.0@.0@.\\
@. @VVV @VVV  \\
 \sumd X {q+1} \HLn {q+e+1} x @>>> \sumd X {q} \HLn {q+e} x @>>>
\sumd X {q-1} \HLn {q+e-1} x \\
@VV{\iota_n}V @VV{\iota_n}V @VV{\iota_n}V  \\
 \sumd X {q+1} \Hinfty {q+e+1} x @>>> \sumd X {q} \Hinfty {q+e} x @>>>
\sumd X {q-1} \Hinfty {q+e-1} x \\
@VV{\ell^n}V @VV{\ell^n}V @VV{\ell^n}V  \\
 \sumd X {q+1} \Hinfty {q+e+1} x @>>> \sumd X {q} \Hinfty {q+e} x @>>>
\sumd X {q-1} \Hinfty {q+e-1} x \\
@VVV @VVV @VVV  \\
0@.0@.0@.\\
\end{CD}
\]
where all columns and rows are exact by the assumption and Remark \ref{finitecoeff.rem2}(2).
\end{proof}
\bigskip

In the above argument, in order to show the vanishing of $\KHn q X$,
the vanishing of $\KHinfty {q} X$ and $\KHinfty {q+1} X$ are used.
In \cite[Section 5]{JS2} a refined argument is given which uses only the vanishing 
of $\KHinfty {q} X$ to show that of $\KHn q X$.
This is useful when we deal with the case $\ell=p$ for which we cannot use
Gabber's theorem and obliged to resort to results on resolution of singularities
in low dimension such as \cite{CJS}. In what follows we recall the argument.
\medbreak

\begin{theo}\label{mainthm.finite}
Let $q\geq 1$ be an integer. Assume the following:
\begin{itemize}
\item[(1)]
$H(-,\Linfty)$ satisfies the Lefschetz condition.
\item[(2)]
$\HLn \empty -$ satisfies $\bf{(PB)}$ from Section~\ref{pullback}.
\item[(3)]
$\bf{(G)}_{\ell,\it{q-2}}$ from Section~\ref{maintheo} holds
(this is the case if $q\leq 4$ by \cite{CJS}). 
\item[(4)]
{${\bf(D)}_q$} holds.
\item[(5)]
We have an exact sequence of the Kato complex for $S$:
\[
0\to \KCn S \rmapo{\iota_n} \KCinfty S \rmapo{\ell^n} \KCinfty S\to 0.
\]
\end{itemize}
Then, for $X\in \cS^{irr}_{reg}$ with image $T$ in $S$, 
$\KHn q X=0$ if $q\geq i_T+1$.
\end{theo}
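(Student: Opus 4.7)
The plan is to derive the $\Ln$-vanishing in two stages: first establish the analogous $\Linfty$-vanishing by invoking the already proved main theorem, then descend from $\Linfty$ to $\Ln$ using a partial Bockstein argument governed jointly by $(D)_q$ and condition (5).

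For the first stage, I would apply Theorem \ref{mainthm} to the homology theory $H(-,\Linfty)$. Assumption (1) of that theorem holds because $H_a(X,\Linfty)=\varinjlim_n H_a(X,\Ln)$ is $\ell$-primary torsion, assumption (2) is our condition (1), and assumption (3) follows by passing $\bf{(PB)}$ from $H(-,\Ln)$ to the colimit (the pullbacks are compatible with the transition maps $\iota_{m,n}$). Together with the alteration hypothesis $\bf{(G)}_{\ell,q-2}$, Theorem \ref{mainthm} then yields $KH_\infty^q(X)=0$ for all $X\in \cSregir$ with image $T$ in $S$ and all $q\geq i_T+1$.

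For the second stage, I would mimic the proof of Proposition \ref{mainprop.geo}, but with $\Ln$ in place of $\Linfty$, by induction on $d=\dim_S(X)$. Given $\alpha\in KH_n^q(X)$, the differentials of the niveau spectral sequence let me pick a dense open $U\subset X$ with complement $W$ of $\dim_S(W)\le q-2$ such that $\alpha|_U$ is the image of some $\beta\in H_{q+e}(U,\Ln)$ under $\edgehom q U$. Applying $\bf{(G)}_{\ell,q-2}$, I choose an alteration $\pi:X'\to X$ of degree prime to $\ell$ making $W':=\pi^{-1}(W)_{red}$ an admissible simple normal crossing divisor, and by Bertini (Theorem \ref{Bertini}) enlarge $W'$ to an ample log-pair $\Phi=(X',Y'';V)$. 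The goal is then to show $\pi^*(\alpha)=0\in KH_n^q(X')$, after which $\bf{(PB)}$ gives $\deg(\pi)\,\alpha=0$ and hence $\alpha=0$ by the $\ell$-primary torsion condition (1).

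To show $\pi^*(\alpha)=0$, I need the $\Ln$-analogue of $(LG)_q$ for the log-pair $\Phi$. The injectivity part of this (at degree $q$) follows from Lefschetz for $\Linfty$ (assumption (1)) combined with $(D)_q$, since the latter gives a clean short exact sequence $0\to H_n^{q+e}(x)\to H_\infty^{q+e}(x)\xrightarrow{\ell^n}H_\infty^{q+e}(x)\to 0$ at all $x\in X_{(q)}$ and hence at the $q$-th level of both the Kato complex and the configuration complex $\Lam_H$. The surjectivity part (at degree $q+1$) is the delicate step: without $(D)_{q+1}$ or $(D)_{q-1}$, the exactness of the Bockstein SES of complexes fails at neighboring degrees. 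Here I would use assumption (5): since $\Lam_H(\Phi)$ is built by the bar construction out of the Kato complexes $KC_H(T_{X'_j})$ of the various images in $S$, the base-level SES (5) transports directly to a Bockstein SES for $\Lam_H(\Phi)$ itself; combined with surjectivity at $\Linfty$-level from Lefschetz, this yields the required surjectivity at $\Ln$-level, completing the $\Ln$-version of Lemma \ref{mainlem} and hence of Proposition \ref{mainprop.geo}.

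The main obstacle is precisely this last descent: showing that condition (5) suffices to replace the neighboring divisibility hypotheses $(D)_{q\pm 1}$. The key observation making it work is that $\Lam_H$ is a $\LP$-functorial construction out of $KC_H(T)$ for $T\subseteq S$, so the base-level exact sequence (5) automatically upgrades to an exact sequence of configuration complexes, and the injectivity/surjectivity arguments in the proof of Lemma \ref{mainlem} can be run with $\Ln$-coefficients on both the left and the right of $\graphedge a \Phi$, using $(D)_q$ for $H_{q+e}$ and (5) for $\Lam_H$ in complementary roles.
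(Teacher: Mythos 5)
Your skeleton is the paper's: obtain the $\Linfty$-vanishing from Theorem \ref{mainthm}, then rerun Proposition \ref{mainprop.geo} and Lemma \ref{mainlem} for $H(-,\Ln)$, the only new input being a descent of cleanness from $\Linfty$ to $\Ln$ in which condition (5), pushed through the bar construction, supplies the Bockstein short exact sequence for the configuration complexes. However, the descent as you set it up has a genuine gap in the injectivity half. By the axiom \eqref{ESfinite} the kernel of $\HLn{q+e}U\to\Hinfty{q+e}U$ is $\Hinfty{q+e+1}U/\ell^n$, and ${\bf(D)}_q$ --- divisibility of $\Hinfty{q+e+1}x$ only at \emph{points} $x$ of dimension $q$ --- gives no control of this group when $\dim_S(U)>q$; your phrase ``at the $q$-th level of the Kato complex'' conflates $H_{q+e}(U,\Ln)$ with the $E^1$-term $\bigoplus_{x\in U_{(q)}}H_{q+e}(x,\Ln)$. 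The paper's Lemma \ref{mainthm.finite.lem} instead applies the four lemma to the map from the sequence \eqref{ESfinite} for $U$ to the corresponding sequence for $\Lam_H(\Phi)$ coming from (5); the outer vertical maps are $\graphedge{q+1}{\Phi,\Linfty}$ reduced mod $\ell^n$ and $\graphedge{q}{\Phi,\Linfty}$ restricted to $\ell^n$-torsion, so what is really needed is \emph{bijectivity} of $\graphedge{q+1}{\Phi,\Linfty}$ (likewise for surjectivity in degree $q+1$: surjectivity of $\graphedge{q+1}{\Phi,\Linfty}$ alone does not give surjectivity on $[\ell^n]$-parts). This is exactly what $\Linfty$-cleanness in degrees $q$ \emph{and} $q+1$ provides, and ${\bf(D)}_q$ plays no role in it.

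Because cleanness in degree $q+1$ only makes sense when $\dim_S(X)\geq q+1$, this descent is unavailable for $\dim_S(X)=q$, and your induction is missing its base case. That is where ${\bf(D)}_q$ actually enters: for $\dim_S(X)=q$ one has $\KHn q X=\ker\big(d^1:E^1_{q,e}(X,\Ln)\to E^1_{q-1,e}(X,\Ln)\big)\subset E^1_{q,e}(X,\Ln)$, and by Remark \ref{finitecoeff.rem2}(2) the map $\iota_n$ is injective on $E^1_{q,e}$, so $\KHn q X$ embeds into $\KHinfty q X$, which vanishes by Theorem \ref{mainthm}. With this base case added and the injectivity step replaced by the four-lemma argument on the map of Bockstein sequences, your proof matches the paper's.
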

\begin{proof}
We may assume $\dim_S(X) \geq q$. In case $\dim_S(X)=q$, by Remark \ref{finitecoeff.rem2}(2), {${\bf(D)}_q$} implies that $\KHn q X$ is a subgroup of 
$\KHinfty q X$. Hence the assertion follows from Theorem \ref{mainthm}. 
Assume $\dim_S(X)> q$. We apply to $H=\HLLn$ the same argument as the proof of 
Theorem \ref{mainthm} (reducing it to Proposition \ref{mainprop.geo} and 
Lemma \ref{mainlem}). We are reduced to showing the following:
\end{proof}

\begin{lem}\label{mainthm.finite.lem}
Let $\Phi=(X,Y;U)$ be a log-pair with $\dim_S(X)\geq q+1$.
If $\Phi$ is clean in degree $q+1$ and $q$ for $\Hinfty \empty -$, 
it is clean in degree $q$ for $H=\HLLn$.
\end{lem}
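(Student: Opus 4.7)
The plan is to run a five-lemma argument that promotes cleanness from $\Lambda_\infty$- to $\Lambda_n$-coefficients, using universal coefficient style short exact sequences on both source and target of $\graphedge{a}{\Phi}$.

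The preliminary step is to install, for every log-pair $\Psi$ and every integer $a$, a short exact sequence
\[
0 \lr G_{a+1}(\Psi,\Lambda_\infty)/\ell^n \lr G_a(\Psi,\Lambda_n) \lr G_a(\Psi,\Lambda_\infty)[\ell^n] \lr 0.
\]
Recall that $\Lambda_H(\Psi,-)$ is assembled, via the Gillet--Soul\'e bar construction of Section~\ref{logpairs}, from the Kato complexes $KC(T_Z,-)$ attached to the images in $S$ of the components appearing in $\Psi$. Condition~(5) of Theorem~\ref{mainthm.finite} supplies the short exact sequence of complexes
\[
0\lr KC(S,\Lambda_n)\lr KC(S,\Lambda_\infty) \xrightarrow{\ell^n} KC(S,\Lambda_\infty)\lr 0,
\]
and the analogous statement for the closed point $s$ of $S$ (relevant in the arithmetic case) is immediate from~\eqref{ESfinite} applied over $s$ together with the divisibility of $H_{e+1}(s,\Lambda_\infty)$. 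Since the bar construction is termwise exact, these lift to a short exact sequence of total complexes computing $\Lambda_H(\Psi,-)$, whose long exact sequence of homology produces the displayed sequence. On the source side, assumption~\eqref{ESfinite} already provides
\[
0 \lr H_{a+1+e}(U,\Lambda_\infty)/\ell^n \lr H_{a+e}(U,\Lambda_n) \lr H_{a+e}(U,\Lambda_\infty)[\ell^n] \lr 0.
\]
By naturality of $\partial_n$ with respect to the trace $\tr_\Phi$, these two exact sequences fit into a commutative ladder whose vertical arrows are induced by $\graphedge{a+1}{\Phi}$, $\graphedge{a}{\Phi}$, and $\graphedge{a}{\Phi}$ respectively.

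I would then apply the five lemma twice. The assumed $\Lambda_\infty$-cleanness of $\Phi$ in degrees $q$ and $q+1$ yields: $\graphedge{q}{\Phi}$ at $\Lambda_\infty$ is injective, $\graphedge{q+1}{\Phi}$ at $\Lambda_\infty$ is bijective (injective by cleanness in degree $q+1$, surjective by cleanness in degree $q$), and $\graphedge{q+2}{\Phi}$ at $\Lambda_\infty$ is surjective. Feeding these into the ladder at $a=q$ makes the left vertical arrow bijective and the right vertical arrow injective, forcing the middle arrow $\graphedge{q}{\Phi}$ at $\Lambda_n$ to be injective. Feeding them into the ladder at $a=q+1$ makes both outer vertical arrows surjective, forcing $\graphedge{q+1}{\Phi}$ at $\Lambda_n$ to be surjective. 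Together with the bound $q+1\leq \dim_S(X)$, this is precisely $H(-,\Lambda_n)$-cleanness of $\Phi$ in degree $q$.

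The principal technical burden lies in the first step: one must verify carefully that the bar construction actually transports a termwise short exact sequence of Kato complexes to an honest short exact sequence of total complexes, and that both $KC(S,-)$ and, where relevant, $KC(T,-)$ for proper closed subschemes $T\subset S$ enjoy the required exactness. Once the universal coefficient sequence for $G_\bullet(\Psi,-)$ is in hand, the rest of the argument is a routine snake-lemma chase.
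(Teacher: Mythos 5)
Your proposal is correct and follows essentially the same route as the paper: the paper's proof is exactly the commutative ladder comparing the sequence \eqref{ESfinite} for $H_\ast(U,-)$ with the short exact sequence $0\to G_{a+1}(\Phi,\Linfty)/\ell^n\to G_a(\Phi,\Ln)\to G_a(\Phi,\Linfty)[\ell^n]\to 0$ obtained from condition (5), followed by the same snake/four-lemma chase using that $\graphedge{q+1}{\Phi}$ is bijective for $\Linfty$-coefficients. Your extra care in checking that the bar construction transports the termwise exactness of condition (5) (and its analogue over the closed point of $S$ in the arithmetic case) to the configuration complexes is a detail the paper leaves implicit, but it is the intended justification.
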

\begin{proof}
We have the commutative diagram {
$$
\begin{CD}
0 @>>> \Hinfty {a+1} U /\nt @>{\partial_n}>> \Hn a U @>>> \Hinfty a U [\nt]
 @>>> 0 \\
@. @VV{\graphedge {a+1}{\Phi,\Linfty}}V @VV{\graphedge {a}{\Phi,\Ln}}V 
@VV{\graphedge {a}{\Phi,\Linfty}}V \\
0@>>> \graphHinfty {a+1} {\Phi}/\nt @>>> \graphHn a {\Phi} @>>> \graphHinfty a {\Phi}[\nt] @>>> 0\\
\end{CD}
$$ 
}
where the exactness of the lower sequence follows from the condition (5) of 
Theorem \ref{mainthm.finite}.
The assumption implies
$\graphedge {a}{\Phi,\Linfty}$ is injective for $a=q+1$ and $q$ 
(resp. surjective for $a=q+2$ and $q+1$). By the diagram this implies that
$\graphedge {a}{\Phi,\Ln}$ is injective for $a=q$ and surjective for $a=q+1$, 
which completes the proof of the lemma.
$\square$
\end{proof}


\section{Kato's Conjectures}\label{kato}

\bigskip

\noindent 
In this section we prove part of the original conjectures of Kato from~\cite{K}. 
We fix a prime $\ell$ and consider the Kato homology with $\ell$-primary torsion 
coefficient.

We start with Kato's Conjectures~0.3 and 5.1 from~\cite{K}.
Assume either of the following:
\begin{itemize}
\item[$(i)$]
$S=\Spec(k)$ for a finite field $k$,
\item[$(ii)$]
$S=\Spec(R)$ where $R$ is a henselian discrete valuation ring with finite residue field.
\end{itemize}
In case $(ii)$ we assume that $\ell$ is invertible in $R$. 
Let $G=\Gal(\overline{k}/k)$ be the absolute Galois group of $k$ in case $(i)$ and
$G=\pi_1(S,\overline{\eta})$ with a geometric point $\overline{\eta}$ over
the generic point $\eta$ of $S$ in case $(ii)$.
We fix an $\ell$-primary torsion $G$-module $\Lam$ and consider the following
\'etale homology theory on $\cCS$: In case $(i)$ it is given by
$$
H^\et_a(X,\Lam)=H^{-a}(X_{\et}, R\,f^{!}\Lam)
\qfor f:X\rightarrow S \text{ in } \cCS.
$$
In case $(ii)$ it is given by
$$
H^\et_a(X,\Lam)=H^{2-a}(X_{\et}, R\,f^{!}\Lam(1))
\qfor f:X\rightarrow S \text{ in } \cCS.
$$
This is leveled above $-1$ and the associated Kato complex is written as:
\begin{multline}\label{KCfinitefield2}
\cdots \sumd X a H_{\et}^{a+1}(x,\Lam(a))\to
\sumd X {a-1} H_{\et}^{a}(x,\Lam(a-1))\to \cdots \\
\cdots \to\sumd X 1 H_{\et}^{2}(x,\Lam(1))\to 
\sumd X 0 H_{\et}^{1}(x,\Lam).
\end{multline}
For $\Lambda=\lmz$ (with the trivial $G$-action), this coincides with
the complex \eqref{eq.KC1} (with $n=\ell^m$) which Kato considered originally.
Let $\KHetL a X$ be the associated Kato homology groups.
The following theorem gives a complete answer to Kato's Conjectures~0.3 and 5.1
from~\cite{K} for $\ell$ invertible on $S$.

\begin{theo}\label{kato.finitefield}
Assume either of the following:
\begin{itemize}
\item[(1)]
$\Lam=T\otimes\qzl$ is admissilbe in the sense of Definition \ref{admisdefi}.
\item[(2)]
$\Lambda=\lnz$ with the trivial $G$-action.
\end{itemize}
For a regular connected scheme $X$ proper over $S$ with image $T_X$ in $S$ we have
\[
\KHetL a X = \left\{ \begin{array}{ll} 
\Lam_{G} & \text{ for } a=0 \text{ and } \dim (T_X)=0 \\
0 & \text{ otherwise }.
\end{array}
 \right.
\]
under one of the following assumptions:
\begin{itemize}
\item { $\ell$} is invertible on $S$,
\item $X$ is projective and { ${\bf (G)}_{\ell,a-2}$} holds (which is the case if
$a\leq 4$ by \cite{CJS}).
\end{itemize}
\end{theo}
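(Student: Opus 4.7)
The plan is to invoke the vanishing results of Sections~\ref{maintheo} and~\ref{finitecoeff} for the \'etale homology theory $H^\et(-,\Lambda)$ and thereby reduce the computation of $\KHetL a X$ to the elementary computation of the Kato complex of the base $T_X$.

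First I would verify the hypotheses of the main theorems. The theory $H^\et(-,\Lambda)$ is leveled above $e=-1$ by Example~\ref{exHK1} (case (i)) or Example~\ref{exHK4} (case (ii); here the hypothesis that $\ell$ is invertible in $R$ is used). By Theorem~\ref{thm.Lcondition} the Lefschetz condition is satisfied: in case (1) this is direct since $\Lambda$ is admissible, while in case (2) I would apply the theorem to $\Lambda_\infty=\qzl$ with trivial $G$-action (admissible of weight zero) and then appeal to the framework of Section~\ref{finitecoeff} to descend to $\Lambda_n=\lnz$. The pullback condition ${\bf (PB)}$ holds by Theorem~\ref{PBthm}, and each $\HetLam a X$ is $\ell$-primary torsion.

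Next I would apply the appropriate main theorem to each of the four sub-cases. In case (1) I invoke Corollary~\ref{mainthm.cor} under the first bullet ($\ell$ invertible on $S$, so Gabber provides ${\bf (G)}_{\ell,q}$ for every $q$) or Theorem~\ref{mainthm} under the second bullet ($X$ projective with ${\bf (G)}_{\ell,a-2}$), obtaining $\KHetL a X = 0$ for $a\ge i_{T_X}+1$ in the notation of Lemma~\ref{log.rediso}. In case (2) I invoke Corollary~\ref{mainthmcor.finite} or the refined Theorem~\ref{mainthm.finite} (the version based on~\cite[Section~5]{JS2}, requiring only ${\bf (G)}_{\ell,q-2}$) in the same fashion; the auxiliary hypothesis ${\bf (D)}_q$ is supplied by Lemma~\ref{lem.Dq}, i.e., by the Bloch--Kato conjecture established by Rost--Voevodsky.

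It remains to compute $\KC{T_X}$. If $\dim(T_X)=0$ then $T_X=\Spec(k')$ for a finite field $k'$, so $\KC{T_X}$ is concentrated in degree~$0$ with value $H^1(k',\Lambda)\cong \Lambda_{G_{k'}}=\Lambda_G$ (the second identification holds because $G_{k'}\cong\zh$ makes $H^1$ the module of coinvariants); hence $i_{T_X}=0$, and Lemma~\ref{lem0.Lefschetz} yields an isomorphism $\KHetL 0 X \cong \Lambda_G$. If $\dim(T_X)=1$, which occurs only in case (ii) with $T_X=S$, then $\KC S$ takes the form $H^2(\eta,\Lambda(1))\to H^1(s,\Lambda)$ and is acyclic, precisely as observed in Case~II-1 of the proof of Theorem~\ref{thm.Lcondition} (using the henselianness of $R$ and the vanishing $H^a(S_\et,\Lambda(1))=0$ for $a\ge 2$); thus $i_{T_X}=-\infty$ and the desired vanishing holds for every $a$. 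The main obstacle is organizational rather than technical: one must fit each of the four sub-cases into the right vanishing theorem and verify its hypotheses, whereas the substantive input (Lefschetz condition, pullback construction, passage from admissible to finite coefficients via Bloch--Kato) is already available from the preceding sections.
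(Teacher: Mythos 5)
Your proposal is correct and follows essentially the same route as the paper: case (1) is handled by Theorem~\ref{mainthm}/Corollary~\ref{mainthm.cor} together with Theorem~\ref{thm.Lcondition} and Theorem~\ref{PBthm}, and case (2) by the refined Theorem~\ref{mainthm.finite} applied to the inductive system $\{H^\et(-,\lnz)\}_{n\ge 1}$ with $\mathrm{(}\mathbf D\mathrm{)}_q$ supplied by Lemma~\ref{lem.Dq}. Your explicit computation of $\KC{T_X}$ (concentrated in degree $0$ with value $\Lam_G$ when $\dim(T_X)=0$, acyclic when $\dim(T_X)=1$ as in Case~II-1) is left implicit in the paper but is exactly what its argument relies on.
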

\begin{proof}
Under the assumption (1) the theorem follows from 
Theorem \ref{mainthm}, Corollary \ref{mainthm.cor}, Theorem \ref{thm.Lcondition}
and Theorem \ref{PBthm}. We show the theorem under the assumption (2) by applying
Theorem \ref{mainthm.finite} to the inductive system of the \'etale homology theories:
\[
H=\{\Het(-,\lnz)\}_{n\geq 1}.
\]
The condition (5) is easily checked.
The condition (1) follows from Theorem \ref{thm.Lcondition} and 
(2) from Theorem \ref{PBthm}.  Finally (4) follows from Lemma \ref{lem.Dq}.
This completes the proof of Theorem \ref{kato.finitefield}
\end{proof}

\medskip

Another consequence of our approach to Kato's conjectures, which was not directly conjectured by Kato himself and which will be used in \cite{KeS} to study quotient singularities, is the following theorem.
Note that we do not know at present whether it holds with finite
coefficients except in case $k$ is 
a one-dimensional global field (see the proof of 
Theorem \ref{kato.globalfield}).

Let $S=\Spec(k)$ where $k$ is a finitely generated field $k$ or its purely inseparable extension. 
Let $G=\Gal(\overline{k}/k)$ be the absolute Galois group of $k$.
We fix an $\ell$-primary torsion $G$-module $\Lam$ and consider the
following \'etale homology theory on $\cCS$ from Example~\ref{exHK2}:
$$
\HDL a X:=\indlim F \Hom\big(H^{a}_c(X_{\et},F^\vee),\qz\big) \qfor X\in Ob(\cCS),
$$ 
where $F$ runs over all finite $G$-submodules of $\Lambda$ and $F^\vee=\Hom(F,\qz)$.
This homology theory is leveled above $0$.
Let $\KHDL a -$ be the associated Kato homology groups.

\begin{theo}\label{kato.finitegenfield}
Assume $\Lam=T\otimes\qzl$ is admissilbe in the sense of Definition \ref{admisdefi}.
For a connected scheme $X$ smooth proper over $k$, we have
\[
\KHDL a X  = \left\{ \begin{array}{ll} 
\Lam_G & \text{ for } a=0  \\
0 & \text{ for } a>0.
\end{array}
 \right.
\]
under one of the following assumptions:
\begin{itemize}
\item { $\ell\ne \ch(k)$},
\item $X$ is projective and { ${\bf (G)}_{\ell,a-2}$} holds after replacing the base field $k$ by its perfection.
\end{itemize}
\end{theo}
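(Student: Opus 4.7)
The plan is to recognize the theorem as a direct application of Corollary~\ref{mainthm.cor} in the first case and of Theorem~\ref{mainthm} in the second case, applied to the homology theory $H = H^D(-,\Lambda)$ from Example~\ref{exHK2}, which is leveled above $e=0$.

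First I would reduce to the case $k$ is perfect. Writing $k^{perf}$ for the perfection of $k$, Lemma~\ref{lemHK1smooth2}(3) provides a canonical pushforward isomorphism $H^D(X\otimes_k k^{perf},\Lambda)\isom H^D(X,\Lambda)$; since $\Spec(k^{perf})\to\Spec(k)$ is a universal homeomorphism whose residue field extensions are purely inseparable, the termwise identification extends to the Kato complex, hence to Kato homology. So I may assume $k$ is perfect, equivalently, the perfection of the original finitely generated field. Then I would verify the four hypotheses of Theorem~\ref{mainthm} for $H^D(-,\Lambda)$: that $H^D_a(-,\Lambda)$ is $\ell$-primary torsion is immediate from $\Lambda=T\otimes\qzl$; the Lefschetz condition is Theorem~\ref{thm.Lcondition}; condition $(\mathbf{PB})$ is Theorem~\ref{PBthm} (via Example~\ref{PBexam2}), fully proven in the first case (since $\ell$ is invertible on $S=\Spec(k)$) and relying on the authors' promised extension of Rost's construction in the second; and $(\mathbf{G})_{\ell,a-2}$ holds by Gabber's refinement of de Jong in the first case and by hypothesis in the second.

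Once the four hypotheses are in place, Corollary~\ref{mainthm.cor} (first case, requiring only that $X$ is regular and proper) or Theorem~\ref{mainthm} (second case, using $X\in\cSregir$ since $X$ is connected smooth projective) will give $\KHDL q X = 0$ for $q\geq i_T+1$, where $T$ is the image of $X$ in $S$, namely $T=\Spec(k)$. The Kato complex of $\Spec(k)$ is concentrated in degree zero with value $H^D_0(\Spec(k),\Lambda)\simeq \Lambda_G$, so $i_T\leq 0$ and the vanishing will hold for all $q\geq 1$. For $q=0$, the edge homomorphism $\edgehom 0 X$ is always an isomorphism by Remark~\ref{rem.Kato.complex}(1), and Lemma~\ref{lem0.Lefschetz} (applicable under the Lefschetz condition) will yield $\KHDL 0 X \isom \KHDL 0 {\Spec(k)}=\Lambda_G$.

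The most delicate step will be the invocation of $(\mathbf{PB})$ in positive characteristic when $\ell=\ch(k)$: Theorem~\ref{PBthm} is only carried out in this paper for coefficients killed by integers invertible on $S$, so one must appeal to the authors' promised revisiting of Rost's construction in the absence of purity. If one restricts to $\ell\ne\ch(k)$, this difficulty disappears and the argument becomes a clean combination of the Lefschetz condition, $(\mathbf{PB})$, Gabber's refinement of de Jong, and Theorem~\ref{mainthm}.
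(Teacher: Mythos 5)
Your proposal is correct and follows essentially the same route as the paper: reduce to perfect $k$ via Lemma \ref{lemHK1smooth2}(3), then apply Theorem \ref{mainthm} / Corollary \ref{mainthm.cor} to $H^D(-,\Lambda)$ after checking the hypotheses via Theorem \ref{thm.Lcondition} and Theorem \ref{PBthm}, exactly as in the proof of Theorem \ref{kato.finitefield} under its assumption (1). Your flag about the $\ell=\ch(k)$ case of $(\mathbf{PB})$ resting on the promised extension of Rost's construction is also the paper's own caveat.
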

\begin{proof}
Let $\tk$ be the perfection of $k$.
From Lemma \ref{lemHK1smooth2}(3) we deduce a canonical isomorphism
\[
\KHDL a {X\otimes_k \tk} \isom \KHDL a X.
\]
Hence we may assume that $k$ is perfect. Then the proof is the same as that of Theorem~\ref{kato.finitefield}. 
$\square$
\end{proof}
\medskip

Next we discuss Conjecture~0.4 from \cite{K}. 
Let $S=\Spec(K)$ where $K$ is a one-dimensional global field.
Let $P_K$ be the set of all places of $K$. 
Let $\cCS$ be the category of schemes separated and of finite type over $S$.
Let $\ell$ be a prime number with $\ell\ne \chara (K)$ and write $\Ln=\lnz$
for an integer $n>0$.
For $X\in \cCS$ with $\dim(X)=d$, Kato considered the complex { $KC^{(1)}(X,\Ln)$}:
\[
\bigoplus_{x\in \Xd {d}} H^{d+2}_{\et}(x,\Ln(d+1) ) \to 
\bigoplus_{x\in \Xd {d-1}} H^{d+1}_{\et}(x,\Ln(d)) \to \cdots \to 
\bigoplus_{x\in \Xd {0}} H^{2}_{\et}(x,\Ln(1))
\]
and the corresponding complexes $KC^{(1)}(X_{K_v},\Ln)$ for 
$X_{K_v}=X\times_K K_v$ with the henselization $K_v$ of $K$ at $v\in P_K$. 
Here we put the the sum $\oplus_{x\in \Xd {a}}$ at degree $a$ for $a\ge 0$. 
Let us define the complex {
\begin{equation}
\KCcLn X := \mathrm{cone} [ KC^{(1)} (X,\Ln) \to
\underset{v\in P_K}{\bigoplus} \; KC^{(1)}(X_{K_v},\Ln ) ] 
\end{equation}  
}
and its homology
\begin{equation}
\KHcLn a X = H_a(\KCcLn X)\qfor a\in \bZ.
\end{equation} 
In fact in his original approach Kato considered completions instead of henselizations 
of $K$, but Jannsen~\cite{J3} showed that this does not change homology. 
The next theorem is due to Jannsen in case $K$ is a number field~\cite{J2}
and gives a partial answer to Kato's Conjecture~0.4 \cite{K}.

\begin{theo}\label{kato.globalfield}
For a connected scheme $X$ proper smooth over the one-dimensional global field $K$ and for { $\ell\ne \chara(K)$ } we have
\[
\KHcLn a X = \left\{ \begin{array}{ll} 
\Ln & \text{ for } a=0  \\
0 & \text{ for } a>0.
\end{array}
 \right.
\]
\end{theo}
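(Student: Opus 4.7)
The plan is to combine Jannsen's comparison isomorphism (Theorem~\ref{thm.J}) with classical global class field theory and then to descend from divisible coefficients $\qzl$ to finite coefficients $\Ln$ via the Bloch--Kato theorem. First I would pass to $\qzl$-coefficients by taking $\ell$-primary parts of Theorem~\ref{thm.J}, which yields
\[
H_a(KC^{(1)}(X,\qzl)) \isom \bigoplus_{v \in P_K} H_a(KC^{(1)}(X_{K_v},\qzl)) \qfor a \geq 1
\]
when $K$ is a number field; when $K$ is the function field of a smooth projective curve over a finite field, the analogous statement can be obtained by spreading $X$ out to a regular proper flat model $\cX$ over the base curve (after replacing $X$ by a generically finite cover of degree prime to $\ell$ via Gabber's refinement of de Jong's alteration), applying Theorem~\ref{kato.finitefield} to $\cX$, rewriting its Kato homology via localization in terms of $KC^{(1)}(X)$ and the local $KC^{(1)}(X_{K_v})$, and transferring back to $X$ using condition~$\bf{(PB)}$. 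The long exact sequence of the mapping cone defining $KC(X/K,\qzl)$ then forces
\[
KH_a(X/K,\qzl) = 0 \qfor a \geq 2,
\]
and reduces the remaining two cases to identifying the kernel and cokernel of
\[
\alpha : H_0(KC^{(1)}(X,\qzl)) \lr \bigoplus_{v \in P_K} H_0(KC^{(1)}(X_{K_v},\qzl)),
\]
namely $KH_1(X/K,\qzl) = \ker \alpha$ and $KH_0(X/K,\qzl) = \coker \alpha$.

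Next I would analyse $\alpha$ using classical global class field theory. Its source and target are cokernels of Kato boundary maps landing in $\bigoplus_{x \in X_{(0)}} H^2(\kappa(x),\qzl(1)) = \bigoplus_{x \in X_{(0)}} \Br(\kappa(x))\{\ell\}$ (respectively its semi-local counterpart); each $\kappa(x)$ is a finite extension of the global field $K$, and the classical Brauer--Hasse--Noether sequence
\[
0 \lr \Br(\kappa(x))\{\ell\} \lr \bigoplus_w \Br(\kappa(x)_w)\{\ell\} \xrightarrow{\sum \inv_w} \qzl \lr 0
\]
controls each closed-point contribution. Assembling these sequences across all closed points under the Kato boundary maps, and using connectedness of $X$ to collapse the resulting family of local-invariant maps to a single copy of $\qzl$, yields $\ker \alpha = 0$ and $\coker \alpha \cong \qzl$. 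Hence $KH_a(X/K,\qzl) = 0$ for $a \geq 1$ and $KH_0(X/K,\qzl) \cong \qzl$.

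Finally, by the Bloch--Kato theorem each term $H^{a+2}(\kappa(x),\qzl(a+1))$ of $KC^{(1)}$ is divisible (cf.\ Lemma~\ref{lem.Dq}), so multiplication by $\ell^n$ gives a short exact sequence of Kato complexes
\[
0 \lr KC(X/K,\Ln) \lr KC(X/K,\qzl) \xrightarrow{\ell^n} KC(X/K,\qzl) \lr 0 ,
\]
whose long exact sequence in Kato homology, combined with the $\qzl$-computation above, produces $\KHcLn a X = 0$ for $a \geq 1$ and $\KHcLn 0 X \cong \qzl[\ell^n] = \Ln$, as claimed. The hard part will be the low-degree step: Jannsen's isomorphism covers only $a \geq 1$, so the cases $a = 0, 1$ of the cone must be treated by hand, and verifying that the family of Brauer--Hasse--Noether sequences---indexed by all closed points of $X$---assembles under the boundary maps of $KC^{(1)}$ to produce exactly $\ker \alpha = 0$ and $\coker \alpha \cong \qzl$ is the arithmetic heart of the theorem, and is precisely where the connectedness of $X$ enters.
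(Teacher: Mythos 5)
Your overall architecture (Jannsen's comparison in degrees $\ge 1$, a separate low-degree analysis of the map $\alpha$, then descent from $\qzl$ to $\Ln$ via Bloch--Kato divisibility) is genuinely different from the paper's, and the final descent step is sound. But the step you yourself call the ``arithmetic heart'' is a real gap, not merely a deferred verification. The source of $\alpha$ is the quotient of $\bigoplus_{x\in X_{(0)}}\Br(\kappa(x))\{\ell\}$ by the image of the boundary $\bigoplus_{x\in X_{(1)}}H^{3}(x,\qzl(2))\to\bigoplus_{x\in X_{(0)}}H^{2}(x,\qzl(1))$, and this boundary couples the closed points of $X$ to one another; applying Brauer--Hasse--Noether to each field $\kappa(x)$ separately gives no control over that quotient, so the pointwise sequences do not ``assemble'' formally. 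Already for $\dim X=1$ the injectivity of $\alpha$ is Kato's two-dimensional Hasse principle, whose proof requires higher class field theory (\cite{KS}, \cite{Sa1}); it is not a corollary of classical Brauer--Hasse--Noether. You also implicitly need $H_0(KC^{(1)}(X_{K_v},\qzl))\cong\qzl$ at every place and the identification of the image of $\alpha$ with the kernel of the sum of local invariants, neither of which is addressed. Finally, Theorem \ref{thm.J} is stated only for number fields and only for $a>0$, so your function-field substitute (spreading out, Gabber, Theorem \ref{kato.finitefield}, residue maps) would need to be carried out in detail, and in any case it does not help with $a=0,1$ over number fields.

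The paper sidesteps the low-degree problem entirely by working at the level of complexes: Jannsen's injectivity theorem \cite[Theorem 0.2]{J2}, strengthened via Gabber's alterations so that the map lands in the direct sum, gives a \emph{termwise} short exact sequence of complexes
$0\to KC^{(1)}(X,\Ln)\to\bigoplus_{v} KC^{(1)}(X_{K_v},\Ln)\to KC_{H}(X,\Ln)\to 0$,
whose quotient is identified by Artin--Verdier duality (Corollary \ref{coro.compact}) with the Kato complex of the homology theory $H^D(-,\Ln)$ over $\Spec(K)$; the theorem then follows in all degrees at once, including $a=0,1$, from Theorem \ref{mainthm.finite} applied to the system $\{H^D(-,\Ln)\}_{n\ge 1}$. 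To salvage your route you would have to either import this duality identification or supply an independent proof of the degree $0$ and $1$ statements.
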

\begin{proof}
Let $H=H^D(-,\Lam_n)$ be the homology theory from Example~\ref{exHK2} over the base 
$S=\Spec(K)$ and $KC_{H}(X,\Lam_n )$ be the associated Kato complex. 
We claim that for any $Y\in \cCS$ there is an exact sequence 
\begin{equation}\label{jannsen.kato}
0 \to   KC^{(1)}(Y,\Lam_n) \rmapo{\iota} 
\underset{v\in P_K}{\bigoplus} \; KC^{(1)}(Y_{K_v},\Ln )
 \rmapo{\rho}  KC_{H}(Y,\Lam_n ) \to 0.
\end{equation}
Indeed, the injectivity of $\iota$ is due to Jannsen \cite[Theorem 0.2]{J2}
and a generalization of \cite[Theorem 0.4]{J2} using Gabber's refinement of 
de Jong's alterations { \cite{Il2}}.
The map $\rho$ as well as the right-exactness of \eqref{jannsen.kato} comes from the duality stated Corollary~\ref{coro.compact}.
\eqref{jannsen.kato} implies that we have an isomorphism of Kato homology groups
\[
\KHDLn a X \cong \KHcLn a X\qfor a\in \bZ.
\]
Thus we are reduced to show the assertion of the theorem by replacing $\KHcLn a X$ by $\KHDLn a X$.
By Lemma \ref{lemHK1smooth2}(3) we may replace $K$ by its perfection. 
Then we can apply Theorem \ref{mainthm.finite} to the inductive system of the homology theories:
\[H=\{H^D(-,\Ln)\}_{n\geq 1}.
\]
We have already checked all the conditions other than (4).  
As for the last condition we have to show the injectivity of
$\HDLn q x \to \HDLinfty q x$ for $x\in \Xd q$ (cf. Remark \ref{finitecoeff.rem2}(2)).
By \eqref{jannsen.kato} we have an exact sequence
\[
0\to H^{q+2}_{\et}(x,\lnz(q+1))\to \underset{v\in P_K}{\bigoplus} \;
H^{q+2}_{\et}(x_v,\lnz(q+1)) \to \HDLn q x \to 0,
\]
where $x_v=x\times_{\Spec(K)}\Spec(K_v)$. Hence it suffices to show the injectivity of
\[
H^{q+2}_{\et}(\Spec(L),\lnz(q+1))\to H^{q+2}_{\et}(\Spec(L),\qzl(q+1))
\]
for residue fields $L$ of $x$ and $x_v$, which follows from the divisibility of
\[H^{q+1}_{\et}(\Spec(L),\qzl(q+1)).\]
Thus the desired assertion follows from 
the Bloch-Kato conjecture (see the proof of Lemma \ref{lem.Dq}).
This completes the proof of Theorem \ref{kato.globalfield}.
\end{proof}

\medskip

Finally, we discuss Kato's Conjecture~0.5 from~\cite{K}. Let $K$ be a one-dimensional global field. Fix a prime $\ell\not=\ch(K)$ and put $\Ln=\lnz$ for an integer $n>0$.
Fix a connected regular scheme $U$ of finite type over $\Z$ with the function field $K$. 
For a scheme $X$ separated and of finite type over $U$, let { $KC^{(0)}(X,\Lam_n)$ }
denote the complex \eqref{KCfinitefield2}:
\[
\bigoplus_{x\in X_{(d)}} H^{d+1}_{\et}(x,\Lam_n (d) ) \to \bigoplus_{x\in X_{(d-1)}} H^{d}_{\et}(x,\Lam_n(d-1)) \to \cdots \to \bigoplus_{x\in X_{(0)}} H^{1}_{\et}(x,\Lam_n ).
\]
We have the natural restriction map {
\[
KC^{(0)}(X,\Lam_n)[1] \to KC^{(1)}(X_K,\Lam_n)
\qwith X_K=X\times_U\Spec(K).
\]
}
and one considers a version of Kato complex defined as {
\[
\KCccLn X :=\mathrm{cone} [ KC^{(0)}(X,\Lam_n )[1] \to \bigoplus_{v\in \Sigma_U} KC^{(1)}(X_{K_v} ,\Lam_n ) ]
\]
}
and its homology group
\[
\KHccLn a X =H_a(\KCccLn X) \qfor a\in \bZ.
\]
Here $\Sigma_U$ denotes the set of $v\in P_K$ which do not correspond to 
closed points of $U$ and $X_{K_v}=X\times_U K_v$ with $K_v$, 
the henselization of $K$ at $v$. 

\begin{theo}\label{kato.arith}
Assume that $\ell$ is invertible on $U$. 
For a regular connected scheme $X$ proper and flat over $U$ and with smooth generic fibre
over $U$, we have
\[
\KHccLn a X = \left\{ \begin{array}{ll} 
\Lam_n & \text{ for } a=0  \\
0 & \text{ for } a>0.
\end{array}
 \right.
\]
\end{theo}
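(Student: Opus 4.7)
The plan is to derive Theorem~\ref{kato.arith} from the three Kato-type statements already proved in the paper: Theorem~\ref{kato.finitefield} (vanishing for regular proper schemes over a henselian DVR with finite residue field), Theorem~\ref{kato.globalfield} (cohomological Hasse principle for smooth projective varieties over the global field $K$), and the obvious localisation stratification of $X$ by generic and closed fibres over the one-dimensional base $U$. The three inputs will be glued by a diagram chase in three interlocking long exact sequences.

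First, I would unfold the defining mapping cone
$\KCccLn X=\mathrm{cone}[KC^{(0)}(X,\Ln)[1]\to \bigoplus_{v\in\Sigma_U}KC^{(1)}(X_{K_v},\Ln)]$
into the long exact sequence
\[
\cdots\to \KHn{a+1}{X}\to \bigoplus_{v\in\Sigma_U}H_a(KC^{(1)}(X_{K_v},\Ln))\to \KHccLn a X\to \KHn a X\to\cdots,
\]
reducing the problem to computing $\KHn\ast X$ together with the indicated boundary. Then I would stratify $X$ along $U$: collecting the ``closed-fibre'' components of $KC^{(0)}(X,\Ln)$ produces the short exact sequence
\[
0\to \bigoplus_{u\in U_{(0)}}KC^{(0)}(X_u,\Ln)[1]\to KC^{(0)}(X,\Ln)[1]\to KC^{(1)}(X_K,\Ln)\to 0,
\]
whose long exact sequence compares $\KHn\ast X$ with the closed-fibre groups $\KHn\ast {X_u}$ and the generic-fibre groups $H_\ast(KC^{(1)}(X_K,\Ln))$.

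Next I would carry out the two local/global computations. For every closed point $u\in U$ with corresponding henselisation $\cO_{K_v}$, I would apply Theorem~\ref{kato.finitefield} to $X':=X\times_U\cO_{K_v}$: this is regular, proper and flat over the henselian DVR $\cO_{K_v}$ with finite residue field, and $\ell$ is invertible on the base, so $\KHn a{X'}=0$ for every $a\ge 0$. Feeding this vanishing into the localisation sequence of $X'$ along $X_u\subset X'\supset X_{K_v}$ produces the isomorphism
\[
\KHn b{X_u}\isom H_b(KC^{(1)}(X_{K_v},\Ln))\qquad(b\ge 0).
\]
On the generic side, Theorem~\ref{kato.globalfield} applied to the smooth proper connected $K$-scheme $X_K$ yields
\[
H_a(KC^{(1)}(X_K,\Ln))\isom \bigoplus_{v\in P_K}H_a(KC^{(1)}(X_{K_v},\Ln))\quad (a\ge 1)
\]
and, in degree zero, a short exact sequence
$0\to H_0(KC^{(1)}(X_K,\Ln))\to \bigoplus_{v\in P_K}H_0(KC^{(1)}(X_{K_v},\Ln))\to \Ln\to 0$.

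The assembly is then a diagram chase. Substituting the local isomorphism above into the stratification long exact sequence replaces the closed-fibre terms by $\bigoplus_{v\in U_{(0)}}H_\ast(KC^{(1)}(X_{K_v},\Ln))$. The decomposition $P_K=\Sigma_U\sqcup U_{(0)}$ combined with the generic-fibre isomorphism then identifies $\KHn{a+1} X\cong \bigoplus_{v\in\Sigma_U}H_a(KC^{(1)}(X_{K_v},\Ln))$ for $a\ge 1$, while the degree-zero extension accounts precisely for the residual copy of $\Ln$. Feeding this back into the cone long exact sequence collapses it to $\KHccLn a X=0$ for $a>0$ and $\KHccLn 0 X=\Ln$. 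The main obstacle is justifying this final diagram chase: I must verify that the boundary $H_a(KC^{(1)}(X_K,\Ln))\to\bigoplus_u\KHn a{X_u}$ of the stratification sequence agrees, via the local isomorphism, with the closed-place component of the restriction map appearing in Theorem~\ref{kato.globalfield}. Both maps are geometrically ``residue at a closed point of $U$'', but matching them requires an octahedral-axiom style computation unwinding the shifts $[1]$ and $[-1]$ across the definitions of $KC^{(0)}$, $KC^{(1)}$ and the various mapping cones.
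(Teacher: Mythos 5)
Your proposal is correct and follows essentially the same route as the paper: the vanishing of the Kato homology of $X\times_U R_v$ over each henselization $R_v$ (Theorem \ref{kato.finitefield}) converts the closed-fibre contribution at $v$ into the local contribution $KC^{(1)}(X_{K_v},\Lam_n)$, and Theorem \ref{kato.globalfield} applied to $X_K$ finishes. The paper performs this at the level of complexes, invoking a chain-level residue map $\delta: KC^{(1)}(X_{K_v},\Lam_n)\to KC^{(0)}(X_v,\Lam_n)$ whose cone is identified with the Kato complex of $X\times_U R_v$ (\cite[Proposition 2.12]{JS1}); this makes $\delta$ a quasi-isomorphism and yields $\KCccLn X\simeq \KCcLn{X_K}$ directly, which dissolves the boundary-map compatibility you flag as the main remaining obstacle.
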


\begin{rem}
For $\dim(X)=1$ Theorem~\ref{kato.arith} is a version of the celebrated Brauer-Hasse-Noether Theorem proved in the 1930s. For $\dim(X)=2$ it was shown by 
Kato~\cite{K} and motivated him to conjecture the general case. The case $\dim(X)=3$ can be deduced from the results proved in~\cite{JS1}.
\end{rem}

\begin{proof}
Let $v\in U$ be a closed point and denote by the same letter $v$ 
the associated place of $K$. Letting $X_v=X\times_U v$, there is a residue map of complexes
(see \cite[Proposition 2.12]{JS1})
\[
\delta\;:\; KC^{(1)}(X_{K_v},\Lam_n ) \to KC^{(0)}(X_v,\Lam_n ) 
\]
whose cone is identified up to sign with the complex
\eqref{KCfinitefield2} (with $\Lam=\Ln$) for $X\times_U R_v$ where $R_v$ is the henseliztion of $\cO_{U,v}$. 
Hence $\delta$ is a quasi-isomorphism by Theorem~\ref{kato.finitefield}. 
This implies an isomorphism
\[
\KHcLn a {X_K}  \cong \KHccLn a X  \qfor a\in \bZ
\]
and Theorem~\ref{kato.arith} follows from Theorem~\ref{kato.globalfield}.
\end{proof}


\section{Application to Cycle Maps} \label{application1}

\bigskip

\noindent
In this section $X$ denotes a separated scheme of finite type over a field or 
a Dedekind domain. Let $\CH^r(X,q)$ be Bloch's higher Chow group defined by Bloch 
\cite{B} (see also \cite{Le} and \cite{Ge1}). 
It is related to algebraic $K$-theory of $X$ via the Atiyah-Hirzebruch spectal sequence (\cite{Le}):
\begin{equation}\label{AHss}
E_2^{p,q}= \CH^{-q/2}(X,-p-q;\nz) \Rightarrow K'_{-p-q}(X,\nz).
\end{equation}
In case $X$ is smooth over a field, it is related to the motivic cohomology defined by Voevodsky via
\begin{equation}\label{MCHC}
\HMXr q=  \CH^r(X,2r-q) .
\end{equation}
A `folklore conjecture', generalizing the analogous conjecture of Bass on 
$K$-groups, is that $\CH^r(X,q)$ should be finitely generated if $X$ is over $\bZ$ or 
a finite field. 
Except the case $r=1$ or $\dim(X)=1$ (Quillen), the only general result has 
been known for the Chow group $\CH_0(X)$ of zero-cycles, which is 
a consequence of higher dimensional class field theory 
(\cite{Sa1} and \cite{CTSS}).
\medbreak

One way to approach the problem is to look at 
an \'etale cycle map constructed by Bloch \cite{B}, 
Geisser and Levine \cite{GL2} and Levine \cite{Le} (see also \cite{Ge1}): 
Fix an integer $n>0$.
Let $\CH^r(X,q; \nz)$ be the higher Chow group with finite coefficients, 
which fits into a short exact sequence:
\begin{equation}\label{HCHfinite}
0\to \CH^r(X,q)/n \to \CH^r(X,q; \nz) \to \CH^r(X,q-1)[n] \to 0.
\end{equation}
Assume that $X$ is regular and that
either $n$ is invertible on $X$ or $X$ is smooth over a perfect field. 
Then we have a natural map
\begin{equation}\label{ccmap}
\rho^{r,q}_{X,\nz} \;:\; \CH^r(X,q;\nz) \to H^{2r-q}_{\et}(X,\nz(r)),
\end{equation}
where $\nz(r)$ is the Tate twist introduced in Lemma \ref{lemHK1smooth}.
Sato \cite{Sat} constructed a similar map in case $X$ is flat over $\bZ$ and 
$n$ is not necessarily invertible on $X$ but we will not use this in this paper.
The following theorem follows from the results of 
Suslin-Voevodsky \cite{SV} and Geisser-Levine \cite{GL2} together with
the Bloch-Kato conjecture:

\begin{theo}\label{thm.VS-GL}
The map $\rho_{X,\nz}^{r,q}$ is an isomorphism for $r\leq q$ and injective
for $r=q+1$. 
\end{theo}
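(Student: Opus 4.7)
The plan is to reinterpret $\rho^{r,q}_{X,\nz}$, via the identification \eqref{MCHC}, as the comparison map
\[
H^{2r-q}_M(X,\nz(r)) \lr H^{2r-q}_{\et}(X,\nz(r))
\]
from motivic to \'etale cohomology in bidegree $(2r-q,r)$, and then to invoke the Beilinson--Lichtenbaum principle. The latter asserts that such a comparison map is an isomorphism in cohomological degrees $\le r$ (equivalently $2r-q\le r$, i.e.\ $r\le q$) and injective in degree $r+1$ (equivalently $2r-q=r+1$, i.e.\ $r=q+1$). Decomposing $n$ into its prime-to-$p$ and $p$-primary parts, where $p=\chara(k)$ in the case that $X$ is smooth over a perfect field $k$ (and $p=0$ otherwise), and treating the two factors separately, the proof reduces to establishing these properties for each prime-power component.

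For the prime-to-characteristic part, i.e.\ when $n$ is invertible on $X$, the theorem of Suslin--Voevodsky \cite{SV} establishes that the Beilinson--Lichtenbaum statement in weight $\le r$ is a formal consequence of the Bloch--Kato conjecture on norm-residue symbols in the same range. Since the Bloch--Kato conjecture is now a theorem of Rost--Voevodsky, available through \cite{V1}, \cite{V2}, \cite{SJ} and \cite{HW}, the desired isomorphism and injectivity follow at once in this case.

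For the $p$-primary part under the hypothesis that $X$ is smooth over a perfect field $k$ of characteristic $p>0$, I would invoke the theorem of Geisser--Levine \cite{GL2} giving a canonical identification
\[
H^s_M(X,\pmz(r))\;\cong\; H^{s-r}(X_{\Zar},W_m\Omega^r_{X,\log})
\]
together with the vanishing $H^s_M(X,\pmz(r))=0$ for $s>r$. By Lemma \ref{lemHK1smooth}(2), the \'etale Tate twist is given by $\pmz(r)=W_m\Omega^r_{X,\log}[-r]$, so that the target $H^{s}_{\et}(X,\pmz(r))$ is nothing but $H^{s-r}(X_{\et},W_m\Omega^r_{X,\log})$; moreover, the same paper of Geisser--Levine shows that the Zariski--\'etale comparison for $W_m\Omega^r_{X,\log}$ is an isomorphism in degrees $\le 0$ once one invokes the Gersten resolution, hence the comparison $\rho^{r,q}_{X,\pmz}$ is an isomorphism for $s=2r-q\le r$. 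For $s=r+1$, the source vanishes and injectivity is automatic. The main obstacle is not in the geometric assembly of the argument, which is routine given \cite{SV} and \cite{GL2}, but lies entirely in the depth of the Bloch--Kato/Rost--Voevodsky norm residue theorem used in the prime-to-$p$ case; no new input is needed beyond citing these results.
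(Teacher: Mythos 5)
The paper offers no proof of this statement beyond the one-line attribution to Suslin--Voevodsky, Geisser--Levine and Bloch--Kato, and your proposal expands exactly that attribution: the translation of $\rho^{r,q}_{X,\nz}$ into the Beilinson--Lichtenbaum comparison map in degree $s=2r-q$ and weight $r$, and the prime-to-characteristic case via \cite{SV}, \cite{GL2} and Rost--Voevodsky, are correct and are what the authors intend. (Strictly speaking, for $X$ regular over a Dedekind domain rather than smooth over a field one needs the Dedekind-base version of Beilinson--Lichtenbaum from \cite{Ge1}, but the paper is equally silent on this.)

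There is, however, a genuine error in your treatment of the $p$-primary part. The vanishing $H^s_M(X,\pmz(r))=0$ for $s>r$ is \emph{false} for a general smooth variety $X$: Geisser--Levine give $H^s_M(X,\pmz(r))\cong H^{s-r}_{\Zar}(X,\DWnlog r X)$ (this is \cite{GL1}, not \cite{GL2}), and the higher Zariski cohomology of the logarithmic de Rham--Witt sheaf does not vanish globally --- e.g.\ $H^{2r}_M(X,\pmz(r))$ contains $\CH^r(X)/p^m$, and already $H^{r+1}_M(X,\pmz(r))=H^1_{\Zar}(X,\DWnlog r X)$ is nonzero in general (for $r=1$ it contains $\Pic(X)/p^m$). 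The vanishing you invoke holds only for fields and semilocal rings. Consequently your justification of injectivity in the case $r=q+1$ (``the source vanishes'') fails. The correct argument is the one you should have used in degree $1$ of the Leray spectral sequence for $\epsilon:X_{\et}\to X_{\Zar}$: the edge map $H^1_{\Zar}(X,\epsilon_*\DWnlog r X)\to H^1_{\et}(X,\DWnlog r X)$ is always injective, which combined with the Geisser--Levine identification gives the required injectivity of $\rho^{r,q}_{X,\pmz}$ for $2r-q=r+1$. With that substitution the $p$-part goes through; as written, the step is wrong.
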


\begin{coro}\label{cor.VS-GL}
Let $Z$ be a quasi-projective scheme over either a finite field or $\bZ$
or a henselian discrete valuation ring with finite residue field.
Let $n>0$ be an integer invertible on $X$.
Then $\CH^r(Z,q;\nz)$ is finite for $r\leq q+1$. 
\end{coro}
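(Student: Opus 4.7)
I will deduce the corollary directly from Theorem \ref{thm.VS-GL}, combined with Deligne's finiteness theorem for $n$-torsion \'etale cohomology of separated schemes of finite type over an arithmetic base with $n$ invertible, and a routine noetherian d\'evissage to reduce to the regular case. It is worth noting that the Kato-type vanishing results of this paper are not required for the specific range $r\le q+1$; they enter only when one wishes to push finiteness to larger $r$, via the long exact sequence described in the introduction.

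First I will treat the regular case. Since higher Chow groups are insensitive to nilpotents, I may assume $Z$ is reduced. If moreover $Z$ is regular, then because $n$ is invertible on $Z$, Theorem \ref{thm.VS-GL} gives an isomorphism
\[
\CH^r(Z,q;\nz)\;\stackrel{\sim}{\lr}\;H^{2r-q}_{\et}(Z,\nz(r))\qquad(r\le q)
\]
and an injection $\CH^{q+1}(Z,q;\nz)\hookrightarrow H^{q+2}_{\et}(Z,\nz(q+1))$. For $Z$ of finite type over a finite field, over $\Spec(\bZ)$, or over a henselian DVR with finite residue field, with $n$ invertible on the base, the right-hand \'etale cohomology groups are finite by Deligne's finiteness theorem for constructible torsion sheaves. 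Hence $\CH^r(Z,q;\nz)$ is finite in the required range when $Z$ is regular.

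For general quasi-projective $Z$ I will proceed by noetherian induction on $\dim Z$. Let $V\subset Z$ be the regular locus, which is open and dense in the reduced scheme $Z$, with closed complement $W$ satisfying $\dim W<\dim Z$. Bloch's localization long exact sequence for higher Chow groups in dimension indexing,
\[
\cdots \to \CH_p(W,q;\nz) \to \CH_p(Z,q;\nz) \to \CH_p(V,q;\nz) \to \CH_p(W,q-1;\nz) \to \cdots ,
\]
combined with the inductive hypothesis applied to $W$ and the regular case applied to $V$, yields finiteness of $\CH_p(Z,q;\nz)$ in the required range.

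I do not expect any serious obstacle. The only bookkeeping item is the conversion between codimension and dimension indexing when $Z$ is not equidimensional: one restricts to each connected component of maximal dimension $d$ and uses $\CH^r=\CH_{d-r}$, which preserves the inequality $r\le q+1$ componentwise. If pressed to identify a genuinely nontrivial input, it is Deligne's finiteness theorem for \'etale cohomology of quasi-projective (not necessarily proper) regular schemes over the arithmetic bases, but this is classical and entirely off-the-shelf.
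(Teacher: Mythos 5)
Your proposal is correct and follows essentially the same route as the paper: reduce to the regular case via Bloch's localization sequence (the paper phrases this as "replace $Z$ by $V$" for a regular open $V$, which amounts to your noetherian induction on dimension), then apply Theorem \ref{thm.VS-GL} together with the classical finiteness of $H^*_{\et}(-,\nz(r))$ over these arithmetic bases. The only difference is presentational: the paper indexes by codimension and cites \cite{CTSS} and \cite{M1} for the \'etale finiteness, while you index by dimension and invoke Deligne's theorem directly.
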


\begin{proof}
The localization sequence for higher Chow groups implies that 
for a closed subscheme $Y\subset Z$ of pure codimension $c$ with
the complement $V=Z-Y$, we have a long exact sequence
\begin{multline}\label{locseq}
\cdots \CH^{r-c}(Y,q;\nz) \to \CH^{r}(Z,q;\nz)\to \CH^{r}(V,q;\nz)\\
\to \CH^{r-c}(Y,q-1;\nz)\to \cdots
\end{multline}
Hence we may replace $Z$ by $V$ and this reduces the proof to the case where
$Z$ is regular, which follows from Theorem \ref{thm.VS-GL} thanks to the finiteness 
result for \'etale cohomology { $H^*_{\et}(X,\nz(r))$ }
(cf. \cite[(26) on page 780]{CTSS}, \cite[Theorem 1.9]{M1}).
\end{proof}
\bigskip

Now we turn our attention to $\rho_{X,\nz}^{r,q}$ for $r\geq d:=\dim(X)$ and the base scheme is as in Corollary~\ref{cor.VS-GL}.
In case $r>d$ it is easily shown (see \cite[Lemma 6.2]{JS2}) that $\rho_{X,\nz}^{r,q}$ is an isomorphism (the proof uses Theorem \ref{thm.VS-GL}).
An interesting phenomenon emerges for $\rho_{X,\nz}^{r,q}$ with $r=d$.
Using Theorem \ref{thm.VS-GL}, one can show that there exits a long exact sequence 
(see \cite[Lemma 6.2]{JS2}):
\begin{multline}\label{les1}
\KHnz {q+2} X \to \CH^d(X,q;\nz) \rmapo{\rho_{X,\nz}^{d,q}} 
H^{2d-q}_{\et}(X,\nz(d)) \\
\to \KHnz {q+1} X \to \CH^d(X,q-1;\nz) \rmapo{\rho_{X,\nz}^{d,q-1}} \dots
\end{multline}
where $\KHnz * -$ is the Kato homology defined as \eqref{eq.KHintro}.
Hence Theorem \ref{kato.finitefield} implies the following:

\begin{theo}\label{thm.cyclemap}
Assume either of the following conditions:
\begin{itemize}
\item[$(i)$]
$X$ is regular and proper over either a finite field or a henselian 
discrete valuation ring with finite residue field and $n$ is invertible on $X$.
\item[$(ii)$]
$X$ is smooth projective over a finite field and $q\leq 2$.
\end{itemize}
Then we have the isomorphism
$$
\rho^{d,q}_{X,\nz} \;:\; \CH^d(X,q; \nz) \isom H^{2d-q}_{\et}(X,\nz(d)).
$$
In particular $\CH^d(X,q;\nz)$ is finite.
\end{theo}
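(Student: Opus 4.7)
The strategy is to apply Theorem~\ref{kato.finitefield} to kill the flanking Kato homology in the long exact sequence \eqref{les1}. Concretely, \eqref{les1} yields the exact segment
$$
\KHnz{q+2}X \lra \CH^d(X,q;\nz) \rmapo{\rho_{X,\nz}^{d,q}} H^{2d-q}_{\et}(X,\nz(d)) \lra \KHnz{q+1}X,
$$
so it suffices to verify $\KHnz{q+1}X = \KHnz{q+2}X = 0$ under each of the hypotheses (i), (ii).

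In case (i), write $n = \prod_\ell \ell^{n_\ell}$ and decompose the Kato complex accordingly into its $\ell$-primary summands. Since $X$ is nonempty and proper over $S$ with $n$ invertible on $X$, the residue characteristic at any closed point in the image of $X \to S$ is coprime to $n$, so every prime $\ell$ dividing $n$ is invertible on $S$. The first bullet of Theorem~\ref{kato.finitefield}(2) then gives $\KHnz a X = 0$ for every $a > 0$. In case (ii), the only relevant indices are $a \in \{q+1, q+2\}$; since $q \leq 2$ these all lie in $\{1,2,3,4\}$, so the condition ${\bf (G)}_{\ell, a-2}$ holds by~\cite{CJS} for each prime $\ell \mid n$, and the second bullet of Theorem~\ref{kato.finitefield}(2) delivers the required vanishings.

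Substituting these into the displayed exact segment identifies $\rho_{X,\nz}^{d,q}$ with an isomorphism onto the \'etale cohomology. Finiteness of $\CH^d(X,q;\nz)$ is then immediate from finiteness of $H^{2d-q}_{\et}(X,\nz(d))$, which is standard for regular proper schemes in both the finite field and henselian discrete valuation ring settings (cf.\ \cite[(26), p.~780]{CTSS}, \cite[Theorem~1.9]{M1}; in case (ii), the prime-to-$p$ part is handled identically, while the $p$-primary part reduces to the well-known finiteness of the cohomology of the logarithmic de Rham--Witt sheaves on smooth projective varieties over finite fields). The only substantive ingredient is Theorem~\ref{kato.finitefield} itself; the deduction above is a short diagram chase.
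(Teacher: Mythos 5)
Your proposal is correct and is essentially the paper's own argument: the paper deduces Theorem \ref{thm.cyclemap} directly from the exact sequence \eqref{les1} together with the vanishing of $\KHnz{q+1}X$ and $\KHnz{q+2}X$ supplied by Theorem \ref{kato.finitefield} (first bullet in case (i), second bullet with ${\bf (G)}_{\ell,a-2}$ for $a\leq 4$ in case (ii)), with finiteness then following from the standard finiteness of \'etale cohomology. Your verification that the relevant hypotheses of Theorem \ref{kato.finitefield} hold in each case is accurate.
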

\medbreak

The above theorem implies the following affirmative result on the finiteness conjecture on motivic cohomology:

\begin{coro}\label{cor.finiteness}
Let $Z$ be a quasi-projective scheme over either a finite field or
a henselian discrete valuation ring with finite residue field.
Let $n>0$ be an integer invertible on $Z$.
\begin{itemize}
\item[(1)]
$\CH^r(Z,q;\nz)$ is finite for all $r\geq \dim(Z)$ and $q\ge 0$. 
\item[(2)]
$K'_i(Z,\nz)$ is finite for $i\geq \dim(Z)-2$.
\end{itemize}
\end{coro}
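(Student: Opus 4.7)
The strategy for part (1) is induction on $d=\dim(Z)$, using compactification and Gabber's refinement of de Jong's alteration to reduce to the regular projective case handled by Theorem~\ref{thm.cyclemap}. By the Chinese Remainder decomposition $\CH^r(Z,q;\nz)=\bigoplus_{\ell\mid n}\CH^r(Z,q;\Z/\ell^{m_\ell})$ I may assume $n=\ell^m$ is a prime power, with $\ell$ invertible on $Z$. The base case $d=0$ holds because $\CH^r(\Spec\kappa,q;\nz)$ vanishes for $q<r$ on simplicial-dimension grounds and is finite for $r\le q+1$ by Corollary~\ref{cor.VS-GL}, combined with the standard finiteness of Galois cohomology of finite fields.

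For the induction step, embed $Z$ as a dense open in a projective scheme $\bar Z$ of dimension $d$. Applying the localization sequence~\eqref{locseq} stratum by stratum to the boundary $\bar Z\setminus Z$, which has dimension $<d$, and invoking the induction hypothesis (note that for a stratum of pure codimension $c$ in $\bar Z$ we have $r-c\ge d-c=\dim(\text{stratum})$), it suffices to prove finiteness of $\CH^r(\bar Z,q;\nz)$ for $r\ge d$. Now apply Gabber's refinement of de Jong's alteration \cite{Il2} (available because $\ell$ is invertible on $\bar Z$) to produce a regular projective scheme $\tilde X$ of dimension $d$ together with a proper surjective generically finite morphism $\pi\colon\tilde X\to\bar Z$ of degree prime to $\ell$. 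A refined pullback $\pi^*\colon\CH^r(\bar Z,q;\nz)\to\CH^r(\tilde X,q;\nz)$, in the spirit of Fulton or of the deformation-to-the-normal-cone construction of Section~\ref{pullback} transposed to higher Chow groups, satisfies $\pi_*\pi^*=\deg(\pi)\cdot\mathrm{id}$, which is an automorphism modulo $\ell^m$; hence $\CH^r(\bar Z,q;\nz)$ embeds as a direct summand of $\CH^r(\tilde X,q;\nz)$. The latter is finite: for $r=d$ this is precisely Theorem~\ref{thm.cyclemap}(i), while for $r>d$ the cycle map $\rho^{r,q}_{\tilde X,\nz}$ is an isomorphism onto the finite \'etale cohomology group $H^{2r-q}_{\et}(\tilde X,\nz(r))$, as recalled just before Theorem~\ref{thm.cyclemap}.

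For part (2), the Atiyah--Hirzebruch spectral sequence~\eqref{AHss} has $E_2$-terms contributing to $K'_i(Z,\nz)$ of the form $\CH^r(Z,i;\nz)$ for codimensions $r$ in the bounded range $0\le r\le d+i$ (by the dimension of Bloch's cycle complex). For fixed $i\ge d-2$, split this range into $r\le i+1$, for which Corollary~\ref{cor.VS-GL} gives finiteness, and $r\ge d$, for which part~(1) does; the apparent gap $i+2\le r\le d-1$ is empty precisely because $i\ge d-2$ forces $i+2\ge d$. Thus every relevant $E_2$-term is finite, and convergence of the spectral sequence in the bounded range yields that $K'_i(Z,\nz)$ itself is finite.

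The main technical obstacle is the construction of the refined pullback $\pi^*$ for an alteration $\pi$ on Bloch's higher Chow groups with $\ell$-power coefficients, satisfying the degree formula $\pi_*\pi^*=\deg(\pi)$. For classical Chow groups this is Fulton's refined Gysin machinery; for higher Chow groups one can either transpose the normal-cone construction of Section~\ref{pullback} from Kato homology to Bloch's complex, or, more elementarily in the regime $r\ge d$ we actually use, transfer the question to \'etale cohomology via the cycle maps (isomorphism for $r>d$, Theorem~\ref{thm.cyclemap}(i) for $r=d$), where proper pushforward and pullback are standard. Everything else amounts to routine diagram chases in the localization sequence and a bounded-convergence argument for the Atiyah--Hirzebruch spectral sequence.
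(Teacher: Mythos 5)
Your overall architecture — reduction to $n=\ell^m$, induction on $\dim(Z)$, localization to dispose of lower-dimensional pieces, Gabber's prime-to-$\ell$ alteration, and a transfer argument $\pi_*\pi^*=\deg(\pi)$ — is the paper's. The treatment of part (2) via the bounded range of the Atiyah--Hirzebruch spectral sequence, with the observation that the gap $i+2\le r\le d-1$ is empty for $i\ge d-2$, is also exactly what the paper leaves implicit. Part (1), however, has a genuine gap where you yourself flag "the main technical obstacle": you compactify $Z$ to a projective (in general \emph{singular}) $\bar Z$ and then require a pullback $\pi^*\colon \CH^r(\bar Z,q;\nz)\to\CH^r(\tilde X,q;\nz)$ along the full proper, generically finite, non-flat alteration $\pi\colon\tilde X\to\bar Z$. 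Neither of your proposed remedies produces this map: the deformation-to-the-normal-cone machinery of Section~\ref{pullback} is set up only for (locally closed immersions of) \emph{regular} schemes, so it does not apply with the singular target $\bar Z$; and the transfer to \'etale cohomology via $\rho^{r,q}$ is unavailable because the cycle map \eqref{ccmap} is only defined (and Theorem~\ref{thm.cyclemap} only proved) for regular schemes. Fulton's refined Gysin maps likewise require an l.c.i.\ hypothesis that $\pi$ need not satisfy. So as written the key step fails.

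The fix is precisely the move the paper makes, and you already have the tool for it: the localization remark shows that finiteness of $\CH^r(Z,q;\nz)$ for all $r\ge\dim(Z)$ may be checked on any dense open $V\subset Z$ (the closed complement being handled by the induction hypothesis). Do not compactify and pull back globally; instead shrink. Gabber's theorem yields a regular scheme $X$ proper over the base, a dense open $V\subset Z$, an open $U\subset X$, and a \emph{finite \'etale} morphism $U\to V$ of degree prime to $\ell$. Finiteness holds for $X$ by Theorem~\ref{thm.cyclemap} (and by the isomorphism $\rho^{r,q}$ for $r>d$), hence for $U$ by the localization remark applied inside $X$, hence for $V$ by the standard norm argument — finite \emph{flat} pullback on higher Chow groups is unproblematic and gives $\pi_*\pi^*=\deg(\pi)$, invertible mod $\ell^m$ — and hence for $Z$ by the remark once more. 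With this replacement your argument closes up and coincides with the paper's proof.
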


\begin{proof}
(2) follows from (1) and \eqref{AHss}.
To show (1), we may assume $n=\ell^m$ for a prime $\ell$ invertible on $Z$.
We proceed by the induction on $\dim(Z)$. First we remark that the localization 
sequence \eqref{locseq} implies that for a dense open subscheme $V\subset Z$,
the finitenss of $\CH^r(Z,q;\nz)$ for all $r\geq \dim(Z)$, is equivalent
to that of $\CH^r(V,q;\nz)$. In particular we may suppose $Z$ is integral.
By Gabber's theorem (\cite[Theorem 1.3 and 1.4]{Il2}), there exist such $X$ as 
in Theorem \ref{thm.cyclemap},
an open subscheme $U$ of $X$, an open subscheme $V$ of $Z$, and a finite \'etale morphism $\pi: U\to V$ of degree prime to $\ell$.
The assertion of the theorem holds for $X$ by Theorem \ref{thm.cyclemap}
and hence for $U$ by the above remark. So it holds for $V$ by a standard norm
argument and hence for $Z$ by the above remark. This completes the proof.
\end{proof}


\section{Application to special values of zeta functions} \label{application2}

\bigskip

\noindent
Let $X$ be a smooth projective variety over a finite field $F$.
We then consider the zeta function
$$
\zeta(X,s)=\underset{x\in \Xd 0}{\prod} \frac{1}{1-N(x)^{-s}}
\quad (s\in \bC)
$$
where $N(x)$ is the cardinality of the residue field $\k(x)$ of $x$.
The infinite product converges absolutely in the region 
$\{s\in \bC\;|\; \Re(s) >\dim(X)\}$ and is known to be continued to
the whole $s$-plane as a meromorphic function. 
Indeed the fumdamental results of Grothendieck and Deligne imply 
\[
\zeta(X,s)=\underset{0\leq i\leq 2d}{\prod} P^i_X(q^{-s})^{(-1)^{i+1}},
\] 
where
$P^i_X(t)\in \bZ[t]$ and for an integer $r$
\[
\zeta(X,r)^*:= \us{s \to r}{\mathrm{lim}} \
 \zeta(X,s) \cdot (1-q^{r-s})^{\rho_r}
\]
is a rational number, where $\rho_r=-\ord_{s=r}\zeta(X,s)$.
The problem is to express these values in terms of arithmetic invariants associated to $X$.
It has been studied in \cite{M2} (where \'etale cohomology is used) and 
in \cite{Li1} and \cite{M3} (where \'etale motivic complexes used)
and in \cite{Li2}, \cite{Ge2} and \cite{Ge3} (where Weil-\'etale cohomology is used).
As an application of Theorem \ref{thm.cyclemap}, we get the following new result
on the problem.

\begin{theo}\label{thm.zeta}
Let $X$ be a smooth projective variety over a finite field $F$.
Let $p=\ch(F)$ and $d=\dim(X)$.
\begin{itemize}
\item[(1)]
For all integers $j$, the torsion part $H^j_M(X,\bZ(d))_{\tors}$ of $H^j_M(X,\bZ(d))$
is finite modulo the $p$-primary torsion subgroup.
Moreover, $H^j_M(X,\bZ(d))_{\tors}$ is finite if $d\leq 4$.
\item[(2)]
We have the equality up to a power of $p$:
\begin{equation}\label{eq.zeta}
\zeta(X,0)^*= 
\underset{0\leq j\leq 2d}{\prod} |H^j_M(X,\bZ(d))_{\tors}|^{(-1)^j}
\end{equation}
The equality holds also for the $p$-part if $d\leq 4$.
\end{itemize}
\end{theo}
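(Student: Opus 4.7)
The plan is to combine the cycle-map isomorphism from Theorem~\ref{thm.cyclemap}---a consequence of the long exact sequence~\eqref{les1} together with the vanishing of Kato homology in positive degrees supplied by Theorem~\ref{kato.finitefield}---with Milne's classical interpretation \cite{M2} of the special value $\zeta(X,0)^*$ in terms of $\ell$-adic \'etale cohomology. Part~(1) will be established first and then used to rewrite the $\ell$-part of $\zeta(X,0)^*$ via the motivic side.

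For part~(1), fix a prime $\ell$, taking $\ell\ne p$ in general, or any $\ell$ when $d\le 4$ (in which case ${\bf (G)}_{\ell, a-2}$ holds for $a\le 4$ by \cite{CJS}). Theorem~\ref{thm.cyclemap} yields the isomorphism $\rho^{d,q}_{X,\lmz}\colon \CH^d(X,q;\lmz) \isom H^{2d-q}_{\et}(X,\lmz(d))$ for all $q\ge 0$. The key observation is that this cycle-map isomorphism is compatible with the universal coefficient sequence~\eqref{HCHfinite} on the higher Chow side and the Bockstein sequence arising from $0\to \zl(d)\to \zl(d)\to \lmz(d)\to 0$ on the \'etale side; matching these short exact sequences term by term extracts isomorphisms
\[
\CH^d(X,2d-j)[\ell^m] \isom H^j_{\et}(X,\zl(d))[\ell^m]
\]
for every $m\ge 1$. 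Passing to the direct limit and using that $H^j_{\et}(X,\zl(d))$ is a finitely generated $\zl$-module (from smoothness and properness of $X$ over $F$), the right-hand side stabilizes to the finite torsion subgroup $H^j_{\et}(X,\zl(d))_{\tors}$. Hence $\HMXd j[\ell^\infty] \cong H^j_{\et}(X,\zl(d))_{\tors}$ is finite, and taking the product over all permitted $\ell$ establishes~(1).

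For part~(2), I invoke Milne's formula \cite{M2}: for each prime $\ell\ne p$, the $\ell$-part of $\zeta(X,0)^*$ is given (up to sign) by the alternating product $\prod_j |H^j_{\et}(X,\zl(d))_{\tors}|^{(-1)^j}$. Substituting the identifications from part~(1) rewrites this as $\prod_j |\HMXd j [\ell^\infty]|^{(-1)^j}$. Taking the product over all $\ell\ne p$ reconstructs $|\zeta(X,0)^*|$ up to a power of $p$ and produces~\eqref{eq.zeta}. When $d\le 4$, the same reasoning handles the $p$-part (since both Theorem~\ref{kato.finitefield} and the finiteness in~(1) extend to $\ell=p$ in this range), giving the exact formula including the $p$-factor.

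The main obstacle I expect is verifying the compatibility of $\rho^{d,q}_{X,\lmz}$ with both the universal coefficient sequence on the source and the Bockstein sequence on the target, which amounts to presenting the cycle map as a morphism of distinguished triangles in an appropriate derived category. A secondary technical point is translating Milne's formula---originally formulated in terms of $\ell$-adic \'etale cohomology---into the language of motivic cohomology, keeping careful track of the index shift introduced by the Bockstein connecting finite and integral coefficients.
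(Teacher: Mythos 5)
Your overall strategy coincides with the paper's: combine the cycle-map isomorphism of Theorem~\ref{thm.cyclemap} with Milne's special-value formula and Poincar\'e duality. However, there is a genuine gap in your derivation of the key identification in part~(1). Even granting the compatibility of $\rho^{d,q}_{X,\lmz}$ with the coefficient sequence \eqref{HCHfinite} on the source and the Bockstein sequence on the target, a commutative ladder of short exact sequences whose \emph{middle} vertical map is an isomorphism does not force the outer maps to be isomorphisms: the snake lemma only yields injectivity on the subobjects, surjectivity on the quotients, and an isomorphism $\coker\bigl(\CH^d(X,q)/\ell^m \to H^{2d-q}_{\et}(X,\zl(d))/\ell^m\bigr)\cong\ker\bigl(\CH^d(X,q-1)[\ell^m]\to H^{2d-q+1}_{\et}(X,\zl(d))[\ell^m]\bigr)$. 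So your ``matching term by term'' does not extract the isomorphism $\CH^d(X,2d-j)[\ell^m]\cong H^j_{\et}(X,\zl(d))[\ell^m]$. The missing input is exactly what the paper uses: pass first to $\qzl$-coefficients, where the subobject $\CH^d(X,i)\otimes\qzl$ of \eqref{eq2.thm.zeta} is divisible while the ambient group $H^{2d-i}_{\et}(X,\qzl(d))$ is \emph{finite} for $i\geq 1$ by \cite[Theorem~2]{CTSS}; hence $\CH^d(X,i)\otimes\qzl=0$ and the middle term is identified with the quotient $\CH^d(X,i-1)\{\ell\}$ outright, giving \eqref{eq3.thm.zeta}. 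This finiteness (which also yields the vanishing for almost all $\ell$ needed to make the full prime-to-$p$ torsion finite) never appears in your argument, and without it the step fails.

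A secondary, lesser issue is your appeal to ``Milne's formula'' in the form $\prod_j |H^j_{\et}(X,\zl(d))_{\tors}|^{(-1)^j}$ for the $\ell$-part of $\zeta(X,0)^*$. What Milne actually proves (\cite[Theorem~0.4]{M2}) is a formula in terms of $H^j_{\et}(X,\bZ)$ (with the cotorsion of $H^2$), and converting it to your Tate-twisted torsion groups requires the arithmetic Poincar\'e duality $H^{2d-i}_{\et}(X,\qzl(d))\simeq \Hom(H^{i+1}_{\et}(X,\zl),\qzl)$ together with the comparisons $H^j_{\et}(X,\zl)\simeq H^j_{\et}(X,\bZ)\{\ell\}$ for $j\geq 3$ and $H^2_{\et}(X,\zl)\simeq H^2_{\et}(X,\bZ)_{\cotor}\{\ell\}$. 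You flag this as a ``technical point,'' but it is a step that must actually be carried out, as the paper does; once the part~(1) gap is repaired along the lines above, the rest of your argument goes through.
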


\begin{rem}\label{rem.zeta}
\begin{itemize}
\item[(1)]
Let $X=\Spec(\cO_K)$ where $\cO_K$ is the ring of integers in a number field.
The formula \eqref{eq.zeta} should be compared with the formula
$$
 \displaystyle \us{s \to 0}{\mathrm{lim}} \
 \zeta(X,s) \cdot s^{-\rho_0}  =
-\frac{\,|H_M^2(X,\bZ(1))_{\tors}| } {|H_M^1(X,\bZ(1))_{\tors}|}\cdot R_K 
$$
which is obtained by rewriting the class number formula using motivic cohomology. 
Thus \eqref{eq.zeta} may be viewed as a geometric analogue of the class number formula.
Note that the regulator $R_K$ does not appear in \eqref{eq.zeta} since
$H^j_M(X,\bZ(d))$ is (conjecturally) finite for $j\not= 2d$.
\item[(2)]
The case $d=2$ of Theorem \ref{thm.zeta} is equivalent to \cite[Proposition 7.2]{K}.
This follows from the following isomorphism for a smooth surface $X$ over a field:
\[
H^{j+2}_M(X,\bZ(2))\simeq H^j(X_{\Zar},\cK_2) \quad\text{ for  } j\geq 0,
\]
which is deduced from \cite[{ Intro.\ }(iv) and (x)]{B} and \cite{NS}, \cite{To}.
\end{itemize}
\end{rem}

\begin{proof}
Put
\[
\CH^d(X,i;\qzl)=\indlim n \CH^d(X,i;\lnz),
\]
\[
{ H^*_{\et}(X,\qzl(d))=\indlim n H^*_{\et}(X,\lnz(d)). }
\]
By Theorem \ref{thm.cyclemap} we have an isomorphism
\begin{equation}\label{eq1.thm.zeta}
\CH^d(X,i;\qzl)\simeq H^{2d-i}_{\et}(X,\qzl(d)).
\end{equation}
By \eqref{HCHfinite} we have an exact sequence
\begin{equation}\label{eq2.thm.zeta}
0\to \CH^d(X,i)\otimes\qzl  \to \CH^d(X,i; \qzl) \to \CH^d(X,i-1)\{\ell\} \to 0.
\end{equation}
Assuming $i\geq 1$, $H^{2d-i}_{\et}(X,\qzl(d))$ is finite by \cite[Theorem 2]{CTSS}. 
Thus \eqref{eq1.thm.zeta} and \eqref{eq2.thm.zeta} imply $\CH^d(X,i)\otimes\qzl=0$
and we get an isomorphism of finite groups
\begin{equation}\label{eq3.thm.zeta}
\CH^d(X,i-1)\{\ell\} \simeq H^{2d-i}_{\et}(X,\qzl(d)).
\end{equation}
In view of \eqref{MCHC}, this shows the first assertion (1).
For the proof of (2), we use the formula
\[
\zeta(X,0)^*=\frac{[H^0_{\et}(X,\bZ)_{\tors}][H^2_{\et}(X,\bZ)_{\cotor}][H^4_{\et}(X,\bZ)]\cdots}
{[H^1_{\et}(X,\bZ)][H^3_{\et}(X,\bZ)][H^5_{\et}(X,\bZ)]\cdots}
\]
due to Milne \cite[Theorem 0.4]{M2}.
Here $H^0_{\et}(X,\bZ)=\bZ$, $H^1_{\et}(X,\bZ)=0$, and
$H^j_{\et}(X,\bZ)$ for $j\geq 3$ and the cotorsion part 
$H^2_{\et}(X,\bZ)_{\cotor}$ of $H^2_{\et}(X,\bZ)$ are finite.
By the  arithmetic Poincar\'e duality we have
\[
H^{2d-i}_{\et}(X,\qzl(d))\simeq \Hom(H^{i+1}_{\et}(X,\zl),\qzl),
\]
where
$H^{i+1}_{\et}(X,\zl)=\projlim n H^{i+1}_{\et}(X,\lnz)$
which is finite for $i\geq 1$.
Thus (2) follows from the following isomorphisms
\[
H^j_{\et}(X,\zl)\simeq H^j_{\et}(X,\bZ)\{\ell\}\qfor j\geq 3,
\]
\[
H^2_{\et}(X,\zl)\simeq H^2_{\et}(X,\bZ)_{\cotor}\{\ell\},
\]
which can be easily shown by using the exact sequence of sheaves
$$
0\to \bZ \rmapo{\ell^n} \bZ\to \lnz\to 0.
$$
\end{proof}


\section{Appendix: Galois cohomology with compact support} \label{duality}

\bigskip

\noindent
In this section we recall Artin-Verdier duality, following \cite[Sec.\ 3]{K}, and we introduce Galois cohomology with compact support.
In this section all sheaf cohomology groups will be with respect the the \'etale topology. 
For a scheme $Z$ and an integer $n>0$ we denote by $D^b_c(Z,\nz)$ the derived category 
of complexes of sheaves of $\Z/n$-modules over $Z$ with bounded constructible cohomology. 
\medbreak

We fix a finitely generated field $K$ and an integer $n>0$ invertible in $K$. 
Note that we have
\[
D^b_c(K,\nz) := D^b_c(\Spec(K),\nz) = \lim_{\lr \atop V} D^b_c(V,\nz) ,
\]
where $V$ runs over all integral schemes of finite type over $\Z$ with function field isomorphic to $K$. For $C\in D^b_c(K,\nz)$ and $a\in \Z$ we endow the \'etale cohomology group $H^a(K,C)$ with the discrete topology.
\medbreak

Let $V$ be an integral scheme of finite type over $\Z$ with function field $K$ and let $j^K:\Spec (K) \to V$ be the canonical morphism. 
Take $C\in D^b_c(K,\nz)$ and choose a dense open subscheme $U\subset V$ and 
$C_U\in D^b_c(U,\nz)$ restricting to $C\in D^b_c(K,\nz)$. Define
\begin{equation}
H^a(V,j_!^K C ) = \lim_{\longleftarrow \atop U'\subset U} H^a(V,j^{U'}_! (C_U\!\! \mid_{U'}))
\end{equation}
where $U'$ runs through all dense open subschemes of $U$ and $j^{U'}:U' \to V$ is the canonical morphism. This inverse limit is independent of the choices we made and we endow it with the pro-finite topology of the inverse limit.
\medbreak

Similarly we define Galois cohomology with compact support for $C\in D^b_c(K,\nz)$ as follows. { Choose an integral scheme $U $ of finite type over $\Z$  with function field $K$ } and choose
$C_U\in D^b_c( U,\nz )$  restricting to $C$. Now we set
\begin{equation}\label{eq.Hc}
H^a_c(K,C) =  \lim_{\longleftarrow \atop V\subset U} \; H^a_c(V,C_V),\quad
H^a_c(V,C_V)=H^a(\Spec(\Z ),\widehat{Rf^{V}_! (C_U\!\! \mid_{V}) })
\end{equation}
where $V$ runs through all dense open subschemes of $U$ and $f^{V}: V\to \Z$ is the canonical morphism and the hat is as in \cite[Sec.\ 3]{K}. Again this group does not depend on the choices made and { it is} endowed with the pro-finite topology of the 
inverse limit.
\medbreak

Assume that $K$ is a one-dimensional global field.
Let $P_K$ be the set of places of $K$ and let $K_v$ for $v\in P_K$ be the henselization of $K$ at $v$.
The next lemma relates Galois cohomology with compact support to the local-global map.  

\begin{lem}\label{locglo.compact}
Fix an integer $q\geq 0$.
Let $C\in D^{b}_c(K,\nz)$ and assume that the cohomology sheaves $H^i(C)$ vanish for 
$i\leq q$.
Then there is a canonical isomorphism of finite groups
\[
\Coker [ H^{q+2}(K, C )  \to \underset{v\in P_K}{\bigoplus} H^{q+2}(K_{v}, C) ]  
\cong  H_c^{q+3}(K,C).
\]
\end{lem}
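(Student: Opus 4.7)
The plan is to deduce the isomorphism from two ingredients: an Artin--Verdier-type local-global long exact sequence at the limit level, and a vanishing result for the comparison map $H^{q+3}_c(K,C)\to H^{q+3}(K,C)$ forced by the depth hypothesis on $C$.

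First I will, for each dense open $V\subset U$ in a fixed model $U$ of $K$ on which $C$ extends to $C_U$, invoke the Artin--Verdier localization sequence (coming from the compactification $\overline V$ obtained by adjoining $\Sigma_V:=P_K\setminus V^{(0)}$) to obtain
\[
\cdots\to H^a_c(V,C_V)\to H^a(V,C_V)\to \bigoplus_{v\in\Sigma_V}H^a(K_v,C)\to H^{a+1}_c(V,C_V)\to\cdots,
\]
identifying the local contributions at $v\in\Sigma_V$ with $H^a(K_v,C)$ via the henselization comparison at non-archimedean places and the standard Artin--Verdier recipe at archimedean ones. Constructibility of $C$ and invertibility of $n$ on $U$ make every term finite, so the inverse system in $V$ is Mittag--Leffler. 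Passing to the limit as $V$ shrinks to $\Spec K$---the filtered colimit $\indlim{V} H^a(V,C_V)=H^a(K,C)$, the direct sums indexed by $\Sigma_V$ exhausting $\bigoplus_{v\in P_K} H^a(K_v,C)$ as $\Sigma_V$ fills up $P_K$, and the outer terms producing $H^a_c(K,C)$ in the sense of \eqref{eq.Hc}---will yield the limit exact sequence
\[
\cdots\to H^a_c(K,C)\to H^a(K,C)\to \bigoplus_{v\in P_K}H^a(K_v,C)\to H^{a+1}_c(K,C)\to\cdots.
\]

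Next I will read off the four-term segment at degrees $q+2$, $q+3$ to produce a canonical injection
\[
\Coker\!\bigl[H^{q+2}(K,C)\to \bigoplus_{v\in P_K}H^{q+2}(K_v,C)\bigr]\hookrightarrow H^{q+3}_c(K,C)
\]
whose image is $\ker\!\bigl[H^{q+3}_c(K,C)\to H^{q+3}(K,C)\bigr]$. It then remains to prove the vanishing of this comparison map. Using Artin--Verdier duality $H^a_c(V,C_V)\cong\Ext^{3-a}_V(C_V,\Gm)^\vee$ together with the hyper-$\Ext$ spectral sequence, I will show that the depth hypothesis $H^i(C)=0$ for $i\le q$ (combined with $\mathrm{cd}(V)\le 3$) confines $H^{q+3}_c(V,C_V)$ to be generated by classes coming from the boundary $\Sigma_V$; such classes are killed when pushed into $H^{q+3}(V',C_{V'})$ for any strictly smaller open $V'\subset V$ which absorbs the originating boundary points into $\Sigma_{V'}$, and therefore vanish in $H^{q+3}(K,C)=\indlim{V'}H^{q+3}(V',C_{V'})$. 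Finiteness of the cokernel will follow automatically from the finiteness of each finite-level term.

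The hard part will be this vanishing of the comparison map. The support argument above is elementary in outline but requires careful bookkeeping of how boundary contributions at $\Sigma_V$ evolve under shrinking $V$, and the depth hypothesis $H^i(C)=0$ for $i\le q$ is essential in keeping the Artin--Verdier dual $\Ext$-terms in the range where the description of $H^{q+3}_c$ via boundary classes applies. The Mittag--Leffler check in passing to the limit sequence in the first step will be routine once finiteness of each finite-level term is established.
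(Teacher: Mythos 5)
Your overall architecture---finite-level localization sequences, passage to the limit, and reduction to the vanishing of the comparison map $H^{q+3}_c(K,C)\to H^{q+3}(K,C)$---parallels the paper's proof, but the step you yourself single out as hard is where the argument fails. By exactness of your limit sequence, the image of the comparison map equals $\ker\bigl[H^{q+3}(K,C)\to\bigoplus_{v\in P_K}H^{q+3}(K_v,C)\bigr]$, so its vanishing is \emph{equivalent} to the injectivity of the local--global map in degree $q+3$. That is a Hasse-principle statement in the Galois cohomology of $K$ and its henselizations, and no support-and-shrinking argument can prove it: a class in that kernel is a genuine nonzero element of the colimit $H^{q+3}(K,C)=\varinjlim_V H^{q+3}(V,C_V)$, and shrinking $V$ never kills a nonzero element of a colimit. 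Your finite-level assertion that $H^{q+3}_c(V,C_V)$ is generated by boundary classes is moreover false in general: for $C=\mu_n[-1]$ and $q=0$ the subgroup $\Pic(V)/n\subset H^2(V,\mu_n)=H^{3}(V,C)$ restricts to zero in $H^{2}(K_v,\mu_n)$ for every $v\in\Sigma_V$, hence lies in the image of $H^{3}_c(V,C)\to H^{3}(V,C)$, which is therefore nonzero. What the paper actually proves (its Claim (2)) is that for $V$ affine with $n$ invertible the depth hypothesis together with cohomological-dimension bounds yields isomorphisms $H^{q+3}(V,j^K_!C)\isom H^{q+3}(K,C)\isom\bigoplus_{v\in P_K^\infty}H^{q+3}(K_v,C)$; since $P_K^\infty\subset\Sigma_V$, the map from $H^{q+3}(V,j^K_!C)$ to the local terms is injective, which is exactly the surjectivity of the connecting map you need. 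Some such archimedean/top-degree input is unavoidable; the $\Ext(-,\Gm)$ bookkeeping you propose does not substitute for it.

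There is also a structural problem in your limiting step. In the paper's conventions $H^a_c(K,C)$ is a limit of the finite groups $H^a_c(V,C_V)$, whereas $H^a(V,C_V)$ and $\bigoplus_{v\in\Sigma_V}H^a(K_v,C)$ form direct systems as $V$ shrinks; the functorial maps on compactly supported cohomology attached to an open immersion $V'\subset V$ go from $H^a_c(V',C_{V'})$ to $H^a_c(V,C_V)$, the wrong direction for your colimit. Mittag--Leffler is beside the point: one cannot pass to the limit of an exact sequence by taking colimits of some terms and limits of others. The paper avoids this by writing the finite-level sequence with $H^a_c(K,C)$ already in place and with the pro-object $H^a(V,j^K_!C)$ in the middle, so that only genuine direct systems vary with $V$; this is also why its Claim (1) (surjectivity of $\varinjlim_V H^{q+2}(V,j^K_!C)\to H^{q+2}(K,C)$) carries real content, whereas your middle term has colimit $H^{q+2}(K,C)$ tautologically.
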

\begin{proof}
Let $V$ be an integral scheme, separated and of finite type over $\Z$ with function field isomorphic to $K$. There is a long exact localization sequence
\[
\cdots \to  H^a_c( K, C ) \to  H^a( V,j^K_! C ) \to 
\underset{v\in \Sigma_V}{\bigoplus} H^a( K_v , C ) \to  H^{a+1}_c( K,C ) \to  \cdots
\]
where $\Sigma_V$ is the set of places of $K$ which do not correspond to closed points of $V$. Taking the direct limit over all $V$, the lemma follows from the following two claims which are consequences of cohomological vanishing theorems:

\begin{claim}
\begin{itemize}
\item[(1)] 
The map \;
$\indlim {V} H^{q+2}( V,j^K_! C ) \to H^{q+2}(K,C)$
is surjective.
\item[(2)] 
If $V$ is affine and $n$ is invertible on $V$, we have isomorphisms:
\begin{equation*}
H^{q+3}( V,j^K_! C ) \stackrel{\sim }{\to} H^{q+3}( K, C ) \stackrel{\sim }{\to} 
\underset{v\in P_K^\infty}{\bigoplus}  H^{q+3}( K_v, C ),
\end{equation*}
where $P_K^\infty$ is the set of infinite places of $K$.
\end{itemize}
\end{claim}
\end{proof}

\medskip

Now we state a version of Artin-Verdier duality using Galois cohomology with compact support. 
We refer to \cite[Sec.\ 3]{K} and \cite[Sec.\ 1.1]{JSS} for more details. 
Let $K$ be an arbitrary finitely generated field with $n\in K^\times$. 
For $C\in D^b_c(K,\nz)$ we put
\[
C^\vee= R\, Hom(C,\Z/n)\in  D^b_c(K,\nz).
\]

For $C\in D^b_c(K,\nz)$, choose a connected regular scheme $U$ of finite type over $\Z[1/n]$ and choose $C_U\in D^b_c(U,\nz)$ restricting to $C$  as in \eqref{eq.Hc}.
Artin-Verdier duality says that for any $V\subset U$ open there is a natural isomorphism of finite groups
\begin{equation}\label{app.artinver}
 \Hom( H^a_c(V,C_V(d_K)) ,\Z / n)  \cong H^{2d_K+1-a}(V,C^\vee), 
\end{equation} 
where $d_K=\dim(V)$ (which is the Kronecker dimension of $K$). 

Taking the direct limit over all nonempty open $V\subset U$ in \eqref{app.artinver} we get:
\begin{theo}\label{dual.compact}
There is a natural isomorphism
$$ \Hom_{\rm cont}( H^a_c(K,C(d_K)) ,\Z / n)  \cong H^{2d_K+1-a}(K,C^\vee) ,$$ 
where 
$H^a_c(K,C(d_K)) = \projlim{V\subset U} \; H^a_c(V,C_V)$
is endowed with the pro-finite topology.
\end{theo}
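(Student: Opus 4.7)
The plan is to derive Theorem~\ref{dual.compact} by passing to the limit in the finite form of Artin--Verdier duality \eqref{app.artinver}, using continuous Pontryagin duality for pro-finite groups of bounded exponent.

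First, I would fix a connected regular $U$ of finite type over $\Z[1/n]$ with function field $K$ together with $C_U \in D^b_c(U,\nz)$ restricting to $C$, as in the setup preceding the theorem. For each dense open $V \subset U$ the finite isomorphism
\[
 \Hom(H^a_c(V, C_V(d_K)), \Z/n) \;\cong\; H^{2d_K+1-a}(V, C^\vee)
\]
of \eqref{app.artinver} is available; the target and source are both finite, since $H^a_c(V, C_V(d_K))$ is finite (Artin--Verdier) and is an $n$-torsion $\Z/n$-module. The key compatibility to verify is that as $V' \subset V$ shrinks, this isomorphism intertwines the covariant transition maps $H^a_c(V', C_{V'}(d_K)) \to H^a_c(V, C_V(d_K))$ (induced by open immersions with compact support) with the contravariant restriction maps $H^{2d_K+1-a}(V, C^\vee) \to H^{2d_K+1-a}(V', C^\vee)$. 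This is standard naturality of the Artin--Verdier pairing on constructible sheaves (cf.\ \cite[Sec.\ 3]{K}, \cite[Sec.\ 1.1]{JSS}); I would simply cite it.

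Next, take direct limits on the right: by the definition of $H^b(K, C^\vee)$ as a colimit of \'etale cohomology over dense opens (the same definition used above to set up $H^b(V, j^K_! C)$), one has
\[
 \indlim_{V' \subset U} H^{2d_K+1-a}(V', C^\vee) \;\cong\; H^{2d_K+1-a}(K, C^\vee).
\]
On the left, the pro-finite topology on $H^a_c(K, C(d_K)) = \projlim_{V'} H^a_c(V', C_{V'}(d_K))$ and the discreteness of $\Z/n$ give
\[
 \Hom_{\mathrm{cont}}\!\left(\projlim_{V'} H^a_c(V', C_{V'}(d_K)),\, \Z/n\right) \;\cong\; \indlim_{V'} \Hom(H^a_c(V', C_{V'}(d_K)), \Z/n),
\]
which is the standard identification of continuous characters of an inverse limit of finite $n$-torsion groups.

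Combining these two direct-limit identifications with the isomorphisms from Step~1, which are compatible by Step~2, yields the required natural isomorphism. The only nontrivial point is the functoriality of Artin--Verdier duality under open restriction; the remaining steps are formal consequences of the definitions of $H^a_c(K, C(d_K))$ and $H^b(K, C^\vee)$ as appropriate limits, together with the elementary Pontryagin-type identity for pro-finite $n$-torsion groups. I do not expect any genuinely hard step beyond quoting the already established finite duality.
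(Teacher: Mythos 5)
Your proposal is correct and is exactly the argument the paper intends: the text derives Theorem~\ref{dual.compact} by "taking the direct limit over all nonempty open $V\subset U$ in \eqref{app.artinver}", which is precisely your combination of the finite Artin--Verdier isomorphisms, their compatibility with the transition maps, and the identification $\Hom_{\rm cont}(\projlim{} A_V,\Z/n)\cong\indlim{} \Hom(A_V,\Z/n)$ for pro-finite $n$-torsion groups. You have merely made explicit the naturality and limit steps that the paper leaves implicit.
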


Combining Theorem~\ref{dual.compact} and the Poincar\'e duality for $X$, we get:

\begin{coro}\label{coro.compact0}
Let $X$ be a smooth variety of dimension $d$ over $K$ and 
$\pi: X\to \Spec(K)$ be the canonical moprhism. 
Let $\Lam$ be a locally constant constructible sheaf of $\nz$-modules on $X_{\et}$.
Then there is a natural isomorphism 
\[
H^{2d+2d_K+1-a}_c(K,R\pi_* \Lam^\vee(d+d_K)) \cong \Hom( H^a_c(X,\Lam) ,\nz).
\]
\end{coro}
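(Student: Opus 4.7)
The plan is to combine the Artin--Verdier duality over $\Spec K$ recorded in Theorem \ref{dual.compact} with Poincar\'e--Verdier duality for the smooth morphism $\pi\colon X\to\Spec K$ of relative dimension $d$.

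The first step is to translate compactly supported cohomology on $X$ into compactly supported cohomology on $\Spec K$. Choosing a regular model $U\subset\Spec\bZ[1/n]$ of $K$ and a model $\mathcal X\to U$ of $\pi$, with $\pi^U\colon \mathcal X\to U$ the induced morphism, the structure map $f^{\mathcal X}\colon\mathcal X\to\Spec\bZ$ factors as $f^U\circ\pi^U$, so $Rf^{\mathcal X}_!=Rf^U_!\circ R\pi^U_!$. Passing to the inverse limit over shrinking opens of $U$ as in the definition \eqref{eq.Hc} of $H^*_c$ produces a canonical identification
\[
H^a_c(X,\Lam)\;\simeq\;H^a_c(K,R\pi_!\Lam).
\]

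Next, I would apply Theorem \ref{dual.compact} to the object $C:=R\pi_!\Lam(-d_K)\in D^b_c(K,\nz)$, for which $C(d_K)=R\pi_!\Lam$; this yields
\[
\Hom(H^a_c(X,\Lam),\nz)\;\cong\;H^{2d_K+1-a}\bigl(K,(R\pi_!\Lam)^\vee(d_K)\bigr).
\]
Then I would invoke smooth Poincar\'e--Verdier duality for $\pi$: since $n$ is invertible in $K$ and $\pi$ is smooth of relative dimension $d$, one has $\pi^{!}\nz\simeq \nz(d)[2d]$, hence by Verdier duality
\[
(R\pi_!\Lam)^\vee\;=\;R\pi_*R\Hom(\Lam,\pi^{!}\nz)\;\simeq\;R\pi_*\Lam^\vee(d)[2d].
\]
Substituting and collecting the twists and shifts gives
\[
\Hom(H^a_c(X,\Lam),\nz)\;\cong\;H^{2d+2d_K+1-a}\bigl(K,R\pi_*\Lam^\vee(d+d_K)\bigr),
\]
which after Pontryagin--dualizing the Artin--Verdier pairing of Theorem \ref{dual.compact} (exchanging the roles of $H^*$ and $H^*_c$ with pro-finite/discrete topologies) yields the corollary.

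The principal obstacle is the bookkeeping required to check that each ingredient -- the composition of compactly supported pushforwards, Verdier duality for $\pi$, and the various Tate twist identifications -- is compatible with the inverse limit over models of $K$ entering the definition of $H^*_c(K,-)$. Each step is a standard manipulation in the six-functor formalism for $\ell$-adic sheaves with $\ell$ invertible on the base, but the precise matching of indices, Tate twists and homological shifts has to be carried out with some care.
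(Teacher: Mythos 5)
There is a genuine gap, and it sits exactly where all the content of the corollary lies: your opening identification $H^a_c(X,\Lam)\simeq H^a_c(K,R\pi_!\Lam)$ conflates two different notions of compact support. The group $H^a_c(X,\Lam)$ appearing in the corollary is the one from Example \ref{exHK2}: compact support \emph{relative to $K$ only}, i.e. $H^a(\Xb_{\et},j_!\Lam)=H^a(\Spec(K),R\pi_!\Lam)$ for a compactification $j:X\hookrightarrow\Xb$ over $K$. It is an ordinary (discrete, in general infinite) Galois cohomology group of $K$. By contrast, $H^a_c(K,-)$ as defined in \eqref{eq.Hc} additionally compactifies over $\Spec(\Z)$: it is a profinite inverse limit over arithmetic models and differs from $H^a(K,-)$ by the local terms at the places of $K$ (this is precisely the content of Lemma \ref{locglo.compact}). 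Your model-theoretic argument via $Rf^{\mathcal X}_!=Rf^U_!\circ R\pi^U_!$ computes the latter group, not the former. Consequently your steps 2--4, which are otherwise correct, prove the \emph{mirror} statement $\Hom(H^a_c(K,R\pi_!\Lam),\nz)\cong H^{2d+2d_K+1-a}(K,R\pi_*\Lam^\vee(d+d_K))$, with ordinary cohomology on the right where the corollary demands $H^*_c$; the closing remark about ``exchanging the roles of $H^*$ and $H^*_c$'' is not a formal manipulation but exactly the assertion that remains to be proved.

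The repair is to apply Theorem \ref{dual.compact} to the other member of the dual pair, namely $C=R\pi_*\Lam^\vee(d)$, so that $C(d_K)=R\pi_*\Lam^\vee(d+d_K)$ and the profinite side of the duality becomes the left-hand side of the corollary. Your own Verdier-duality computation shows $C=(R\pi_!\Lam)^\vee[-2d]$, whence by biduality $C^\vee\simeq R\pi_!\Lam[2d]$, and Theorem \ref{dual.compact} gives, for $b=2d+2d_K+1-a$,
\[
\Hom\big(H^{b}_c(K,R\pi_*\Lam^\vee(d+d_K)),\nz\big)\cong H^{2d_K+1-b}(K,R\pi_!\Lam[2d])=H^{a}(K,R\pi_!\Lam)=H^a_c(X,\Lam),
\]
which upon dualizing once more (a perfect pairing between a profinite and a discrete $\nz$-module) is the assertion. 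No identification of $H^a_c(X,\Lam)$ with an arithmetic compactly supported group is needed at any point.
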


Combining Lemma~\ref{locglo.compact} and Theorem~\ref{dual.compact}, we get:

\begin{coro}\label{coro.compact}
Assume that $K$ is a one-dimensional global field $K$.
Let $X$ be a smooth affine connected variety of dimension $d$ over $K$ and $C$ be 
a constructible $\nz$-sheaf on $X_{\et}$.
Then there is a natural isomorphism of finite groups
\[
\Hom( H^d_c(X,C^\vee )  ,\nz)\cong 
\Coker [  H^{d+2}(X,C(d+1)) \to \bigoplus_{v\in P_K}  
H^{d+2}(X_{K_v},C(d+1)) ].
\]
\end{coro}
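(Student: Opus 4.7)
The strategy is to identify both sides with the same compactly supported Galois cohomology group on $\Spec(K)$, namely $H^{d+3}_c(K,R\pi_*C(d+1))$, where $\pi\colon X\to\Spec(K)$ is the structural morphism. This is exactly the setup in which the two main tools of the section --- Artin--Verdier duality combined with Poincar\'e duality (Corollary~\ref{coro.compact0}) and the local--global description of compactly supported Galois cohomology (Lemma~\ref{locglo.compact}) --- can be combined.

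\textbf{Step 1.} Apply Corollary~\ref{coro.compact0} with $\Lam=C^\vee$, $a=d$ and $d_K=1$ to obtain a canonical isomorphism
\[
\Hom\bigl(H^d_c(X,C^\vee),\,\nz\bigr)\;\cong\;H^{d+3}_c\bigl(K,\,R\pi_*C(d+1)\bigr).
\]
Corollary~\ref{coro.compact0} is stated for locally constant constructible coefficients; for a general constructible $C$ one reduces to this case by choosing a locally closed stratification of $X$ on which $C$ becomes locally constant and by patching the resulting isomorphisms through the long exact localization sequence of Definition~\ref{def.HTD}\,(3), using the Five Lemma at each stratum.

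\textbf{Step 2.} Apply Lemma~\ref{locglo.compact} to the complex $D:=R\pi_*C(d+1)\in D^b_c(K,\nz)$ with $q=d$ to obtain
\[
H^{d+3}_c(K,D)\;\cong\;\Coker\!\Bigl[H^{d+2}(K,D)\lr\bigoplus_{v\in P_K}H^{d+2}(K_v,D)\Bigr].
\]
The Leray spectral sequence for $\pi$ identifies $H^{d+2}(K,D)$ with $H^{d+2}(X,C(d+1))$; smooth base change along the cartesian square with vertical map $\pi$ and horizontal map $\Spec(K_v)\to\Spec(K)$ identifies $H^{d+2}(K_v,D)$ with $H^{d+2}(X_{K_v},C(d+1))$, using that $X/K$ is smooth. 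Combining these with the isomorphism of Step~1 yields the desired identification.

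\textbf{Main obstacle.} The most delicate point is verifying the vanishing hypothesis $H^i(D)=0$ for $i\le d$ required by Lemma~\ref{locglo.compact}. This is where the affineness of $X$ enters decisively: by Verdier duality for the smooth morphism $\pi$ the complex $D=R\pi_*C(d+1)$ is interchanged, up to a shift by $2d$ and a Tate twist, with the dual of $R\pi_!C^\vee$; the affine Lefschetz theorem applied to $X_{\bar K}$ gives $R^i\pi_!C^\vee=0$ for $i<d$, and unwinding the duality converts this into the required vanishing in degrees $\le d$ after accounting for the twist $(d+1)$ and the shift by $2d$. One must also take some care with archimedean places of $K$ when applying the lemma, handled as in its proof via claims (1) and (2). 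The technical heart of the argument is this bookkeeping; once it is in place the two identifications of Steps~1 and~2 chain together formally.
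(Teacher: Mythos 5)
Your overall architecture is the one the paper intends: Corollary~\ref{coro.compact} is obtained by combining Theorem~\ref{dual.compact} (together with Poincar\'e--Verdier duality for $\pi$, which is the content of Corollary~\ref{coro.compact0}) with Lemma~\ref{locglo.compact} applied to $D=R\pi_*C(d+1)$, exactly as in your Steps 1 and 2. However, the point you yourself single out as ``the technical heart'' is wrong. The vanishing $H^i(D)=0$ for $i\le d$ that you set out to verify is false: $H^0(D)=H^0(X_{\bar K},C(d+1))$ is the group of global sections of $C(d+1)$ over $X_{\bar K}$, nonzero already for $C=\nz$ and $X=\AAA^1_K$. Your duality argument cannot rescue this: if one actually unwinds it, $R^j\pi_!C^\vee=0$ for $j<d$ translates, via $(R\pi_!C^\vee)^\vee\simeq R\pi_*C(d)[2d]$ and the self-injectivity of $\nz$, into the vanishing of $H^i(D)$ in the \emph{complementary} range $i>d$.

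What saves the proof is that $i>d$ is the range Lemma~\ref{locglo.compact} actually needs: its printed hypothesis ``$H^i(C)=0$ for $i\le q$'' has to be read as ``$H^i(C)=0$ for $i>q$''. Indeed, Claim~(2) in its proof, $H^{q+3}(K,C)\cong\bigoplus_{v\in P_K^\infty}H^{q+3}(K_v,C)$, rests on the isomorphism $H^a(K,M)\cong\bigoplus_{v\ \mathrm{real}}H^a(K_v,M)$ for $a\ge 3$, which is only relevant when the cohomology sheaves of $C$ sit in degrees $\le q$; and in the application in Theorem~\ref{kato.globalfield} the complex fed into the lemma is concentrated in degrees $[0,q]$. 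With this reading, the hypothesis for $D=R\pi_*C(d+1)$ is a one-line consequence of Artin's affine vanishing theorem on $X_{\bar K}$ --- no Verdier duality or shift bookkeeping is needed --- and this is precisely where the affineness of $X$ enters. A secondary remark on Step~1: Corollary~\ref{coro.compact0} is stated for a locally constant constructible sheaf, whereas $C^\vee$ is a constructible complex; rather than stratifying and patching (where the compatibility of the two localization sequences with the duality isomorphisms would itself have to be checked, and Definition~\ref{def.HTD}(3) is not the relevant tool), it is cleaner to combine Theorem~\ref{dual.compact}, which already allows arbitrary objects of $D^b_c(K,\nz)$, directly with Poincar\'e--Verdier duality for $\pi$; this yields $\Hom(H^d_c(X,C^\vee),\nz)\cong H^{d+3}_c(K,R\pi_*C(d+1))$ for every constructible $C$ at once.
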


\bigskip

\end{document}